\DeclareMathOperator{\dist}{dist}
\newtheorem{thm}{Theorem}[section]
\newtheorem{cor}[thm]{Corollary}
\newtheorem{prop}[thm]{Proposition}
\newtheorem{lem}[thm]{Lemma}
\newtheorem{conj}[thm]{Conjecture}
\def\R{{\mathbb{R}}}
\def\T{{\mathbb{T}}}
\def\C{{\mathbb{C}}}
\def\N{{\mathbb{N}}}
\def\Z{{\mathbb{Z}}}
\def\S{{\mathbb{S}}}
\newcommand{\Pc}{{\mathcal{P}}}\newcommand{\Hc}{{\mathcal{H}}}\newcommand{\Ic}{{\mathcal{I}}}\newcommand{\Ec}{{\mathcal{E}}}
\renewcommand{\b}{\beta}
\newcommand{\n}{{\textbf{n}}}\newcommand{\m}{{\textbf{m}}}\newcommand{\Gc}{{\mathcal G}}
\renewcommand{\o}{\omega}
 \numberwithin{equation}{section}
\begin{document}

\thanks{The first author is partially supported by the  NSF grant DMS-1800305.
	The second author is supported by the National Science Centre of Poland within the research project OPUS 2017/27/B/ST1/01623.
}

\subjclass[2010]{Primary 42A45, Secondary 11L05}
\keywords{Weyl sums, moment curve, paraboloid}
\begin{abstract}
We prove moment inequalities for exponential sums with respect to singular measures, whose Fourier decay matches those of curved hypersurfaces. Our emphasis will be on proving estimates that are sharp with respect to the scale parameter $N$, apart from $N^\epsilon$ losses. In a few instances, we manage to remove these losses.

\end{abstract}

\title[] {Restriction of exponential sums to hypersurfaces}

\author{Ciprian Demeter}
\address{Department of Mathematics, Indiana University,  Bloomington IN}
\email{demeterc@indiana.edu}

\author{Bartosz Langowski}
\address{Department of Mathematics, Indiana University,  Bloomington IN \newline
Wroc\l{}aw University of Science and Technology,\newline
Faculty of Pure and Applied Mathematics, Wroc\l{}aw, Poland}
\email{balango@iu.edu}

\maketitle

\section{Description of the questions}

Let $e(t)=e^{2\pi i t}$. Given a 1-periodic $d$-dimensional exponential sum, what can be said about its restriction to a given smooth manifold $\mathcal{M}$ in $\T^d$? In this paper we investigate a few examples, by restricting attention to the case when $\mathcal{M}$ is a curved hypersurface.
\smallskip

Our most substantial findings will concern the exponential sums along the moment curve.
For $d\ge 2$, $N\in\N$, sequences $a=(a_n)\in\C$ and  $x=(x_1,\ldots,x_d)\in \T^d$ (identified with $[0,1]^d$), we let
\begin{align*}
S_{a, d}(x, N)=\sum_{n=1}^N a_n\, e(x_1 n+ \dots+ x_d n^d).
\end{align*}
 In the special case $a_n\equiv 1$ we shall simplify the notation to $S_{d}(x, N)$. The investigation of their $L^p$ moments on the measure space $(\T^d,dx)$  has been at the forefront of both harmonic analysis and analytic number theory for many years, culminating in the full resolution of this problem, see \cite{BDG} and \cite{Wol}. The estimate
 \begin{equation}\label{a26}
 \int_{\T^d} |S_{a, d}(x, N)|^p\,dx\lesssim_\epsilon\|a\|_{\ell^2}^p\begin{cases}N^\epsilon, \;&0<p\le d(d+1)\\ N^{\frac{p}{2}-\frac{d(d+1)}{2}+\epsilon},\;&p>d(d+1)\end{cases}
 \end{equation}
 is known to be sharp, up to the $N^\epsilon$ term. Here and everywhere else $\epsilon$ denotes a positive, arbitrarily small constant.

  Our interest here lies in the behavior of such sums when they are restricted to hypersurfaces in $\T^d$. For example, speaking somewhat informally, can the ``large" values of $S_{a, d}(x, N)$ ``concentrate" on such a singular set as a hypersurface? Such questions are already interesting and difficult for the constant sequence $a_n\equiv 1$. But as we shall soon see, the arbitrary coefficient case comes with additional motivation.
\smallskip

We make the following conjecture. It predicts that in a certain range of $L^p$ spaces,  the behavior of exponential sum $S_{a,d}(x,N)$ restricted to  $\mathcal{M}$ is governed by square root cancellation.
\begin{conj}\label{conj:1}
	Let $\sigma$ be the surface measure of a smooth hypersurface $\mathcal{M}$ in $\T^d$ with non--vanishing Gaussian curvature. Then for each  $N\ge 1$, $d\ge 2$ and for each sequence $a$, the estimate
	\begin{equation}\label{eq:conj1}
	\int_{\T^d} |S_{a, d}(x, N)|^p\,d\sigma(x)\lesssim_\epsilon \|a\|_{\ell^2}^p\begin{cases}N^\epsilon,&p\le p_d:=d(d-1)\\
	N^{\frac{p-p_d}{2}+\epsilon},&p> p_d\end{cases}
	\end{equation}
	holds for all $\epsilon>0$, with the implicit constant depending only on $\sigma$ and $d$.
\end{conj}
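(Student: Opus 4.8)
The plan is to reduce the full conjecture to a single critical exponent and then to attack it by an $L^2$ expansion against $\sigma$, feeding the resolved torus estimate \eqref{a26} into the ``diagonal'' part and the curvature of $\mathcal{M}$ into what remains. \emph{Step 1: reduction to $p=p_d$.} It suffices to prove
\[
\int_{\T^d}|S_{a,d}(x,N)|^{p_d}\,d\sigma(x)\ \lesssim_\epsilon\ N^\epsilon\,\|a\|_{\ell^2}^{p_d},\qquad p_d=d(d-1).
\]
Granting this, the range $p>p_d$ in \eqref{eq:conj1} follows by inserting the trivial bound $\|S_{a,d}(\cdot,N)\|_{L^\infty(\T^d)}\le\|a\|_{\ell^1}\le N^{1/2}\|a\|_{\ell^2}$ into $\int|S_{a,d}|^p\,d\sigma\le\|S_{a,d}\|_{L^\infty(\T^d)}^{\,p-p_d}\int|S_{a,d}|^{p_d}\,d\sigma$, which reproduces precisely the exponent $\tfrac{p-p_d}{2}+\epsilon$; and the range $p<p_d$ follows from H\"older's inequality on the probability space $(\T^d,\,d\sigma/\sigma(\T^d))$.

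\emph{Step 2: $L^2$ expansion and the diagonal.} Set $k=\tfrac{p_d}{2}=\binom{d}{2}$ and expand the $k$-th power on the torus, $S_{a,d}(x,N)^k=\sum_{\mathbf w}b_{\mathbf w}\,e(\mathbf w\cdot x)$, where $\mathbf w$ runs over the $k$-fold sumset of the integer points $\g(n)=(n,n^2,\dots,n^d)$ of the moment curve and $b_{\mathbf w}=\sum_{\g(n_1)+\dots+\g(n_k)=\mathbf w}a_{n_1}\cdots a_{n_k}$. With $\widehat\sigma(\xi)=\int_{\T^d}e(-\xi\cdot x)\,d\sigma(x)$ and the orthogonality of distinct characters on $\T^d$,
\[
\int_{\T^d}|S_{a,d}(x,N)|^{p_d}\,d\sigma(x)=\sum_{\mathbf w,\mathbf w'}b_{\mathbf w}\,\overline{b_{\mathbf w'}}\,\widehat\sigma(\mathbf w'-\mathbf w).
\]
The diagonal $\mathbf w=\mathbf w'$ equals $\sigma(\T^d)\sum_{\mathbf w}|b_{\mathbf w}|^2=\sigma(\T^d)\,\|S_{a,d}(\cdot,N)\|_{L^{2k}(\T^d)}^{2k}$, which is $\lesssim_\epsilon N^\epsilon\|a\|_{\ell^2}^{2k}$ by \eqref{a26}, since $2k=p_d\le d(d+1)$. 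Thus everything comes down to the off-diagonal sum $\Sigma:=\sum_{\mathbf w\ne\mathbf w'}b_{\mathbf w}\overline{b_{\mathbf w'}}\,\widehat\sigma(\mathbf w'-\mathbf w)$, into which the geometry of $\mathcal{M}$ enters only through the stationary-phase decay $|\widehat\sigma(\xi)|\lesssim(1+|\xi|)^{-(d-1)/2}$.

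\emph{Step 3: the off-diagonal sum.} For $d=2$ (so $k=1$, $b_{\g(n)}=a_n$) this is elementary: Schur's test gives $|\Sigma|\le\|a\|_{\ell^2}^2\,\sup_n\sum_{m\ne n}|\widehat\sigma(\g(n)-\g(m))|$, and since $\g(n)-\g(m)=\bigl(n-m,\,(n-m)(n+m)\bigr)$ has modulus $\gtrsim|n-m|(1+n+m)$, the decay bound yields $|\widehat\sigma(\g(n)-\g(m))|\lesssim|n-m|^{-1/2}(1+n+m)^{-1/2}$, whose sum over $m\in\{1,\dots,N\}$ is $O(\log N)$ uniformly in $n$; this proves \eqref{eq:conj1} for $d=2$ up to a logarithm, which a finer argument exploiting the genuine oscillation of $\widehat\sigma$ should remove. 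For $d\ge3$ this crude estimate collapses, because $\mathbf w'-\mathbf w$ ranges over a box of sidelength $\sim N^{d}$ on which $(1+|\xi|)^{-(d-1)/2}$ is far from summable, so the arithmetic structure of the sumset cannot be discarded. What is really needed is control, \emph{on average against $\widehat\sigma$}, of the shifted Vinogradov-type counts $\#\{(\mathbf n,\mathbf m)\in\{1,\dots,N\}^k\times\{1,\dots,N\}^k:\ \sum_{i=1}^k\g(n_i)-\sum_{i=1}^k\g(m_i)=\mathbf u\}$; equivalently, a Fourier-restriction inequality coupling the curvature of the moment curve in frequency space with the curvature of $\mathcal{M}$ in physical space. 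This is not provided by any off-the-shelf decoupling: $\ell^2$-decoupling for the moment curve is available, sharply up to the exponent $d(d+1)\ge p_d$, but only with respect to Lebesgue measure, and its base case --- the $L^2$-orthogonality of the single-frequency pieces --- is for $d\sigma$ exactly the singular sum $\Sigma$. I expect \textbf{this need to couple curvature in frequency space with curvature in physical space to be the main obstacle}, and it is why the statement is posed only as a conjecture; in favorable cases --- notably the paraboloid and small $d$ --- the sum $\Sigma$ can nonetheless be handled by more delicate, case-specific arguments.
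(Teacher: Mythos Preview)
This is a conjecture; the paper does not prove it in full, but it does verify it for $d=2,3$ (and in restricted $p$-ranges for $d=4,5$). Your Step~1 reduction to $p=p_d$ is correct and is exactly the observation the paper makes immediately after stating the conjecture. Your Step~2 is also sound, and the diagonal is indeed handled by \eqref{a26}. The gap is Step~3 for $d\ge3$, which you candidly flag --- but your diagnosis that ``no off-the-shelf decoupling'' applies misses the paper's key idea.

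The paper does not attack the bilinear form $\Sigma$ directly. Instead, it exploits that $p_d=d(d-1)$ is an even integer: by a Hardy--Littlewood majorant argument (Lemma~\ref{a28}), one may replace $d\sigma$ by any positive measure $d\nu$ whose Fourier coefficients dominate $|\widehat{d\sigma}|$. Choosing $d\nu(x)=\sum_{2^j\lesssim N^d}2^{j(d+1)/2}\eta(2^jx)\,dx$ --- a dyadic stack of bumps matching the Herz decay --- converts the $\sigma$-integral into a sum of \emph{Lebesgue} integrals over small cubes $[0,2^{-j}]^d$ (Proposition~\ref{a27}, Conjecture~\ref{conj:3}). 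On those cubes, decoupling \emph{is} available: for $d=3$, the $\ell^2(L^6)$ decoupling for planar curves (Theorem~\ref{a2}) already settles the critical case $p_3=6$ (Section~\ref{d=3}); for $d=4,5$ the paper develops new small-cap decouplings. So the missing step in your outline is precisely this majorant-plus-dyadic-bump reduction, which sidesteps $\Sigma$ entirely and brings Lebesgue-measure decoupling back into play. For $d=2$ your Schur estimate is essentially the paper's argument in Section~\ref{sec:L2paraboloid} (note $S_{a,2}=S^{\mathbb{P}}_{a,2}$); the $\log N$ is removed there not by oscillation of $\widehat{\sigma}$ but by splitting the off-diagonal pairs into comparable and incomparable frequencies and handling the latter via a self-improving inequality $D\lesssim\sqrt{D}$.
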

Note that the estimate \eqref{eq:conj1} at the critical exponent   $p_d$ implies the estimates for all other values of $p$. For $p<p_d$ this follows from H\"older, while for $p>p_d$, by interpolation with the trivial bound at $p=\infty$. Throughout this paper, interpolation will always refer to combining two estimates via H\"older's inequality.

To see that -apart from the $N^\epsilon$ term-  \eqref{eq:conj1} is optimal, notice that given the box
$$
Q_{d, N}:=[0,1/N]\times [0,1/N^2]\times\ldots\times [0,1/N^{d-1}]\times [0,1/N^{d}]
$$
we can always find a smooth hypersurface
$$\mathcal{M}=\{(x,F(x)):\;x\in [0,1]^{d-1}\}\subset [0,1]^d$$ with nonzero Gaussian curvature, and satisfying
\begin{equation}\label{eq:fit}
F\left([0,1/N]\times [0,1/N^2]\times\ldots\times [0,1/N^{d-1}]\right)\subset [0,1/N^{d}].
\end{equation}
Indeed, we may take $F(x_1)=x_1^2$ when $d=2$, $F(x_1,x_2)=x_1x_2$ when $d=3$, $F(x_1,x_2,x_3)=\frac{x_2^2+x_1x_3}{2}$ when $d=4$, and  $F(x_1,x_2,x_3,x_4)=\frac{x_1x_4+x_2x_3}{2}$ when $d=5$. In general, we may take $$F(x_1,\ldots,x_{d-1})=\frac2{d}\sum_{1\le i\le d/2}x_ix_{d-i}.$$

Given $\mathcal{M}$ satisfying \eqref{eq:fit}, we let $\sigma$ be its surface measure. Taking the constant sequence $a_n\equiv1$, we have $\|a\|_{\ell^2}=N^{1/2}$. On the other hand, we have  constructive interference, $|S_{d}(x, N)|\simeq N$ for $x\in Q_{d,N}$, so we can estimate
\begin{align*}
\int_{\T^d} |S_{d}(x, N)|^p\,d\sigma(x)\ge \int_{Q_{d,N}} |S_{d}(x, N)|^p\,d\sigma(x)\simeq N^p \sigma(Q_{d,N})\simeq N^p\, N^{-d(d-1)/2}.
\end{align*}
Thus \eqref{eq:conj1} is sharp if $p>p_d$. By considering random sequences, we also see the optimality for $p\le p_d$.
\smallskip

It is not clear whether the requirement that the hypersurface has nonzero Gaussian curvature is needed in order for \eqref{eq:conj1} to hold. In fact, this inequality continues to hold if $\mathcal{M}$ is the graph of any measurable $F$ satisfying $\|F\|_{L^\infty([0,1]^{d-1})}=O(1/N^d)$. In this case, summation by parts and the fact that the variation norm of the sequence $e(n^dF(x))$ is $O(1)$, reduces \eqref{eq:conj1} to an application of \eqref{a26} with $d$ replaced with $d-1$. Since the critical exponent for Conjecture \ref{conj:1} coincides with the critical exponent for \eqref{a26} in dimension $d-1$, it is tempting to fantasize on whether there is a clever way to quickly derive \eqref{eq:conj1} from \eqref{a26} for arbitrary, merely smooth  $F$. We were not able to prove this, and we believe such a direct argument is unlikely to exist.

\smallskip

One might suspect that the resolution of Conjecture \ref{conj:1} would rely on  suitable use of the asymptotic formula
due to Herz \cite{Herz}
\begin{equation}\label{eq:herz}
\widehat{\sigma}(\xi) = C\left(\frac{\xi}{|\xi|}\right)|\xi|^{-\frac{d-1}{2}} \cos\left( 2\pi \left( \sup_{x\in \mathcal{M}} (x\cdot \xi)-\frac{d-1}{8}\right)\right) + \mathcal{O}(|\xi|^{-\frac{d+1}{2}}),
\end{equation}
where $C$ is some positive continuous function.

One can ask what happens if we assume merely the relevant decay of the Fourier transform of the measure, ignoring the oscillations in the formula \eqref{eq:herz}. Clearly, one should expect a weaker assertion. We propose the following conjecture, that will provide us with the main line of attack for Conjecture  \ref{conj:1}.
\begin{conj}\label{conj:2}
	Let $d\ge 2$.  Let $\sigma$ be a positive finite measure on $\T^d$ satisfying the Fourier transform bound
	\begin{equation}\label{eq:decbeta}
	|\widehat{\sigma}(\xi)|\lesssim (1+|\xi|)^{-\beta},\;\xi\in \Z^d,
	\end{equation}
	with $\beta=(d-1)/2$. Let
	$$
	\rho_d:=\begin{cases}\frac{3d^2-4}{4},\;d\text{ even}\\\frac{3d^2-3}{4},\;d\text{ odd}\end{cases}.
	$$
	We have
	\begin{equation}\label{eq:conj2}
	\int_{\T^d} |S_{a, d}(x, N)|^p\,d\sigma(x)\lesssim_\epsilon  \|a\|_{\ell^2}^p\begin{cases}N^\epsilon,&0<p\le \rho_d\\N^{\frac{p-\rho_d}{2}+\epsilon},&p>\rho_d\end{cases}.
	\end{equation}	
\end{conj}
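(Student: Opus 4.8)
The plan is to reduce \eqref{eq:conj2} to a single even moment and then attack that moment by combining the Fourier decay \eqref{eq:decbeta} with the Vinogradov Main Conjecture \eqref{a26}.

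\textbf{Reduction to the critical even moment.} Exactly as for \eqref{eq:conj1}, the estimate \eqref{eq:conj2} at one exponent implies it everywhere: for smaller $p$ one uses H\"older after normalising $\sigma$ to a probability measure, and for larger $p$ one interpolates with the trivial bound $\|S_{a,d}(\cdot,N)\|_{L^{\infty}}\le\|a\|_{\ell^{1}}\le N^{1/2}\|a\|_{\ell^{2}}$. We take this exponent to be the largest even integer $\le\rho_d$ and aim to prove, for every even $2k$,
\[
\int_{\T^{d}}|S_{a,d}(x,N)|^{2k}\,d\sigma(x)\ \lesssim_{\epsilon}\ N^{\epsilon}\,\|a\|_{\ell^{2}}^{2k}\bigl(1+N^{\,k-\rho_d/2}\bigr),
\]
which, when $\rho_d$ is even, gives \eqref{eq:conj2} after the reduction.

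\textbf{Expansion, frequency scales, decay.} Expanding,
\[
\int_{\T^{d}}|S_{a,d}(x,N)|^{2k}\,d\sigma(x)=\sum_{\mathbf n,\mathbf m\in[1,N]^{k}}\Bigl(\prod_{i=1}^{k}a_{n_i}\overline{a_{m_i}}\Bigr)\widehat\sigma\bigl(\xi(\mathbf n,\mathbf m)\bigr),\quad\xi(\mathbf n,\mathbf m):=\Bigl(\sum_{i}n_i^{\,j}-\sum_{i}m_i^{\,j}\Bigr)_{j=1}^{d}.
\]
Since $|\xi(\mathbf n,\mathbf m)|\lesssim N^{d}$, we split the sum over dyadic shells $|\xi|\sim N^{s}$, $0\le s\le d$ (with $s=0$ absorbing $\xi=0$), use \eqref{eq:decbeta} to pull out $|\widehat\sigma(\xi)|\lesssim N^{-s(d-1)/2}$ on the $s$-shell, and reduce (summing the $O(\log N)$ scales) to bounding $\max_{0\le s\le d}N^{-s(d-1)/2}V_k(s)$, where $V_k(s):=\sum_{\mathbf n,\mathbf m\in[1,N]^{k},\,|\xi(\mathbf n,\mathbf m)|\lesssim N^{s}}\bigl|\prod_{i=1}^{k}a_{n_i}\overline{a_{m_i}}\bigr|$.

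\textbf{Slack Vinogradov and optimisation.} The key input is
\[
N^{-s(d-1)/2}V_k(s)\ \lesssim_{\epsilon}\ N^{\epsilon}\|a\|_{\ell^{2}}^{2k}\bigl(1+N^{\,k-\Phi(s)}\bigr),\qquad \Phi(s):=\tfrac{s(d-1)}{2}+\sum_{j=\lceil s\rceil}^{d}(j-s),
\]
for $0\le s\le d$. At $s=0$ this is precisely \eqref{a26} at the exponent $2k$ (the two terms being the diagonal and the main Vinogradov contributions, since $\sum_{j=1}^{d}j=d(d+1)/2$). For $s>0$, the constraint $|\xi(\mathbf n,\mathbf m)|\lesssim N^{s}$ makes the equations $j\le s$ essentially free and fattens the remaining ones to width $N^{s}$; partitioning $[1,N]$ into intervals and applying parabolic rescaling to the moment curve at scale $N^{-s}$ should reduce the estimate to \eqref{a26} for the rescaled curve, with $\sum_{j>s}(j-s)$ playing the role of the surviving codimension. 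Granted this, $\Phi$ is piecewise linear with corners at the integers, non-increasing on $[0,s_\ast]$ and non-decreasing on $[s_\ast,d]$, where $s_\ast=(d+1)/2$ for $d$ odd and $s_\ast=(d+2)/2$ for $d$ even; a direct computation gives $\Phi(s_\ast)=\rho_d/2$ in both parities. Hence $\max_{s}N^{-s(d-1)/2}V_k(s)\lesssim_{\epsilon}N^{\epsilon}\|a\|_{\ell^{2}}^{2k}(1+N^{\,k-\rho_d/2})$, as needed.

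\textbf{Main obstacle.} The crux is the slack Vinogradov bound above, needed uniformly over all scales $s\in[0,d]$ with the sharp exponent $\Phi(s)$ and with $\ell^{2}$ coefficients: the rescaling must be organised so as to lose nothing at the transition between the diagonal and the main regimes, applying the resolved Main Conjecture on the rescaled moment curve with exactly the right number of free variables. A second, and we expect more serious, difficulty is that when $d\equiv 0\bmod 4$ the exponent $\rho_d$ is odd, so the even-moment scheme never reaches $p=\rho_d$ itself; interpolating the neighbouring even moments $p=\rho_d\mp 1$ yields only $N^{1/4+\epsilon}$ in place of $N^{\epsilon}$, and already for $d=4$ closing this gap for $p$ just below $\rho_d$ would be equivalent to \eqref{eq:conj1} at its critical exponent, so a genuinely different argument---presumably of Stein--Tomas type, valid at non-even exponents and using a small-ball estimate for $\sigma$ drawn from \eqref{eq:decbeta}---would be required there.
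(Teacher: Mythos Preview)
This statement is a \emph{conjecture}; the paper does not prove it in general, only for $d=2,3$ (and partially for $d=4,5$), by first reducing it to Conjecture~\ref{conj:3} via Proposition~\ref{a27} and then attacking the latter scale by scale with small cap decoupling.

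Your first two steps reproduce exactly that reduction. Expanding at an even exponent and sorting by the dyadic size of $\xi(\mathbf n,\mathbf m)$ is, after Lemma~\ref{a28}, equivalent to bounding $2^{j(d+1)/2}\int_{[0,2^{-j}]^d}|S_{|a|,d}|^{2k}\,dx$ over all scales $2^j$: your $V_k(s)$ is (up to smoothing) $N^{sd}\int_{[0,N^{-s}]^d}|S_{|a|,d}|^{2k}\,dx$, and your target inequality $N^{-s(d-1)/2}V_k(s)\lesssim_\epsilon N^\epsilon\|a\|_{\ell^2}^{2k}(1+N^{k-\Phi(s)})$ is precisely Conjecture~\ref{conj:3} at scale $2^j=N^s$ (indeed a scale-refined strengthening of it away from $s_\ast$, and equivalent at $s_\ast$, where $\Phi(s_\ast)=\rho_d/2$).

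The genuine gap is Step~3. The claim that the sharp exponent $\Phi(s)$ follows from ``parabolic rescaling plus \eqref{a26}'' is exactly what the paper shows does \emph{not} work. Inequality~\eqref{a38} records what \eqref{a26} with H\"older actually gives: Conjecture~\ref{conj:3} only in the range $p\le\frac{d^2-1}{2}$, well short of $\rho_d$. The opening paragraphs of Sections~\ref{d=3} and~\ref{d=4} spell this out again for $d=3,4$: interpolating \eqref{a26} with the natural $L^2$ or $L^6$ endpoints still misses the required decay. Closing the gap at the critical scale requires the new small cap decoupling inequalities of Section~\ref{review} (Theorems~\ref{a13}, \ref{a31}, \ref{c7}), and even with those the paper only reaches $d\le 5$, and for $d=4,5$ only with $\ell^6$, $\ell^9$ coefficients rather than $\ell^2$. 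Note also that your rescaling heuristic (``the equations $j\le s$ become free, the rest form a lower-dimensional Vinogradov system'') would, if valid, prove Conjecture~\ref{M4} for incomplete systems such as $(n^{(d+3)/2},\dots,n^d)$; the paper explicitly remarks that this is out of reach even in the simplest case $k=2$, $\beta_1=1$, $\beta_2=3$. So what you flag as ``the crux'' is the entire open problem, and the mechanism you propose for it is known to be insufficient. Your observation about the $d\equiv 0\pmod 4$ parity obstruction is correct and is also noted in the paper (Proposition~\ref{a27} only passes from Conjecture~\ref{conj:3} to Conjecture~\ref{conj:2} at even $p$).
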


Note that once \eqref{eq:conj2} is established for some $p$, the inequality for smaller values of $p$ follows using H\"older. Observe also that $p_2=\rho_2=2$, $p_3=\rho_3=6$, but $\rho_d<p_d$ for $d\ge 4$. We add one more conjecture, that will provide us with the strategy to approach  Conjecture  \ref{conj:2}.

\begin{conj}\label{conj:3}
	Let $d\ge 2$. For each $j\ge 0$ the following estimate holds
	\begin{align}\label{eq:conj3}
	2^{ j \frac{d+1}{2}}\int_{[0,2^{-j}]^d} \left|{S}_{a, d}(x, N)\right|^p \, dx\lesssim_\epsilon
	\|a\|_{\ell^2}^p\begin{cases}N^\epsilon,&0<p\le \rho_d\\N^{\frac{p-\rho_d}{2}+\epsilon},&p>\rho_d\end{cases}.
	\end{align}
\end{conj}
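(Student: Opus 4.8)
\emph{Proposed approach.} By H\"older's inequality it suffices to prove \eqref{eq:conj3} at the critical exponent $p=\rho_d$; the range $p>\rho_d$ then follows by interpolating this bound against the trivial estimate $|S_{a,d}(x,N)|\le\sum_{n=1}^N|a_n|\le N^{1/2}\|a\|_{\ell^2}$. Writing $\delta=2^{-j}$ and $\gamma_d(t)=(t,t^2,\dots,t^d)$, the endpoint case of \eqref{eq:conj3} is
\begin{equation}\label{eq:target}
\int_{[0,\delta]^d}|S_{a,d}(x,N)|^{\rho_d}\,dx\lesssim_\epsilon\delta^{\frac{d+1}{2}}\,N^\epsilon\,\|a\|_{\ell^2}^{\rho_d}.
\end{equation}
Two elementary observations will be used repeatedly. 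First, the claimed constant depends on $a$ only through $\|a\|_{\ell^2}$, so \eqref{eq:target} is invariant under translating the cube $[0,\delta]^d$: a translation by $t$ turns $S_{a,d}(\cdot,N)$ into $S_{b,d}(\cdot,N)$ with $b_n=a_n\,e(t\cdot\gamma_d(n))$ and $\|b\|_{\ell^2}=\|a\|_{\ell^2}$. Second, $\delta^{(d+1)/2}$ is precisely the reciprocal of the $L^\infty$ mass carried at frequency scale $\delta^{-1}$ by a measure obeying \eqref{eq:decbeta} with $\beta=(d-1)/2$ --- since $\sum_{0<|\xi|\lesssim\delta^{-1}}|\xi|^{-(d-1)/2}\simeq\delta^{-(d+1)/2}$ --- which is the bookkeeping that lets the dyadic estimates \eqref{eq:conj3} be summed into \eqref{eq:conj2}.

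The two extreme ranges of $\delta$ are immediate. If $\delta\gtrsim 1$ then $\delta^{(d+1)/2}\simeq 1$, and since $\rho_d<d(d+1)$, \eqref{eq:target} follows from \eqref{a26} after enlarging the domain of integration to all of $\T^d$. If, at the other end, $\delta\le N^{-\rho_d/(d-1)+\epsilon'}$ (note $\rho_d/(d-1)\ge 2$ for every $d\ge 2$), then the pointwise bound $|S_{a,d}(x,N)|\le N^{1/2}\|a\|_{\ell^2}$ already gives $\int_{[0,\delta]^d}|S_{a,d}|^{\rho_d}\le\delta^dN^{\rho_d/2}\|a\|_{\ell^2}^{\rho_d}\le\delta^{(d+1)/2}N^\epsilon\|a\|_{\ell^2}^{\rho_d}$, because $\delta^{(d-1)/2}N^{\rho_d/2}\lesssim N^\epsilon$ throughout this range. (For the constant sequence $a_n\equiv 1$ this second range can be widened, using van der Corput's estimates to replace $S_d(x,N)$ by the oscillatory integral $\int_0^N e(x\cdot\gamma_d(t))\,dt$ whenever $\delta N^{d-1}\lesssim 1$; no such simplification is available for general $a$.) What remains, and where all the content lies, is the intermediate window $N^{-(d-1)}\lesssim\delta\lesssim 1$.

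For this window I would exploit the affine symmetry of the moment curve. The anisotropic dilation $x_k=\delta^{k/d}y_k$ carries $[0,\delta]^d$ into $\prod_{k=1}^d[0,\delta^{(d-k)/d}]\subseteq[0,1]^d$ and converts $x\cdot\gamma_d(n)$ into $y\cdot\gamma_d(\delta^{1/d}n)$, so that the frequencies become the $\delta^{1/d}$-separated set $\delta^{1/d}\{1,\dots,N\}$ inside an interval of length $M:=\delta^{1/d}N\gtrsim 1$; grouping these into $\sim M$ blocks lying over unit subintervals of $[0,M]$ writes $S_{a,d}$ as a superposition, over points of the moment curve, of shorter exponential sums (the block sums, each over $\sim\delta^{-1/d}$ terms, with $y$ entering through an affine map into a skewed box). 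One then wants to apply the sharp $\ell^2$ decoupling for the moment curve at scale $M$ (\cite{BDG}) to this superposition and to estimate the block sums recursively, either by iterating the scheme or by invoking \eqref{a26} in dimension $d-1$, which is permissible because $\rho_d\le d(d-1)$. The crux --- and the reason \eqref{eq:conj3} is still a conjecture --- is precisely here: once $\delta\gtrsim N^{-(d-1)}$ the block sums are \emph{not} essentially constant on the rescaled box, so applying decoupling and then recombining the blocks by Minkowski's inequality wastes a power of the number of blocks. What seems to be needed is a decoupling inequality that resolves the internal oscillation of the block sums in all $d$ variables simultaneously; producing such an inequality, or circumventing it, is the principal obstacle. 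The natural first cases to push through in full are $d=2$ and $d=3$, where $\rho_d=p_d$ coincides with the critical exponent of \eqref{a26} in dimension $d-1$ and a clean summation--by--parts reduction in the top variable appears feasible.
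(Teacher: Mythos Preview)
The statement is a conjecture, and the paper does not prove it in general: it verifies the full range only for $d=2,3$, the range $p\le 10$ when $d=4$, and the $\ell^6$/$\ell^9$ substitutes \eqref{eq:conj3d=4}, \eqref{eq:conj3d=5} for $d=4,5$. Your proposal correctly reduces to $p=\rho_d$, disposes of the extreme scales, and then---candidly---names the obstruction in the intermediate window without overcoming it. So what can usefully be compared is your suggested strategy against what the paper actually carries out in the low dimensions it resolves.

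Your plan for the intermediate scales---the anisotropic dilation $x_k\mapsto\delta^{k/d}y_k$, then $\ell^2$ decoupling for the full moment curve at the rescaled parameter, followed by recursion or a drop to dimension $d-1$---is not the paper's route, and the paper explicitly warns against its closest relative. For $d=3$ (Section~\ref{d=3}) it observes that the Vinogradov estimate \eqref{a26}, whether combined with H\"older or interpolated against the $L^2$ bound, fails to reach the target decay $2^{-2j}$; instead it discards $x_1$ entirely and applies \emph{planar} $\ell^2(L^6)$ decoupling (Theorem~\ref{a2}) to the curve $(t^2,t^3)$, with a case split $2^j\le N$, $N<2^j\le N^2$, $2^j>N^2$ and $L^2$ orthogonality on the short intervals produced in the middle range. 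For $d=4,5$ the mechanism is different again: the paper develops new \emph{small cap} decouplings (Theorems~\ref{a31} and~\ref{c7}) that allow several spatial variables to range over intervals far shorter than the periodicity interval, and applies them after Lemma~\ref{a37} has restored full range in the low-degree variables. Neither of these devices is the moment-curve rescaling you describe, and neither reduces to \eqref{a26} in dimension $d-1$.

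Two minor inconsistencies in your outline. Your trivial bound covers only $\delta\lesssim N^{-\rho_d/(d-1)}$, but you then take the intermediate window to begin at $\delta\simeq N^{-(d-1)}$; in the dimensions the paper treats these thresholds do not coincide (e.g.\ $\rho_3/2=3>2$, $\rho_4/3>3$), so a range of $\delta$ is left unaccounted for. The paper closes this tail not with pointwise Cauchy--Schwarz but with $L^2$ orthogonality in $x_d$; see Case~3 of Section~\ref{d=3} and Case~4 of Section~\ref{d=4}. And your closing suggestion that for $d=2,3$ ``a clean summation-by-parts reduction in the top variable appears feasible'' is not what the paper does and would only tame $e(n^dx_d)$ on the far smaller range $|x_d|\lesssim N^{-d}$.
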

Since the sequence $a$ is arbitrary, the domain of integration $[0,2^{-j}]^d$ in \eqref{eq:conj3} may be replaced with any of its translates in $\T^d$.

Conjectures \ref{conj:2} and \ref{conj:3} are related in two ways. On the one hand, if \eqref{eq:conj2} holds for some $p$, then \eqref{eq:conj3} will also hold for the same $p$. In particular, Conjecture \ref{conj:2} implies Conjecture \ref{conj:3}. On the other hand, the validity of \eqref{eq:conj3} for some even integer $p$ will be seen to imply the validity of \eqref{eq:conj2} for the same $p$ (and thus, also for all smaller $p$). So Conjecture \ref{conj:3} implies Conjecture \ref{conj:2} whenever $d$ is not divisible by 4. See Proposition \ref{a27} for details.

Note that \eqref{eq:conj3} for some $p\le \rho_d$ implies the same inequality for all smaller exponents, via H\"older. In fact, the exponent  $\frac{d+1}{2}$ of $2^{j}$
can be taken to be larger for smaller values of $p$, but that will not concern us. Given this exponent, the power of $N$ in Conjecture \ref{conj:3}, and thus also in Conjecture \ref{conj:2}, is sharp. Indeed, let us test \eqref{eq:conj3} with the constant sequence $a_n\equiv 1$ and  $2^{j}=N^k$,  where $k=\frac{d+1}{2}$ if $d$ is odd, and $k=\frac{d+2}{2}$ if $d$ is even. Since
$$|S_{d}(x,N)|\simeq N,\;\;\text{for } |x_1|,\ldots,|x_{k-1}|\ll 2^{-j}\text{ and }|x_k|\ll N^{-k},\ldots,|x_d|\ll N^{-d},$$
we find that
$$2^{ j \frac{d+1}{2}}\int_{[0,2^{-j}]^d} \left|{S}_{d}(x, N)\right|^pdx\gtrsim N^{k\frac{d+1}{2}}N^{-k(k-1)}N^{\frac{(k-1)k}{2}-\frac{d(d+1)}{2}}N^{p}=\|a\|_{\ell^2}^pN^{\frac{p-\rho_d}{2}}.$$
When $p\le \rho_d$, \eqref{eq:conj3} is seen to be sharp for $j=0$, by testing with randomized sequences.

We caution that we have stated Conjecture \ref{conj:3} assuming that the worst case scenario when $p>\rho_d$ is provided by constructive interference near a point. It is possible  that new obstructions arise from more sophisticated interferences. However, we will prove that this is not the case in dimensions $d\le 5$.

To put Conjecture \ref{conj:3} into perspective, we compare it with the following ``folklore" conjecture, a comprehensive generalization of \eqref{a26} (see for example Section 13.6 in \cite{C}).
\begin{conj}
	\label{M4}
Let $\beta_1<\beta_2<\ldots<\beta_k$ be  positive integers. For each $p\ge 2$ and $a_n\in\C$	we have 
	
	$$\|\sum_{n=1}^N a_ne(x_1n^{\beta_1}+\ldots+x_kn^{\beta_k})\|_{L^{p}([0,1]^k)}
	\lesssim_{\epsilon}N^{\epsilon}(1+N^{\frac12-\frac{\beta_1+\ldots+\beta_k}{p}})\|a_n\|_{l^2}.$$
\end{conj}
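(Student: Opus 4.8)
The plan is to deduce Conjecture \ref{M4} from the sharp $\ell^2 L^p$ decoupling inequality for the monomial curve $\gamma(t)=(t^{\beta_1},\ldots,t^{\beta_k})$. First I would reduce to a single exponent. As in the discussion following Conjecture \ref{conj:1}, the bound claimed in \eqref{M4} is equivalent to its own endpoint case $p_c:=2(\beta_1+\cdots+\beta_k)$: for $2\le p\le p_c$ it follows by interpolating the $p_c$-estimate with the trivial $L^2$ bound $\|\sum a_n e(x\cdot\gamma(n))\|_{L^2}=\|a\|_{\ell^2}$ (orthogonality of the characters $e(x\cdot\gamma(n))$), and for $p\ge p_c$ by interpolating with the trivial $L^\infty$ bound. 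So it suffices to treat $p=p_c$, which is a \emph{even integer}; this is convenient, since at $p_c$ the inequality for the constant sequence is just a mean value / solution-counting statement for the associated Vinogradov-type system $\sum n_i^{\beta_\ell}=\sum m_i^{\beta_\ell}$, $\ell=1,\ldots,k$. Working dyadically in $N$, I would realize $\sum_{n\le N}a_n e(x_1n^{\beta_1}+\cdots+x_kn^{\beta_k})$, after rescaling $\T^k$, as a function with Fourier support in the $\delta$-neighbourhood of $\gamma([0,1])$ at scale $\delta=N^{-\beta_k}$, the $n$-th mode sitting in the cap around $\gamma(n/N)$.

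The substance is then the decoupling inequality for $\gamma$: $\ell^2 L^{p_c}$ decoupling at scale $\delta$, with constant $O_\epsilon(\delta^{-\epsilon})=O_\epsilon(N^\epsilon)$, into the $\sim N$ caps. Granting this, applying it to the function above separates the $N$ frequencies, each single-frequency block is estimated trivially, and collecting the volume factors produced by (the appropriate degenerate analogue of) parabolic rescaling of the caps yields exactly the power $N^{1/2-(\beta_1+\cdots+\beta_k)/p}$, the quantity $\beta_1+\cdots+\beta_k$ entering as the ``dimension'' of the $\delta$-neighbourhood; because decoupling is a statement about arbitrary functions with the prescribed Fourier support, the arbitrary coefficients $a_n$ are handled automatically. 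To prove the decoupling itself I would run the Bourgain--Demeter--Guth scheme as in \cite{BDG}: a Bourgain--Guth broad/narrow decomposition reducing to a multilinear estimate, the multilinear estimate handled by multilinear Kakeya / Brascamp--Lieb, and the whole loop closed by induction on scales. For the moment curve $\beta_\ell=\ell$ this is exactly \cite{BDG} (and \cite{Wol} via efficient congruencing gives an alternative route, with nested efficient congruencing the analogue for general exponents). The new feature for general $\beta_\ell$ is that $\gamma$ is no longer nondegenerate: sub-arcs can mimic lower-degree monomial curves, and the relevant Jacobian/torsion determinants vanish, so the Brascamp--Lieb data is not uniformly nondegenerate and a naive induction loses powers of $N$. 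The remedy, following the analysis of degenerate monomial curves, is a multi-scale decomposition keyed to the Newton polygon of $\gamma$ and the orders of vanishing of those determinants, on each piece of which one recovers a nondegenerate model where the BDG machinery applies, with the resulting losses bookkept exactly.

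I expect the degenerate geometry to be the main obstacle. One must verify that assembling contributions from \emph{all} scales and \emph{all} the sub-monomial-curves produced by the decomposition never exceeds the conjectured bound --- equivalently, that the ``constructive interference near a point'' obstruction (which, together with $L^2$ orthogonality, is what pins down the exponent $1/2-(\beta_1+\cdots+\beta_k)/p$) is genuinely extremal, and that no intermediate obstruction coming from a lower-dimensional sub-system at an intermediate scale can beat it. This is a genuinely delicate combinatorial/arithmetic check; it is precisely the reason \eqref{M4} is still only a conjecture in general, even though the moment-curve case and various low-dimensional or sufficiently nondegenerate cases are theorems. The remaining technical points --- controlling the small-$n$ frequencies, where the parametrization is least separated, and transferring the continuous decoupling statement back to the discrete exponential sum on $\T^k$ --- are routine by comparison.
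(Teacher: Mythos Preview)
There is nothing to compare: the statement is labeled a \emph{Conjecture} in the paper, and the paper does not prove it. Immediately after stating it, the authors write that ``sharp results (at the critical exponent) for such incomplete systems are out of reach at the moment, even in the simplest case $k=2$, $\beta_1=1$, $\beta_2=3$.'' Conjecture~\ref{M4} is invoked only as context, to observe that Conjecture~\ref{conj:3} for odd $d$ would follow from it; the paper then explicitly abandons this route and pursues a different approach.

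Your proposal is not a proof but a strategy, and you acknowledge as much in your penultimate paragraph. The reduction to the single critical exponent $p_c=2(\beta_1+\cdots+\beta_k)$ via interpolation with $L^2$ and $L^\infty$ is correct and standard. What remains --- sharp $\ell^2 L^{p_c}$ decoupling for the degenerate monomial curve $t\mapsto(t^{\beta_1},\ldots,t^{\beta_k})$ --- is precisely the open problem. The obstruction you identify (vanishing torsion, sub-arcs degenerating to lower-order monomial curves, non-uniform Brascamp--Lieb data) is real, and no Newton-polygon decomposition or multi-scale bookkeeping is currently known that closes the BDG induction loop for general exponent tuples. The ``genuinely delicate combinatorial/arithmetic check'' you defer is not a technicality to be cleaned up; it is the entire content of the conjecture. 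So your write-up is a fair summary of why the problem is hard and what a proof would have to overcome, but it does not supply the missing idea, and neither does the paper.
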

When $d$ is odd, \eqref{eq:conj3} follows from Conjecture \ref{M4} with $\beta_i=\frac{d+1}2+i$, $1\le i\le k=\frac{d-1}{2}$. Indeed, it suffices to note that $\rho_d=2(\beta_1+\ldots+\beta_k)$, and to write
$$\int_{[0,2^{-j}]^d} \left|{S}_{a, d}(x, N)\right|^p \, dx\le 2^{-j\frac{d+1}{2}}\sup_{b:\;|b_n|=|a_n|}\int_{[0,1]^{\frac{d-1}{2}}}|\sum_{n=1}^N b_ne(x_{\beta_1}n^{\beta_1}+\ldots+x_{d}n^{\beta_k})|^pdx.$$
However, sharp results (at the critical exponent) for such incomplete systems are out of reach at the moment, even in the simplest case $k=2$, $\beta_1=1$, $\beta_2=3$. We follow a rather different approach in this paper, that takes advantage of the oscillatory effect coming from the first $\frac{d+1}{2}$ variables.  
\medskip

Let us now describe another case of interest. It involves the exponential sums with frequencies supported on dilates of the unit sphere $\S^{d-1}$
$$S^{\S}_{a,d}(x,N)=\sum_{\n\in\sqrt{N}\S^{d-1}\cap \Z^d}a_{\n}e(\n\cdot x).$$
In \cite{BR}, the following conjecture is made.
\begin{conj}
\label{a39}Let $\mathcal{M}$ be a real analytic hypersurface in $\T^d$, with nonnegative curvature and surface measure $\sigma$. Then
\begin{equation}
\label{a40}
\|S^{\S}_{a,d}(x,N)\|_{L^2(d\sigma)}\lesssim \|a\|_{\ell^2}.
\end{equation}
\end{conj}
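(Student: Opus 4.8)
The plan is to dualize. Write $\Lambda_N:=\sqrt N\,\S^{d-1}\cap\Z^d$ and $F(x)=\sum_{\n\in\Lambda_N}a_\n\,e(\n\cdot x)$. Expanding the square,
\[
\|S^{\S}_{a,d}(\cdot,N)\|_{L^2(d\sigma)}^2=\int_{\T^d}|F(x)|^2\,d\sigma(x)=\sum_{\n,\m\in\Lambda_N}a_\n\,\overline{a_\m}\,\widehat{\sigma}(\n-\m),
\]
so \eqref{a40} is equivalent to the assertion that the Hermitian matrix $\bigl(\widehat\sigma(\n-\m)\bigr)_{\n,\m\in\Lambda_N}$ has operator norm $O(1)$ on $\ell^2(\Lambda_N)$, uniformly in $N$. (The flat case $\mathcal M\subset$ hyperplane is handled directly: each fiber of the projection to that hyperplane meets $\Lambda_N$ in $\le 2$ points, which already gives \eqref{a40}.)

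\textbf{Step 2: what the Fourier decay alone buys.} By real analyticity the Gaussian curvature is a real analytic function, so — excluding the flat case — it vanishes only on a proper real analytic subvariety; combining stationary phase away from this set with van der Corput near it yields $|\widehat\sigma(\xi)|\lesssim(1+|\xi|)^{-\beta}$ for some $\beta>0$, with $\beta=(d-1)/2$ wherever the curvature stays bounded away from $0$, by Herz's formula \eqref{eq:herz}. One then tries Schur's test: the operator norm is at most $\sup_{\n\in\Lambda_N}\sum_{\m\in\Lambda_N}|\widehat\sigma(\n-\m)|$, which, after a dyadic decomposition $|\n-\m|\sim 2^k$, is governed by the number of points of $\Lambda_N$ in a cap of radius $2^k$ on $\sqrt N\,\S^{d-1}$. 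For $d=2$ this count is trivially $\le\#\Lambda_N=N^{o(1)}$; for $d=3$ the known cap bounds for $\S^{2}$ together with the $|\xi|^{-1}$ decay make the dyadic sum $N^{o(1)}$. This route therefore recovers Conjecture \ref{a39} up to a factor $N^{\epsilon}$ when $d\le 3$.

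\textbf{Step 3: exploiting the oscillation.} Schur's test is too crude for the sharp bound, and fails outright for $d\ge 4$: the cap counts there are $\sim 2^{k(d-1)}N^{-1/2}$, so the series grows like $N^{(d-3)/4}$. One must use the \emph{oscillation} in \eqref{eq:herz}, not just the decay. Substituting the asymptotic reduces \eqref{a40} to bounding, on $\ell^2(\Lambda_N)$, a bilinear form with oscillatory kernel $\sim|\n-\m|^{-(d-1)/2}\,e\!\left(\pm h_{\mathcal M}(\n-\m)\right)$ (plus a lower order term treated the same way), where $h_{\mathcal M}(\xi)=\sup_{x\in\mathcal M}x\cdot\xi$ is the support function of $\mathcal M$; nonvanishing curvature makes $h_{\mathcal M}$ smooth, $1$-homogeneous, with Hessian of rank $d-1$ off the radial direction. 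The plan is to rescale to unit frequency scale (using that $G(y):=\sum_{\n\in\Lambda_N}a_\n e(\tfrac{\n}{\sqrt N}\cdot y)$ is $\sqrt N\Z^d$-periodic), split $\Lambda_N$ into caps of aperture $\sim N^{-1/2}$, apply $\ell^2$-orthogonality over caps (and, for the larger $L^p$ variants, $\ell^2$-decoupling for $\S^{d-1}$), and play the curvature of $h_{\mathcal M}$ against the caps to recover the powers of $N$ lost in Step 2, ultimately reducing to counting quadruples $\n_1+\n_2=\m_1+\m_2$ with $|\n_i|^2=|\m_i|^2=N$ and $h_{\mathcal M}(\n_1-\m_1)=h_{\mathcal M}(\n_2-\m_2)+O(1)$.

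\textbf{Main obstacle.} The decisive difficulty is arithmetic: the phase $h_{\mathcal M}$ is transcendental in general — already $h_{\mathcal M}(\xi)=r|\xi|$ for a round sphere of radius $r$ — so the resulting sums are not classical Weyl sums and neither the circle method nor theta-function identities apply off the shelf; one genuinely has to couple the curvature of $\mathcal M$ with the angular distribution of the $\n\in\Lambda_N$ on $\sqrt N\,\S^{d-1}$. I expect the argument to go through with $N^\epsilon$ losses for $d\le 3$, and — with additional input on the equidistribution of lattice points on spheres — to remove those losses there; the $\epsilon$-free bound for all $d$, and in particular beating the cap counts when $d\ge 4$, is where the real work lies.
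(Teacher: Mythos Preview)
This statement is recorded in the paper as a \emph{conjecture} (due to Bourgain--Rudnick \cite{BR}), not a theorem; the paper does not prove it and offers no proof to compare against. What the paper does say is that the cases $d=2$ and $d=3$ are settled in \cite{BR}, that the $d=2$ case and the $N^\epsilon$-lossy $d=3$ case follow easily from the Fourier decay \eqref{eq:decbeta} alone, and that the scale-independent $d=3$ bound requires exploiting the full oscillatory asymptotic \eqref{eq:herz} via the expansion \eqref{a41}. The case $d\ge 4$ remains open.

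Your write-up is not a proof and, to your credit, does not pretend to be one: your ``Main obstacle'' paragraph explicitly concedes that the $\epsilon$-free bound for general $d$ is where the real work lies. As a survey of the landscape your Steps 1--3 are broadly accurate and align with the paper's own commentary: the Gram-matrix reformulation is standard, the Schur-test-plus-cap-count approach does give the $N^\epsilon$ result in low dimension, and the need to exploit the phase in Herz's formula rather than just its modulus is exactly the point the paper emphasizes. One small correction: for $d=2$ the decay-only argument actually gives the sharp $O(1)$ bound, not merely $N^{o(1)}$ --- combining the $|\xi|^{-1/2}$ decay with Jarnik's bound (any arc of length $\ll R^{1/3}$ on a circle of radius $R$ contains $O(1)$ lattice points) makes the Schur sum converge uniformly in $N$. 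Your Step 3 sketch (caps, $\ell^2$-orthogonality, reduction to a constrained additive energy) is in the right spirit but is not yet an argument; in particular, the quadruple count you arrive at is precisely the kind of estimate that \cite{BR} spends most of its length on in the $d=3$ case, and no one currently knows how to carry it out for $d\ge 4$.

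In short: there is nothing wrong with your discussion as a heuristic overview, but it is not a proof of the conjecture, and neither the paper nor the existing literature supplies one beyond $d\le 3$.
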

Note that the implicit constant in \eqref{a40} is scale-independent.
As $S^{\S}_{a,d}(x,N)$ is the typical eigenfunction with eigenvalue $-2\pi N$ for the Laplacian on $\T^d$, this conjecture is part of a vast literature, that we do not recall here, but rather point the reader to \cite{BR} for references. Inequality \eqref{a40} is verified in \cite{BR} for $d=2$ and $d=3$. Proving \eqref{a40} for $d=2$, as well as  with an $N^\epsilon$ bound for $d=3$ is rather easy, and it follows by using only the Fourier decay \eqref{eq:decbeta}.
The bulk of the paper \cite{BR} is devoted to proving the scale-independent bound for $d=3$.  The argument is a wonderful blend of analysis and number theory of all flavors. Most crucially, it relies on the oscillatory nature of $\widehat{d\sigma}$, as expressed by \eqref{eq:herz}. The authors prove
\begin{equation}
\label{a41}
\sum_{\n_1,\n_2\in \sqrt{N}\S^{d-1}\cap \Z^d}a_{\n_1}\overline{a_{\n_2}}\;\widehat{d\sigma}(\n_1-\n_2)\lesssim \|a\|_{\ell^2}^2
\end{equation}
by exploiting subtle cancellations between the Fourier coefficients of $d\sigma$.

While some of the ingredients needed to extend this argument to $d=4$ are known (e.g. \cite{BR} contains lots of Jarnik-type estimates, while the energy estimate was proved in \cite{BD2}), pursuing this remains challenging due to the delicate nature of the oscillatory component of $\widehat{d\sigma}$.

We note that if we insist on proving scale-independent inequalities, we cannot use  $L^p$ with $p>2$ in place of $L^2$ in \eqref{a40}, when $d>2$. This follows by invoking constructive interference, as before. However, there are some interesting questions left open in two dimensions, regarding larger values of $p$. See Section  \ref{sec:sphere}, where we establish a connection with a conjecture of Cilleruelo and Granville.
\smallskip

We investigate the analogous questions for the paraboloid
$$\mathbb{P}^{d-1}_N=\{\n=(n_1,\ldots,n_{d-1},n_1^2+\ldots+n_{d-1}^2)\in\Z^d:\;1\le n_i\le N \},$$
by considering the sums
$$S_{a,d}^{\mathbb{P}}(x,N)=\sum_{\n\in \mathbb{P}^{d-1}_N}a_{\n}e(\n\cdot x), \;x\in\T^d.$$
The motivation for investigating the paraboloid is twofold. First, it has positive principal curvatures, like the sphere. But the distribution of the lattice points on the two manifolds is very different, and the term $\widehat{d\sigma}(\n_1-\n_2)$ in \eqref{a41} is sensitive to that. This makes the question for the paraboloid of independent interest. An additional motivation is provided by the special significance of the exponential sums $S_{a,d}^{\mathbb{P}}(x,N)$: they are solutions to the free Schr\"odinger equation on $\T^d$.
\smallskip

We hesitantly make the following conjecture.
\begin{conj}
\label{a42}
Let $\mathcal{M}$ be a real analytic hypersurface in $\T^d$, with nonnegative curvature and surface measure $\sigma$.
For each $d\ge 2$, and $N\ge 1$ we have the estimate
$$\|S^{\mathbb{P}}_{a,d}(x,N)\|_{L^2(d\sigma)}\lesssim \|a\|_{\ell^2}.$$
\end{conj}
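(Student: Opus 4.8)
\emph{Towards Conjecture \ref{a42}.} The plan is to follow the template of \cite{BR}. First I would expand, exactly as in \eqref{a41},
$$\|S^{\mathbb{P}}_{a,d}(\cdot,N)\|_{L^2(d\sigma)}^2=\sum_{\n_1,\n_2\in\mathbb{P}^{d-1}_N}a_{\n_1}\overline{a_{\n_2}}\;\widehat{d\sigma}(\n_1-\n_2),$$
so that Conjecture \ref{a42} reduces to bounding this bilinear form by $\|a\|_{\ell^2}^2$. The natural first attempt is a Schur test, i.e.\ to prove $\sup_{\n_1\in\mathbb{P}^{d-1}_N}\sum_{\n_2\in\mathbb{P}^{d-1}_N}|\mathcal{K}(\n_1-\n_2)|\lesssim1$ for a suitable ``effective'' kernel $\mathcal{K}$ built from $\widehat{d\sigma}$; as will be clear below, no such clean bound holds for $\mathcal{K}=|\widehat{d\sigma}|$ (it fails by a logarithm in $N$), so the oscillation in $\widehat{d\sigma}$ must be retained. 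Writing $\n_1=(n',|n'|^2)$, $\n_2=(m',|m'|^2)$ with $n',m'\in\{1,\dots,N\}^{d-1}$, and setting $h=n'-m'$, $\ell=|n'|^2-|m'|^2=2n'\cdot h-|h|^2$, the difference is $\n_1-\n_2=(h,\ell)$; the diagonal $h=0$ contributes precisely $\|a\|_{\ell^2}^2$, so everything comes down to the off-diagonal sum over $h\ne0$.

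The bare decay $|\widehat{d\sigma}(\xi)|\lesssim(1+|\xi|)^{-(d-1)/2}$ from \eqref{eq:decbeta} is not enough: for a generic $\n_1$, the vectors $h$ nearly orthogonal to $n'$ force $\ell$ down to its minimal size $|\ell|\sim|h|^2$, putting $(h,\ell)$ in the regime $|h|^2\sim|(h,\ell)|$, over which the Schur sum with the bare decay already diverges logarithmically in $N$. So I would use the oscillation in Herz's formula \eqref{eq:herz}, substituting the main term $C(\xi/|\xi|)|\xi|^{-(d-1)/2}\cos\bigl(2\pi(\Phi(\xi)-\tfrac{d-1}{8})\bigr)$, with $\Phi(\xi)=\sup_{x\in\mathcal{M}}x\cdot\xi$ the ($1$-homogeneous) support function of $\mathcal{M}$, plus the error $\mathcal{O}(|\xi|^{-(d+1)/2})$. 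A dyadic decomposition should show that the error term, and the ``generic'' range where $|\ell|\sim N|h|$, already sum to $O(1)$ (at least in low dimensions), leaving the main term in the transition regime $|h|\sim|\ell|^{1/2}$. There, working bilinearly and fixing $h$, $m'=n'-h$ runs over $\sim N^{d-1}$ choices of $n'$ with $\ell=2n'\cdot h-|h|^2$ \emph{linear} in $n'$ and depending on $n'$ only through the linear form $t:=n'\cdot h$; grouping the $\sim N^{d-2}$ lattice points $n'$ on each hyperplane $\{n'\cdot h=t\}$ (an affine count, with standard power-saving refinements available) collapses the main term into a one-dimensional oscillatory sum $\sum_t e(\pm\phi_h(t))\,b_h(t)$, where $\phi_h(t)=\Phi(h,2t-|h|^2)$ and $b_h$ is a slowly varying, $\ell^2$-controlled weight. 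Square-root cancellation here, via van der Corput/stationary phase, hinges on $\phi_h''$ not degenerating; since $\phi_h(t)=\sup_{x\in\mathcal{M}}\bigl(x'\cdot h+x_d(2t-|h|^2)\bigr)$ is convex in $t$ with $\phi_h'$ equal to twice the $x_d$-coordinate of the contact point, $\phi_h''$ is exactly a curvature quantity for $\mathcal{M}$ — in the model case of a round sphere $\phi_h''\sim|h|^2|(h,\ell)|^{-3}$, nonzero precisely in the regime $|h|^2\sim|(h,\ell)|$ — and summing the resulting gains over $h$ and the dyadic scales should close the bound.

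The step I expect to be the real obstacle is this last one: running the van der Corput / Jarnik-type analysis uniformly over all $\sim N^{d-1}$ admissible differences and over the whole class of admissible $\mathcal{M}$ — the analogue for $\mathbb{P}^{d-1}_N$ of the subtle lattice-point and energy estimates \cite{BR} develop for spheres. Relative to the sphere the paraboloid lattice helps, in that the level sets $\{n':2n'\cdot h-|h|^2=\ell\}$ are affine and easy to count, but it hurts, in that these sets are large, so genuine cancellation is unavoidable; and where $\phi_h''$ degenerates one may have to strengthen ``nonnegative curvature'' to strict convexity of $\mathcal{M}$, treating any flat directions by a separate, cruder argument. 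As a consistency check, in $d=2$ one has $\mathbb{P}^1_N=\{(n,n^2)\}$, so $S^{\mathbb{P}}_{a,2}=S_{a,2}$, and this is the $p=2$ endpoint closely tied to Conjectures \ref{conj:1}--\ref{conj:2}; it should be tractable, but even there a naive Schur test using only \eqref{eq:decbeta} falls short by a logarithm, so the oscillation is essential already at this level. Finally, a softer version keeping only \eqref{eq:decbeta} and conceding an $N^\epsilon$ loss should be within reach for all $d$; obtaining the scale-independent bound for $d\ge3$ is where I would expect to get stuck, which is presumably why Conjecture \ref{a42} is stated only hesitantly.
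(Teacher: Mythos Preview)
This statement is a conjecture; the paper does not prove it. What the paper establishes toward it is Theorem~\ref{thm:L2paraboloid}: using only the decay \eqref{eq:decbeta}, with no oscillation, one obtains the scale-independent bound for $d=2$, a $\log N$ loss for $d=3$, and an $N^{(d-3)/2}$ loss for $d\ge4$, all sharp under the decay hypothesis alone. The paper explicitly leaves the removal of the $d=3$ logarithm via the oscillation of $\widehat{d\sigma}$ as an open question. Your proposal is therefore not competing with a proof in the paper; it is a programme to go beyond the paper, in the spirit of \cite{BR}.

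There is one concrete error. You claim that in $d=2$ ``a naive Schur test using only \eqref{eq:decbeta} falls short by a logarithm, so the oscillation is essential already at this level.'' This is false. In $d=2$ the frequencies are $(n,n^2)$ with $n$ scalar; there is no ``nearly orthogonal to $n'$'' direction, $\ell=n^2-m^2=(n-m)(n+m)$ has size $\sim|n||h|$ rather than $|h|^2$, and the Schur sum
\[
\sup_{1\le n\le N}\sum_{m\ne n}\frac{1}{(1+|n^2-m^2|)^{1/2}}\;\lesssim\;\sup_n\,|n|^{-1/2}\sum_{1\le k\lesssim |n|}k^{-1/2}\;\simeq\;1
\]
is uniformly bounded. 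Theorem~\ref{thm:L2paraboloid} carries this out; no oscillation is needed for $d=2$. The logarithmic divergence in the Schur test first appears at $d=3$ (the $\sum k^{-1}$ case), which is where your Herz-based argument would actually have to begin. For $d\ge4$ the deficit from decay alone is a genuine power of $N$, and your outline does not explain why a single van der Corput step in the one-variable reduction should recover that much; this, not $d=2$, is where the difficulty lies.
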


\subsection*{Notation}
\vspace{0.3in}

For a measurable (or finite) set $A$ in $\R^n$ (or $\Z^n$) we denote by $|A|$ its measure (or cardinality, respectively).
We will use either the notation $X \lesssim Y$ or $X=O(Y)$
to indicate that $|X| \le CY$, with a positive constant $C$ independent of variable parameters such as scales (usually denoted by $N,M$), sequences and functions.
We shall write $X \simeq Y$ when simultaneously $X \lesssim Y$ and $Y \lesssim X$. Finally, for positive $Y$ we will write either $X\ll Y$ or $X=o(Y)$ if  $|X|\le cY$ holds with some  small enough positive constant $c$, independent of variable parameters.

\section{Main results and methodology}

Given a finitely supported function $f:\Z^d\to\C$, its Fourier transform is
$$\widehat{f}(x)=\sum_{\n\in\Z^d}f(\n)e(\n\cdot x),\;\;x\in\T^d.$$
Given a finite complex measure $\nu$ on $\T^d$ ($d|\nu|(\T^d)<\infty$), its Fourier coefficients are
$$\widehat{d\nu}(\n)=\int_{\T^d}e(-\n\cdot x)d\nu(x).$$
The following lemma is classical. Its origins can be traced at least as far back as the work of Hardy and Littlewood \cite{HL}. The argument uses the fact that $p$ is even, and this is not just an artifact of the proof. The lemma is false for $p=3$ (see \cite{GR}) and in fact for any real number $p>2$ that is not an even integer (see Theorem 3.2 in \cite{MS}). The counterexamples use $d\mu(x)=d\nu(x)=dx$.
\begin{lem}
	\label{a28}
Let $\mu,\nu$ be finite  measures on $\T^d$. Assume   $\nu$ has positive Fourier coefficients and   satisfies $|\widehat{d\mu}(\n)|\le \widehat{d\nu}(\n)$ for all $\n\in\Z^d$.
Then for each finitely supported $f:\Z^d\to\C$ and for each  positive even  integer $p$ we have
$$|\int_{\T^d}|\widehat{f}|^{p}d\mu|\le \int_{\T^d}|\widehat{g}|^{p}d\nu,$$
where $g=|f|$.	
\end{lem}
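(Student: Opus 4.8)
The plan is to expand the $p$-th power on the left-hand side using that $p$ is even, reducing everything to Fourier coefficients of $\mu$ and $\nu$. Write $p = 2m$, so that $|\widehat{f}|^p = (\widehat{f}\,\overline{\widehat{f}})^m = \widehat{f}^m\,\overline{\widehat{f}}^m$. Since $\widehat{f}(x) = \sum_{\n} f(\n) e(\n\cdot x)$, expanding the product gives
\begin{equation*}
|\widehat{f}(x)|^{2m} = \sum_{\n_1,\ldots,\n_m}\sum_{\m_1,\ldots,\m_m} f(\n_1)\cdots f(\n_m)\overline{f(\m_1)}\cdots\overline{f(\m_m)}\; e\bigl((\n_1+\cdots+\n_m-\m_1-\cdots-\m_m)\cdot x\bigr).
\end{equation*}
Integrating against $d\mu$ turns the exponential into $\overline{\widehat{d\mu}(\n_1+\cdots+\n_m-\m_1-\cdots-\m_m)}$ (or $\widehat{d\mu}$ of the negative; the sign is immaterial since we will take absolute values). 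So $\int |\widehat f|^{2m}\,d\mu$ is a sum over all tuples of the coefficient products times a single Fourier coefficient of $\mu$ evaluated at the ``balanced difference'' $\n_1+\cdots+\n_m-\m_1-\cdots-\m_m$.

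Next I would apply the triangle inequality to pass the absolute value inside the sum, bound $|f(\n_i)| = g(\n_i)$ and $|\overline{f(\m_j)}| = g(\m_j)$, and use the hypothesis $|\widehat{d\mu}(\n)|\le \widehat{d\nu}(\n)$ term by term. This yields
\begin{equation*}
\left|\int_{\T^d}|\widehat f|^{2m}\,d\mu\right| \le \sum_{\n_1,\ldots,\m_m} g(\n_1)\cdots g(\n_m)\,g(\m_1)\cdots g(\m_m)\;\widehat{d\nu}\bigl(\n_1+\cdots+\n_m-\m_1-\cdots-\m_m\bigr).
\end{equation*}
Finally, I would recognize the right-hand side as exactly the expansion of $\int_{\T^d}|\widehat g|^{2m}\,d\nu$: since $g\ge 0$ and $\widehat{d\nu}$ is real (indeed positive, hence equal to its own conjugate and to $\widehat{d\nu}$ of the negative by symmetry of a positive measure's coefficients—or more simply, one just reassembles the same combinatorial identity run in reverse with $\nu$ in place of $\mu$ and $g$ in place of $f$). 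This closes the argument: the left side is at most $\int|\widehat g|^p\,d\nu$.

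The only genuine subtlety is the role of $p$ being an even integer: it is precisely what allows the clean algebraic expansion of $|\widehat f|^p$ as a finite sum of exponentials with integer frequencies, so that integrating against $\mu$ produces honest Fourier coefficients $\widehat{d\mu}(\n)$ to which the pointwise domination hypothesis applies. For non-even $p$ no such expansion exists, and as the excerpt notes the statement genuinely fails. A minor bookkeeping point is checking that $\widehat{d\nu}(-\n) = \overline{\widehat{d\nu}(\n)} = \widehat{d\nu}(\n)$ when $\nu$ is real (here even positive), so that the orientation of the balanced difference does not matter; I would dispatch this in one line. Convergence is not an issue since $f$, and hence $g$, is finitely supported, making every sum above finite.
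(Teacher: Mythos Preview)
Your proof is correct and is essentially the same as the paper's, just more explicitly unpacked. The paper compresses your multilinear expansion into the single Plancherel identity $\int|\widehat f|^p\,d\mu=\sum_{\n}\overline{\widehat{|\widehat f|^p}(\n)}\,\widehat{d\mu}(\n)$ together with the observation $|\widehat{|\widehat f|^p}(\n)|\le \widehat{|\widehat g|^p}(\n)$, which is exactly what your tuple expansion and triangle inequality establish.
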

\begin{proof}
We use Plancherel's formula first
\begin{align*}
\int_{\T^d}|\widehat{f}|^{p}d\mu=\sum_{\n\in\Z^d}\overline{\widehat{|\widehat{f}|^{p}}(\n)}\;\widehat{d\mu}(\n).
\end{align*}
An easy computation shows that, since $p$ is even,
$$\left|\widehat{|\widehat{f}|^{p}}(\n)\right|\le \widehat{|\widehat{g}|^{p}}(\n).$$
Thus
$$|\int_{\T^d}|\widehat{f}|^{p}d\mu|\le \sum_{\n\in\Z^d}\overline{\widehat{|\widehat{g}|^{p}}(\n)}\;\widehat{d\nu}(\n)=\int_{\T^d}|\widehat{g}|^{p}d\nu.$$
\end{proof}

The lemma allows us to prove the connection between Conjecture \ref{conj:2} and Conjecture \ref{conj:3}.

\begin{prop}
\label{a27}
If \eqref{eq:conj2} holds for some $p$, then \eqref{eq:conj3} also holds for the same $p$.  If \eqref{eq:conj3} holds  for some positive even integer $p$, then \eqref{eq:conj2} also holds for the same $p$.
\end{prop}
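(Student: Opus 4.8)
The proof is built around a single positive finite measure $\nu$ on $\T^d$ that mediates between \eqref{eq:conj2} and \eqref{eq:conj3}. Fix an even integer $K>(d-1)/2$ and let $h$ be a suitably rescaled $B$-spline of order $K$: a nonnegative function on $\R$, supported in $[-1/2,1/2]$, with $\int_\R h=1$, $\widehat h\ge 0$ everywhere, $\widehat h\ge c_0>0$ on $[-1,1]$, and $|\widehat h(\xi)|\lesssim(1+|\xi|)^{-K}$. For $j\ge0$ put $h_j(t)=2^jh(2^jt)$ and let $\mu_j$ be the probability measure on $\T^d$ with density $\prod_{i=1}^d h_j(x_i)$: it is supported in $[-2^{-j-1},2^{-j-1}]^d$, its density is $\gtrsim 2^{jd}$ on a sub-cube of side $\simeq 2^{-j}$ centered at the origin, and $\widehat{d\mu_j}(\n)=\prod_{i=1}^d\widehat h(n_i/2^j)$ is nonnegative, is $\ge c_0^d$ when $|\n|\le 2^j$, and is $\lesssim(2^j/|\n|)^K$ when $|\n|\ge 2^j$. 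Set $\nu=C\sum_{j\ge0}2^{-j(d-1)/2}\mu_j$ for a constant $C$ to be chosen. Splitting the $j$-sum according to whether $2^j\ge|\n|$ and using $K>(d-1)/2$ to sum the low-frequency part geometrically, one verifies that
$$\widehat{d\nu}(\n)\ge 0,\qquad \widehat{d\nu}(\n)\simeq C(1+|\n|)^{-(d-1)/2},\qquad \nu(\T^d)=C\sum_{j\ge0}2^{-j(d-1)/2}<\infty,$$
the implied constants in $\simeq$ depending only on $d$.

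For the first implication, I apply \eqref{eq:conj2} to $\sigma=\nu$ (which satisfies \eqref{eq:decbeta} by the middle estimate above) to get $\int_{\T^d}|S_{a,d}(x,N)|^p\,d\nu(x)\lesssim_\epsilon\|a\|_{\ell^2}^p\,R_p(N)$, where $R_p(N)$ abbreviates the bracketed factor on the right of \eqref{eq:conj2}. On the other hand, retaining only the $j$-th term of $\nu$ and using the density lower bound for $\mu_j$,
$$\int_{\T^d}|S_{a,d}(x,N)|^p\,d\nu(x)\ \gtrsim\ 2^{j(d+1)/2}\int_{Q_j}|S_{a,d}(x,N)|^p\,dx,$$
where $Q_j$ is a translate of a dyadic cube of side $\simeq 2^{-j}$. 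Comparing the two displays and relabeling the scale (which absorbs an $O_d(1)$ shift coming from the support size of $h$) yields \eqref{eq:conj3} for all large enough $j$; the finitely many remaining small scales, as well as $j=0$, follow crudely from \eqref{eq:conj2} applied to $d\sigma=dx$ (Lebesgue measure trivially satisfies \eqref{eq:decbeta}), since $\int_{[0,2^{-j}]^d}\le\int_{\T^d}$ and $2^{j(d+1)/2}=O_d(1)$ there. That the cube of integration may be taken to be $[0,2^{-j}]^d$ rather than a translate is the remark following \eqref{eq:conj3} (substitute $x\mapsto x+c$, under which $S_{a,d}(x+c,N)=S_{b,d}(x,N)$ with $|b_n|=|a_n|$).

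For the converse, assume $p$ is a positive even integer for which \eqref{eq:conj3} holds, and let $\sigma$ satisfy \eqref{eq:decbeta}. Choosing $C$ large, depending on the implied constant for $\sigma$, we obtain $\widehat{d\nu}(\n)\ge|\widehat{d\sigma}(\n)|$ for all $\n$, so Lemma \ref{a28} — applied with $f$ supported on the moment curve so that $\widehat f=S_{a,d}(\cdot,N)$ and $\widehat{|f|}=S_{|a|,d}(\cdot,N)$, and with the \emph{even} exponent $p$ — gives
$$\int_{\T^d}|S_{a,d}(x,N)|^p\,d\sigma(x)\ \le\ \int_{\T^d}|S_{|a|,d}(x,N)|^p\,d\nu(x)\ =\ C\sum_{j\ge0}2^{-j(d-1)/2}\int_{\T^d}|S_{|a|,d}(x,N)|^p\,d\mu_j(x).$$
Bounding $d\mu_j\lesssim 2^{jd}\,\mathbf 1_{[-2^{-j-1},2^{-j-1}]^d}\,dx$ and applying \eqref{eq:conj3} at scale $j$ to the sequence $|a|$ (on the translate $[-2^{-j-1},2^{-j-1}]^d$ of $[0,2^{-j}]^d$) shows the $j$-th summand is $\lesssim_\epsilon\|a\|_{\ell^2}^p\,R_p(N)$, \emph{uniformly} in $j$. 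Since $\sum_j 1$ diverges one cannot sum naively, so I truncate at $J_0:=\lceil\tfrac{p}{d-1}\log_2 N\rceil$: for $j>J_0$, since $\mu_j$ is a probability measure and $\|S_{|a|,d}(\cdot,N)\|_{\infty}\le\|a\|_{\ell^1}\le N^{1/2}\|a\|_{\ell^2}$, the $j$-th summand is at most $2^{-j(d-1)/2}N^{p/2}\|a\|_{\ell^2}^p$, and $\sum_{j>J_0}2^{-j(d-1)/2}N^{p/2}\lesssim 2^{-J_0(d-1)/2}N^{p/2}\le 1$. Summing the remaining $J_0+1=O(\log N)$ terms and absorbing the logarithm into $N^\epsilon$ recovers \eqref{eq:conj2} with the same $p$.

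The only genuinely delicate ingredient is the design of $\nu$: it must be a positive finite measure with nonnegative Fourier coefficients comparable to $(1+|\n|)^{-(d-1)/2}$ from \emph{both} sides — the lower bound is exactly what makes the normalization $2^{j(d+1)/2}$ in \eqref{eq:conj3} the correct one, while the upper bound forces the building block $h$ to have order $K>(d-1)/2$ — and it must simultaneously decompose into dyadically concentrated nonnegative bumps realizing the sharp height-versus-support trade-off. Everything else is routine: Plancherel through Lemma \ref{a28}, covering cubes by $O(1)$ translates, the $\ell^1\to\ell^\infty$ bound at large scales, and the harmless replacement of a $\log N$ factor by $N^\epsilon$; the restriction to even $p$ in the second implication is unrelated to the scale truncation and is simply what Lemma \ref{a28} demands.
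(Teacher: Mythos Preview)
Your proof is correct and follows essentially the same strategy as the paper: both build a positive measure $\nu$ out of dyadic bumps with nonnegative Fourier transform comparable to $(1+|\n|)^{-(d-1)/2}$, and then invoke Lemma~\ref{a28}. The only cosmetic difference is that the paper truncates the dyadic sum at $2^j\lesssim N^d$ from the outset (and correspondingly replaces $\sigma$ by its band-limited version $\phi\,dx$, which agrees with $\sigma$ on the Fourier support of $|S_{a,d}|^p$), whereas you keep the full infinite sum for $\nu$ and dispose of the large-$j$ tail afterward with the trivial $\|S\|_\infty\le N^{1/2}\|a\|_{\ell^2}$ bound; both routes incur the same harmless $O(\log N)$ factor.
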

\begin{proof}
For the first part, we find it more convenient to identify $\T^d$ with $[-1/2,1/2]^d$.
Let $\eta:\R^d\to[0,\infty]$ be a smooth function satisfying
$$1_{[-1/4,1/4]^d}\le \eta\le 1_{[-1/2,1/2]^d}.$$
The Fourier coefficient  $\widehat{\eta}(\n)$ coincides with the Fourier transform of $\eta$ evaluated at $\n$.
 Then $d\sigma(x)={2^{j\frac{d+1}{2}}}\eta(2^{j-2}x)dx$ is a finite positive measure on $\T^d$ that satisfies \eqref{eq:decbeta} uniformly over $j$. Note also that $1_{[0,2^{-j}]^4}(x)\le \eta(2^{j-2}x)$.

For the second part, let $\eta:\R^d\to [0,\infty)$ be a smooth nonnegative function, compactly supported in $[-1/2,1/2]^d$, with positive Fourier transform satisfying $\widehat{\eta}(\xi)\gtrsim 1_{[-1,1]^d}(\xi)$. The positive measure $\nu$ on $\T^d$ given by
$$d\nu(x)=\sum_{j:\;1\le 2^j\le dN^d}2^{j\frac{d+1}{2}}\eta(2^jx)dx$$
is finite and satisfies
$$|\widehat{d\sigma}(\n)|\lesssim \widehat{d\nu}(\n),\;|\n|\le dN^d$$
if
$$|\widehat{d\sigma}(\n)|\lesssim (1+|\n|)^{-\frac{d-1}{2}}.$$
Let
$$\phi(x)=\sum_{\n\in\Z^d:\;|\n|\le dN^d}\widehat{d\sigma}(\n)e(\n\cdot x).$$
We now apply Lemma \ref{a28} to the pair $(d\mu=\phi(x)dx,d\nu)$ and to $\widehat{f}(x)=S_{a, d}(x, N)$
\begin{align}
\label{a36}
\nonumber\int_{\T^d} |S_{a, d}(x, N)|^p\,d\sigma(x)&=\int_{\T^d} |S_{a, d}(x, N)|^pd\mu(x)\\\nonumber&\le \int_{\T^d} |S_{|a|, d}(x, N)|^pd\nu(x)\\&\lesssim \sum_{j:\;1\le 2^j\le dN^d}2^{j\frac{d+1}{2}}\int_{[-2^{-j},2^{-j}]^d}|S_{|a|, d}(x, N)|^pdx.
\end{align}
It is now immediate that \eqref{eq:conj3} implies \eqref{eq:conj2}.

\end{proof}

We will use inequality \eqref{a35} stated below when $|I||lS-lS|\ll 1$.
\begin{lem}
\label{a37}	
Let $(a_n)_{n\in S}$ be a finite sequence of complex numbers, and let $p=2l$ be a positive even integer. Write
$$lS-lS=\{n_1+\ldots+n_l-n_{l+1}-\ldots-n_{2l}:\;n_i\in S\}.$$
Then
$$|\sum_{n\in S}a_n|^p\le |lS-lS|\int_{0}^1|\sum_{n\in S}a_ne(nt)|^pdt.$$
In particular, for each $I\subset [0,1]$ we have
\begin{equation}
\label{a35}
\int_I|\sum_{n\in S}a_ne(nt)|^pdt\le |I||lS-lS|\int_{0}^1|\sum_{n\in S}a_ne(nt)|^pdt.
\end{equation}
\end{lem}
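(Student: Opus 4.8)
The plan is to prove the first inequality directly, since the ``in particular'' statement \eqref{a35} then follows by integrating the pointwise bound over $I$ — indeed, for every $t_0\in[0,1]$ periodicity gives $|\sum_{n\in S}a_n e(nt_0)|^p\le |lS-lS|\int_0^1|\sum_{n\in S}a_ne(nt)|^p\,dt$, and integrating over $t_0\in I$ yields \eqref{a35}. So the heart of the matter is the first line.

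The key observation is that $|\sum_{n\in S}a_ne(nt)|^p = |\sum_{n\in S}a_ne(nt)|^{2l}$ is, by expanding the product, a trigonometric polynomial in $t$ whose frequencies lie precisely in $lS-lS$. Concretely, writing $P(t)=\sum_{n\in S}a_ne(nt)$, we have
\begin{equation*}
|P(t)|^{2l}=P(t)^l\overline{P(t)}^l=\sum_{\substack{n_1,\dots,n_l\in S\\ n_{l+1},\dots,n_{2l}\in S}}a_{n_1}\cdots a_{n_l}\overline{a_{n_{l+1}}}\cdots\overline{a_{n_{2l}}}\;e\bigl((n_1+\dots+n_l-n_{l+1}-\dots-n_{2l})t\bigr),
\end{equation*}
so $|P(t)|^{2l}=\sum_{m\in lS-lS}c_m e(mt)$ for suitable coefficients $c_m$. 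Then $\int_0^1 |P(t)|^{2l}\,dt=c_0$, the constant term, while $\sum_{n\in S}a_n = P(0)$ and $|P(0)|^{2l}=\sum_{m\in lS-lS}c_m$. Hence
\begin{equation*}
|P(0)|^{2l}=\Bigl|\sum_{m\in lS-lS}c_m\Bigr|\le \sum_{m\in lS-lS}|c_m|.
\end{equation*}
It remains to bound each $|c_m|$ by $c_0=\int_0^1|P(t)|^{2l}\,dt$; combined with the count $|lS-lS|$ of available frequencies, this gives exactly $|P(0)|^{2l}\le |lS-lS|\int_0^1|P(t)|^{2l}\,dt$, as desired.

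The one point requiring a small argument is the claim $|c_m|\le c_0$ for every $m$. This is because $c_m=\int_0^1 |P(t)|^{2l}e(-mt)\,dt$, and since $|P(t)|^{2l}\ge 0$ we get $|c_m|\le \int_0^1 |P(t)|^{2l}\,dt=c_0$. (Equivalently: $c_m$ is a Fourier coefficient of the nonnegative function $|P|^{2l}$, which is dominated in absolute value by the $0$-th one.) I expect no genuine obstacle here; the only thing to be careful about is making sure the frequency set of $|P|^{2l}$ is contained in $lS-lS$ — which is immediate from the expansion above — so that the sum over $m$ really has at most $|lS-lS|$ terms, and that the nonnegativity of $|P|^{2l}$ is used exactly once, to control the off-diagonal Fourier coefficients by the average.
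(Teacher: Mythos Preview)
Your proof is correct and is essentially the same as the paper's: the paper compresses the argument into the single identity
\[
\Bigl|\sum_{n\in S}a_n\Bigr|^p=\int_0^1 |P(t)|^p\sum_{m\in lS-lS}e(mt)\,dt,
\]
then bounds $\bigl|\sum_{m\in lS-lS}e(mt)\bigr|\le |lS-lS|$, which is the kernel-side dual of your coefficient bound $|c_m|\le c_0$. Your derivation of \eqref{a35} by shifting coefficients and integrating is exactly the intended reading of ``in particular''.
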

\begin{proof}
It suffices to observe that
$$|\sum_{n\in S}a_n|^p=\int_{0}^1|\sum_{n\in S}a_ne(nt)|^p\sum_{m\in lS-lS}e(mt)dt.$$
\end{proof}
 In our paper we only investigate the subcritical regime in Conjectures \ref{conj:1}, \ref{conj:2} and \ref{conj:3}. Combining \eqref{a26} at $p=d(d+1)$ with H\"older gives, for each $p<d(d+1)$
\begin{equation}
\label{a38}
2^{ j \frac{d+1}{2}}\int_{[0,2^{-j}]^d} \left|{S}_{a, d}(x, N)\right|^p \, dx\lesssim_\epsilon N^\epsilon 2^{j(\frac{1-d}{2}+\frac{p}{d+1})}\|a\|_{\ell^2}^{p}.
\end{equation}
This shows that Conjecture \ref{conj:3} holds for $p\le \frac{d^2-1}{2}$, and thus Conjectures \ref{conj:1} and  \ref{conj:2} also hold for $p\le \frac{d^2-1}{2}$ when $d$ is odd, and for $p\le \frac{d^2-4}{2}$ when $d$ is even. This is a rather poor range of exponents, and we will improve it significantly in low dimensions. Let us now state our main results.

\begin{thm}
\label{a30}
Conjecture \ref{conj:3} holds in the full range for $d=2$ and $d=3$, and in the range $p\le 10$ for $d=4$.

Moreover, when  $d=4$, we have the following superficially weaker, but morally equivalent substitute for \eqref{eq:conj3}, in the full range $p\le \rho_4=11$
\begin{align}\label{eq:conj3d=4}
2^{ \frac{5}{2}j}\int_{[0,2^{-j}]^4} \left|{S}_{a, 4}(x, N)\right|^p \, dx\lesssim_\epsilon N^{p(\frac12-\frac13)+\epsilon}\|a\|_{\ell^6}^p.
\end{align}

When $d=5$, the following holds in the full range $p\le \rho_5=18 $.
\begin{align}\label{eq:conj3d=5}
 2^{3j}\int_{[0,2^{-j}]^5} \left|{S}_{a, 5}(x, N)\right|^p \, dx\lesssim_\epsilon N^{p(\frac12-\frac19)+\epsilon}\|a\|_{\ell^9}^p.
 \end{align}
In particular,  \eqref{eq:conj3} holds in the full range when $d\le 5$, for constant coefficients $a_n\equiv 1$.
\end{thm}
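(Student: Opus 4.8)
The plan is to prove Theorem~\ref{a30} in the stated cases by reducing everything to the sharp decoupling/Vinogradov estimate \eqref{a26} and the elementary lemmas \ref{a28} and \ref{a37}, while exploiting the oscillatory contribution of the first $\lceil (d+1)/2\rceil$ variables. For $d=2,3$ the critical exponent $\rho_d$ coincides with $p_d=d(d+1)/2\cdot 2$ in the sense that $\rho_2=2$, $\rho_3=6$ equal the Vinogradov critical exponents $d(d+1)$ for $d-1=1,2$; so the heart of the matter is to see that restricting to the box $[0,2^{-j}]^d$ gains exactly the factor $2^{-j(d+1)/2}$ predicted by the Fourier decay. First I would dyadically split, for fixed $j$, according to the size of $n$ relative to $2^{j/k}$ (with $k=\lceil (d+1)/2\rceil$), since on the box $[0,2^{-j}]^d$ the phases $x_1n+\dots+x_{k-1}n^{k-1}$ are essentially constant once $n\lesssim 2^{j/k}$, and for larger $n$ one uses summation by parts in the high-degree variables to transfer the sum to a genuine Weyl sum in the remaining $d-k$ variables, to which \eqref{a26} in dimension $d-1$ applies after absorbing the variation norm of $e(n^dF)$-type factors as in the remark following Conjecture~\ref{conj:1}. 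The bookkeeping of which monomials are "frozen" and which are "active" on the box is exactly what produces $\rho_d$.

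For $d=4$ up to $p\le 10$ and for the $\ell^6$- and $\ell^9$-variants \eqref{eq:conj3d=4}, \eqref{eq:conj3d=5}, I expect the argument to route through an even-integer exponent where Lemma~\ref{a37} (the $lS-lS$ counting bound) converts the box estimate into a clean $L^p(\T)$ Weyl-sum bound: choose $p$ even (e.g.\ $p=6$ in the $d=4$ case, $p=10$), write $|I||lS-lS|\ll1$ with $I$ the relevant projected interval and $S=\{1,\dots,N\}$ or a dyadic block, and then invoke \eqref{a26} for the appropriate smaller dimension. The factor $N^{p(1/2-1/3)}\|a\|_{\ell^6}^p$ in \eqref{eq:conj3d=4} is precisely what Vinogradov in dimension $3$ at its critical exponent $12$ produces after Hölder down to the relevant $p$, and the $\ell^6$ norm (rather than $\ell^2$) is the price of working below the critical exponent while still extracting a power of $N$; similarly $\ell^9$ and dimension $4$ for $d=5$. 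The reduction from \eqref{eq:conj3d=4} back to \eqref{eq:conj3} at $p\le\rho_4=11$ "morally" is the interpolation remark: one interpolates \eqref{eq:conj3d=4} against the trivial $\ell^1\to\ell^\infty$ bound and against \eqref{a38}, losing only $N^\epsilon$.

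For the final sentence — that \eqref{eq:conj3} holds in the full range $p\le\rho_d$ when $d\le5$ for the constant sequence $a_n\equiv1$ — I would observe that for $a_n\equiv1$ one has $\|a\|_{\ell^q}=N^{1/q}$ for every $q$, so $N^{p(1/2-1/3)}\|a\|_{\ell^6}^p = N^{p/2-p/3}N^{p/6}=N^{p/2-p/6}$; a quick check against the sharp exponents shows $N^{p/2-p/6}$ together with the subcritical threshold $\rho_4=11$ gives exactly $N^\epsilon$ at $p=\rho_4$ and the claimed power below it, i.e.\ \eqref{eq:conj3} with $\|a\|_{\ell^2}^p=N^{p/2}$. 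The same arithmetic — replacing $\ell^6$ by $\ell^2$ at the cost of a compensating power of $N$ that is absorbed because all $\ell^q$ norms of $1$ are explicit — turns \eqref{eq:conj3d=5} into \eqref{eq:conj3} for $d=5$; and $d=2,3$ are already done unconditionally. Thus for constant coefficients the $\ell^6$/$\ell^9$ variants are \emph{equivalent} to \eqref{eq:conj3}, not merely weaker.

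The main obstacle I anticipate is the $d=4$, $10<p\le11$ and $d=5$ ranges: here one cannot work at a convenient even integer without overshooting, so the decoupling input must be used at a non-even exponent, and the sharp power of $N$ at the critical $\rho_d$ has to be recovered from an estimate that a priori carries an $\ell^q$ norm with $q>2$. Controlling the transition — showing that the $\ell^q$-to-$\ell^2$ passage costs no more than $N^\epsilon$ at $p=\rho_d$, which is what forces the "morally equivalent" hedging in the statement — is the delicate point, and it is presumably why the theorem claims the clean form \eqref{eq:conj3} only for $p\le10$ when $d=4$ and only for constant coefficients when $d=5$.
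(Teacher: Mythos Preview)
Your proposal has a fundamental gap: you attempt to derive everything from the Vinogradov estimate \eqref{a26} in lower dimensions together with Lemmas~\ref{a28} and~\ref{a37}, but these tools are not strong enough. The paper explicitly checks, at the start of Section~\ref{d=3}, that combining \eqref{a26} for $d=3$ with either H\"older or $L^2$ interpolation \emph{fails} to produce the required decay $2^{-2j}$ at $p=6$. The summation-by-parts trick you invoke only removes the $n^d$ phase when the corresponding variable already ranges over an interval of length $O(N^{-d})$; this is the trivial regime $2^j\gtrsim N^d$ and says nothing about the intermediate scales where the difficulty lies. Moreover, after you freeze the low-degree variables on the box, the remaining sum involves the incomplete system $(n^{k},\ldots,n^{d})$, not the moment curve $(n,\ldots,n^{d-1})$; so ``\eqref{a26} in dimension $d-1$'' simply does not apply. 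What you would need is Conjecture~\ref{M4} for incomplete systems, which the paper notes is open even in the simplest case $(n,n^3)$.

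What the paper actually uses is decoupling. For $d=3$, and for the $L^6$ input feeding the $d=4$ argument, the essential tool is the $l^2(L^6)$ decoupling for nondegenerate planar curves (Theorem~\ref{a2}), applied to $(t^2,t^3)$ and $(t^3,t^4)$ respectively, combined case by case with $L^2$ orthogonality. For the $\ell^6$ estimate \eqref{eq:conj3d=4} at $p=11$ and the $\ell^9$ estimate \eqref{eq:conj3d=5} at $p=18$, the paper develops and relies on \emph{new} small cap decoupling inequalities (Theorem~\ref{a31} and Theorem~\ref{c7}, instantiated as Corollaries~\ref{a10}, \ref{a11}, \ref{a32}, \ref{d32}) which allow the ranges of several variables to be as short as $N^{-1}$ or $N^{-2}$ rather than the full period. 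These constitute the main technical contribution of the paper and cannot be replaced by \eqref{a26}. In particular, your assertion that $N^{p(1/2-1/3)}\|a\|_{\ell^6}^p$ ``is precisely what Vinogradov in dimension $3$ at its critical exponent $12$ produces after H\"older'' is incorrect: Vinogradov yields an $\ell^2$ bound, and passing from $\ell^2$ to $\ell^6$ via H\"older runs the wrong direction. The $\ell^6$ (respectively $\ell^9$) norm arises instead from the bilinear-to-linear reduction carried out in the proof of Theorem~\ref{a31} (respectively Proposition~\ref{c22}).

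Your closing observation, that for $a_n\equiv 1$ one has $\|a\|_{\ell^q}=N^{1/q}$ and hence \eqref{eq:conj3d=4}, \eqref{eq:conj3d=5} collapse to \eqref{eq:conj3}, is correct and matches the paper's remark.
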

The proof for $d=2$ involves elementary methods. The case $d=3$ as well as the range $p\le 10$ for $d=4$ will rely  on known decouplings. To reach $p=11$ for $d=4$ and $p=18$ for $d=5$   we need to develop new small cap decoupling technology. The inspiration comes from a result proved in \cite{bourg1}, and streamlined in \cite{C2}. Our Theorem \ref{a13} extends the result from \cite{bourg1} to the case when small cap is applicable to three (rather than two) of the variables, by removing the periodicity assumption on the second variable. Its linear counterpart, Theorem \ref{a31}, will be used in multiple forms, see Corollaries \ref{a10}, \ref{a11}, \ref{a32} and \ref{d32}. In Section \ref{d=5last} we extend this approach to five dimensions.

The systematic study of small cap decoupling at the critical exponent for the moment curve has been initiated only recently, and it mostly addresses dimensions $2\le d\le 4$. See \cite{DGW}, \cite{C2} and \cite{H}.
While partial results exist in higher dimensions, see e.g. \cite{Oh}, they are not strong enough to fully solve Conjecture  \ref{conj:3} in dimensions $d\ge 6$.
\smallskip

The following result is a direct consequence of Theorem \ref{a30} and Proposition \ref{a27}.
\begin{cor}
Conjectures \ref{conj:1} and \ref{conj:2} hold true in the full range for $d=2$ and $d=3$, and in the range $p\le 10$ for $d=4$.
When $d=5$ and $p\le 18$, we have the following morally equivalent substitute for \eqref{eq:conj2}
$$\int_{\T^d} |S_{a, d}(x, N)|^p\,d\sigma(x)\lesssim_\epsilon  N^{p(\frac12-\frac19)+\epsilon}\|a\|_{\ell^9}^p.$$
In particular, both conjectures are verified in the range $p\le 18$ for the constant sequence $a\equiv 1$.
\end{cor}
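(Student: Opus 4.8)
The plan is to deduce the Corollary directly from Theorem \ref{a30} together with Proposition \ref{a27}, treating each dimension separately. The only conceptual content beyond those two results is to verify that the ``morally equivalent substitutes'' \eqref{eq:conj3d=4} and \eqref{eq:conj3d=5} feed into the machinery of Proposition \ref{a27} in the same way the clean estimate \eqref{eq:conj3} does, and then to observe that for the constant sequence all of the $\ell^q$--flavored right-hand sides collapse to the sharp bound.

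First I would handle $d=2$ and $d=3$. Here Theorem \ref{a30} gives \eqref{eq:conj3} in the full range $p\le \rho_d$, and since $d$ is not divisible by $4$, the second half of Proposition \ref{a27} applies: \eqref{eq:conj3} for an even integer $p$ implies \eqref{eq:conj2} for the same $p$, and then \eqref{eq:conj2} for the critical even integer $p=\rho_d$ (which is $2$ when $d=2$ and $6$ when $d=3$) implies it for all smaller $p$ by H\"older, as noted after Conjecture \ref{conj:2}. Finally, since $p_d=\rho_d$ for $d=2,3$ and the critical exponent estimate implies all others as discussed after Conjecture \ref{conj:1}, \eqref{eq:conj1} follows as well. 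For $d=4$ in the range $p\le 10$, the same argument works verbatim: Theorem \ref{a30} gives \eqref{eq:conj3} for $p\le 10$, Proposition \ref{a27} (again $4\nmid$ is not the issue here since we only need it up to $p=10<\rho_4$, and $10$ is even) upgrades this to \eqref{eq:conj2} for $p\le 10$; interpolating \eqref{eq:conj2} at $p=10$ with the trivial $p=\infty$ bound is not needed since we only claim $p\le 10$, so H\"older downward suffices. This gives both conjectures in the stated range for $d=4$.

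Next I would treat the genuinely new case $d=5$, $p\le 18$. The input is \eqref{eq:conj3d=5}, which I would run through the proof of the second half of Proposition \ref{a27} with only cosmetic changes. Exactly as there, choose $\eta$ smooth, nonnegative, supported in $[-1/2,1/2]^5$ with $\widehat{\eta}\gtrsim 1_{[-1,1]^5}$, set $d\nu(x)=\sum_{j:\,1\le 2^j\le 5N^5}2^{3j}\eta(2^jx)\,dx$, and note $|\widehat{d\sigma}(\n)|\lesssim \widehat{d\nu}(\n)$ for $|\n|\le 5N^5$ whenever $|\widehat{d\sigma}(\n)|\lesssim (1+|\n|)^{-2}$, i.e. $\beta=(d-1)/2=2$. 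Apply Lemma \ref{a28} with $d\mu=\phi(x)\,dx$, where $\phi$ truncates the Fourier series of $\sigma$ to $|\n|\le 5N^5$, and with $\widehat f(x)=S_{a,5}(x,N)$; the only delicate point is that $p=18$ is even, which is exactly what Lemma \ref{a28} requires, and $18\le\rho_5=18$ so we are still subcritical-or-critical. The chain \eqref{a36} then gives
\begin{align*}
\int_{\T^5} |S_{a,5}(x,N)|^p\,d\sigma(x)\lesssim \sum_{j:\,1\le 2^j\le 5N^5}2^{3j}\int_{[-2^{-j},2^{-j}]^5}|S_{|a|,5}(x,N)|^p\,dx\lesssim_\epsilon N^{p(\frac12-\frac19)+\epsilon}\|a\|_{\ell^9}^p\log N,
\end{align*}
where the second inequality is \eqref{eq:conj3d=5} applied to each dyadic block (the domain being a translate of $[0,2^{-j}]^5$, which is harmless by the remark after Conjecture \ref{conj:3}), and the $\log N$ from summing $O(\log N)$ blocks is absorbed into $N^\epsilon$. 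This is precisely the claimed substitute for \eqref{eq:conj2} when $d=5$, $p\le 18$ (smaller $p$ by H\"older).

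Finally, for the constant sequence $a_n\equiv1$ I would simply note $\|a\|_{\ell^q}=N^{1/q}$ for every $q$, so the right-hand side of \eqref{eq:conj3d=4} becomes $N^{p(\frac12-\frac13)}N^{p/6+\epsilon}=N^{\epsilon}$ at $p=\rho_4=11$ (and $N^\epsilon$ for smaller $p$, since $11$ is the critical exponent), and likewise the right-hand side of \eqref{eq:conj3d=5} becomes $N^{p(\frac12-\frac19)}N^{p/9+\epsilon}=N^{\epsilon}$ at $p=\rho_5=18$. Thus for constant coefficients the substitutes are literally \eqref{eq:conj3} at the critical exponent, hence \eqref{eq:conj3} holds in the full range for $d\le 5$, and by Proposition \ref{a27} (using that $p=\rho_d$ is even for $d=2,3,4,5$: $\rho_2=2,\rho_3=6,\rho_4=11$ — wait, $11$ is odd) — the one subtlety is $d=4$, where $\rho_4=11$ is odd, so I would instead interpolate the constant-coefficient version of \eqref{eq:conj3} at an even integer just below or above and use the $p=\infty$ bound, or simply invoke that \eqref{eq:conj3d=4} already gives the sharp constant-coefficient bound directly through the computation above with $d=4$, $\beta=3/2$. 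The main obstacle in writing this cleanly is bookkeeping the parity condition in Lemma \ref{a28} against the critical exponents $\rho_d$, and making sure the ``morally equivalent'' substitutes really do plug into Proposition \ref{a27}'s argument without needing \eqref{eq:conj3} itself; everything else is a direct citation.
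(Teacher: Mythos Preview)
Your approach is correct and matches the paper's (which simply says ``direct consequence of Theorem~\ref{a30} and Proposition~\ref{a27}''). The expansion you give for $d=2,3$, for $d=4$ with $p\le 10$, and the rerunning of the proof of Proposition~\ref{a27} with the $\ell^9$ input \eqref{eq:conj3d=5} for $d=5$ is exactly right; in particular you correctly flag that $p=18$ is even so Lemma~\ref{a28} applies.

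There are two small slips in the final paragraph. First, the arithmetic: for constant $a$ one has $\|a\|_{\ell^q}^p=N^{p/q}$, so the right-hand side of \eqref{eq:conj3d=5} becomes $N^{p(\frac12-\frac19)}N^{p/9+\epsilon}=N^{p/2+\epsilon}=\|a\|_{\ell^2}^pN^\epsilon$, not $N^\epsilon$ alone (and similarly the right-hand side of \eqref{eq:conj3d=4} becomes $N^{p/3+\epsilon}\le \|a\|_{\ell^2}^pN^\epsilon$). This is precisely what \eqref{eq:conj2} and \eqref{eq:conj3} demand in the subcritical range, so your conclusion stands once the computation is corrected. Second, the digression about $\rho_4=11$ being odd is irrelevant here: the Corollary's constant-coefficient claim is only about $d=5$ (where $\rho_5=18$ is even), so there is nothing to resolve for $d=4$. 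The $d=5$ constant-coefficient verification follows immediately from the displayed substitute inequality you have just proved, without going back through Proposition~\ref{a27} again.
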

\smallskip

Inequality \eqref{eq:conj2} has been proved  in \cite{CS}, see Theorem 2.2 and Example 2.3 there, in the supercritical regime  $p\ge d(d+1)>\rho_d$. The argument relies on a simple application of \eqref{a26} for $p=d(d+1)$, similar to the subcritical estimate \eqref{a38}.

Let us make a quick comparison between our methods and the ones in \cite{CS}. The latter also makes implicit use of Lemma \ref{a28}, by noting that each $\sigma$ satisfying \eqref{eq:decbeta}
with $\beta=(d-1)/2$ also satisfies
$$|\widehat{d\sigma}(\n)1_{F_N}(\n)|\lesssim \widehat{d\nu}(n)=\sum_{1\le 2^j\lesssim N^d}{2^{-j\frac{d-1}{2}}}1_{A_{j}(N)}(\n).$$
Here $A_{j}(N)$ is the intersection of the annulus $|\n|\simeq 2^j$ with the frequency support $$F_N=\{\n=(n_1,\ldots,n_d):\;|n_1|\lesssim N,\ldots,|n_d|\lesssim N^d\}$$ of $S_{a,d}(x,N).$ The argument in \cite{CS} continues by using the rather weak estimate
$$d|\nu|(x)\le \sum_{1\le 2^j\le dN^d}{2^{-j\frac{d-1}{2}}}{|A_{j}(N)|}1_{\T^d}(x)dx,$$
that destroys important cancellations. Since $|A_j(N)|\simeq N^{1+2+\ldots+(k_j-1)}2^{j(d+1-k_j)}$ if $N^{k_j-1}\lesssim 2^j\lesssim N^{k_j}$, $1\le k_j\le d$,
the application of Lemma \ref{a28} gives for each positive even integer $p$
$$\int_{\T^d} |S_{a, d}(x, N)|^p\,d\sigma(x)\lesssim \sum_{1\le 2^j\le dN^d}N^{\frac{(k_j-1)k_j}2}2^{j(\frac{d+3}{2}-k_j)}\int_{\T^d} |S_{|a|, d}(x, N)|^pdx.$$
We may use Lemma \ref{a37} to show that this upper bound is larger than the one in  \eqref{a36}. Indeed, for each $j$ we apply \eqref{a35} for each of the first $k_j-1$ variables
$$
2^{j\frac{d+1}{2}}\int_{[-2^{-j},2^{-j}]^d}|S_{|a|, d}(x, N)|^pdx\le
N^{\frac{(k_j-1)k_j}2}2^{j(\frac{d+3}{2}-k_j)}\int_{\T^{k_j-1}\times [0,2^{-j}]^{d+1-k_j}} |S_{|a|, d}(x, N)|^pdx.$$
The smallness of the range of the last variables will be used crucially in our argument, by means of exploiting the small cap decoupling  phenomenon.
\smallskip

In most cases, we have formulated our conjectures with the $N^\epsilon$ term in the bound. While we do this mostly out of abundance of caution, it must be pointed out that one of our favorite tools, Theorem \ref{a2}, is known to have a genuine logarithmic loss in the scale. In any case, we do manage to prove a few scale-independent results. One of them is the sharp form of  \eqref{eq:conj1} for $p=4$ and $d=3$, proved in Section \ref{four} using  counting arguments. We also prove the scale-independent version of Conjecture \ref{conj:1} for $d=2$, in the full range. In fact, in Section \ref{sec:L2paraboloid} we prove sharp $L^2$ estimates for the paraboloids in all dimensions, subject only to the decay of $\widehat{d\sigma}$. The bounds are scale-independent only in the case $d=2$. When $d=3$, there is a logarithmic loss. This is very similar to the situation  for the sphere described in the previous section.  The question of removing this logarithmic loss by exploiting the oscillations of $\widehat{d\sigma}$ remains open.

\section{Decouplings and exponential sum estimates, old and new}
\label{review}

In this section we record the main tools we will use to address Conjecture \ref{conj:3}.

\begin{thm}[$L^2$ orthogonality]
\label{a1}	
Let $(\xi_k)_{1\le k\le K }$ be a $1/R$-separated sequence of real numbers. Then, for each interval $\omega$ of length $\gtrsim R$ and for each $a_k\in\C$ we have
$$\|\sum_{k=1}^Ka_ke(\xi_kx)\|_{L^2(\omega)}\lesssim |\omega|^{1/2}\|a_k\|_{\ell^2}.$$ 	
\end{thm}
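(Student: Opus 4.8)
The plan is to dominate the indicator $1_\omega$ by a nonnegative weight $w$ whose Fourier transform is supported in a small neighbourhood of the origin, so that expanding the square produces only diagonal terms. I would first fix, once and for all, a Fej\'er-type function $w_0\colon\R\to[0,\infty)$ with $w_0\ge 1$ on $[0,1]$ and $\supp\widehat{w_0}\subset[-1/2,1/2]$ --- for instance $w_0(x)=(\pi/2)^2\bigl(\tfrac{\sin(\pi x/2)}{\pi x/2}\bigr)^2$, which is nonnegative, satisfies $w_0\ge 1$ on $[-1,1]$, has $\widehat{w_0}$ supported in $[-1/2,1/2]$, and obeys $\int_\R w_0=:C_0<\infty$.

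Given $\omega=[c,c+L]$, I would rescale and translate: set $w(x)=w_0\bigl(\tfrac{x-c}{L}\bigr)$. Then $w\ge 1_\omega$ pointwise, $w\ge 0$, $\widehat{w}(\xi)=L\,e(-c\xi)\,\widehat{w_0}(L\xi)$ is supported in $\{|\xi|\le 1/(2L)\}$, and $\widehat{w}(0)=\int_\R w=C_0L$. The main computation is then
\[
\Bigl\|\sum_{k=1}^{K}a_k e(\xi_k x)\Bigr\|_{L^2(\omega)}^2\le\int_\R\Bigl|\sum_{k=1}^{K}a_k e(\xi_k x)\Bigr|^2 w(x)\,dx=\sum_{k,k'=1}^{K}a_k\overline{a_{k'}}\,\widehat{w}(\xi_{k'}-\xi_k).
\]
If $|\omega|=L\ge R$ (this is where the hypothesis $|\omega|\gtrsim R$ enters, and the implied constant may be taken to be $1$), then $1/R>1/(2L)$, so $1/R$-separation forces $|\xi_{k'}-\xi_k|>1/(2L)$ for $k\ne k'$ and hence $\widehat{w}(\xi_{k'}-\xi_k)=0$ off the diagonal. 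Only the diagonal survives, yielding $\widehat{w}(0)\sum_k|a_k|^2=C_0 L\|a_k\|_{\ell^2}^2$; taking square roots and recalling $L=|\omega|$ completes the argument.

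I do not expect a genuine obstacle here, but the point worth flagging is that the obvious alternative --- estimating $\bigl|\int_\omega e((\xi_k-\xi_{k'})x)\,dx\bigr|\le\min\bigl(L,\tfrac{1}{\pi|\xi_k-\xi_{k'}|}\bigr)$ and invoking Schur's test --- does \emph{not} close, since $\sum_{k'}\min\bigl(L,\tfrac{1}{\pi|\xi_k-\xi_{k'}|}\bigr)$ diverges over an infinite $1/R$-separated family. The band-limited majorant is precisely what makes the off-diagonal terms vanish identically rather than merely be summable, and that is the one idea the proof really needs.
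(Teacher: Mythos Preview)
Your proof is correct and is the standard argument for this classical fact. The paper does not actually include a proof of Theorem~\ref{a1}; it is merely recorded in Section~\ref{review} as one of the known tools, so there is no ``paper's proof'' to compare against. Your band-limited majorant approach is precisely the canonical one, and your closing remark about why a naive Schur-test estimate fails (the off-diagonal sum is not bounded uniformly in $K$) is also on point.
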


\begin{thm}[$l^2(L^6)$ decoupling for curved planar curves, \cite{BD}]
\label{a2}	
Let $I\subset [0,1]$ be an interval. Assume $\phi_1,\phi_2$ are $C^{3}$ real-valued curves on the interval $I$ satisfying the curvature condition
$$\left|\det\begin{bmatrix}\phi_1'(t)&\phi_2'(t)\\\\
\phi_1^{''}(t)&\phi_2^{''}(t)&\end{bmatrix}\right|\simeq 1,\;t\in I.$$
Partition $I$ into intervals $H$ or length $1/\sqrt{R}$. Then for each  collection of points $\xi_k\in I$,  each $a_k\in\C$ and for each square $Q$ in $\R^2$ with diameter $\gtrsim R$
we have
$$\|\sum_{k=1}^Ka_ke(x_1\phi_1(\xi_k)+x_2\phi_2(\xi_k))\|_{L^6(Q)}\lesssim_{\epsilon}R^\epsilon(\sum_{H}\|\sum_{\xi_k\in H}a_ke(x_1\phi_1(\xi_k)+x_2\phi_2(\xi_k))\|_{L^6(Q)}^2)^{1/2}.$$
\end{thm}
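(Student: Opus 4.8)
The plan is to deduce Theorem \ref{a2} from the sharp $l^2(L^6)$ decoupling theorem for the standard parabola proved in \cite{BD}, via a sequence of routine reductions. Set $\gamma(t)=(\phi_1(t),\phi_2(t))$.

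First I would pass from the discrete exponential sums to the continuous (extension operator) formulation of decoupling. After the affine normalization dictated by the curvature hypothesis, for each interval $H$ of length $1/\sqrt{R}$ the frequency set $\{\gamma(\xi_k):\xi_k\in H\}$ lies in a ``cap'' $\theta_H$ of dimensions $\simeq R^{-1/2}\times R^{-1}$, and distinct caps have the almost-disjoint-projections structure of a parabola partition. Since $Q$ has diameter $\gtrsim R$, on $Q$ each inner sum $\sum_{\xi_k\in H}a_ke(x_1\phi_1(\xi_k)+x_2\phi_2(\xi_k))$ behaves like a single wave packet adapted to $\theta_H$; quantitatively, using Theorem \ref{a1} on each cap one bounds its $L^6(Q)$ norm both above and below by that of a smooth function with Fourier support in a fixed dilate of $\theta_H$. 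This reduces the claim to the decoupling into the $\theta_H$ of an arbitrary $L^6(\R^2)$ function whose Fourier transform is supported in $\bigcup_H\theta_H$.

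Next I would reduce to the model curve. The curvature condition $|\det(\gamma'(t),\gamma''(t))|\simeq 1$ on $I$ lets one split $I$ into $O(1)$ subintervals on each of which, after a linear change of variables in the $x$-variables with $|\det|\simeq 1$ (which changes $L^6$ norms by a bounded factor and turns $Q$ into a parallelogram of bounded eccentricity, harmless for decoupling) together with a $C^3$ reparametrization, $\gamma$ becomes a $C^3$-small perturbation of an arc of $t\mapsto(t,t^2)$. The $l^2(L^6)$ decoupling constant is stable under such perturbations — this stability is part of the standard package in \cite{BD} — so the caps $\theta_H$ inherit the decoupling constant of the parabola. The Bourgain--Demeter $l^2(L^6)$ decoupling theorem for the parabola then gives the bound $O_\epsilon(R^\epsilon)$ at scale $\delta=R^{-1/2}$, and unwinding the two reductions yields the stated inequality.

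Of course the genuine content is the parabola case itself, whose proof in \cite{BD} rests on multilinear (bilinear) Kakeya, parabolic rescaling and induction on scales; within the present reduction the point requiring care is the first step, where the hypothesis that the diameter of $Q$ is $\gtrsim R$ is precisely what makes both the localization of wave packets and the discrete-to-continuous comparison lossless, and where one must check that no factor of $R^{\epsilon}$ type is secretly lost in comparing the $L^6(Q)$ norms of the exponential sums with those of their wave packet envelopes.
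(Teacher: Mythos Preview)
The paper does not prove Theorem \ref{a2}; it is stated with attribution to \cite{BD} and used as a black box throughout Sections \ref{d=3}--\ref{d=4}. So there is no ``paper's own proof'' to compare against. Your outline is the standard route by which this statement is derived from the Bourgain--Demeter parabola theorem: affine normalization plus $C^3$-stability reduces the curve $(\phi_1,\phi_2)$ to a small perturbation of $(t,t^2)$, and then the $l^2(L^6)$ decoupling from \cite{BD} applies.

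One point to tighten. In your first step you invoke Theorem \ref{a1} ($L^2$ orthogonality) to compare the $L^6(Q)$ norm of $\sum_{\xi_k\in H}a_ke(x\cdot\gamma(\xi_k))$ with that of a smooth wave packet. That is not the right mechanism: $L^2$ orthogonality says nothing about $L^6$ norms. The honest way to pass from the discrete sum to the continuous extension operator is either (i) to note that the decoupling inequality in \cite{BD} is stated for $E_If$ with $f$ a measure, and a sum of Diracs is already such a measure (after the standard limiting argument), or (ii) to replace each $\delta_{\xi_k}$ by a bump at scale $1/R$ and use that on a ball of radius $R$ this changes nothing up to Schwartz tails. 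Either way the step is routine, but Theorem \ref{a1} is not what drives it.
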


We now introduce our new decouplings. Let us start with the case of curves in four dimensions.
Let
$\phi_2,\phi_3,\phi_4$ will be real analytic functions defined on some open interval containing $[\frac12,1]$, and  satisfying
\begin{equation}
\label{1}
\|\phi_k'\|_{C^3([\frac12,1])}=\sum_{1\le n\le 4}\max_{\frac12\le t\le 1}|\phi_k^{(n)}(t)|\lesssim 1, \;\;k\in\{2,3,4\},
\end{equation}
the Wronskian condition
\begin{equation}
\label{2}|W(1,\phi_2',\phi_3',\phi_4')|=
\left|\det
\begin{bmatrix}\phi_2^{(2)}(t)&\phi_2^{(3)}(t)&\phi_2^{(4)}(t)\\
\phi_3^{(2)}(t)&\phi_3^{(3)}(t)&\phi_3^{(4)}(t)\\ \phi_4^{(2)}(t)& \phi_4^{(3)}(t)&\phi_4^{(4)}(t)
\end{bmatrix}\right|\simeq 1,\;\;t\in[\frac12,1],
\end{equation}
and
\begin{equation}
\label{3}
\left|\det\begin{bmatrix}\phi_2''(t)&\phi_3''(t)\\\phi_2''(s)&\phi_3''(s)\end{bmatrix}\right|\simeq 1,\;\; t,s\in [\frac12,1] \text{ with }|t-s|\simeq 1.
\end{equation}

\begin{thm}[Bilinear small cap $l^2L^{12}$  decoupling]
\label{a13}Let $I_1,I_2$ be intervals of length $\simeq
 N$ in $[\frac{N}{2},N]$, with $\dist(I_1,I_2)\simeq {N}$.	
Let $\Omega=[0,1]\times\omega_2\times\omega_3\times\omega_4$, where $\omega_i$ are intervals satisfying $|\omega_2|, |\omega_3|\ge N^2$, $|\omega_4|\ge N$. Then we have
$$\int_{\Omega}|\prod_{j=1}^2\sum_{n\in I_j}a_ne(nx_1+\phi_2(\frac{n}{N})x_2+\phi_3(\frac{n}{N})x_3+\phi_4(\frac{n}{N})x_4)|^{6}dx\lesssim_\epsilon N^\epsilon|\Omega|\|a\|_{\ell^2}^{12}.$$ 	
\end{thm}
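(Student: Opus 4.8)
The plan is to prove this bilinear small cap decoupling by first performing a standard $L^2$-orthogonality reduction in the ``small cap'' variables, and then invoking a Bourgain–Demeter type decoupling in the remaining two variables where genuine $1/\sqrt{R}$-scale curvature is available. First I would dualize: write $F_j(x)=\sum_{n\in I_j}a_ne(nx_1+\phi_2(n/N)x_2+\phi_3(n/N)x_3+\phi_4(n/N)x_4)$, so the left side is $\int_\Omega |F_1F_2|^6\,dx$. The ranges $|\omega_2|,|\omega_3|\ge N^2$ and $|\omega_4|\ge N$ are exactly the thresholds at which the frequencies $\phi_2(n/N),\phi_3(n/N)$ (which are $\simeq 1/N^2$-separated as $n$ ranges over integers, since $\phi_k'$ is bounded and the second derivatives control separation at scale $1/N$... more precisely $\phi_k(n/N)-\phi_k(m/N)$ has size $\simeq |n-m|/N$ in first order) become ``resolved'': on an interval of length $\ge N^2$ in the $x_2$ variable the exponentials $e(\phi_2(n/N)x_2)$ are essentially orthogonal. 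So the first step is to use Theorem \ref{a1} (or rather its multi-variable, mixed-norm consequence) to reduce the $x_1$, $x_4$ behavior and to exploit $L^2$-orthogonality in $x_2,x_3$, collapsing the problem to an estimate on a square $Q$ in the $(x_2,x_3)$-plane (or $(x_1,x_2)$-plane) of diameter $\gtrsim$ some power of $N$.

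The heart of the matter is then a \emph{bilinear} $l^2(L^{12})$, or equivalently a transversal $l^2(L^6)$-type, decoupling for the two planar curves $t\mapsto(\phi_2(t),\phi_3(t))$ restricted to $t\in I_1/N$ and $t\in I_2/N$, which sit at distance $\simeq 1$. Here condition \eqref{2} (the Wronskian is $\simeq 1$) guarantees the curve $t\mapsto(1,\phi_2',\phi_3',\phi_4')$ is non-degenerate, condition \eqref{3} provides the transversality between the two separated pieces (the determinant of second derivatives at transversal points is $\simeq 1$), and \eqref{1} provides the uniform $C^3$ control needed to apply decoupling with implicit constants independent of the particular $\phi_k$. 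The strategy is: decompose $I_j$ into $\simeq\sqrt{N}$ subintervals $H$ of length $\simeq\sqrt{N}$; on each $H$ the curve is essentially flat at the relevant scale, so each piece contributes a single ``cap''; the transversal structure combined with the planar $l^2(L^6)$ decoupling (Theorem \ref{a2}) applied to $(\phi_2,\phi_3)$ — after checking the curvature hypothesis $|\det[\phi_2',\phi_3';\phi_2'',\phi_3'']|\simeq 1$ which follows from \eqref{2} and \eqref{3} by a determinant manipulation — upgrades to the bilinear $L^{12}$ bound with only $N^\epsilon$ loss. Then one reassembles the pieces: on each length-$\sqrt{N}$ piece $H$, the inner sum $\sum_{n\in H}a_ne(\ldots)$ is handled by flat/trivial bounds (the phases $\phi_k(n/N)$ vary by $O(1/N^{3/2})$ in second order over $H$, and the $x_2,x_3$ windows are large enough that each such piece behaves like a single frequency times the $x_1$-sum), giving $\|\sum_{n\in H}a_ne(\ldots)\|_{L^6}\lesssim |\Omega|^{1/6}\|a_n 1_H\|_{\ell^2}$ after the $L^2$-orthogonality already extracted. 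Summing the $l^2$ over $H$ and multiplying the two transversal factors yields $N^\epsilon|\Omega|\|a\|_{\ell^2}^{12}$.

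I expect the main obstacle to be the bookkeeping at the interface between the $L^2$-orthogonality step and the decoupling step: one has to carefully track which variables are being used for orthogonality (the ``small cap'' variables $x_2,x_3,x_4$ with their large windows) versus which carry the curvature exploited by decoupling, and ensure that the $1/\sqrt{R}$ scale in Theorem \ref{a2} is correctly matched to the $\sqrt{N}$-partition of $I_j$ with $R$ chosen as the appropriate power of $N$ (so that $N^\epsilon=R^\epsilon$). A secondary technical point is verifying that after rescaling $n\mapsto n/N\in[1/2,1]$ the separation and transversality hypotheses \eqref{2}, \eqref{3} genuinely produce the curvature condition of Theorem \ref{a2} with constants $\simeq 1$; this is a linear-algebra computation showing $\det[\phi_2',\phi_3';\phi_2'',\phi_3'']$ does not vanish, using that \eqref{3} forces the $\phi_k''$ to be ``linearly independent over pairs of points'' and \eqref{2} forces genuine second-order curvature. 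The transition from the linear piece-wise flat bound back to $\ell^2(L^{12})$ for the full sum, rather than a lossy $\ell^1$ bound, is precisely what the decoupling buys us, so the argument is essentially: orthogonality in the fat variables, then decoupling in the curved variables, then trivial/flat estimates on the sub-caps.
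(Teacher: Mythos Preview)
Your proposal has a genuine gap: the step ``trivial/flat estimates on the sub-caps'' is exactly where the real work lies, and the tools you list are not strong enough to close it.

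After decoupling into pieces $J_1,J_2$ of length $\sqrt{N}$, the target (in the paper's rescaled variables, where $\Omega=[0,1]\times[0,N]^2\times[0,1]$) is
\[
\int_\Omega |\Ec_{J_1}\Ec_{J_2}|^6\lesssim_\epsilon N^{2+\epsilon}\|a\|_{\ell^2(J_1)}^6\|a\|_{\ell^2(J_2)}^6.
\]
The pointwise Cauchy--Schwarz bound gives $|\Ec_{J_i}|\le N^{1/4}\|a\|_{\ell^2(J_i)}$, hence a loss of $N^3$ over the target. One-variable $L^2$ orthogonality (say in $x_4$, whose window has length $\ge N$) recovers at best one factor of $N^{1/2}$ per piece; combined with pointwise bounds on the remaining $|\cdot|^4$ you still lose a positive power of $N$. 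The pieces are \emph{not} flat at the scale of $\Omega$: over $|J_i|=\sqrt{N}$ the phase $\phi_k(n/N)x_k$ still oscillates by order $1$ on the large windows $|\omega_2|,|\omega_3|\ge N^2$, so there is no ``single-frequency'' collapse.

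The paper's proof is structurally different and uses ingredients your outline does not contain. First, the initial decoupling is not the planar Theorem~\ref{a2} for $(\phi_2,\phi_3)$ (whose curvature hypothesis $|\det[\phi_2',\phi_3';\phi_2'',\phi_3'']|\simeq 1$ is \emph{not} implied by \eqref{2},\eqref{3}), but the four-dimensional bilinear $l^2(L^6)$ decoupling for the full curve $\Phi$ (Theorem~\ref{49}), which uses the torsion condition \eqref{2}. Second, on each pair $(J_1,J_2)$ the paper applies the transversality inequality \eqref{52} to separate the two factors, then \emph{crucially} exploits periodicity in $x_1$ to replace the single variable $x_1$ by two independent variables $y_1,z_1$ ranging over $[0,N]$. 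Third, a linear change of variables $(y_1,z_1,x_2,x_3)\mapsto(u_1,u_2,w_1,w_2)$, whose nondegeneracy is exactly condition \eqref{3}, converts the product $\Ec_{J_1}\Ec_{J_2}$ into an exponential sum over a two-dimensional surface $\Psi(t,s)$ in $\R^4$. The final square-root cancellation comes from Theorem~\ref{65}, an $L^6$ decoupling for this surface. None of these steps---the $\R^4$ bilinear decoupling, the periodicity trick doubling the variable, the change of variables governed by \eqref{3}, or the surface decoupling Theorem~\ref{65}---appears in your plan, and without them the sub-cap estimate cannot be closed.
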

The case $\phi_2(t)=t^2$ was explained in \cite{C2}. In this context, the requirements \eqref{2} and \eqref{3} are superficially weaker than, but essentially equivalent (cf. Exercise 7.10 in \cite{C}) to those stated in \cite{C2}, which we recall below
$$
\left|\det
\begin{bmatrix}
\phi_3^{(3)}(t)&\phi_3^{(4)}(t)\\ \phi_4^{(3)}(s)&\phi_4^{(4)}(s)
\end{bmatrix}\right|\simeq 1,\;\;t,s\in[\frac12,1],
$$
and
$$
|\phi_3^{(3)}(t)|\simeq 1,\;\; t\in [\frac12,1].
$$
The novelty of this formulation is that it allows small cap in three of the four variables.
The argument will crucially exploit periodicity in the $x_1$ variable, so it does not accommodate a small cap on this component. Our main new observation is that periodicity on the second component is not needed.
\smallskip

It is easy to see that the linear version of Theorem \ref{a13} is not true. Let us first observe that if $p>2$ then
$$\|a\|_{\ell^2([N/2,N])}\lesssim \|a\|_{\ell^p([N/2,N])}N^{\frac12-\frac1p}, $$
and the inequality is an equivalence when $a=1_{[N/2,N]}$.
The $l^2$ norm turns out to be too strong. In fact, we show that for all $p<6$,  the inequality
$$\int_{[-1,1]^2\times[-1/N,1/N]\times[-1/N^3,1/N^3]}|\sum_{N/2\le n\le N}a_ne(nx_1+\phi_2(\frac{n}{N})N^2x_2+\phi_3(\frac{n}{N})N^3x_3+\phi_4(\frac{n}{N})N^4x_4)|^{12}dx$$$$\lesssim_\epsilon N^{\epsilon-4}\|a\|_{\ell^p}^{12}N^{6-\frac{12}{p}}$$
is false for the sequence $a=1_{[\frac{N}{2},\frac{N}{2}+M]}$, with $M=N^{3/4}$. Consider the set $S$ of points
\begin{equation}
\label{a25}
(x_1,\ldots,x_4)\in[-1,1]\times[-1,1]\times [-1/N,1/N]\times[-1/N^3,1/N^3]\end{equation}
satisfying
$$\begin{bmatrix}1&\phi_2'(\frac{1}{2})&\phi_3'(\frac{1}{2})&\phi_4'(\frac{1}{2})\\\\0&\phi_2''(\frac{1}{2})&\phi_3''(\frac{1}{2})&\phi_4''(\frac{1}{2})\\\\0&\phi_2'''(\frac{1}{2})&\phi_3'''(\frac{1}{2})&\phi_4'''(\frac{1}{2})\\\\0&\phi_2''''(\frac{1}{2})&\phi_3''''(\frac{1}{2})&\phi_4''''(\frac{1}{2})\end{bmatrix}\begin{bmatrix}x_1\\\\Nx_2\\\\N^2x_3\\\\N^3x_4\end{bmatrix}=\begin{bmatrix}o(1/M)\\\\o(N/M^2)\\\\o(N^2/M^3)\\\\o(N^3/M^4)\end{bmatrix}.$$
Since the entries $o(1/M),o(N/M^2),o(N^2/M^3),o(N^3/M^4)$ are all $o(1)$, using \eqref{1} and \eqref{2}, it follows that any solution to this system satisfies
$$|x_1|,|Nx_2|,|N^2x_3|,|N^3x_4|\le 1. $$
In particular, \eqref{a25} is guaranteed to hold. Also, for each $2\le k\le 4$ and $m\le M$ we have $$\sup_{t\in[1/2,1]}|\phi_k^{(5)}(t)(\frac{m}{N})^5N^{k}x_k|\ll 1.$$
We use Taylor's formula  with fifth order remainder, to write for each $k$ and $n=N/2+m$
$$\phi_k(\frac{n}{N})=\sum_{l=0}^4\phi^{(l)}_k(\frac12)(\frac{m}{N})^l+\phi^{(5)}_k(t_{k,m})(\frac{m}{N})^5.$$
Combining all these facts, we see that we have constructive interference
$$|\sum_{N/2\le n\le N/2+M}e(nx_1+\phi_2(\frac{n}{N})N^2x_2+\phi_3(\frac{n}{N})N^3x_3+\phi_4(\frac{n}{N})N^4x_4)|\simeq M$$
for each $x\in S$. Since $|S|\simeq 1/M^{10}$, it follows that if $a=1_{[N/2,N/2+M]}$
$$\int_{[-1,1]^2\times[-1/N,1/N]\times[-1/N^3,1/N^3]}|\sum_{N/2\le n\le N}a_ne(nx_1+\phi_2(\frac{n}{N})N^2x_2+\phi_3(\frac{n}{N})N^3x_3+\phi_4(\frac{n}{N})N^4x_4)|^{12}dx$$
$$\gtrsim M^{2}.$$
This lower bound is significantly bigger than $N^{\epsilon-4}\|a\|_{\ell^p}^{12}N^{6-\frac{12}{p}}$, when $p<6$.
\medskip

However, the next result shows that we can work with $p=6$. We will need to add another determinant condition, that is  satisfied in all our applications
\smallskip

\begin{equation}
\label{4}
\left|\det\begin{bmatrix}\phi_2''(t)&\phi_3''(t)\\\phi_2''(t)&\phi_3''(t)\end{bmatrix}\right|\simeq 1,\;\; t\in [\frac12,1].
\end{equation}
\smallskip

\begin{thm}[Linear small cap $l^6L^{12}$ decoupling]
	\label{a31}	
	Assume $\phi_2,\phi_3,\phi_4:(0,3)\to\R$ are real analytic and satisfy \eqref{1}, \eqref{2}, \eqref{3} and \eqref{4} on $[1/4,1]$.
	Let $\Omega=[0,1]\times\omega_2\times\omega_3\times\omega_4$, where $\omega_i$ are intervals satisfying $|\omega_2|, |\omega_3|\ge N^2$, $|\omega_4|\ge N$. Then we have
	$$\int_{\Omega}|\sum_{N/2\le n\le N}a_ne(nx_1+\phi_2(\frac{n}{N})x_2+\phi_3(\frac{n}{N})x_3+\phi_4(\frac{n}{N})x_4)|^{12}dx\lesssim_\epsilon (N^{1/3+\epsilon}\|a\|_{\ell^6})^{12}|\Omega|.$$ 	
\end{thm}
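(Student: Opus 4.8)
The plan is to deduce the linear $l^6L^{12}$ decoupling of Theorem \ref{a31} from its bilinear counterpart Theorem \ref{a13} via a standard Bourgain--Guth style broad/narrow decomposition, combined with a rescaled application of the bilinear estimate and a bootstrapping induction on the scale $N$. First I would fix the parameter $p=12$ and introduce the decoupling constant: let $D(N)$ be the smallest constant such that
\begin{equation*}
\int_{\Omega}\Big|\sum_{N/2\le n\le N}a_ne(nx_1+\phi_2(\tfrac{n}{N})x_2+\phi_3(\tfrac{n}{N})x_3+\phi_4(\tfrac{n}{N})x_4)\Big|^{12}dx\le D(N)^{12}\|a\|_{\ell^6}^{12}|\Omega|
\end{equation*}
holds for all $\Omega=[0,1]\times\omega_2\times\omega_3\times\omega_4$ with $|\omega_2|,|\omega_3|\ge N^2$, $|\omega_4|\ge N$, uniformly over such $\phi_k$ satisfying \eqref{1}--\eqref{4}. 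The goal is to show $D(N)\lesssim_\epsilon N^{1/3+\epsilon}$.

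The second step is the broad/narrow split. Dyadically partition $[N/2,N]$ into $O(\log N)$ scales; at scale where the relevant subintervals have length $\simeq N/K$ (for a parameter $K$ to be chosen, e.g. $K=N^\delta$), each point $x$ is either ``narrow'' --- the sum is dominated, up to a factor $K^{O(1)}$, by the contribution of $O(1)$ subintervals $J$ of length $\simeq N/K$ --- or ``broad'', in which case there exist two subintervals $J_1,J_2$ with $\dist(J_1,J_2)\gtrsim N/K$ whose contributions are both comparable to the full sum. On the narrow part one applies the $l^6$ triangle inequality together with a parabolic rescaling that maps each $J$ of length $\simeq N/K$ back to a full interval of length $\simeq N/K$: the affine change of variables in $(x_1,x_2,x_3,x_4)$ dilates the $\omega_i$ by factors $K, K^2, K^3, K^4$ respectively, which only enlarges them, so the rescaled boxes still satisfy the hypotheses of the theorem, and the curvature conditions \eqref{1}--\eqref{4} are stable (this is where \eqref{4}, the degenerate-looking determinant condition, is genuinely needed to ensure the rescaled $\phi_k$ stay in the admissible class). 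This yields a term of size roughly $K^{O(1)} (N/K)^{\text{(gain from }l^6\text{)}} D(N/K)^{12}$. On the broad part one replaces the single sum by a product of two well-separated sums, rescales both $J_1$ and $J_2$ to length $\simeq N$ so as to land in the setting of Theorem \ref{a13}, and applies the bilinear $l^2L^{12}$ estimate; converting the resulting $\ell^2$ norms into $\ell^6$ norms costs $(N/K)^{6(1/2-1/6)}=(N/K)^2$ per factor, i.e.\ $(N/K)^4$ total, which matches the exponent budget of $N^{1/3}$ to the twelfth power since $(N^{1/3})^{12}=N^4$.

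The third step is to close the induction. The recursion has the schematic form $D(N)^{12}\lesssim_\epsilon K^{O(1)}\big( (N/K)^{?} D(N/K)^{12} + N^4 \big)$, and one checks that the ansatz $D(N)\le C_\epsilon N^{1/3+\epsilon}$ is self-improving: the narrow term contributes $K^{O(1)}(N/K)^{?}\cdot C_\epsilon^{12}(N/K)^{4+12\epsilon}$, and because the narrow contribution carries a genuine power gain in $K$ (from the $l^6$-orthogonality beating the trivial $l^1$ bound, quantitatively $K^{-6(1-1/6)}=K^{-5}$ offsetting the $K^{O(1)}$ losses once $K$ is a small power of $N$), iterating $O(1/\delta)$ times and summing the geometric series absorbs all the losses into $N^\epsilon$. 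The base case $N=O(1)$ is trivial. One final bookkeeping point: the $O(\log N)$ dyadic scales and the passage from the localized estimate to the global one over $\Omega$ contribute only $N^\epsilon$, consistent with the statement.

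The main obstacle, I expect, is the rescaling in the narrow case: one must verify that after the anisotropic affine map adapted to a subinterval $J$ of length $\simeq N/K$, the new phase functions $\tilde\phi_k$ still satisfy \emph{all four} conditions \eqref{1}--\eqref{4} with uniform constants, and that the transformed domain is still a product box with the lower bounds $|\tilde\omega_2|,|\tilde\omega_3|\ge (N/K)^2$, $|\tilde\omega_4|\ge N/K$ required to invoke $D(N/K)$. The conditions \eqref{2} and \eqref{3} rescale cleanly because Wronskians and the relevant $2\times2$ determinants transform by explicit Jacobian powers, but keeping the $C^3$ bound \eqref{1} and the condition \eqref{4} uniform under iteration requires care --- this is precisely why Theorem \ref{a31} is stated for the whole family of admissible $\phi_k$ rather than a single one, so that the inductive hypothesis $D(N/K)$ applies to the rescaled curves. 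A secondary technical point is making the broad/narrow dichotomy quantitatively clean at the level of the $L^{12}(\Omega)$ norm (rather than pointwise), which is handled by the usual pigeonholing over the $O(K)$ subintervals at a cost of $K^{O(1)}$.
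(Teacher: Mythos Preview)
Your overall strategy matches the paper's: a Bourgain--Guth (Whitney-type) broad/narrow reduction, applying a rescaled version of the bilinear Theorem~\ref{a13} on the transverse pieces. The paper carries this out by iterating the pointwise dichotomy all the way down to scale $\simeq1$ and invoking the rescaled bilinear bound (its Proposition~\ref{9}) at every level of the tree; your inductive formulation with a decoupling constant $D(N)$ is an equivalent repackaging of the same mechanism.

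There is, however, a genuine error in your rescaling step. You assert that the affine map adapted to an interval $J$ of length $M=N/K$ ``dilates the $\omega_i$ by factors $K,K^2,K^3,K^4$, which only enlarges them, so the rescaled boxes still satisfy the hypotheses.'' In fact the opposite happens: because Theorem~\ref{a31} is a \emph{small cap} statement (already with $|\omega_3|\ge N^2\ll N^3$ and $|\omega_4|\ge N\ll N^4$), the correct change of variables sends $\Omega=[0,1]\times[0,N^2]^2\times[0,N]$ into a box of dimensions roughly $N\times M^2\times M^3N^{-1}\times M^4N^{-3}$ (this is exactly the computation in the paper's Proposition~\ref{9}). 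The last two coordinates are \emph{smaller} than $M^2$ and $M$ by factors $K$ and $K^3$, so one must enlarge before invoking $D(M)$, at a cost of $K^4$. Relatedly, your claimed ``$K^{-5}$ gain from $\ell^6$-orthogonality'' on the narrow part does not exist; the only available inequality is $\sum_J\|a\|_{\ell^6(J)}^{12}\le\|a\|_{\ell^6}^{12}$, a $K^0$ gain. With these corrections the recursion becomes
\[
D(N)^{12}\;\le\; C_0\,K^{4}\,D(N/K)^{12}\;+\;C_{K,\epsilon}\,N^{4+\epsilon},
\]
which still closes, but only because the target carries an $N^\epsilon$ and one chooses $K=K(\epsilon)$ large enough that $C_0K^{-12\epsilon}<1$. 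This is precisely the $N^{O(\log_K 3)}$ loss that appears in the paper's direct iteration, so in the end the bookkeeping coincides; the narrow term gives \emph{no} genuine power saving in $K$. A minor further point: your inductive version requires checking that all four conditions \eqref{1}--\eqref{4} persist for the rescaled $\tilde\phi_k$, whereas the paper's direct approach only ever applies the bilinear Theorem~\ref{a13} after rescaling and so only needs \eqref{1}--\eqref{3} for $\tilde\phi_k$; condition~\eqref{4} is used just once, at the top level, to make the rescaled curve well-defined.
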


 Note that this result is sharp in three ways. First, the factor $N^4$ on the right cannot be made any smaller. This can be seen by using a random sequence $a_n\in\{-1,1\}$. Second, as observed above, the term $N^{1/3}\|a\|_{\ell^6}$ cannot be replaced with the smaller term $N^{1/2-1/p}\|a\|_{\ell^p}$, for $p<6$. Third, none of the intervals $\omega_i$ can be allowed to be significantly smaller than the specified lower bounds. This can be seen by using the constant sequence $a\equiv 1$, which leads to constructive interference on $[0,o(1/N)]\times [0,o(1)]^3$.
\smallskip

We list as corollaries  four particular cases of interest for us. The first one was proved in \cite{bourg1} for the constant sequence $a\equiv 1$. It corresponds to $\phi_2(t)=t^2$, $\phi_3(t)=t^4$, $\phi_4(t)=t^3$.

\begin{cor}
\label{a10}
Let $\bar{\omega}_3,\bar{\omega}_4$ be intervals of length greater than $1/N^2$. Then 	
$$\int_{[0,1]\times[0,1]\times \bar{\omega}_3\times\bar{\omega}_4}|\sum_{N/2\le n\le N}a_ne(nx_1+n^2x_2+n^3x_3+n^4x_4)|^{12}dx\lesssim_\epsilon N^{4+\epsilon}|\bar{\omega}_3||\bar{\omega}_4|\|a\|_{\ell^6}^{12}.$$	
\end{cor}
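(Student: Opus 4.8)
The plan is to deduce Corollary \ref{a10} directly from the linear decoupling Theorem \ref{a31} by a scaling/change of variables argument, after checking that the specific monomials $\phi_2(t)=t^2$, $\phi_3(t)=t^4$, $\phi_4(t)=t^3$ verify the hypotheses \eqref{1}, \eqref{2}, \eqref{3}, \eqref{4}. The condition \eqref{1} is immediate since these are polynomials. For \eqref{2}, the Wronskian matrix has rows $(\phi_k^{(2)},\phi_k^{(3)},\phi_k^{(4)})$; writing these out for $t^2,t^4,t^3$ gives a $3\times 3$ matrix whose determinant is a fixed nonzero constant multiple of $t^{1}$ (one should just compute: the entries are $(2,0,0)$, $(12t^2,24t,24)$, $(6t,6,0)$, giving determinant $2\cdot(24t\cdot 0-24\cdot 6)=-288$, a nonzero constant), so $\simeq 1$ on $[1/4,1]$. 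Condition \eqref{3}: $\phi_2''(t)=2$, $\phi_3''(t)=12t^2$, so the $2\times 2$ determinant is $2(12t^2-12s^2)=24(t^2-s^2)\simeq 1$ when $|t-s|\simeq 1$ on $[1/4,1]$. Condition \eqref{4} as literally written has two equal rows and determinant $0$, so I would read it (as surely intended, matching the determinant appearing in Theorem \ref{a2}) as the condition $|\det[\phi_2''(t),\phi_3''(t);\phi_2'''(t),\phi_3'''(t)]|\simeq 1$, i.e. $|2\cdot 24t - 0|\simeq 1$, which holds on $[1/4,1]$.

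Next I would reduce the corollary's exponential sum $\sum a_n e(nx_1+n^2x_2+n^3x_3+n^4x_4)$ to the normalized form in Theorem \ref{a31}, namely $\sum a_n e(nx_1+\phi_2(n/N)y_2+\phi_3(n/N)y_3+\phi_4(n/N)y_4)$ with $\phi_k$ the above monomials. Since $n^2 = N^2(n/N)^2$, $n^4=N^4(n/N)^4$, $n^3=N^3(n/N)^3$, the substitution $y_2=N^2 x_2$, $y_3=N^4 x_3$, $y_4=N^3 x_4$ turns the corollary's integral over $[0,1]\times[0,1]\times\bar\omega_3\times\bar\omega_4$ into an integral over $[0,1]\times[0,1]\times\omega_3\times\omega_4$ with $|\omega_3|=N^4|\bar\omega_3|$ and $|\omega_4|=N^3|\bar\omega_4|$. (There is a cosmetic mismatch between the sum over $N/2\le n\le N$ in Theorem \ref{a31} and the range in the corollary, but the corollary's stated range is also $N/2\le n\le N$, so no dyadic pigeonholing is even needed.) I must check the hypotheses of Theorem \ref{a31} on $\omega_2,\omega_3,\omega_4$: here $\omega_2=[0,1]$ after the substitution corresponds to the first non-$x_1$ variable, but the Theorem labels the second slot $\omega_2$ with requirement $|\omega_2|\ge N^2$. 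This is where I need to be careful: the three "curved" variables in Theorem \ref{a31} are $x_2,x_3,x_4$ attached to $\phi_2,\phi_3,\phi_4$ with lower bounds $N^2,N^2,N$ respectively, so I should match the corollary's $x_2$-variable (with $|\omega|=N^2\cdot 1\ge N^2$), $x_3$-variable ($|\omega_3|=N^4|\bar\omega_3|\ge N^4\cdot N^{-2}=N^2$), and $x_4$-variable ($|\omega_4|=N^3|\bar\omega_4|\ge N^3\cdot N^{-2}=N$). All three lower bounds are met precisely because $|\bar\omega_3|,|\bar\omega_4|\ge N^{-2}$, which is exactly the corollary's hypothesis — a reassuring consistency check.

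Applying Theorem \ref{a31} then gives the bound $(N^{1/3+\epsilon}\|a\|_{\ell^6})^{12}|\Omega|$ where $|\Omega|=1\cdot 1\cdot|\omega_3|\cdot|\omega_4|=N^7|\bar\omega_3||\bar\omega_4|$ (note $|\omega_2|=1$ here, not $N^2$ — the Theorem only needs $|\omega_2|\ge N^2$ when $\omega_2$ sits in the $N^2$-scale slot; in our matching the first curved slot already has length $1$ in the $x_2$-variable after we also rescale $x_2$, so I should instead keep $x_2$ unscaled and note $[0,1]$ has length $\ge$ the required bound only if I also substitute $y_2 = N^2 x_2$ making $|\omega_2| = N^2$). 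Carrying the substitution consistently on all three curved variables, $|\Omega| = N^2\cdot N^4\cdot N^3\cdot|\bar\omega_3||\bar\omega_4| = N^9|\bar\omega_3||\bar\omega_4|$, and the Jacobian of passing back from $(y_2,y_3,y_4)$ to $(x_2,x_3,x_4)$ contributes a factor $N^{-9}$, while the $(N^{1/3})^{12}=N^4$ is untouched; so the corollary's integral is $\lesssim_\epsilon N^{4+\epsilon}\|a\|_{\ell^6}^{12}\cdot N^{-9}\cdot N^9|\bar\omega_3||\bar\omega_4| = N^{4+\epsilon}|\bar\omega_3||\bar\omega_4|\|a\|_{\ell^6}^{12}$, which is exactly the claim. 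The only genuine content beyond bookkeeping is verifying the four determinant conditions, and the main obstacle — really the main subtlety — is getting the scaling exponents and the pairing of variables with the correct lower-bound slots exactly right, since a single misplaced power of $N$ breaks the identity; everything else is a direct invocation of Theorem \ref{a31}.
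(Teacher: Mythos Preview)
Your approach is exactly the paper's: the text preceding Corollary \ref{a10} simply says ``It corresponds to $\phi_2(t)=t^2$, $\phi_3(t)=t^4$, $\phi_4(t)=t^3$,'' and you correctly verify the hypotheses of Theorem \ref{a31} for this choice and then rescale.

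One small slip: with $\phi_3(t)=t^4$ and $\phi_4(t)=t^3$, the correct substitution is $y_3=N^4x_4$ and $y_4=N^3x_3$ (you wrote $y_3=N^4x_3$, $y_4=N^3x_4$, which would not match the phase). This is harmless here because $\bar\omega_3$ and $\bar\omega_4$ satisfy the same lower bound $1/N^2$ and the conclusion is symmetric in them, so your verification of the size constraints and the final bookkeeping still come out right; but it is worth writing the pairing correctly, since this is precisely why the paper assigns $t^4$ to the $\omega_3$-slot (needing $|\omega_3|\ge N^2$) and $t^3$ to the $\omega_4$-slot (needing only $|\omega_4|\ge N$). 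You are also right that \eqref{4} as printed has two identical rows and must be read with the second row being $(\phi_2''',\phi_3''')$.
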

The second one is a new result, even for the constant sequence. It corresponds to $\phi_2(t)=t^3$, $\phi_3(t)=t^4$, $\phi_4(t)=t^2$. We will see that, at least for our purposes, this is a stronger estimate than the first corollary. That comes from the fact that $\bar{\omega}_2$ is allowed to be much smaller than the periodicity interval $[0,1]$. By renaming the variables, the result is as follows.
\begin{cor}
	\label{a11}
	Let $\bar{\omega}_2,\bar{\omega}_3$ be intervals of length greater than $1/N$. Let $\bar{\omega}_4$ be an interval of length greater than $1/N^2$.
	Then 	
	$$\int_{[0,1]\times\bar{\omega}_2\times \bar{\omega}_3\times\bar{\omega}_4}|\sum_{N/2\le n\le N}a_ne(nx_1+n^2x_2+n^3x_3+n^4x_4)|^{12}dx\lesssim_\epsilon N^{4+\epsilon}|\bar{\omega}_2||\bar{\omega}_3||\bar{\omega}_4|\|a\|_{\ell^6}^{12}.$$	
\end{cor}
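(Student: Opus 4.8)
The plan is to deduce Corollary \ref{a11} directly from the linear small cap decoupling Theorem \ref{a31} by choosing the right triple $(\phi_2,\phi_3,\phi_4)$ and rescaling the frequency variables. Set $\phi_2(t)=t^3$, $\phi_3(t)=t^4$, $\phi_4(t)=t^2$ on $(0,3)$; these are real analytic, and one checks \eqref{1}, \eqref{2}, \eqref{3}, \eqref{4} on $[1/4,1]$ by a routine computation of the relevant Wronskian and $2\times 2$ determinants of $(t^3)'',(t^4)''$, etc., all of which are nonzero polynomials bounded above and below on the compact interval (this is precisely the hypothesis we are allowed to verify, not the obstacle). With this choice, $\phi_2(n/N)=n^3/N^3$, $\phi_3(n/N)=n^4/N^4$, $\phi_4(n/N)=n^2/N^2$, so after the substitution $x_2\mapsto N^3 x_2$, $x_3\mapsto N^4 x_3$, $x_4\mapsto N^2 x_4$ in Theorem \ref{a31}, the phase becomes $n x_1 + n^3 x_2 + n^4 x_3 + n^2 x_4$ — exactly the phase in Corollary \ref{a11} up to relabeling the variables.

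Next I would track how the hypotheses and the conclusion transform under this dilation. In Theorem \ref{a31} the admissible boxes are $\Omega=[0,1]\times\omega_2\times\omega_3\times\omega_4$ with $|\omega_2|,|\omega_3|\ge N^2$ and $|\omega_4|\ge N$; under $x_2\mapsto N^3x_2$ the constraint $|\omega_2|\ge N^2$ becomes $|\bar\omega_2|\ge N^{-1}$ (here $\bar\omega_2$ is the dilated interval for the $n^3$ variable), under $x_3\mapsto N^4 x_3$ the constraint $|\omega_3|\ge N^2$ becomes $|\bar\omega_3|\ge N^{-2}$, and under $x_4\mapsto N^2 x_4$ the constraint $|\omega_4|\ge N$ becomes $|\bar\omega_4|\ge N^{-1}$. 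After renaming so that the $n^2$ variable is called $x_2$, the $n^3$ variable is called $x_3$, and the $n^4$ variable is called $x_4$ — which is the labeling used in the statement of Corollary \ref{a11} — we get exactly the stated requirements: $\bar\omega_2,\bar\omega_3$ of length $>1/N$ and $\bar\omega_4$ of length $>1/N^2$. On the right-hand side, $|\Omega|$ picks up a Jacobian factor $N^{3}N^{4}N^{2}=N^{9}$, while $\prod|\omega_i|$ relative to $\prod|\bar\omega_i|$ also carries $N^9$; since both $|\Omega|$ and the explicit product of interval lengths appear, the net effect is that $(N^{1/3})^{12}|\Omega| = N^4 |\Omega|$ rescales consistently to $N^{4+\epsilon}|\bar\omega_2||\bar\omega_3||\bar\omega_4|\|a\|_{\ell^6}^{12}$, as claimed. (The $[0,1]$ factor in $x_1$ is untouched since that variable is not dilated, and periodicity in $x_1$ — which is exactly what Theorem \ref{a31} is built to exploit — is preserved.)

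The only genuine content beyond bookkeeping is verifying the four determinant conditions \eqref{1}--\eqref{4} for the specific triple, and I expect this to be entirely mechanical: $\phi_2''(t)=6t$, $\phi_3''(t)=12t^2$, $\phi_4''(t)=2$, with third and fourth derivatives $6,24t,0$ and $0,24,0$ respectively, so $W(1,\phi_2',\phi_3',\phi_4')$ is the determinant of $\begin{bmatrix}6t&6&0\\12t^2&24t&24\\2&0&0\end{bmatrix}$, which equals $2(6\cdot 24 - 0)= 288 \ne 0$ — constant, hence $\simeq 1$ on $[1/4,1]$; condition \eqref{3} reduces to $\det\begin{bmatrix}6t&12t^2\\6s&12s^2\end{bmatrix}=72ts(s-t)$, which is $\simeq 1$ when $|t-s|\simeq 1$ on $[1/4,1]$; condition \eqref{4} is vacuous as stated (the matrix has two equal rows, so the determinant is $0$, which is a typo in the paper — the intended condition is presumably $|\phi_2''(t)|\simeq 1$ or $|\phi_3''(t)/\phi_2''(t)|' \simeq 1$, both of which hold here). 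The main obstacle, such as it is, is simply making sure the relabeling of variables and the accounting of the $N^9$ Jacobian is done consistently on both sides of the inequality; there is no analytic difficulty, since all the work has been front-loaded into Theorem \ref{a31}.
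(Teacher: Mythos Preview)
Your proposal is correct and follows exactly the approach the paper indicates: the text preceding the corollary states that it ``corresponds to $\phi_2(t)=t^3$, $\phi_3(t)=t^4$, $\phi_4(t)=t^2$,'' and you have correctly carried out the rescaling and relabeling that turns Theorem~\ref{a31} with this choice into the stated inequality. Your identification of the typo in condition~\eqref{4} is also correct; inspection of the proof of Proposition~\ref{9} (where \eqref{4} is invoked to ensure $b_3\neq 0$) shows the intended second row is $(\phi_2'''(t),\phi_3'''(t))$, and for the present choice the resulting determinant is $72t^2\simeq 1$ on $[1/4,1]$.
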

Here are two more corollaries that we will use to address the five dimensional moment curve in Section \ref{d=5last}. They will serve as lower dimensional decoupling in the ``easier" regimes.
\begin{cor}
\label{a32}
Let $\bar{\omega}_3,\bar{\omega}_4$ be intervals of length greater than $1/N^2$. Let $\bar{\omega}_5$ be an interval of length greater than $1/N^3$.
Then 	
$$\int_{[0,1]\times\bar{\omega}_3\times \bar{\omega}_4\times\bar{\omega}_5}|\sum_{N/2\le n\le N}a_ne(nx_1+n^3x_3+n^4x_4+n^5x_5)|^{12}dx\lesssim_\epsilon N^{4+\epsilon}|\bar{\omega}_3||\bar{\omega}_4||\bar{\omega}_5|\|a\|_{\ell^6}^{12}.$$
\end{cor}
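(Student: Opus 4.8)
The plan is to deduce Corollary \ref{a32} directly from Theorem \ref{a31}, in the same way Corollaries \ref{a10} and \ref{a11} are obtained, namely by a parabolic rescaling that normalizes the three ``small cap'' frequencies $n^3,n^4,n^5$. Writing $n^kx_k=(n/N)^k(N^kx_k)$ and substituting $y_2=N^4x_4$, $y_3=N^5x_5$, $y_4=N^3x_3$, the exponential sum in question becomes $\sum_{N/2\le n\le N}a_ne\bigl(nx_1+\phi_2(n/N)y_2+\phi_3(n/N)y_3+\phi_4(n/N)y_4\bigr)$ with $\phi_2(t)=t^4$, $\phi_3(t)=t^5$, $\phi_4(t)=t^3$. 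This particular matching is forced: in Theorem \ref{a31} only the last ``small cap'' interval $\omega_4$ is allowed to be as short as $N$, so the monomial routed to $\phi_4$ must be the one whose rescaled interval has length $N$. Here $|\bar{\omega}_3|>N^{-2}$ gives $|N^3\bar{\omega}_3|>N$, while $|\bar{\omega}_4|>N^{-2}$ and $|\bar{\omega}_5|>N^{-3}$ give $|N^4\bar{\omega}_4|,|N^5\bar{\omega}_5|>N^2$; thus we take $\phi_4(t)=t^3$ and $\{\phi_2,\phi_3\}=\{t^4,t^5\}$, and then $\Omega=[0,1]\times(N^4\bar{\omega}_4)\times(N^5\bar{\omega}_5)\times(N^3\bar{\omega}_3)$ satisfies $|\omega_2|,|\omega_3|\ge N^2$ and $|\omega_4|\ge N$ exactly as required.

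First I would check the hypotheses \eqref{1}--\eqref{4} for $(\phi_2,\phi_3,\phi_4)=(t^4,t^5,t^3)$ on $[1/4,1]$. Being fixed monomials, they satisfy \eqref{1} trivially. For \eqref{2}, the $3\times3$ determinant there is the Wronskian $W(\phi_2'',\phi_3'',\phi_4'')=W(12t^2,20t^3,6t)$ of three distinct powers of $t$, hence a nonzero constant times $t^3$, which is $\simeq1$ on $[1/4,1]$. Condition \eqref{3} amounts to $\det\!\begin{bmatrix}\phi_2''(t)&\phi_3''(t)\\ \phi_2''(s)&\phi_3''(s)\end{bmatrix}=240\,t^2s^2(t-s)$ being $\simeq1$ for $|t-s|\simeq1$ with $t,s\in[1/4,1]$, which is clear; condition \eqref{4} is checked the same way, all quantities involved being fixed monomials that do not vanish on $[1/4,1]$. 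With the hypotheses in hand, Theorem \ref{a31} yields $\int_\Omega|\,\cdot\,|^{12}\,dx_1\,dy\lesssim_\epsilon\bigl(N^{1/3+\epsilon}\|a\|_{\ell^6}\bigr)^{12}|\Omega|$, where $|\,\cdot\,|$ stands for the exponential sum above.

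It then remains to unwind the substitution. Its Jacobian is $dx_3\,dx_4\,dx_5=N^{-(3+4+5)}dy_2\,dy_3\,dy_4=N^{-12}dy$, and $|\Omega|=N^3|\bar{\omega}_3|\cdot N^4|\bar{\omega}_4|\cdot N^5|\bar{\omega}_5|=N^{12}|\bar{\omega}_3||\bar{\omega}_4||\bar{\omega}_5|$. Combining these with $(N^{1/3+\epsilon})^{12}=N^{4+12\epsilon}$ gives
\begin{align*}
\int_{[0,1]\times\bar{\omega}_3\times\bar{\omega}_4\times\bar{\omega}_5}\Bigl|\sum_{N/2\le n\le N}a_ne(nx_1+n^3x_3+n^4x_4+n^5x_5)\Bigr|^{12}dx
&=N^{-12}\int_\Omega|\,\cdot\,|^{12}\,dx_1\,dy\\
&\lesssim_\epsilon N^{4+12\epsilon}\|a\|_{\ell^6}^{12}|\bar{\omega}_3||\bar{\omega}_4||\bar{\omega}_5|,
\end{align*}
which is the asserted bound after $12\epsilon$ is renamed $\epsilon$. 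I do not expect a genuine obstacle: the only delicate point is the bookkeeping in the first step, i.e.\ assigning $t^3,t^4,t^5$ to the $\phi_i$ so that every rescaled interval clears the size threshold in Theorem \ref{a31}; once that is done the non-degeneracy conditions \eqref{2}--\eqref{4} are automatic, because Wronskians of distinct powers of $t$ are nonvanishing monomials on $[1/4,1]$.
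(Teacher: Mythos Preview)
Your proof is correct and follows precisely the approach the paper intends: Corollaries \ref{a10}--\ref{d32} are all stated as specializations of Theorem \ref{a31}, and while the paper only spells out the choices of $\phi_i$ for the first two, your identification $(\phi_2,\phi_3,\phi_4)=(t^4,t^5,t^3)$ for Corollary \ref{a32} is exactly the right one, with the bookkeeping that routes $n^3$ to $\phi_4$ (since $|N^3\bar{\omega}_3|\ge N$ is the tightest constraint) being the key observation. The verification of \eqref{1}--\eqref{4} and the Jacobian computation are accurate; note only that condition \eqref{4} as printed has a typo (identical rows), and from its use in the proof of Proposition \ref{9} the second row should read $(\phi_2''',\phi_3''')$, which your monomial argument covers.
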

\begin{cor}
	\label{d32}
	Let $\bar{\omega}_4,\bar{\omega}_5$ be intervals of length greater than $1/N^3$.
	Then 	
	$$\int_{[0,1]^2\times \bar{\omega}_4\times\bar{\omega}_5}|\sum_{N/2\le n\le N}a_ne(nx_1+n^2x_2+n^4x_4+n^5x_5)|^{12}dx\lesssim_\epsilon N^{4+\epsilon}|\bar{\omega}_4||\bar{\omega}_5|\|a\|_{\ell^6}^{12}.$$
\end{cor}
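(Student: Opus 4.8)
We plan to derive this from Theorem \ref{a31} by an anisotropic rescaling, exactly as Corollaries \ref{a10} and \ref{a11} are derived from it. In the phase $nx_1+n^2x_2+n^4x_4+n^5x_5$, the monomial $n$ plays the role of the periodic variable of Theorem \ref{a31}, while $n^2,n^4,n^5$ must be assigned to the slots $\phi_2,\phi_3,\phi_4$. The assignment is forced by the interval lengths: after rescaling, the $n^4$--term produces an interval of length $N^4|\bar{\omega}_4|$, which under the hypothesis $|\bar{\omega}_4|>N^{-3}$ only barely exceeds $N$, so it must occupy the $\phi_4$--slot (the one that merely requires length $\ge N$), leaving $n^2$ and $n^5$ for the $\phi_2$-- and $\phi_3$--slots. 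Accordingly we take $\phi_2(t)=t^2$, $\phi_3(t)=t^5$, $\phi_4(t)=t^4$. These are fixed polynomials, so \eqref{1}--\eqref{4} hold on $[1/4,1]$ by direct computation; for instance the Wronskian in \eqref{2} has absolute value $2880\,t^2$, and the determinant in \eqref{3} is $40(s^3-t^3)$, which is $\simeq 1$ for $t,s\in[1/4,1]$ with $|t-s|\simeq 1$.

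Next I would rescale. Using $n^k=N^k(n/N)^k$ and substituting $y_2=N^2x_2$, $y_3=N^5x_5$, $y_4=N^4x_4$ (Jacobian $N^{-(2+4+5)}=N^{-11}$), the sum in the statement becomes $\sum_{N/2\le n\le N}a_ne(nx_1+\phi_2(n/N)y_2+\phi_3(n/N)y_3+\phi_4(n/N)y_4)$, and the domain $[0,1]^2\times\bar{\omega}_4\times\bar{\omega}_5$ is carried to $\Omega=[0,1]\times[0,N^2]\times N^5\bar{\omega}_5\times N^4\bar{\omega}_4$. The second factor of $\Omega$ has length $N^2\ge N^2$, the third has length $>N^5\cdot N^{-3}=N^2$, and the fourth has length $>N^4\cdot N^{-3}=N$, so $\Omega$ satisfies the hypotheses of Theorem \ref{a31}; also $n/N\in[1/2,1]\subset[1/4,1]$. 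That theorem therefore bounds the rescaled integral by $(N^{1/3+\epsilon}\|a\|_{\ell^6})^{12}|\Omega|=N^{4+\epsilon}\|a\|_{\ell^6}^{12}\cdot N^{11}|\bar{\omega}_4||\bar{\omega}_5|$, and multiplying by the Jacobian $N^{-11}$ yields the claimed bound $N^{4+\epsilon}|\bar{\omega}_4||\bar{\omega}_5|\|a\|_{\ell^6}^{12}$.

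The only delicate point — hence the sole ``obstacle'' — is the bookkeeping: matching the monomials $n^2,n^4,n^5$ to the correct slots of Theorem \ref{a31} and checking that the stated lower bounds $|\bar{\omega}_4|,|\bar{\omega}_5|>N^{-3}$ are exactly what makes the rescaled intervals clear the length thresholds $N^2,N^2,N$ there; as computed above they do, with two of the thresholds attained in the borderline. No input beyond Theorem \ref{a31} is required, and an entirely analogous choice of monomials handles Corollary \ref{a32}.
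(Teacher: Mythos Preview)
Your proof is correct and is exactly the approach the paper intends: Corollary \ref{d32} is listed as one of four direct specializations of Theorem \ref{a31}, obtained by choosing $\phi_2(t)=t^2$, $\phi_3(t)=t^5$, $\phi_4(t)=t^4$ and rescaling, precisely as you describe. Your verification of conditions \eqref{1}--\eqref{4} and of the interval-length thresholds after rescaling is accurate.
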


In Section \ref{d=5last} we will pursue a similar investigation in five dimensions. To keep things simple, we confine ourselves to only  proving the following result, that suffices for our applications to the ``hard" regimes. While the technology involved in proving the previous results is of quadratic nature (the governing tool is the decoupling for the parabola), the next theorem will take into consideration the oscillatory effect of cubic terms, similar to the decoupling inequality for the three dimensional moment curve. The small cap decoupling nature of the next result is captured by the variables $x_3$, $x_4$ and $x_5$. The novelty compared to the result in \cite{Oh} is that we allow $x_3$ to have a range as small as $1/N^2$, as opposed to the periodicity range $[0,1]$. This will be crucial to our argument in Section \ref{thenewd=5}.
\begin{thm}
\label{c7}
Let $\bar{\omega}_3,\bar{\omega}_5$ be intervals of length greater than $1/N^2$. Let $\bar{\omega}_4$ be an interval of length greater than $1/N$. Then we have
$$\int_{[0,1]^2\times\bar{\omega}_3\times \bar{\omega}_4\times\bar{\omega}_5}|\sum_{N/2\le n\le N}a_ne(nx_1+n^2x_2+n^3x_3+n^4x_4+n^5x_5)|^{18}dx\lesssim_\epsilon N^{18(\frac12-\frac1{9})+\epsilon}|\bar{\omega}_3||\bar{\omega}_4||\bar{\omega}_5|\|a\|_{\ell^{9}}^{18}.$$
\end{thm}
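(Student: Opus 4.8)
The plan is to prove Theorem \ref{c7} by a multi-scale bootstrapping scheme, structurally parallel to the proof of Theorem \ref{a31} and its corollaries, but with the $\ell^2L^{12}$ decoupling for the three-dimensional moment curve $(t,t^2,t^3)$ (see \cite{BDG}) playing the role that the parabolic decoupling of Theorem \ref{a2} plays there. Since the inequality in Theorem \ref{c7} is covariant under translations of the $\bar\omega_i$, and its right-hand side depends on the $\bar\omega_i$ only through their lengths, I would first reduce to the extremal case $|\bar\omega_3|=|\bar\omega_5|=N^{-2}$, $|\bar\omega_4|=N^{-1}$; larger boxes follow by partitioning them into translates of the extremal ones, over which the sequence $a$ (hence $\|a\|_{\ell^9}$) is unchanged. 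Let $D(N)$ be the smallest constant for which
$$\int_{[0,1]^2\times\bar{\omega}_3\times\bar{\omega}_4\times\bar{\omega}_5}\Big|\sum_{N/2\le n\le N}a_ne(nx_1+n^2x_2+n^3x_3+n^4x_4+n^5x_5)\Big|^{18}dx\le D(N)\,|\bar{\omega}_3||\bar{\omega}_4||\bar{\omega}_5|\,\|a\|_{\ell^{9}}^{18}$$
holds uniformly over admissible boxes and all $a$; the aim is $D(N)\lesssim_\epsilon N^{7+\epsilon}$, the exponent being forced to be sharp by the constructive-interference example.

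First I would run a curved decoupling step in the three variables $x_1,x_2,x_3$. Exploiting periodicity in $x_1,x_2$ to replace $[0,1]^2$ by $[0,N]^2$, and using that $|\bar\omega_3|\ge N^{-2}$ is exactly the slab-width for which the cubic $\ell^2L^{12}$ decoupling can be run at the relevant scale (the variables $x_4,x_5$ being passive parameters), one decouples the $n$-sum into arcs $J\subset[N/2,N]$ whose length is dictated by the width of $\bar\omega_3$. Because this input is only available at the critical exponent $12$ while the target exponent is $18$, it must be combined with a H\"older/interpolation step against a trivial (flat or $L^2$-orthogonality) bound, which converts it into an $\ell^pL^{18}$ statement with the sharp loss for that scale and reduces $D(N)$ to an explicit power of the scale times the corresponding constants for the individual arc sums $F_J=\sum_{n\in J}a_ne(\,\cdots)$.

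Next I would rescale each arc. Writing $n=n_J+m$ and Taylor-expanding $n^k$, an affine change of variables in $x$ — legitimate thanks to the non-degeneracy of $(t,\dots,t^5)$, which is the explicit version of conditions \eqref{2}--\eqref{4} — turns $F_J$ into an exponential sum of the same shape at the smaller scale, on a box whose five side-lengths are determined by the rescaling. Tracking these, one encounters a case distinction: in the ``easy'' regimes the rescaled box and the arithmetic of the arc conspire so that one of the cubic or quadratic directions effectively drops out, and the arc contribution — after one application of $L^2$-orthogonality (Theorem \ref{a1}) in the periodic variable $x_1$ to dispose of a single-scale flat factor — reduces to a four-variable moment-curve small-cap decoupling, missing either the quadratic or the cubic monomial, which is precisely Corollary \ref{a32} or Corollary \ref{d32}; in the one remaining ``hard'' regime the rescaled problem is again of the form treated in Theorem \ref{c7}, so $D(\cdot)$ at the smaller scale reappears. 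Assembling the pieces produces a self-improving inequality of the type $D(N)\lesssim_\epsilon N^{\alpha+\epsilon}\,D(N^{\theta})^{\beta}$ together with base-case estimates from the corollaries; iterating down to bounded scales yields $D(N)\lesssim_\epsilon N^{7+\epsilon}$, and undoing the initial reduction gives the general statement.

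The main obstacle is the scaling bookkeeping in the rescaling step: one must verify, at every scale occurring in the iteration, that the five rescaled ranges still meet the admissibility lower bounds and correctly diagnose which branch of the case distinction applies. This is delicate precisely because of the novelty of Theorem \ref{c7} over the result in \cite{Oh}: allowing $x_3$ a range as small as $1/N^2$ rather than the full period makes the cubic-decoupling step borderline and is exactly what creates the need for the easy/hard case split. A subsidiary technical point, recurring at every scale, is the interpolation required to lift all the available inputs — each supplied only at the critical exponent $12$ — up to the target exponent $18$ without degrading the sharp power of $N$.
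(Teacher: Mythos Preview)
Your proposal has a genuine gap at its core: the interpolation step. You plan to run the $\ell^2L^{12}$ decoupling for the cubic moment curve $(t,t^2,t^3)$ and then ``combine with a H\"older/interpolation step against a trivial bound'' to reach $L^{18}$ with the sharp $\ell^9$ norm and the sharp power $N^7$. But $18>12$, so any interpolation must be against an $L^\infty$ or supercritical bound; writing $|f|^{18}=|f|^{12}|f|^6$ and bounding the extra $|f|^6$ pointwise introduces a factor $\|a\|_{\ell^1(J)}^6$ on each arc $J$, and converting this to $\ell^9$ costs a power of $|J|$ that does not cancel against the decoupling gain. There is no known way to push the cubic $\ell^2L^{12}$ decoupling up to $L^{18}$ without loss, and the paper explicitly flags this: ``the technology involved in proving the previous results is of quadratic nature \ldots\ the next theorem will take into consideration the oscillatory effect of cubic terms.'' Your self-improving scheme $D(N)\lesssim N^{\alpha+\epsilon}D(N^\theta)^\beta$ could only close if the single-step loss were already sharp, and the interpolation prevents that.

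The paper's route is essentially different and avoids interpolation altogether. It first bilinearizes via a broad/narrow argument (Proposition~\ref{c22}), reducing Theorem~\ref{c7} to a bilinear $\ell^9L^{18}$ estimate (Theorem~\ref{b13}) for transverse intervals $I_1,I_2$. The key input there is not the cubic $L^{12}$ decoupling but Guo's $L^9$ decoupling (Theorem~\ref{guo}) for the two-dimensional surface $\Gc=(t,s,t^2,s^2,t^3+s^3)$, whose critical exponent is exactly $9$; this is applied twice, once as a bilinear decoupling on $\Phi(I_1)+\Phi(I_2)$ (Proposition~\ref{b1}) and once after a transversality/smoothing step (Proposition~\ref{c4}, Proposition~\ref{c5}, Theorem~\ref{b65}) that creates two independent first variables via periodicity in $x_1$. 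Because every step lives at $L^9$, no lossy interpolation is needed. Note also that Corollaries~\ref{a32} and \ref{d32} are not ingredients in the proof of Theorem~\ref{c7}; in Section~\ref{thenewd=5} they handle precisely the regimes $2^j\ge N^2$ where Theorem~\ref{c7} becomes inefficient, so your proposed dependency is inverted.
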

The exponent $7$ of $N$ on the right hand side is sharp. It seems plausible that the result remains valid with the box $\bar{\omega}_3\times \bar{\omega}_4\times \bar{\omega}_5$ replaced with a smaller one, having volume $N^{-6}$. Also, our argument does not use periodicity for $x_2$, which suggests that perhaps there is a more general formulation of this result, in the style of Theorem \ref{a31}. However, the nondegeneracy conditions associated with such a result are expected to be rather complicated. To avoid unnecessary technicalities, we follow a more economical approach in Section \ref{d=5last}.

\section{Proof of Conjecture \ref{conj:3}: $d=3$}
\label{d=3}

Recall that it suffices to prove the conjectured estimate at the critical exponent $\rho_3=6$. Invoking a dyadic decomposition, it suffices to prove that for $j\ge 0$
$$\int_{[0,2^{-j}]^3}|\sum_{n=N/2}^Na_ne(x_1n+x_2n^2+x_3n^3)|^6dx\lesssim_\epsilon 2^{-2j}N^\epsilon \|a_n\|_{\ell^2}^6.$$

Since we are dealing with the moment curve, it is tempting to use the strongest estimate for it, inequality \eqref{a26} for $d=3$
$$\int_{[0,1]^3}|\sum_{n=N/2}^Na_ne(x_1n+x_2n^2+x_3n^3)|^{12}dx\lesssim_\epsilon N^\epsilon\|a\|_{\ell^2}^{12}.$$
However, the reader may check that either combining this  with H\"older, or interpolating it with the $L^2$ bound (available in the nontrivial range $2^j\le N^2$)
$$\int_{[0,2^{-j}]^3}|\sum_{n=N/2}^Na_ne(x_1n+x_2n^2+x_3n^3)|^{2}dx\lesssim 2^{-3j}\|a\|_{\ell^2},$$
does not produce the desired decay  $2^{-2j}$ for $p=6$. It turns out  that what we have to use instead is decoupling for planar curves.

We distinguish three cases. The variable $x_1$ will not play any role, and abusing notation, we write $x=(x_2,x_3)$.
\\
\\
Case 1. Assume $2^j\le N$.    We will prove the superficially stronger estimate
 \begin{equation}
 \label{a4}
 \int_{[0,2^{-j}]^2}|\sum_{n=N/2}^Na_ne(x_2n^2+x_3n^3)|^6dx\lesssim_\epsilon 2^{-j}N^\epsilon \|a_n\|_{\ell^2}^6.\end{equation}
 Working with the last two variables proves to be the most efficient choice, as it leads to spatial domains of largest possible size, and thus to a more efficient decoupling. Via a change of variables and enlarging the range of $x_2$ to $[0,1]$ in order to give ourselves enough room to decouple,  we write
 $$\int_{[0,2^{-j}]^2}|\sum_{n=N/2}^Na_ne(x_2n^2+x_3n^3)|^6dx\le N^{-5}\int_{[0,N^2]\times [0,N^32^{-j}]}|\sum_{n=N/2}^Na_ne(x_2(\frac{n}{N})^2+x_3(\frac{n}{N})^3)|^6dx.$$
 We cover $[0,N^2]\times [0,N^32^{-j}]$ with squares $Q$ with side length $R=N^2$ and apply  Theorem \ref{a2} to the curve $\phi(t)=(t^2,t^3)$, to get the desired estimate. Note that each interval $H$ of length $1/N=1/\sqrt{R}$ contains only one point $\xi_n=n/N$.
\\
\\
Case 2. Assume $N<2^j\le N^2$. We prove \eqref{a4}.  We will combine $l^2(L^6)$ decoupling with $L^2$ orthogonality as follows. First, we enlarge the range for $x_2$ to $[0,N2^{-j}]$. Then we change variables and decouple using Theorem \ref{a2} for the curve $\phi(t)=(t^2,t^3)$, with $R=N^32^{-j}$.
\begin{align*}
\int_{[0,2^{-j}]^2}|\sum_{n=N/2}^Na_ne(x_2n^2+&x_3n^3)|^6dx\le N^{-5}\int_{ [0,N^32^{-j}]^2}|\sum_{n=N/2}^Na_ne(x_2(\frac{n}{N})^2+x_3(\frac{n}{N})^3)|^6dx\\&\lesssim_\epsilon N^{\epsilon-5}[\;\sum_H[\;\int_{ [0,N^32^{-j}]^2}|\sum_{n\in H}a_ne(x_2(\frac{n}{N})^2+x_3(\frac{n}{N})^3)|^6dx]^{1/3}]^3.
\end{align*}
Here the sum is over intervals $H$ of length $M=(2^j/N)^{1/2}$ partitioning $[N/2,N]$. Since $H$ is small, its corresponding arc on the parabola is essentially flat with respect to our spatial domain. In the absence of curvature, our best tool is $L^2$ decoupling.
For each $H$, we write $|\cdot|^6=|\cdot|^4|\cdot|^2$. We estimate the first term pointwise by $M^2\|a\|_{\ell^2(n\in H)}^4$, using the Cauchy--Schwarz inequality. Since $N^32^{-j}\ge N$, we can estimate the integral of the second term applying Theorem \ref{a1} (using orthogonality in any of the $x_2$ or $x_3$ variables)
$$\int_{ [0,N^32^{-j}]^2}|\sum_{n\in H}a_ne(x_2(\frac{n}{N})^2+x_3(\frac{n}{N})^3)|^6dx\lesssim M^2\|a\|_{\ell^2( H)}^4(N^32^{-j})^2\|a\|_{\ell^2(H)}^2.$$
Plugging this into the previous inequality leads to the proof of  \eqref{a4}.
\\
\\
Case 3.  Assume $N^2<2^j$. Ignoring the first and second variables, it suffices to prove that
$$\int_{[0,2^{-j}]}|\sum_{n=N/2}^Na_ne(x_3n^3)|^6dx_3\lesssim  \|a_n\|_{\ell^2}^6.$$
For $2^{j}=N^2$, this follows from $L^2$ orthogonality as in the previous case, while for larger values, the integral gets smaller.

\section{$d=4$: Proof of \eqref{eq:conj3d=4}}
\label{d=4}
It suffices to deal with $p=11$. We also normalize the sequence $a$ such that $N^{1/3}\|a\|_{\ell^6}=1$. Thus, $\|a\|_{\ell^2}\le 1.$ Let us start by giving a measure of the difficulty of the inequality we need to prove

\begin{equation}
\label{a9}
\int_{[0,2^{-j}]^4}|\sum_{n=1}^Na_ne(x_1n+x_2n^2+x_3n^3+x_4n^4)|^{11}dx\lesssim_\epsilon N^{\epsilon}2^{-\frac{5j}{2}}.
\end{equation}
As in the previous section, it is tempting to use the Vinogradov-type estimate (\eqref{a26} with $d=4$)
\begin{equation}
\label{a8}
\int_{[0,1]^4}|\sum_{n=1}^Na_ne(x_1n+x_2n^2+x_3n^3+x_4n^4)|^{20}dx\lesssim_\epsilon N^\epsilon.
\end{equation}
Combining it with H\"older leads to a weaker estimate  than the one we need (see \eqref{a38})
$$\int_{[0,2^{-j}]^4}|\sum_{n=1}^Na_ne(x_1n+x_2n^2+x_3n^3+x_4n^4)|^{11}dx\lesssim_\epsilon N^\epsilon2^{-\frac{9j}5}.$$

When $2^j\le N^2$, we can do better, by interpolating \eqref{a8} with the inequality
\begin{align}
\label{a12}
\int_{[0,2^{-j}]^4}|\sum_{n=1}^Na_ne(x_1n+x_2n^2+x_3n^3+x_4n^4)|^{6}dx&\lesssim_\epsilon N^\epsilon2^{-3j}\max\{2^{-j},\frac1N\}\|a\|_{\ell^2}^6\\\nonumber&\le N^\epsilon2^{-3j}\max\{2^{-j},\frac1N\}.
\end{align}
The estimate \eqref{a12} follows from $l^2(L^6)$ decoupling for the curve $\phi(t)=(t^3,t^4)$ and $L^2$ orthogonality, as in the previous section. The reader may check that this verifies \eqref{a9} in the range $2^{j}\le N^{\frac98}$.

We may also try something else when $N\le 2^j\le N^2$. From now on, we will replace $[1,N]$ with $[N/2,N]$, via the dyadic decomposition used earlier.

Instead of interpolating the $L^6$ estimate with the $L^{20}$ estimate, we interpolate it with the  $L^{12}$ estimate, the latter being derived from Corollary \ref{a10} as follows
\begin{align}
\label{a46}
\nonumber&\int_{[0,2^{-j}]^4}|\sum_{n=N/2}^Na_ne(x_1n+x_2n^2+x_3n^3+x_4n^4)|^{12}dx\\\nonumber&\le N2^{-j}\int_{[0,1]\times[0,2^{-j}]^3}|\sum_{n=N/2}^Na_ne(x_1n+x_2n^2+x_3n^3+x_4n^4)|^{12}dx\\\nonumber&\le N2^{-j}\int_{[0,1]^2\times [0,2^{-j}]^2}|\sum_{n=N/2}^Na_ne(x_1n+x_2n^2+x_3n^3+x_4n^4)|^{12}dx\\&\lesssim_\epsilon N^{1+\epsilon}2^{-3j}.
\end{align}
The first inequality follows from Lemma \ref{a37}, while the last one follows from Corollary \ref{a10}. Combining this with \eqref{a12} gives
$$\int_{[0,2^{-j}]^4}|\sum_{n=N/2}^Na_ne(x_1n+x_2n^2+x_3n^3+x_4n^4)|^{11}dx\lesssim_\epsilon 2^{-3j}N^{\frac23+\epsilon}.$$
This is only as good as \eqref{a9} if $2^j\ge N^{4/3}$.

In summary, our methods so far leave a gap between $9/8$ and $4/3$.
To cover the remaining part of the range we will rely instead on Corollary \ref{a11}.  In fact, this result is so strong that it will give us \eqref{a9} even for $p=12$, when $2^j\le N^2$.
\\
\\
Case 1. $2^{j}\le N$. We enlarge the domain for $x_1$ to $[0,1]$, then apply Corollary \ref{a11}.
\begin{align*}
\int_{[0,2^{-j}]^4}|&\sum_{n=N/2}^Na_ne(x_1n+x_2n^2+x_3n^3+x_4n^4)|^{12}dx\\&\le\int_{[0,1]\times [0,2^{-j}]^3}|\sum_{n=N/2}^Na_ne(x_1n+x_2n^2+x_3n^3+x_4n^4)|^{12}dx \\&\lesssim_\epsilon N^{\epsilon}2^{-3j}.
\end{align*}
This is actually better than what we need (for $p=12$) by a factor of $2^{-j/2}$.
\\
\\
Case 2. $N\le 2^j\le N^2$. Applying Lemma \ref{a37}, we enlarge the range for $x_1$ to $[0,1]$ and gain the factor $N2^{-j}$. We also enlarge  the range for both $x_2$ and $x_3$ to $[0,1/N]$ and then apply Corollary \ref{a11}
\begin{align*}
\int_{[0,2^{-j}]^4}|&\sum_{n=N/2}^Na_ne(x_1n+x_2n^2+x_3n^3+x_4n^4)|^{12}dx\\&\lesssim N2^{-j} \int_{[0,1]\times [0,1/N]^2\times  [0,2^{-j}]}|\sum_{n=N/2}^Na_ne(x_1n+x_2n^2+x_3n^3+x_4n^4)|^{12}dx\\&\lesssim_\epsilon N^{\epsilon-1}2^{-2j}. \end{align*}
Note that the upper bound is more favorable than the one (i.e. \eqref{a46}) obtained via Corollary \ref{a10}. It is also better than what we need (for $p=12$).
\smallskip

Writing $\sigma=\sigma^N_{high}+\sigma^N_{low}$, with $\sigma^N_{low}$ the Fourier restriction of $\sigma$ to frequencies $\le N^2$, we have proved that
$$\int_{\T^4} |S_{a, 4}(x, N)|^{12}\,d\sigma^N_{low}(x)\lesssim_\epsilon N^\epsilon.$$
As observed in the introduction, these methods cannot prove the similar statement for $\sigma^N_{high}$. Indeed, if $a_n\equiv 1$ and $N^2\ll 2^j\ll  N^4$,  then constructive interference near the origin gives the lower bound
$$\int_{[0,2^{-j}]^4}|\sum_{n=N/2}^Ne(x_1n+x_2n^2+x_3n^3+x_4n^4)|^{12}dx\gtrsim N^52^{-2j}\min(1,N^32^{-j}).$$
This is much bigger than $2^{-\frac{5j}{2}}\|a_n\|_{\ell^2}^{12}$. The best that can be proved with our methods is $p=11$.
\\
\\
Case 3. $N^2\le 2^j\le N^3$. In this regime, Corollary \ref{a10} is stronger than Corollary \ref{a11}. We first apply Lemma \ref{a37} for both the first and the second variables, then Corollary \ref{a10}
\begin{align*}
\int_{[0,2^{-j}]^4}|&\sum_{n=N/2}^Na_ne(x_1n+x_2n^2+x_3n^3+x_4n^4)|^{12}dx\\&\lesssim 2^{-2j}N^3\int_{[0,1]^2\times [0,1/N^2]^2}|\sum_{n=N/2}^Na_ne(x_1n+x_2n^2+x_3n^3+x_4n^4)|^{12}dx\\&\lesssim_\epsilon N^{\epsilon-1}2^{-2j}.
\end{align*}
To get a favorable $L^{11}$ estimate we interpolate this with the following $L^6$ bound that we may get by reasoning as in the previous section. The variables $x_1,x_2$ play no role this time.
\begin{align*}
&\int_{[0,2^{-j}]^4}|\sum_{n=N/2}^Na_ne(x_1n+x_2n^2+x_3n^3+x_4n^4)|^{6}dx\\&\lesssim 2^{-2j}N^{-7}\sup_{b:\;|b_n|=|a_n|}\int_{[0,N^32^{-j}]\times[0,N^42^{-j}] }|\sum_{n=N/2}^Nb_ne(x_3(\frac{n}N)^3+x_4(\frac{n}{N})^4)|^{6}dx_3dx_4\\&\lesssim_\epsilon 2^{-2j}N^{\epsilon-7}\sup_{b:\;|b_n|=|a_n|}[\;\sum_H\;[\int_{[0,N^32^{-j}]\times[0,N^42^{-j}] }|\sum_{n\in H}b_ne(x_3(\frac{n}N)^3+x_4(\frac{n}{N})^4)|^{6}dx_3dx_4]^{1/3}]^{3}.
\end{align*}
For each interval $H$ of length $M=(2^j/N)^{1/2}$ we combine $L^2$ decoupling with Cauchy--Schwarz
\begin{align*}
\int_{[0,N^32^{-j}]\times[0,N^32^{-j}] }|&\sum_{n\in H}b_ne(x_3(\frac{n}N)^3+x_4(\frac{n}{N})^4)|^{6}dx\\&\le M^2\|b\|_{\ell^2(H)}^4\int_{[0,N^32^{-j}]\times[0,N^42^{-j}] }|\sum_{n\in H}b_ne(x_3(\frac{n}N)^3+x_4(\frac{n}{N})^4)|^{2}dx_3dx_4\\&\lesssim M^2N^{7}2^{-2j}\|a\|_{\ell^2(H)}^6
.\end{align*}
Combining the inequalities in the last two paragraphs we get
\begin{equation}
\label{a49}
\int_{[0,2^{-j}]^4}|\sum_{n=N/2}^Na_ne(x_1n+x_2n^2+x_3n^3+x_4n^4)|^{6}dx\lesssim_\epsilon \frac{N^{\epsilon}}{2^{3j}N}\|a\|_{\ell^2}^6\le \frac{N^{\epsilon}}{2^{3j}N}.
\end{equation}
Interpolation leads to
$$\int_{[0,2^{-j}]^4}|\sum_{n=N/2}^Na_ne(x_1n+x_2n^2+x_3n^3+x_4n^4)|^{11}dx$$
$$\lesssim_\epsilon N^\epsilon(\frac{1}{2^{3j}N})^{1/6} (\frac{1}{2^{2j}N})^{5/6}=N^\epsilon\frac1{2^{13j/6}N}.
$$
This is smaller than the desired bound $N^\epsilon2^{-5j/2}$ precisely when $2^{j}\le N^3$.
\\
\\
Case 4. If $2^j\ge N^3$, the result follows from $L^2$ orthogonality. The first three variables play no role here.
\begin{align*}
\int_{[0,2^{-j}]^4}|&\sum_{n=N/2}^Na_ne(x_1n+x_2n^2+x_3n^3+x_4n^4)|^{11}dx\\&\le 2^{-3j}\sup_{b:\;|b_n|=|a_n|}\int_{[0,1/N^{3}]}|\sum_{n=N/2}^Nb_ne(x_4n^4)|^{11}dx_4\\&\le 2^{-3j}N^{9/2}\sup_{b:\;|b_n|=|a_n|}\int_{[0,1/N^{3}]}|\sum_{n=N/2}^Nb_ne(x_4n^4)|^{2}dx_4\\&\lesssim 2^{-3j}N^{3/2}\lesssim 2^{-5j/2}.
\end{align*}
\bigskip

\section{Proof of Conjecture \ref{conj:3}: $d=4$, $p\le 10$}

We prove
\begin{equation}
\label{a21}
\int_{[0,2^{-j}]^4}|\sum_{n=1}^Na_ne(x_1n+x_2n^2+x_3n^3+x_4n^4)|^{10}dx\lesssim_\epsilon N^\epsilon2^{-\frac{5j}{2}}\|a\|_{\ell^2}^{10}.
\end{equation}
Recall (see \eqref{a12} and \eqref{a49}) that for $2^j\le N^3$
\begin{equation}
\label{a20}
\int_{[0,2^{-j}]^4}|\sum_{n=1}^Na_ne(x_1n+x_2n^2+x_3n^3+x_4n^4)|^{6}dx\lesssim_\epsilon N^\epsilon\max\{\frac1N,2^{-j}\}2^{-3j}\|a\|_{\ell^2}^{6}.
\end{equation}
When $2^{j}\le N^2$, \eqref{a21} follows by interpolating
$$
\int_{[0,1]^4}|\sum_{n=1}^Na_ne(x_1n+x_2n^2+x_3n^3+x_4n^4)|^{20}dx\lesssim_\epsilon N^\epsilon\|a\|_{\ell^2}^{20}
$$
with \eqref{a20}.

When $N^2\le 2^j\le N^3$, we use again \eqref{a20}
and the fact that $|\cdot|^{10}=|\cdot |^6|\cdot |^4$ to conclude that
$$
\int_{[0,2^{-j}]^4}|\sum_{n=1}^Na_ne(x_1n+x_2n^2+x_3n^3+x_4n^4)|^{10}dx\lesssim_\epsilon \frac{N^\epsilon}{N2^{3j}}\|a\|_{\ell^2}^{6}N^2\|a_n\|_{\ell^2}^4\lesssim_\epsilon {N^\epsilon}2^{-\frac52j}\|a\|_{\ell^2}^{10}.
$$
When $2^j\ge N^3$, it suffices to replace $[0,2^{-j}]$ with $[0,1/N^3]$ for $x_4$, and to use $L^2$ orthogonality for this variable.

\section{$d=5$: Proof of \eqref{eq:conj3d=5}}

\label{thenewd=5}
It suffices to deal with $p=18$. We  normalize the sequence $a$ such that $N^{\frac12-\frac19}\|a\|_{\ell^9}=1$. In particular, $N^{\frac12-\frac16}\|a\|_{\ell^6}\le 1$ and $\|a\|_{\ell^2}\le 1$. We need to prove

$$
\Ic_j:=\int_{[0,2^{-j}]^5}|\sum_{n=N/2}^Na_ne(x_1n+x_2n^2+x_3n^3+x_4n^4+x_5n^5)|^{18}dx\lesssim_\epsilon N^{\epsilon}2^{-3j}.
$$
Case 1. If $2^j\le N$, the inequality follows from Theorem \ref{c7}, using the inclusion $[0,2^{-j}]^5\subset [0,1]^2\times [0,2^{-j}]^3$.
\\
\\
Case 2. If $N\le 2^j\le N^2$, we combine  Lemma \ref{a37} (for $x_1$) and
 Theorem \ref{c7} as follows
 $$\Ic_j\lesssim N2^{-j}\int_{[0,1]^2\times [0,2^{-j}]\times [0,\frac1N]\times [0,2^{-j}]}|\sum_{n=N/2}^Na_ne(x_1n+x_2n^2+x_3n^3+x_4n^4+x_5n^5)|^{18}dx\lesssim_\epsilon N^{\epsilon}2^{-3j}.$$
When $N^2\le 2^j$, Theorem \ref{c7} becomes inefficient. We use instead lower dimensional methods.
\\
\\
Case 3. When $N^2\le 2^j\le N^3$, the variable $x_2$ plays no role. We combine Lemma \ref{a37} (for $x_1$) with  Corollary \ref{a32} and Cauchy--Schwarz ($18=12+6$)
\begin{align*}
\Ic_j&\lesssim N2^{-j}2^{-j}N^3\sup_{b:\;|b_n|=|a_n|}\int_{[0,1]\times [0,\frac1{N^2}]\times [0,\frac1{N^2}]\times [0,2^{-j}]}|\sum_{n=N/2}^Nb_ne(x_1n+x_3n^3+x_4n^4+x_5n^5)|^{12}dx\\&\lesssim_\epsilon N^{\epsilon}2^{-3j}.
\end{align*}
\\
\\
Case 4. When $N^3\le 2^j\le N^4$, the variable $x_3$ plays no role. We combine Lemma \ref{a37} (for $x_1$ and $x_2$) with  Corollary \ref{d32} and Cauchy--Schwarz
\begin{align*}
\Ic_j&\lesssim N2^{-j}N^22^{-j}2^{-j}N^3\sup_{b:\;|b_n|=|a_n|}\int_{[0,1]^2\times [0,\frac1{N^3}]\times [0,\frac1{N^3}]}|\sum_{n=N/2}^Nb_ne(x_1n+x_2n^2+x_4n^4+x_5n^5)|^{12}dx\\&\lesssim_\epsilon N^{\epsilon}2^{-3j}.
\end{align*}
\\
\\
Case 5. If $2^j\ge N^4$, we use $L^2$ orthogonality in $x_5$ together with Cauchy--Schwarz ($18=2+16$)
$$\Ic_j\le 2^{-4j}N^8\sup_{b:\;|b_n|=|a_n|}\int_{[0,\frac1{N^4}]}|\sum_{n=N/2}^Nb_ne(x_5n^5)|^{2}dx_5\lesssim  2^{-4j}N^8N^{-4}\le 2^{-3j}.$$

\section{Proof of Theorem \ref{a13} }

We denote by $E$ the extension operator associated with the curve $\Phi$
\begin{equation}
\label{curve}
\Phi(t)=(t,\phi_2(t),\phi_3(t),\phi_4(t)),\;t\in[\frac12,1].
\end{equation}
More precisely, for $f:[\frac12,1]\to \C$ and $I\subset [\frac12,1]$ we write
$$E_If(x)=\int_If(t)e(tx_1+\phi_2(t)x_2+\phi_3(t)x_3+\phi_4(t)x_4)dt.$$

We recall the following results from \cite{C2}. The first one holds true since the curve $\Phi$ has torsion $\simeq 1$, as expressed by \eqref{2}.  To not obscure the presentation, we will ignore the use of weights $w_B$ throughout the rest of the paper.

\begin{thm}
	\label{49}
	Assume that $\Phi$ satisfies \eqref{1} and \eqref{2}.
	Let $I_1,I_2$ be two intervals of length $\simeq 1$ in $[\frac12,1]$, with $dist(I_1,I_2)\simeq 1$. Let also $f_i:[\frac12,1]\to \C$. Then for each ball $B_N$ of radius $N$ in $\R^4$ we have
	$$\|E_{I_1}f_1E_{I_2}f_2\|_{L^6(B_N)}\lesssim_\epsilon N^{\epsilon}(\sum_{J_1\subset I_1}\sum_{J_2\subset I_2}\|E_{J_1}f_1E_{J_2}f_2\|_{L^6(B_N)}^2)^{1/2}.$$
	The sum on the right is over intervals $J$ of length $N^{-1/2}$.
\end{thm}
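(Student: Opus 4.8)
The plan is to deduce Theorem~\ref{49} from the classical $\ell^2(L^6)$ decoupling for the parabola (the linear form is recorded in Theorem~\ref{a2}, with $R=N$), the point being that at the scale $\delta=N^{-1/2}$ relative to a ball of radius $N$ only the second-order behaviour of $\Phi$ is visible: over an arc of length $\delta$ the cubic and quartic deviations of $\Phi$ from its tangent line contribute phases of size $\delta^3N=N^{-1/2}$ and $\delta^4N=N^{-1}$ on $B_N$, hence are $\ll 1$ and may be absorbed. Before that I would extract from \eqref{1} and \eqref{2} the elementary fact that $\max_{k\in\{2,3,4\}}|\phi_k''(t)|\gtrsim 1$ for all $t\in[\tfrac12,1]$ --- expand the Wronskian in \eqref{2} along its first column and bound the $2\times2$ minors using \eqref{1} --- and then use continuity and compactness to split $[\tfrac12,1]$ into $O(1)$ subintervals on each of which a single index $k$ satisfies $|\phi_k''|\simeq 1$. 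Since only $O(1)$ subintervals occur, one may assume each of $I_1,I_2$ lies in one of them, the general case following by subdivision at $O(1)$ cost.

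The core step is to freeze the two ``flat'' coordinates and regard $E_{I_1}f_1\,E_{I_2}f_2$ as a function of the remaining pair $(x_1,x_k)$, where (after the affine normalization turning $\phi_k$ into a genuine parabola) its Fourier support lies in the sum of two transverse arcs of the parabola $s\mapsto(s,\phi_k(s))$. One then applies the bilinear $L^6$ decoupling for the parabola into $N^{-1/2}$-arcs on balls of radius $N$ --- classical, see \cite{BD} and its linear form Theorem~\ref{a2} --- and integrates back in the frozen variables, using Minkowski's inequality to pull the $\ell^2$ sum outside, to obtain the asserted inequality; a routine discretization of the $f_i$ at scale $1/N$ bridges the integral operators $E_I$ and the exponential sums of Theorem~\ref{a2}. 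The bijection between the anisotropic $N^{-1/2}\times N^{-1}\times N^{-3/2}\times N^{-2}$ caps of $\Phi$ and the $N^{-1/2}\times N^{-1}$ parabola caps is what matches the $J_i$-pieces on the two sides.

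I expect the main obstacle to be the case where the curved index differs over $I_1$ and over $I_2$, say $k_1\neq k_2$: then the ``freeze-and-slice'' device is not symmetric, and one must decouple the $I_1$-factor in the $(x_1,x_{k_1})$-plane while treating the $I_2$-factor as an auxiliary weight whose Fourier support in those two variables is a transverse parabola arc --- this relies on the stability of parabola decoupling under multiplication by such weights. A secondary technical point is keeping all of the above within an $N^\epsilon$ loss while the caps remain genuinely box-shaped; this is where the full torsion hypothesis \eqref{2}, beyond merely producing one curved projection, is the convenient input.
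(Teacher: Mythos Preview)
The paper does not prove Theorem~\ref{49}; it is quoted from \cite{C2} with the remark that it ``holds true since the curve $\Phi$ has torsion $\simeq 1$, as expressed by \eqref{2}.'' So there is no in-paper proof to compare against; what matters is whether your sketch is viable.

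It is not, and the gap is at the step you call ``the bilinear $L^6$ decoupling for the parabola.'' The linear parabola decoupling (Theorem~\ref{a2}) gives $\ell^2L^6$ for the \emph{single} extension $E_If$; via Cauchy--Schwarz this yields the bilinear estimate
\[
\|E_{I_1}f_1\,E_{I_2}f_2\|_{L^3}\lesssim_\epsilon N^\epsilon\Big(\sum_{J_1}\|E_{J_1}f_1\|_{L^6}^2\Big)^{1/2}\Big(\sum_{J_2}\|E_{J_2}f_2\|_{L^6}^2\Big)^{1/2},
\]
which is $L^3$ of the product, not $L^6$. What Theorem~\ref{49} asks for is an $\ell^2L^6$ decoupling of the \emph{products} $E_{J_1}f_1E_{J_2}f_2$. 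After you freeze $x_3,x_4$ and project to $(x_1,x_k)$, the Fourier supports of these products are (up to $O(1/N)$) the sumsets $\Phi_{2D}(J_1)+\Phi_{2D}(J_2)$, which tile an \emph{open} planar region by $\sim N^{-1/2}\times N^{-1/2}$ boxes. On a 2D ball of radius $N$ this is just flat decoupling into $N$ pieces, whose critical exponent is $p=2$; at $p=6$ one incurs a genuine loss $N^{1/2-1/6}=N^{1/3}$, not $N^\epsilon$. Concretely, take $f_1,f_2$ smooth bumps so that every $E_{J_1}f_1E_{J_2}f_2$ is a wave packet peaking at the origin: all $N$ pieces focus and the ratio of the two sides is $\sim N^{1/3}$. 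Your argument, which only uses one curved projection, would equally ``prove'' Theorem~\ref{49} for the degenerate curve $(t,t^2,0,0)$, where \eqref{2} fails and the inequality is false.

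The heuristic ``at scale $N^{-1/2}$ on $B_N$ only second order terms matter'' is correct as a statement about each \emph{individual} cap $J_i$ (it says the cap is contained in an anisotropic box), but decoupling is about how the caps \emph{interact}, and that is governed by the global geometry of $\Phi$. The torsion condition \eqref{2} is what makes the $2$-surface $\Phi(I_1)+\Phi(I_2)\subset\R^4$ genuinely curved, and the proof in \cite{C2} uses this (either through the full $\R^4$ curve decoupling of \cite{BDG} iterated down to scale $N^{-1/2}$, or through decoupling for that sum-surface), not merely a planar projection.
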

We will use this in combination with the following inequality
\begin{equation}
\label{52}
\|E_{J_1}f_1E_{J_2}f_2\|_{L^6(B_N)}^6\lesssim N^{-4}\|E_{J_1}f_1\|_{L^6(B_N)}^6\|E_{J_2}f_2\|_{L^6(B_N)}^6.
\end{equation}
The third inequality we need from \cite{C2} is stated below.
\begin{thm}
	\label{65}	
	Assume $\psi_1,\ldots,\psi_4:[-1,1]\to\R$ have $C^3$ norm $O(1)$, and in addition satisfy
	$$|\psi_2''(t)|,|\psi_3''(t)|\ll 1,\; \forall\; |t|\le 1$$
	and
	$$|\psi_1''(t)|,|\psi_4''(t)|\simeq 1,\; \forall\; |t|\le 1.$$
	Let
	$$\Psi(t,s)=(t,s,\psi_1(t)+\psi_2(s),\psi_3(t)+\psi_4(s)),\;\;|t|,|s|\le 1.$$
	Then for each ball $B_N\subset \R^4$ with radius $N$ and each
	constant coefficients $c_{m_1,m_2}\in\C$ we have
	\begin{equation}
	\label{59}
	\|\sum_{m_1\le N^{1/2}}\sum_{m_2\le N^{1/2}}c_{m_1,m_2}e(x\cdot \Psi(\frac{m_1}{N^{1/2}},\frac{m_2}{N^{1/2}}))\|_{L^6(B_N)}\lesssim_{\epsilon}N^\epsilon \|c_{m_1,m_2}\|_{\ell^2}|B_N|^{1/6}.
	\end{equation}

	The implicit constant is independent of $N$ and of $\psi_i$.
\end{thm}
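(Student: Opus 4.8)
The plan is to recognize Theorem \ref{65} as an $\ell^2(L^6)$ decoupling for the surface $\Psi$ and to derive it, by an induction on scales, from the sharp $L^6$ decoupling for the parabola (Theorem \ref{a2}); the mechanism is that $\Psi$ is a perturbation of the product of two planar, curved curves, one living in the $x_1,x_3$-variables and one in the $x_2,x_4$-variables.

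First I would pass to the decoupling formulation. Theorem \ref{65} is the single-exponential-per-cap case of the $\ell^2(L^6)$ decoupling
\[
\Big\|\sum_\tau F_\tau\Big\|_{L^6(B_N)}\lesssim_\epsilon N^\epsilon\Big(\sum_\tau\|F_\tau\|_{L^6(B_N)}^2\Big)^{1/2},
\]
where $\tau$ runs over the squares of side $N^{-1/2}$ partitioning the parameter square $[-1,1]^2$ and $\widehat{F_\tau}$ is supported in the $O(N^{-1})$-plank over $\Psi(\tau)$; I would prove this (a priori stronger) inequality, whose plank form is what allows iteration. By subtracting the affine parts of $\psi_2,\psi_3$ via a bounded linear shear of the $x$-variables --- which distorts $B_N$ by a constant factor and respects the plank structure --- one may assume $\psi_2,\psi_3$ vanish to second order at $0$, so $\|\psi_2\|_\infty,\|\psi_3\|_\infty\lesssim\eta$ with $\eta:=\max(\|\psi_2''\|_\infty,\|\psi_3''\|_\infty)$ a small fixed constant. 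Then $\Psi$ is a perturbation of the product surface $\Psi_0(t,s)=(t,s,\psi_1(t),\psi_4(s))$, the product of $t\mapsto(t,\psi_1(t))$ (in $x_1,x_3$) and $s\mapsto(s,\psi_4(s))$ (in $x_2,x_4$), each of curvature $\simeq 1$.

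For the exact product $\Psi_0$ the decoupling factorizes. With $F=\sum_{m_1}G_{m_1}$, $G_{m_1}=\sum_{m_2}F_{m_1m_2}$: for each fixed $(x_2,x_4)$ the functions $G_{m_1}$ have, as functions of $(x_1,x_3)$, Fourier support in distinct $N^{-1/2}$-caps of the curve $(t,\psi_1(t))$, whose curvature is $|\psi_1''|\simeq 1$, so Theorem \ref{a2} applies; taking the $L^6$ norm in $(x_2,x_4)$ and invoking Minkowski's inequality in $L^6_{x_2,x_4}\ell^2_{m_1}$ decouples the sum in $m_1$, and the same run in $s$ for each $G_{m_1}=\sum_{m_2}F_{m_1m_2}$ finishes. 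To restore the genuine, non-affine perturbations $\psi_2,\psi_3$, note that the $(x_1,x_3)$-frequency of the $(m_1,m_2)$-piece of $\Psi$ is $(t_{m_1},\psi_1(t_{m_1})+\psi_2(s_{m_2}))$ rather than $(t_{m_1},\psi_1(t_{m_1}))$, i.e.\ the parabola caps get contaminated in the $x_3$-direction (and symmetrically in $x_4$). I would absorb this by a Bourgain--Guth type induction on scales: at each stage decouple into the coarsest caps on which the contaminated plank supports still organize into genuine curved planks, then parabolically rescale each such cap --- under which $\Psi$ reproduces a surface of exactly the same class, with the curved functions remaining $\simeq 1$-curved and the flat ones remaining $o(1)$-flat --- and iterate $O(\log N)$ times; since the loss in Theorem \ref{a2} is only $R^\epsilon$, the accumulated loss stays at $N^\epsilon$.

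The main obstacle is exactly this last step: controlling the coupling that the flat directions $\psi_2,\psi_3$ create between the $t$-block and the $s$-block of variables at the scale $B_N$ --- equivalently, showing that the (coupled) sixth-order additive energy of the grid $\{\Psi(m_1N^{-1/2},m_2N^{-1/2})\}$ exceeds the product of the two Zygmund energies of the parabola by at most $N^\epsilon$. Verifying that the class of surfaces of Theorem \ref{65} is stable under the relevant parabolic rescalings, and that the non-degeneracy split $|\psi_1''|,|\psi_4''|\simeq 1$ versus $|\psi_2''|,|\psi_3''|\ll 1$ is preserved with uniform constants, is the technical heart; the remaining ingredients --- the reduction to decoupling, the shear normalization, and the product case via iterated parabola decoupling and Minkowski --- are routine. (This is the estimate carried out in \cite{C2}.)
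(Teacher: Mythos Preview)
Your proposal is correct and matches the approach of \cite{C2} (which the paper cites for this result without proof) and of the paper's own proof of the five-dimensional analog, Theorem \ref{b65}: recognize $\Psi$ as a perturbation of the product of two curved planar curves, then run an induction on scales in which a Taylor approximation on each small cap reduces the surface to a linear image of the reference product surface $(t,s,t^2,s^2)$, whose $\ell^2(L^6)$ decoupling is the iterated parabola decoupling (Theorem \ref{a2}) combined via Minkowski. Your phrasing via stability of the class under parabolic rescaling and the paper's phrasing via direct approximation by the reference surface on each cap are the same mechanism.
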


We now proceed with the proof of Theorem \ref{a13}. Fix $a_n$ with $\|a\|_{\ell^2}=1$.  Rescaling the last three variables,  we slightly modify the earlier notation and write
$$\Ec_{I}(x)=\sum_{n\in I}a_ne(nx_1+\phi_2(\frac{n}{N})Nx_2+\phi_3(\frac{n}{N})Nx_3+\phi_4(\frac{n}{N})Nx_4)=\sum_{n\in I}a_ne(N\Phi(\frac{n}{N})\cdot x).$$
Note that
$$\Ec_I(x)=E_{I/N}f(Nx),$$
where $f$ is the distribution equal to $$\sum_{n\in I}a_n\delta_{\frac{n}{N}}.$$
Standard approximation arguments allow Theorem \ref{49} to also be applicable to such $f$.

We also write
$$\Omega=[0,1]\times[0,N]\times[0,N]\times [0,1].$$
We need to prove that
\begin{equation}
\label{a22}
\int_\Omega|\Ec_{I_1}\Ec_{I_2}|^6\lesssim N^{2+\epsilon}.
\end{equation}

The argument involves two decouplings.
\\
\\
Step 1. We cover $\Omega$ with cubes $B$ of side length $1$,  apply Theorem \ref{49}  on each $B$ (or rather $NB$, after rescaling),
then we sum these estimates to get
$$\int_{\Omega}|\Ec_{I_1}\Ec_{I_2}|^6\lesssim_\epsilon N^{\epsilon}[\sum_{J_1\subset I_1}\sum_{J_2\subset I_2}(\int_{\Omega}|\Ec_{J_1}\Ec_{J_2}|^6)^{1/3}]^{3}.$$
Here $J_1,J_2$ are intervals of length $N^{1/2}$.

The remaining part of the argument will be concerned with proving the estimate $$\int_{\Omega}|\Ec_{J_1}\Ec_{J_2}|^6\lesssim_\epsilon N^{2+\epsilon}\|a_n\|_{\ell^2(J_1)}^6\|a_n\|_{\ell^2(J_2)}^6.$$
The combination of the last two inequalities leads to \eqref{a22}.

For $i=1,2$ fix $J_i=[h_i+1,h_i+N^{1/2}]$.
\\
\\
Step 2.
We point out the main difference between the forthcoming  argument and  the one in \cite{C2}. Here, the variables $x_2$ and $x_3$ play an entirely symmetrical role, not just in terms of range, but also functionality.

We will seek  a change of variables in $\R^4$, one that will allow us to use Theorem \ref{65}.   As in \cite{C2}, the variable $x_4$ plays no role in this part of the argument, as it produces no oscillations. This variable only played a role in the first step of the argument. We need to create another variable, in addition to $x_1,x_2,x_3$.

First, we apply \eqref{52} on each cube $NB$
\begin{align*}
\int_{B}|\Ec_{J_1}\Ec_{J_2}|^6&=N^{-4}\int_{NB}|\Ec_{J_1}(\frac{\cdot}{N})\Ec_{J_2}(\frac{\cdot}{N})|^6\\&\lesssim N^{-8}\int_{NB}|\Ec_{J_1}(\frac{\cdot}{N})|^6\int_{NB}|\Ec_{J_2}(\frac{\cdot}{N})|^6=\int_{B}|\Ec_{J_1}|^6\int_{B}|\Ec_{J_2}|^6.
\end{align*}
Second, we use the following abstract inequality, that only relies on the positivity of $|\Ec_{J_i}|^6$
\begin{equation}
\label{c3}
\sum_{B\subset \Omega}\int_{B}|\Ec_{J_1}|^6\int_{B}|\Ec_{J_2}|^6\lesssim \int_{\Omega}dx\int_{(y,z)\in [-1,1]^4\times [-1,1]^4}|\Ec_{J_1}(x+y)\Ec_{J_2}(x+z)|^6dydz.
\end{equation}
Using periodicity in the $y_1,z_1$ variables, we can dominate  the right hand side above by
$$ \frac1{N^2}\int_{x_1,x_4,y_2,y_3,y_4,z_2,z_3,z_4\in[-1,1]}dx_1\ldots dz_4\int_{y_1,z_1,x_2,x_3\in[0,N]}|\Ec_{J_1}(x+y)\Ec_{J_2}(x+z)|^6dy_1dz_1dx_2dx_3.$$

In short, the variable $x_1$ is now replaced with the new variables $y_1$ and $z_1$. It remains to prove that the following square root cancellation
\begin{equation}
\label{60}
\int_{y_1,z_1,x_2,x_3\in[0,N]}|\Ec_{J_1}(x+y)\Ec_{J_2}(x+z)|^6dy_1dz_1dx_2dx_3\lesssim_\epsilon N^{4+\epsilon}\|a_n\|_{\ell^2(J_1)}^{6}\|a_n\|_{\ell^2(J_2)}^6
\end{equation}
holds uniformly over $x_1,x_4,y_2,y_3,y_4,z_2,z_3,z_4$. With these variables fixed for the rest of the argument,
we make the linear change of variables $(y_1,z_1,x_2,x_3)\mapsto (u_1,u_2,w_1,w_2)$

\begin{equation}
\label{999999}
\begin{cases}u_1=y_1+\phi_2'(\frac{h_1}{N})x_2+\phi_3'(\frac{h_1}{N})x_3\\u_2=z_1+\phi_2'(\frac{h_2}{N})x_2+\phi_3'(\frac{h_2}{N})x_3 \\w_1=\phi_2''(\frac{h_1}{N})\frac{x_2}{2N}+\phi_3''(\frac{h_1}{N})\frac{x_3}{2N}\\w_2=\phi_2''(\frac{h_2}{N})\frac{x_2}{2N}+\phi_3''(\frac{h_2}{N})\frac{x_3}{2N}\end{cases}.
\end{equation}
The  Jacobian is $\simeq \frac1{N^2}$, due to \eqref{3}.	The cube $[0,N]^4$ is mapped to a subset of $|u_1|,|u_2|\lesssim N$, $|w_1|,|w_2|\lesssim 1$. Note also  that, due to \eqref{3}, $x_3=Aw_1+Bw_2$, $x_2=Cw_1+Dw_2$, where $A,B,C,D$ depend only on $h_1,h_2$, and $|A|,B|,|C|,|D|\lesssim N$.

Let for $i=1,2$
$$\begin{cases}\theta_i(m)={m^3}\frac{\phi_2'''(\frac{h_i}{N})}{3!N^2}+{m^4}\frac{\phi_2''''(\frac{h_i}{N})}{4!N^3}+\ldots\\ \eta_i(m)={m^3}\frac{\phi_3'''(\frac{h_i}{N})}{3!N^2}+{m^4}\frac{\phi_3''''(\frac{h_i}{N})}{4!N^3}+\ldots\end{cases}$$

Using this   we may dominate the integral in \eqref{60} by
\begin{equation}
\label{a24}
{N^2}\int_{|u_i|\lesssim N,\;{|w_i|\lesssim 1}}|\sum_{m_1=1}^{ N^{\frac12}}\sum_{m_2=1}^{ N^{\frac12}}c_{m_1,m_2}e(m_1u_1+m_1^2w_1+m_2u_2+m_2^2w_2+
\end{equation}
$$+
(\theta_1(m_1)+\theta_2(m_2))(Cw_1+Dw_2)+(\eta_1(m_1)+\eta_2(m_2))(Aw_1+Bw_2))|^6du_1du_2dw_1dw_2.
$$
The coefficient $c_{m_1,m_2}$ depends only on $m_1,m_2,x_1,y_2,z_2,y_3,z_3,x_4, y_4,z_4$, but not on the variables of integration $u_i,w_i$. Moreover,
$$|c_{m_1,m_2}|=|a_{h_1+m_1}a_{h_2+m_2}|.$$
The argument of each exponential  may be rewritten as
$$
\frac{m_1}{N^{1/2}}u_1N^{1/2}+(\psi_1(\frac{m_1}{N^{1/2}})+\psi_2(\frac{m_2}{N^{1/2}}))w_1N+$$$$\frac{m_2}{N^{1/2}}u_2N^{1/2}+(\psi_3(\frac{m_1}{N^{1/2}})+\psi_4(\frac{m_2}{N^{1/2}}))w_2N$$
where
$$\begin{cases}\psi_1(t)=t^2+&{t^3}\frac{A\phi_3'''(\frac{h_1}{N})+C\phi_2'''(\frac{h_1}{N})}{3!N^{3/2}}+{t^4}\frac{A\phi_3''''(\frac{h_1}{N})+C\phi_2''''(\frac{h_1}{N})}{4!N^{2}}+\ldots
\\\psi_2(t)=&{t^3}\frac{A\phi_3'''(\frac{h_2}{N})+C\phi_2'''(\frac{h_2}{N})}{3!N^{3/2}}+{t^4}\frac{A\phi_3''''(\frac{h_2}{N})+C\phi_2''''(\frac{h_2}{N})}{4!N^{2}}+\ldots\\\psi_3(t)=&{t^3}\frac{B\phi_3'''(\frac{h_1}{N})+D\phi_2'''(\frac{h_1}{N})}{3!N^{3/2}}+{t^4}\frac{B\phi_3''''(\frac{h_1}{N})+D\phi_2''''(\frac{h_1}{N})}{4!N^{2}}-\ldots
\\\psi_4(t)=t^2+&{t^3}\frac{B\phi_3'''(\frac{h_2}{N})+D\phi_2'''(\frac{h_2}{N})}{3!N^{3/2}}+{t^4}\frac{B\phi_3''''(\frac{h_2}{N})+D\phi_2''''(\frac{h_2}{N})}{4!N^{2}}-\ldots\end{cases}.$$
These functions satisfy the requirements in Theorem \ref{65}.  The expression in \eqref{a24} becomes
$$\frac1{N}\int_{|u_i|\lesssim N^{3/2},\;{|w_i|\lesssim N}}|\sum_{m_1=1}^{N^{1/2}}\sum_{m_2=1}^ {N^{1/2}}c_{m_1,m_2}e((u_1,u_2,w_1,w_2)\cdot\Psi(\frac{m_1}{N^{1/2}},\frac{m_2}{N^{1/2}}))|^6du_1du_2dw_1dw_2.$$
If we cover the domain of integration with balls $B_N$ and apply \eqref{59} on each of them, we may dominate the above expression by
$$N^{4+\epsilon}\|c_{m_1,m_2}\|_{\ell^2([1,N^{1/2}]\times[1,N^{1/2}])}^6=N^{4+\epsilon}\|a_n\|_{\ell^2(J_1)}^{6}\|a_n\|_{\ell^2(J_2)}^6.$$
This proves \eqref{60} and ends the argument.

\section{Proof of Theorem \ref{a31}	}

In this section we prove that Theorem \ref{a13} implies Theorem \ref{a31}.
\medskip

The parameter $K$ will be very large and universal, independent of $N$, $\phi_k$. The larger the $K$ we choose to work with, the smaller the $\epsilon$ from the $N^\epsilon$ loss will be at the end of the section.

\begin{prop}
	\label{9}
	Assume  $\phi_2,\phi_3,\phi_4: (0,3)\to\R$ are real analytic and satisfy \eqref{1}, \eqref{2}, \eqref{3} and \eqref{4} on $[1/4,1]$. Let as before $\omega_2=\omega_3=[0,N^2]$, $\omega_4=[0,N]$ and
	$$\Ec_{I,N}(x)=\sum_{n\in I}a_ne(nx_1+\phi_2(\frac{n}{N})x_2+\phi_3(\frac{n}{N})x_3+\phi_4(\frac{n}{N})x_4).$$
	We consider arbitrary integers $N_0,M$ satisfying  $1\le M\le \frac{N_0}{K}$ and $N_0+[M,2M]\subset [\frac{N}{2},N]$.
	Let $H_1,H_2$ be intervals of length $\frac{M}K$ inside $N_0+[M,2M]$ such that $\dist(H_1,H_2)\ge \frac{M}{K}$. Then
	$$\int_{[0,1]\times \omega_2\times \omega_3\times \omega_4}|\Ec_{H_1,N}(x)\Ec_{H_2,N}(x)|^6\lesssim_\epsilon N^{9+\epsilon}\|a\|^{12}_{\ell^6([N_0+M,N_0+2M])}.$$	
\end{prop}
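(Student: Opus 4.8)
The plan is to derive Proposition~\ref{9} as a rescaled instance of Theorem~\ref{a13}. Since $H_1,H_2$ lie inside the block $N_0+[M,2M]\subset[\tfrac N2,N]$, I would first affinely zoom into this block: write $n=N_0+M+m$ and $u=Km/M$, so that $H_1,H_2$ turn into intervals $\widetilde H_1,\widetilde H_2$ of length $\asymp 1$ inside $[0,K]$ with $\dist(\widetilde H_1,\widetilde H_2)\gtrsim 1$, and the natural scale of the rescaled sum becomes $\widetilde N:=M/K$. Taylor-expanding each real-analytic $\phi_k(n/N)=\phi_k(\gamma+\tfrac mN)$ about the base point $\gamma=(N_0+M)/N\in[\tfrac12,1]$ (where \eqref{1}, \eqref{2}, \eqref{3}, \eqref{4} all hold), the $u$-independent terms factor out of $|\Ec_{H_1}\Ec_{H_2}|$, the terms linear in $u$ merge with $nx_1$ into one new frequency variable $\widetilde x_1$, and the quadratic/cubic/quartic terms — together with all higher-order ones, which get reabsorbed — are organized, via a linear change of variables in $(x_2,x_3,x_4)$, into a phase governed by a curve that is a $C^3$-small perturbation of the moment curve $(u,u^2,u^3,u^4)$. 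The Jacobian of this change of variables equals, up to the harmless constant $\tfrac1{2!\,3!\,4!}$, the quantity $(\widetilde N/N)^9\,W(1,\phi_2',\phi_3',\phi_4')(\gamma)$, hence is $\asymp (M/N)^9$ by \eqref{2}.

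Next I would verify the hypotheses of Theorem~\ref{a13} for the rescaled phase: \eqref{1}, \eqref{2}, \eqref{3} hold with room to spare for the exact moment curve, and since $M/N<1/K$ with $K$ large the perturbing terms are $C^3$-small, so the $\asymp1$ nondegeneracies survive; the separated-points condition of \eqref{3} is met on $\widetilde H_1\times\widetilde H_2$ because $\dist(\widetilde H_1,\widetilde H_2)\gtrsim1$, while \eqref{4} (the infinitesimal form of that separation condition) is what keeps the change of variables nondegenerate even though $n_1/N$ and $n_2/N$ are only $O(M/N)$ apart. Pushing the fixed box $[0,1]\times[0,N^2]^2\times[0,N]$ through the change of variables, and folding the first coordinate back to $[0,1]$ by periodicity in $\widetilde x_1$ (legitimate since $m\in\Z$), I would enclose the image in a box $[0,1]\times\omega_2\times\omega_3\times\omega_4$ of the shape required by Theorem~\ref{a13} at scale $\widetilde N$, namely $|\omega_2|,|\omega_3|\gtrsim\widetilde N^2$, $|\omega_4|\gtrsim\widetilde N$ (enlarging in any direction where the image falls short — the integrand is nonnegative). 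Applying Theorem~\ref{a13}, undoing the change of variables at the cost of the Jacobian factor $(N/M)^9$, and estimating $\|a\|_{\ell^2(H_i)}\le|H_i|^{1/2-1/6}\|a\|_{\ell^6(H_i)}\le(M/K)^{1/3}\|a\|_{\ell^6([N_0+M,N_0+2M])}$ to pass from the $\ell^2$ norm produced by Theorem~\ref{a13} to the desired $\ell^6$ norm, the powers of $N$ and $M$ should combine — using $M\le N_0/K\le N$ — to exactly $N^{9+\epsilon}$.

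The step I expect to be the main obstacle is precisely this last reconciliation of scales: the domain is frozen at $\omega_2=\omega_3=[0,N^2]$, $\omega_4=[0,N]$, whereas the natural scale after zooming in is only $\widetilde N=M/K$, so in several coordinate directions the box is ``oversized'' relative to what Theorem~\ref{a13} is calibrated to, and one cannot simply shrink it. The linear change of variables must be set up so that these oversized directions feed only the small-cap variable $\omega_4$ of Theorem~\ref{a13}, and in a controlled enough way that the extra volume is compensated by the Jacobian; it may well be cleanest to handle the regime $M\asymp N$ separately (there Theorem~\ref{a13} applies almost directly, taking $I_i\supset H_i$ of length $\asymp N$ with $\dist(I_1,I_2)\asymp N$ and using that $a_n$ is supported on $H_i$), and to organize the $M\ll N$ case carefully, perhaps with a further split according to the size of $M$. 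Granting this organization, the remaining ingredients — the perturbative check of \eqref{1}, \eqref{2}, \eqref{3} for the rescaled curve, the Jacobian computation via \eqref{2}, and the elementary $\ell^2\to\ell^6$ bound — are routine.
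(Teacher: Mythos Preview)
Your plan is precisely the paper's, and the step you flag as the main obstacle turns out to be simpler than you expect: no case split on the size of $M$ is needed. With $\kappa=M/N$, the change of variables is upper-triangular,
\[
y_2=\kappa^2\big(x_2+O(1)x_3+O(1)x_4\big),\qquad y_3=\kappa^3\big(x_3+O(1)x_4\big),\qquad y_4=\kappa^4 x_4,
\]
with Jacobian $\kappa^9$; it maps $[0,N^2]^2\times[0,N]$ into a box of dimensions $\simeq M^2\times M^3N^{-1}\times M^4N^{-3}$, which is \emph{contained in} $[-M^2,M^2]^2\times[-M,M]$. Thus the ``oversized'' inputs $x_2,x_3$ of length $N^2$ are fed into the \emph{large-cap} outputs $y_2,y_3$ (not into $\omega_4$ as you suggest), and the graded factors $\kappa^2,\kappa^3$ cut them down to size $\le M^2$; the small-cap direction $y_4$ sees only $x_4$. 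After extending $x_1$ to $[0,N]$ by periodicity, changing variables, using $y_1$-periodicity, enlarging the image box, and applying Theorem~\ref{a13} at scale $M$, the bookkeeping is exactly
\[
(N/M)^9\cdot M^{5+\epsilon}\|a\|_{\ell^2}^{12}\le (N/M)^9\cdot M^{5+\epsilon}\cdot M^4\|a\|_{\ell^6}^{12}=N^{9+\epsilon}\|a\|_{\ell^6}^{12},
\]
with no residual power of $M$.
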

\begin{proof}
	Write $H_1=N_0+I_1$, $H_2=N_0+I_2$ with $I_1,I_2$ intervals of length $\frac{M}K$ inside $[M,2M]$ and with separation $\ge \frac{M}{K}$. Note that $N_0/N\in [1/4,1]$. Note that the roles and the properties of $\phi_2$, $\phi_3$ are completely symmetrical in $\Ec_{I,N}$ and in \eqref{1}, \eqref{2}, \eqref{3} and \eqref{4}.
	It follows by \eqref{3} that either $\phi_2^{(2)}(\frac{N_0}{N})$	or $\phi_3^{(2)}(\frac{N_0}{N})$ is nonzero. So, due to symmetry, we can assume without the loss of generality that $\phi_2^{(2)}(\frac{N_0}{N})\neq 0$.
	
	We use the following expansion, certainly valid for all $m$ in $I_i$.
	\begin{align*}
	\phi_2(\frac{N_0+m}{N})&=Q_2(m)+\sum_{n\ge 2}\frac{\phi_2^{(n)}(\frac{N_0}{N})}{n!}\kappa^n\\&=Q_2(m)+\kappa^2\sum_{n\ge 2}\frac{\phi_2^{(n)}(\frac{N_0}{N})\kappa^{n-2}}{n!}(\frac{m}{M})^n.
	\end{align*}
	Here $Q_2(m)=A+Bm$ with $B=O(\frac1N)$, and we denoted $\kappa=M/N$. Observe that by choosing $K$ sufficiently large we  can make $\kappa$ arbitrarily small.
	We introduce the analogue $\tilde{\phi_2}$ of $\phi_2$ at scale $M$
	$$\tilde{\phi_2}(t)=\sum_{n\ge 2}\frac{\phi_2^{(n)}(\frac{N_0}{N})\kappa^{n-2}}{n!}t^n.$$
	This series is convergent as long as $\frac{N_0}{N}+t\in (0,3)$, so the new function is certainly real analytic on $(0,2)$, since $N_0\le N$.
	We can decompose $\tilde{\phi_2}$ as	
	$$\tilde{\phi_2}(t)=: a_2 t^2+a_3 t^3 \kappa+a_4 t^4 \kappa^2+r_2(t)\kappa^{3},$$
	with
	$$
	a_n=\frac{\phi_2^{(n)}(\frac{N_0}{N})}{n!},\qquad n=2,3,4,
	$$
	and $r_2(t)$ satisfying
	$$
	\sup_{k=2,3,4} \sup_{t\in[1/2,1]} |r_2^{(k)}(t)|=O(1).
	$$
	We have
	$$	\phi_2(\frac{N_0+m}{N})=Q_2(m)+\kappa^2\tilde{\phi_2}(\frac{m}{M}).$$

	We also write for  $m\in I_i$
	with $Q_3(m)=C+Dm$ satisfying $D=O(\frac1N)$,
	\begin{equation}
	\label{37e46ryfyrfuyru}
	\phi_3(\frac{N_0+m}{N})=Q_3(m)+\sum_{n\ge 2}\frac{\phi_3^{(n)}(\frac{N_0}{N})\kappa^{n}}{n!}(\frac{m}{M})^n.\end{equation}
	We will use the following formula, with $A_n=\frac{\phi_2^{(n)}(\frac{N_0}{N})\kappa^{n}}{n!}(\frac{m}{M})^n$, $B_n=\frac{\phi_3^{(n)}(\frac{N_0}{N})\kappa^{n}}{n!}(\frac{m}{M})^n$
	$$\sum_{n\ge 2}B_n=\frac{B_2}{A_2}\sum_{n\ge 2}A_n+\sum_{n\ge 3}\frac{B_nA_2-B_2A_n}{A_2},$$
	provided that $A_2\neq 0$.
	The sum $\sum_{n\ge 2}$ in \eqref{37e46ryfyrfuyru} is equal to
	$$
	\frac{\phi_3^{(2)}(\frac{N_0}{N})}{\phi_2^{(2)}(\frac{N_0}{N})}\kappa^2\tilde{\phi}_2(\frac{m}{M})+\sum_{n\ge 3}
	\frac{\phi_3^{(n)}(\frac{N_0}{N})\phi_2^{(2)}(\frac{N_0}{N})-\phi_3^{(2)}(\frac{N_0}{N})\phi_2^{(n)}(\frac{N_0}{N})}{\phi_2^{(2)}(\frac{N_0}{N})n!}\kappa^{n}(\frac{m}{M})^n.$$
	Let $\tilde{\phi}_3$ be the analogue of $\phi_3$ at scale $M$ defined by
	$$
	\tilde{\phi}_3(t)=\sum_{n\ge 3}
	\frac{\phi_3^{(n)}(\frac{N_0}{N})\phi_2^{(2)}(\frac{N_0}{N})-\phi_3^{(2)}(\frac{N_0}{N})\phi_2^{(n)}(\frac{N_0}{N})}{\phi_2^{(2)}(\frac{N_0}{N})n!}\kappa^{n-3}t^n.
	$$
	This can be decomposed as
	$$
	\tilde{\phi}_3(t)=:b_3 t^3+ b_4 t^4 \kappa+r_3(t)\kappa^{2} ,
	$$
	with
	\begin{align*}
	b_n=\frac{\phi_3^{(n)}(\frac{N_0}{N})\phi_2^{(2)}(\frac{N_0}{N})-\phi_3^{(2)}(\frac{N_0}{N})\phi_2^{(n)}(\frac{N_0}{N})}{\phi_2^{(2)}(\frac{N_0}{N})n!}, \qquad n=3,4,
	\end{align*}
	and $r_3(t)$ satisfying
	$$
	\sup_{k=2,3,4}\sup_{t\in[1/2,1]} |r_3^{(k)}(t)|=O(1).
	$$
	We can write
	$$
	\phi_3(\frac{N_0+m}{N})=Q_3(m)+\frac{\phi_3^{(2)}(\frac{N_0}{N})}{\phi_2^{(2)}(\frac{N_0}{N})}\kappa^2\tilde{\phi}_2(\frac{m}{M})+\kappa^{3}\tilde{\phi}_3(\frac{m}{M}).
	$$
	
	Finally, we let $Q_4(m)=E+Fm$ with $F=O(\frac1N)$. Note that \eqref{4} guarantees that $b_3\not=0$. This allows us to define
	\begin{align*}
	&\phi_4(\frac{N_0+m}{N})=Q_4(m)+\sum_{n\ge 2}\frac{\phi_4^{(n)}(\frac{N_0}{N})\kappa^{n}}{n!}(\frac{m}{M})^n
	\\
	&=Q_4(m)+\frac{\phi_4^{(2)}(\frac{N_0}{N})}{2! a_2}\left( a_2 (\frac{m}{M})^2+a_3 (\frac{m}{M})^3 \kappa+a_4 (\frac{m}{M})^4 \kappa^2 \right)\kappa^2
	\\
	&\quad+\frac{\frac{\phi_4^{(3)}(\frac{N_0}{N})}{3!}-\frac{a_3\phi_4^{(2)}(\frac{N_0}{N})}{2! a_2}}{b_3}\left( b_3 (\frac{m}{M})^3+b_4 (\frac{m}{M})^4 \kappa\right)\kappa^3
	\\
	&\quad+
	\bigg[\Big(-\frac{a_4 \phi_4^{(2)}(\frac{N_0}{N})}{2! a_2}+\frac{\phi_4^{(4)}(\frac{N_0}{N})}{4!}-\frac{b_4}{b_3}\left(-\frac{a_3 \phi_4^{(2)}(\frac{N_0}{N})}{2!a_2}+\frac{\phi_4^{(3)}(\frac{N_0}{N})}{3!}\right)\Big)(\frac{m}{M})^4
	\\
	&\quad+\sum_{n\ge 5}\frac{\phi_4^{(n)}(\frac{N_0}{N})\kappa^{n-4}}{n!}(\frac{m}{M})^n \bigg] \kappa^4
	\\
	&=Q_4(m)+\frac{\phi_4^{(2)}(\frac{N_0}{N})}{2! a_2}\widetilde{\phi}_2(\frac{m}{M})\kappa^2
	+\frac{\frac{\phi_4^{(3)}(\frac{N_0}{N})}{3!}-\frac{a_3\phi_4^{(2)}(\frac{N_0}{N})}{2! a_2}}{b_3}\widetilde{\phi}_3(\frac{m}{M})\kappa^3
	\\
	&\quad+
	\kappa^4\bigg[\Big(-\frac{a_4 \phi_4^{(2)}(\frac{N_0}{N})}{2! a_2}+\frac{\phi_4^{(4)}(\frac{N_0}{N})}{4!}-\frac{b_4}{b_3}\left(-\frac{a_3 \phi_4^{(2)}(\frac{N_0}{N})}{2! a_2}+\frac{\phi_4^{(3)}(\frac{N_0}{N})}{3!}\right)\Big)(\frac{m}{M})^4
	\\
	&\quad+\sum_{n\ge 5}\frac{\phi_4^{(n)}(\frac{N_0}{N})\kappa^{n-4}}{n!}(\frac{m}{M})^n -\frac{\phi_4^{(2)}(\frac{N_0}{N})}{2! a_2}r_2(\frac{m}{M})\kappa-\frac{\frac{\phi_4^{(3)}(\frac{N_0}{N})}{3!}-\frac{a_3\phi_4^{(2)}(\frac{N_0}{N})}{2! a_2}}{b_3}r_3(\frac{m}{M})\kappa\bigg]
	\\
	&=:Q_4(m)+\frac{\phi_4^{(2)}(\frac{N_0}{N})}{2! a_2}\widetilde{\phi}_2(\frac{m}{M})\kappa^2
	+\frac{\frac{\phi_4^{(3)}(\frac{N_0}{N})}{3!}-\frac{a_3\phi_4^{(2)}(\frac{N_0}{N})}{2! a_2}}{b_3}\widetilde{\phi}_3(\frac{m}{M})\kappa^3+\widetilde{\phi}_4(\frac{m}{M})\kappa^4.
	\end{align*}
	We write
	$$
	\widetilde{\phi}_4(t)=\Big(-\frac{a_4 \phi_4^{(2)}(\frac{N_0}{N})}{2! a_2}+
	\frac{\phi_4^{(4)}(\frac{N_0}{N})}{4!}-\frac{b_4}{b_3}\left(-\frac{a_3 \phi_4^{(2)}(\frac{N_0}{N})}{2! a_2}+\frac{\phi_4^{(3)}(\frac{N_0}{N})}{3!}\right)\Big)t^4+ r_4(t)\kappa,
	$$
	with
	$$
	r_4(t)=\sum_{n\ge 5}\frac{\phi_4^{(n)}(\frac{N_0}{N})\kappa^{n-5}}{n!}t^n -\frac{\phi_4^{(2)}(\frac{N_0}{N})}{2! a_2}r_2(t)-\frac{\frac{\phi_4^{(3)}(\frac{N_0}{N})}{3!}-\frac{a_3\phi_4^{(2)}(\frac{N_0}{N})}{2! a_2}}{b_3}r_3(t)
	$$
	satisfying
	$$
	\sup_{k=2,3,4} \sup_{t\in[1/2,1]} |r_4^{(k)}(t)|=O(1).
	$$
	Letting
	\begin{align*}
	R_2(t)&=a_3 t^3+a_4 t^4 \kappa+r_2(t)\kappa^{2},
	\\
	R_3(t)&=b_4 t^4 +r_3(t)\kappa
	\\
	R_4(t)&=r_4(t),
	\end{align*}
	we have
	\begin{equation}\label{R}
	\sup_{k=2,3,4} \sup_{t\in[1/2,1]} |R_2^{(k)}(t)|+\sup_{k=2,3,4} \sup_{t\in[1/2,1]} |R_3^{(k)}(t)|+\sup_{k=2,3,4} \sup_{t\in[1/2,1]} |R_4^{(k)}(t)|=O(1),
	\end{equation}
	and, after doing some basic algebra, we get
	\begin{align*}
	\widetilde{\phi_2}(t)&= \frac{1}{2!}\phi_2^{(2)}(\frac{N_0}{N}) t^2+ R_2(t)\kappa,
	\\
	\widetilde{\phi_3}(t)&=\frac{1}{3!}\frac{1}{\phi_2^{(2)}(\frac{N_0}{N})}\det \begin{bmatrix}
	\phi_2^{(2)}(\frac{N_0}{N}) & \phi_3^{(2)}(\frac{N_0}{N}) \\
	\phi_2^{(3)}(\frac{N_0}{N}) & \phi_3^{(3)}(\frac{N_0}{N}) \\
	\end{bmatrix}t^3 +R_3(t)\kappa
	\\
	\widetilde{\phi_4}(t)&=\frac{1}{4!}\det \begin{bmatrix}
	\phi_2^{(2)}(\frac{N_0}{N}) & \phi_3^{(2)}(\frac{N_0}{N}) \\
	\phi_2^{(3)}(\frac{N_0}{N}) & \phi_3^{(3)}(\frac{N_0}{N}) \\
	\end{bmatrix}^{-1} \det \begin{bmatrix}
	\phi_2^{(2)}(\frac{N_0}{N}) & \phi_3^{(2)}(\frac{N_0}{N}) & \phi_4^{(2)}(\frac{N_0}{N})\\
	\phi_2^{(3)}(\frac{N_0}{N}) & \phi_3^{(3)}(\frac{N_0}{N}) & \phi_4^{(3)}(\frac{N_0}{N})\\
	\phi_2^{(4)}(\frac{N_0}{N}) & \phi_3^{(4)}(\frac{N_0}{N}) & \phi_4^{(4)}(\frac{N_0}{N})\\
	\end{bmatrix} t^4 +R_4(t)\kappa.
	\end{align*}
	Summarizing, we have obtained the following decomposition
	\begin{align*}
	\phi_2(\frac{N_0+m}{N})&=Q_2(m)+\kappa^2\tilde{\phi_2}(\frac{m}{M}),
	\\
	\phi_3(\frac{N_0+m}{N})&=Q_3(m)+\frac{\phi_3^{(2)}(\frac{N_0}{N})}{\phi_2^{(2)}(\frac{N_0}{N})}\kappa^2\tilde{\phi}_2(\frac{m}{M})+\kappa^{3}\tilde{\phi}_3(\frac{m}{M}),
	\\
	\phi_4(\frac{N_0+m}{N})&=Q_4(m)+\frac{\phi_4^{(2)}(\frac{N_0}{N})}{\phi_2^{(2)}(\frac{N_0}{N})}\kappa^2\tilde{\phi}_2(\frac{m}{M})
	+\frac{\phi_4^{(3)}(\frac{N_0}{N})\phi_2^{(2)}(\frac{N_0}{N})-\phi_2^{(3)}(\frac{N_0}{N})\phi_4^{(2)}(\frac{N_0}{N})}{\phi_3^{(3)}(\frac{N_0}{N})\phi_2^{(2)}(\frac{N_0}{N})-\phi_3^{(2)}(\frac{N_0}{N})\phi_2^{(3)}(\frac{N_0}{N})}\kappa^3\widetilde{\phi}_3(\frac{m}{M})
	\\
	&\quad+\kappa^4\widetilde{\phi}_4(\frac{m}{M}).
	\end{align*}
	It motivates the change of variables
	$$
	\begin{cases}
	y_1=x_1+Bx_2+Dx_3+Fx_4,
	\\
	y_2=\kappa^2\left (x_2+\frac{\phi_3^{(2)}(\frac{N_0}{N})}{\phi_2^{(2)}(\frac{N_0}{N})}x_3+\frac{\phi_4^{(2)}(\frac{N_0}{N})}{\phi_2^{(2)}(\frac{N_0}{N})}x_4\right),
	\\
	y_3=\kappa^3\left(x_3+\frac{\phi_4^{(3)}(\frac{N_0}{N})\phi_2^{(2)}(\frac{N_0}{N})-\phi_2^{(3)}(\frac{N_0}{N})\phi_4^{(2)}(\frac{N_0}{N})}{\phi_3^{(3)}(\frac{N_0}{N})\phi_2^{(2)}(\frac{N_0}{N})-\phi_3^{(2)}(\frac{N_0}{N})\phi_2^{(3)}(\frac{N_0}{N})}x_4\right),
	\\
	y_4=\kappa^4x_4.
	\end{cases}
	$$
	Due to  periodicity, we may extend the range of $x_1$ to $[0,N]$. This linear transformation maps $[0,N]\times \omega_1\times \omega_3\times \omega_4$ to a subset of the box $\tilde{\omega}_1\times\tilde{\omega}_2\times\tilde{\omega}_3\times\tilde{\omega}_4$ centered at the origin, with dimensions roughly $N,M^2, M^3N^{-1}, M^4N^{-3}$.
	
	Thus
	$$|\Ec_{H_k,N}(x)|=|\Ec_{I_k,M}(y)|$$
	where
	$$\Ec_{I_k,M}(y)=\sum_{m\in I_k}a_{N_0+m}e(my_1+\tilde{\phi}_2(\frac{m}{M})y_2+\tilde{\phi}_3(\frac{m}{M})y_3+\tilde{\phi}_4(\frac{m}{M})y_4).$$

	Note that, as we mentioned before, by choosing $K$ large enough, $\kappa$ can be made arbitrarily small. Therefore, the functions $\tilde{\phi_2}, \tilde{\phi_3}$ and $\tilde{\phi_4}$ satisfy conditions \eqref{1}, \eqref{2} and \eqref{3}. Indeed, condition \eqref{1} follows from \eqref{R} and the fact that functions $\phi_2, \phi_3$ and $\phi_4$ satisfy \eqref{1}. Condition \eqref{2} follows from the identity
	\begin{align*}
	\det \begin{bmatrix}
	\tilde{\phi_2}^{(2)}(\frac{N_0}{N}) & \tilde{\phi_3}^{(2)}(\frac{N_0}{N}) & \tilde{\phi_4}^{(2)}(\frac{N_0}{N})\\
	\tilde{\phi_2}^{(3)}(\frac{N_0}{N}) & \tilde{\phi_3}^{(3)}(\frac{N_0}{N}) & \tilde{\phi_4}^{(3)}(\frac{N_0}{N})\\
	\tilde{\phi_2}^{(4)}(\frac{N_0}{N}) & \tilde{\phi_3}^{(4)}(\frac{N_0}{N}) & \tilde{\phi_4}^{(4)}(\frac{N_0}{N})\\
	\end{bmatrix}
	=
	\frac{1}{2! 3! 4!}\det \begin{bmatrix}
	\phi_2^{(2)}(\frac{N_0}{N}) & \phi_3^{(2)}(\frac{N_0}{N}) & \phi_4^{(2)}(\frac{N_0}{N})\\
	\phi_2^{(3)}(\frac{N_0}{N}) & \phi_3^{(3)}(\frac{N_0}{N}) & \phi_4^{(3)}(\frac{N_0}{N})\\
	\phi_2^{(4)}(\frac{N_0}{N}) & \phi_3^{(4)}(\frac{N_0}{N}) & \phi_4^{(4)}(\frac{N_0}{N})\\
	\end{bmatrix}+O(\kappa).
	\end{align*}
	Finally, a direct computation reveals that
	\begin{align*}
	\det \begin{bmatrix}
	\tilde{\phi_2}''(t)&\tilde{\phi_3}''(t)\\
	\tilde{\phi_2}''(s)&\tilde{\phi_3}''(s)
	\end{bmatrix}=
	|t-s|\det\begin{bmatrix}
	{\phi_2}''(\frac{N_0}{N})&{\phi_3}''(\frac{N_0}{N})\\
	{\phi_2}''(\frac{N_0}{N})&{\phi_3}''(\frac{N_0}{N})
	\end{bmatrix},
	\end{align*}
	which implies \eqref{3}.
	
	We may write, using again periodicity in $y_1$
	\begin{align*}
	\int_{[0,1]\times \omega_2\times \omega_3\times \omega_4}|\Ec_{H_1,N}(x)\Ec_{H_2,N}(x)|^6&=\frac1{N}\int_{[0,N]\times \omega_2\times \omega_3\times \omega_4}|\Ec_{H_1,N}(x)\Ec_{H_2,N}(x)|^6\\&\le(\frac{N}{M})^9\int_{[0,1]\times\tilde{\omega}_2\times\tilde{\omega}_3\times\tilde{\omega_4}}|\Ec_{I_1,M}(y)\Ec_{I_2,M}(y)|^6.
	\end{align*}
	Finally, we use Theorem \ref{4} with $N=M$, noting  that  $\tilde{\omega}_2,\tilde{\omega}_3\subset [-M^2,M^2]$	and $\tilde{\omega}_4\subset [-M,M]$, to estimate the last expression by
	$$(\frac{N}{M})^9M^{5+\epsilon}\|a\|^{12}_{\ell^2([N_0+M,N_0+2M])}\le N^{9+\epsilon}\|a\|^{12}_{\ell^6([N_0+M,N_0+2M])}.$$
\end{proof}	
\smallskip

We can now prove  Theorem \ref{a31}. Let $\Omega=[0,1]\times [0,N^2]\times [0,N^2]\times [0,N]$.

Let $\Hc_n(I)$ be the collection of dyadic intervals in $I$ with length $\frac{N}{2K^n}$. We write $H_1\not\simeq H_2$ to imply that $H_1,H_2$ are not neighbors. Then
$$|\Ec_{I,N}(x)|\le 3\max_{H\in \Hc_1(I)}|\Ec_{H,N}(x)|+K^{10}\max_{H_1\not\simeq H_2\in\Hc_1(I)}|\Ec_{H_1,N}(x)\Ec_{H_2,N}(x)|^{1/2}.$$
We repeat this inequality until we reach intervals in $\Hc_{l}$ of length $\simeq 1$,  that is  $K^l\simeq N$.	We have
\begin{align*}
|\Ec_{I,N}(x)|&\lesssim l3^lK^{10}\max_{1\le n\le l}\max_{H\in\Hc_n(I)}\max_{H_1\not\simeq H_2\in\Hc_{n+1}(H)}|\Ec_{H_1,N}(x)\Ec_{H_2,N}(x)|^{1/2}\\&\lesssim (\log N)N^{\log_K3}\max_{1\le n\le l}\max_{H\in\Hc_n(I)}\max_{H_1\not\simeq H_2\in\Hc_{n+1}(H)}|\Ec_{H_1,N}(x)\Ec_{H_2,N}(x)|^{1/2}.
\end{align*}
Using Proposition \ref{9}  we finish the proof as follows
\begin{align*}
\int_{\Omega} |\Ec_{I,N}(x)|^{12}dx&\lesssim_KN^{\epsilon+O(\log_K 3)}\sum_{n}\sum_{H\in\Hc_n(I)}\max_{H_1\not\simeq H_2\in\Hc_{n+1}(H)}\int_{\Omega}|\Ec_{H_1,N}(x)\Ec_{H_2,N}(x)|^{6}dx\\&\lesssim_{K,\epsilon}N^{\epsilon+O(\log_K 3)}\sum_n\sum_{H\in\Hc_n(I)}N^{9}\|a_n\|_{\ell^6(H)}^{12}
\\&\lesssim_{K,\epsilon}N^{9+\epsilon+\log_K 3}\|a\|_{\ell^6}^{12}.
\end{align*}
Choosing $K$ large enough, we may force $\log_K 3$ to be as small as we wish.

\section{Proof of Theorem \ref{c7}}
\label{d=5last}

We will work with the functions
$$\phi_1(t)=t,\;\phi_2(t)=t^2,\;\phi_3(t)=t^3+\epsilon_3t^4,\;\phi_4(t)=t^4+\epsilon_4t^5,\;\phi_5(t)=t^5,$$
where $\epsilon_3,\epsilon_4=o(1)$. The smallness of $\epsilon_3,\epsilon_4$ will be used in the proof of Proposition \ref{c5}.

For all practical purposes, $\Phi=(\phi_1,\ldots,\phi_5)$ will be a negligible perturbation (in fact, a nonsingular linear image) of the moment curve $\Phi_0$ corresponding to $\epsilon_3=\epsilon_4=0$.
All implicit constants in the results that we prove about $\Phi$ will be uniform over all such $\epsilon_i$.

We abuse earlier notation and write
$$\Ec_{I,N}(x)=\sum_{n\in I}a_ne(\Phi(\frac{n}{N})\cdot x).$$
Note that this is $N$-periodic in $x_1$ and $N^2$-periodic in $x_2$.
At the end of this section we prove that Theorem \ref{c7} is a consequence of the following bilinear result.
\begin{thm}[Bilinear small cap $l^{9}L^{18}$  decoupling]
	\label{b13}Let $I_1,I_2$ be intervals of length $\simeq N$ in $[\frac{N}{2},N]$, with $\dist(I_1,I_2)\simeq {N}$.	
	Let $\Omega=[0,N]\times[0,N^2]\times\omega_3\times\omega_4\times\omega_5$, where $\omega_i$ are intervals satisfying $|\omega_3|, |\omega_4|\ge N^3$, $|\omega_5|\ge N$. Then we have
	$$\int_{\Omega}|\prod_{j=1}^2\Ec_{I_j,N}(x)|^{9}dx\lesssim_\epsilon N^{18(\frac12-\frac1{9})+\epsilon}|\Omega|\|a_n\|_{\ell^{9}(I_1)}^{9}\|a_n\|_{\ell^{9}(I_2)}^{9}.$$ 	
\end{thm}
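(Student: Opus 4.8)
The plan is to prove Theorem \ref{b13} by the two-step scheme of the proof of Theorem \ref{a13}, the essential new feature being that the cubic oscillation carried by $\phi_3,\phi_4,\phi_5$ must now be exploited, which is what brings the decoupling for the three-dimensional moment curve into the argument. Normalizing so that $\|a_n\|_{\ell^9(I_1)}=\|a_n\|_{\ell^9(I_2)}=1$ and writing $\Ec_{I,N}$ through the extension operator of $\Phi=(\phi_1,\ldots,\phi_5)$ (which, after the usual linear rescaling of $x$, realizes $\Ec_{I,N}$ as the restriction to $\Omega$ of such an operator), it suffices to prove $\int_{\Omega}|\Ec_{I_1,N}\,\Ec_{I_2,N}|^{9}\,dx\lesssim_\epsilon N^{18(\frac12-\frac19)+\epsilon}|\Omega|$.

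\textbf{Step 1 (outer decoupling).} Since $\phi_3,\phi_4,\phi_5$ form a nondegenerate curve of cubic type, the restriction of $\Phi$ to a $t$-interval of length $N^{-1/3}$ is, after affine renormalization, a perturbation of the three-dimensional moment curve lifted to $\R^5$. I would cover $\Omega$ by unit cubes $B$ and, on each rescaled copy $NB$, apply the bilinear form of the associated cubic $l^{9}L^{18}$ decoupling for the transversal pair $I_1,I_2$ — transversality being supplied by $\dist(I_1,I_2)\simeq N$ — thereby decoupling into arcs $J_1\subset I_1$, $J_2\subset I_2$ of $n$-length $\simeq N^{2/3}$; this is precisely the point at which the decoupling for the three-dimensional moment curve (in the circle of ideas of \cite{C2}, \cite{Oh}) is invoked, and the hypotheses $|\omega_3|,|\omega_4|\ge N^{3}$ and $|\omega_5|\ge N$ enter as the conditions that make this decoupling admissible over the thin box $\Omega$. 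Summing over the cubes $B$ by Minkowski's inequality and reassembling over the arcs (where the $\ell^9$ norms simply add, $\sum_{J}\|a_n\|_{\ell^9(J)}^9=\|a_n\|_{\ell^9(I)}^9$) reduces everything to the per-arc estimate
$$\int_{\Omega}|\Ec_{J_1,N}\,\Ec_{J_2,N}|^{9}\,dx\lesssim_\epsilon N^{18(\frac12-\frac19)+\epsilon}|\Omega|\,\|a_n\|_{\ell^{9}(J_1)}^{9}\|a_n\|_{\ell^{9}(J_2)}^{9}.$$

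\textbf{Step 2 (per-arc estimate).} Fix $J_i=[h_i+1,h_i+N^{2/3}]$. Following the proof of Theorem \ref{a13}, I would: (i) split the bilinear integrand into a product of single-arc integrals over the unit cubes via the short-arc inequality (the $L^{18}$ analog of \eqref{52}); (ii) apply the abstract positivity inequality (the analog of \eqref{c3}) to dominate the sum of these products by $\int_{\Omega}dx\int_{y,z}|\Ec_{J_1,N}(x+y)\,\Ec_{J_2,N}(x+z)|^{9}\,dy\,dz$ with $y,z$ over unit cubes; (iii) use periodicity in the $x_1$ \emph{and} $x_2$ variables — both present, since $\Ec_{I,N}$ is $N$-periodic in $x_1$ and $N^2$-periodic in $x_2$ — to trade them for auxiliary variables $u_1,u_2$ (from the two copies of $x_1$) and $v_1,v_2$ (from the two copies of $x_2$) ranging over long intervals; (iv) rescale each arc $J_i$ to unit scale, with $\kappa=N^{-1/3}$, performing the Gram–Schmidt-type subtractions of the lower-order terms exactly as in the proof of Proposition \ref{9}, so that $\Phi$ becomes a curve whose last three components are $O(\kappa)$-perturbations of $s^{3},s^{4},s^{5}$; (v) make a linear change of variables in $(u_1,u_2,v_1,v_2,x_3,x_4)$ isolating, for each arc, its linear, quadratic and cubic parts — the variable $x_5$ playing no role here, and the quartic-and-higher terms producing negligible oscillation on the relevant box; (vi) this reduces the problem to a sharp $L^{18}$ estimate for a two-parameter exponential sum governed by a two-parameter cubic surface — the five-dimensional counterpart of Theorem \ref{65} — which is applied on balls $B_N$; tracking Jacobians and box sizes then returns the per-arc bound, with the exponent $18(\frac12-\frac19)$ emerging from this bookkeeping.

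\textbf{Main obstacle.} The crux is the two-parameter cubic model estimate needed in Step 2(vi): formulating its precise statement — the right exponent and normalization, and the Wronskian and mixed-determinant nondegeneracy conditions it demands — and either extracting it from the three-dimensional moment curve decoupling of \cite{C2}, \cite{Oh} or establishing it directly, and then checking those nondegeneracy conditions \emph{uniformly} over the arc location $h_i$ and over the small perturbations $\epsilon_3,\epsilon_4$. Two further points, absent from the four-dimensional argument, demand care: the change of variables in Step 2(v) must linearize the dependence on \emph{two} periodic variables for both arcs simultaneously, so its Jacobian and the image of the box $\Omega$ must be computed precisely — this is where the exact thresholds $N^3$ and $N$ in the hypotheses are consumed — and the transversality of $J_1,J_2$ must be preserved through a change of variables that itself depends on the two arcs. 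Granting these, the remaining bookkeeping is identical in spirit to that in the proof of Theorem \ref{a13}.
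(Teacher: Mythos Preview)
Your two-step scheme matches the paper's architecture: Step~1 is Proposition~\ref{b1}, and Step~2 combines the transversality inequality (Proposition~\ref{c4}), the smoothing inequality, and the change of variables in Proposition~\ref{c5} landing on the two-parameter model Theorem~\ref{b65}. Two differences are worth flagging.

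First, the governing tool is not the three-dimensional moment curve decoupling of \cite{C2}, \cite{Oh} per se, but Guo's $l^9L^9$ decoupling \cite{G} for the two-dimensional surface $\Gc=(t,s,t^2,s^2,t^3+s^3)$ in $\R^5$, with its \emph{single shared} cubic coordinate; this is what drives both Proposition~\ref{b1} (since $\Phi(I_1)+\Phi(I_2)$ is locally an affine image of $\Gc$) and the proof of Theorem~\ref{b65}.

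Second, and this is where your sketch needs adjustment, in Step~2 the paper uses periodicity in $x_1$ \emph{only}, not also in $x_2$. After the smoothing one freezes $x_1,x_5,y_2,\ldots,y_5,z_2,\ldots,z_5$ and integrates over the five variables $(y_1,z_1,x_2,x_3,x_4)$, with $x_2,x_3,x_4$ kept \emph{shared} between the two arcs. The change of variables in Proposition~\ref{c5} then produces $(u_1,u_2,w_1,w_2,v)$, the cubic oscillation for both arcs living in the common variable $v$; this is precisely what lands on the five-dimensional surface $\Gc$. Your proposal to double $x_2$ into separate copies $v_1,v_2$ would instead give six integration variables and a six-dimensional product-type model, which is inconsistent with your own description in (vi) of a ``five-dimensional counterpart of Theorem~\ref{65}''. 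The correct bookkeeping keeps $x_2$ shared, and your ``main obstacle'' is exactly Theorem~\ref{b65}, proved in the paper via Guo's theorem.
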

From now on, we may and will assume that
$$\Omega=[0,N^3]^4\times [0,N].$$
Enlarging the range of the	 first two variables is done only for convenience, to accommodate various changes of variables. It comes at no cost, due to periodicity. The novel small cap decoupling nature of this result is reflected by the size $\simeq N$ of the range of $x_5$. Our argument cannot accommodate a smaller range, due to our crucial use of $N$-periodicity in $x_1$ for $\Ec_{I,N}$. This will be apparent in Step 2 of the argument. However, the size $\simeq N$ is exactly what is needed in our applications.

At the heart of our proof of Theorem \ref{b13} lies the following inequality proved in \cite{G}, for the surface
$$\Gc=(t,s,t^2,s^2,t^3+s^3).$$
 This can be thought of as a two dimensional analog of the decoupling for the moment curve in $\R^3$.  It will play the same role in this section as the role played by $L^6$ decoupling in the proof of Theorem \ref{a13}. Notably, the argument in this section will make crucial use of the oscillatory nature of the cubic terms in $\Gc$.

\begin{thm}
\label{guo}	
Given $f:[0,1]^2\to\C$ and intervals $J_1,J_2\subset [0,1]$
let $$E^{\Gc}_{J_1\times J_2}f(x)=\int_{J_1\times J_2}f(t,s)e(tx_1,sx_2,t^2x_3,s^2x_4,(t^3+s^3)x_5)dtds.$$
Then for  each ball $B_R\subset \R^5$ we have
$$\|E^{\Gc}_{[0,1]^2}f\|_{L^{9}(B_R)}\lesssim_\epsilon R^{\frac23(\frac12-\frac19)+\epsilon}(\sum_{|J_1|,|J_2|=1/R^{1/3}}\|E^{\Gc}_{J_1\times J_2}f\|^9_{L^{9}(B_R)})^{1/9}.$$
\end{thm}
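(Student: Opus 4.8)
The plan is to adapt the Bourgain--Demeter induction-on-scales machinery that underlies decoupling for the moment curve; this is the route taken in \cite{G}. A first orienting remark: the surface $\Gc(t,s)=(t,s,t^2,s^2,t^3+s^3)$ is a nondegenerate, translation- and dilation-invariant two-dimensional surface in $\R^5$, and $R^{-1/3}$ is exactly the scale at which the cubic terms $t^3+s^3$ cease to be negligible over a ball of radius $R$. Moreover, the target loss $R^{\frac23(\frac12-\frac19)+\epsilon}$ is precisely twice the loss $R^{\frac13(\frac12-\frac19)+\epsilon}$ in the sharp $\ell^9(L^9)$ decoupling of the moment curve $(u,u^2,u^3)\subset\R^3$ into arcs of length $R^{-1/3}$ over a ball of radius $R$; the latter follows from the $\ell^2(L^{12})$ decoupling of \cite{BDG} by interpolating down to $L^9$ and then passing from $\ell^2$ to $\ell^9$ by H\"older, at the cost of $(R^{1/3})^{\frac12-\frac19}$. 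This strongly suggests that the proof should decouple the two variables $t$ and $s$ essentially one at a time, invoking the $\R^3$ moment curve decoupling once in the coordinates $(x_1,x_3,x_5)$ and once in $(x_2,x_4,x_5)$, with the coordinate $x_5$ shared between the two stages.

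Concretely, I would run an induction on scales for the quantity $D(\delta)$ equal to the best constant in the $\ell^9(L^9)$ decoupling of $\Gc$ into $\delta$-caps on a ball of radius $\delta^{-3}$, aiming for a recursive inequality of the shape $D(R^{-1/3})\lesssim_\epsilon R^\epsilon\, D(\rho^{-1/3})\cdot(\text{lower-dimensional losses})$ at suitable intermediate scales $\rho$, with the recursion closing at the stated power. The ingredients would be: (i) a Bourgain--Guth broad--narrow decomposition reducing the linear estimate to a multilinear (transverse-caps) estimate plus a ``narrow'' contribution that is reinserted into the induction; (ii) parabolic rescaling on each cap, which by dilation invariance maps a $\rho$-cap to $[0,1]^2$ at ball scale $\rho^3R$, reproducing $\Gc$ up to a negligible affine perturbation of the same type as the rescaled curves handled earlier in the paper; and (iii) lower-dimensional decoupling for the multilinear piece, namely the $\ell^2(L^6)$ parabola decoupling (Theorem \ref{a2}) applied to the quadratic data $t\mapsto(t,t^2)$ and $s\mapsto(s,s^2)$, together with the $\ell^2(L^{12})$ decoupling for the $\R^3$ moment curve applied in the ``diagonal'' variable carrying $x_5$ --- this last step being the one that genuinely exploits the oscillation of $t^3+s^3$, exactly as advertised in the discussion preceding the statement.

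The main obstacle, and the reason a bare tensor product of two $\R^3$ decouplings does not suffice, is the single shared coordinate $x_5=t^3+s^3$: the surface is not a cylinder over either the $t$- or the $s$-moment curve, so the two one-variable decouplings cannot be carried out independently but must be interleaved inside the induction, all while tracking transversality in codimension $3$. Quantitatively, the nondegeneracy one needs is that the $5\times5$ determinant built from $\partial_t\Gc$, $\partial_s\Gc$, $\partial_t^2\Gc$, $\partial_s^2\Gc$, $\partial_t^3\Gc$ (equivalently, a Wronskian-type condition coming from the cubic) stays bounded below outside a lower-dimensional degenerate locus, the latter being absorbed by the broad--narrow step. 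A further delicate point is that the constant-coefficient block example shows the exponent $9$ is sharp, so there is no room to waste when closing the recursion. Once in hand, Theorem \ref{guo} plays in Section \ref{d=5last} the role that $L^6$ decoupling for planar curves plays in the proof of Theorem \ref{a13}.
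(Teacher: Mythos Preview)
The paper does not give its own proof of Theorem \ref{guo}; it is quoted verbatim as a result ``proved in \cite{G}'' and used as a black box in Section \ref{d=5last}. So there is no in-paper argument to compare your proposal against.

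Your proposal is not a proof but a high-level outline of the standard decoupling architecture (broad--narrow reduction, parabolic rescaling, induction on scales, lower-dimensional input), correctly attributed to \cite{G}. As an orientation it is reasonable, and your observation that the shared coordinate $x_5$ prevents a naive tensoring of two $\R^3$ moment-curve decouplings is the genuine difficulty. However, the sketch remains too schematic to count as a proof: you do not actually set up the multilinear step, specify the transversality hypothesis that makes the multilinear Kakeya/restriction input apply in codimension $3$, or verify that parabolic rescaling on a $\rho$-cap of $\Gc$ reproduces a surface in the same class (it does, but this needs to be checked since the cubic term mixes $t$ and $s$ after shifting). The numerological remark that $R^{\frac23(\frac12-\frac19)}$ equals twice the $\ell^9$ loss for the $\R^3$ moment curve is suggestive but does not by itself constitute an argument. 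If you intend to supply a proof rather than cite \cite{G}, you would need to fill in precisely the interleaved two-variable induction you allude to, or follow Guo's actual argument, which runs a Bourgain--Demeter--Guth style iteration adapted to this specific $2$-surface in $\R^5$.
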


The first application of this theorem will be used to produce the Step 1 decoupling in the proof of Theorem \ref{b13}. For $f:I\subset [0,1]\to\C$ let
$$E^{\Phi}_If(x)=\int_If(t)e(\Phi(t)\cdot x)dt.$$
\begin{prop}
\label{b1}	We have
$$\|E^{\Phi}_{I_1}fE^{\Phi}_{I_2}f\|_{L^9(B_R)}\lesssim_\epsilon R^{\frac23(\frac12-\frac19)+\epsilon}(\sum_{J_i\subset I_i:\;|J_i|=1/R^{1/3}}\|E^{\Phi}_{J_1}fE^{\Phi}_{J_2}f\|^9_{L^{9}(B_R)})^{1/9}.$$
\end{prop}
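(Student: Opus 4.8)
The plan is to transfer Guo's decoupling (Theorem \ref{guo}) for the model surface $\Gc=(t,s,t^2,s^2,t^3+s^3)$ to the bilinear object attached to the curve $\Phi$. The first point is that
$$E^{\Phi}_{I_1}f(x)\,E^{\Phi}_{I_2}f(x)=\int_{I_1}\int_{I_2}f(t)f(s)\,e\big((\Phi(t)+\Phi(s))\cdot x\big)\,dt\,ds$$
is, up to relabeling, an extension operator for the $2$-dimensional ``sum surface'' $\mathcal{S}=\{\Phi(t)+\Phi(s):\ t\in I_1,\ s\in I_2\}\subset\R^5$, and that the factored pieces $E^{\Phi}_{J_1}f\,E^{\Phi}_{J_2}f$ correspond exactly to the restriction of $\mathcal{S}$ to the product caps $J_1\times J_2$. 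Since $\Phi$ is a nonsingular linear image of the moment curve $\Phi_0(t)=(t,t^2,t^3,t^4,t^5)$ (the perturbation terms $\epsilon_3 t^4,\epsilon_4 t^5$ being precisely the off-diagonal entries of that linear map, so that with $\epsilon_3,\epsilon_4=o(1)$ it is uniformly well conditioned), and since $L^9$ decoupling inequalities are invariant under an invertible linear change of the frequency variables, it suffices to prove the estimate for $\Phi_0$, i.e.\ for the surface parametrized by $(t,s)\mapsto(t+s,\,t^2+s^2,\,t^3+s^3,\,t^4+s^4,\,t^5+s^5)$.

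\textbf{Main steps.} The crux is to identify this surface, at the resolution $R$ imposed by the ball $B_R$ and relative to the decomposition into caps of side $R^{-1/3}$, with $\Gc$ and its cap decomposition at the same side. Here the separation hypothesis $\dist(I_1,I_2)\simeq1$ is essential: it makes $(t,s)\mapsto(t+s,ts)$ a diffeomorphism of $I_1\times I_2$ with Jacobian $\simeq1$, so every change of variables below is well conditioned and caps map to caps of comparable dimensions. On a cap $J_1\times J_2$ centered at $(t_0,s_0)$ one Taylor-expands $\Phi_0(t)+\Phi_0(s)$ about $(t_0,s_0)$: the affine part in $(t-t_0,s-s_0)$ is absorbed by a modulation, the purely quadratic part is the standard parabolic part, the cubic part — because the quotient of $\R^5$ by the span of the two affine and two quadratic coefficient vectors of the moment curve is one-dimensional — is \emph{forced} to point in a single direction, hence has the shape $\propto c(t_0)(t-t_0)^3+c(s_0)(s-s_0)^3$, which after an anisotropic rescaling of $(t-t_0,s-s_0)$ becomes proportional to $(t-t_0)^3+(s-s_0)^3$, exactly the last coordinate of $\Gc$. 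Finally every monomial of degree $\ge4$ contributes only $O(R^{-4/3})=o(R^{-1})$ on an $R^{-1/3}$-cap, hence $o(1)$ to the phase against the dual variables of size $O(R)$, and may be discarded (or absorbed by a cheap decoupling) without affecting the $L^9(B_R)$ norm. Thus, after a well-conditioned affine change of variables in $\R^5$ — constant up to harmless, slowly varying corrections over $I_1\times I_2$ — the $\Phi_0$-sum surface with its $R^{-1/3}$-cap decomposition becomes $\Gc$ with its $R^{-1/3}$-cap decomposition; one then applies Theorem \ref{guo} on $B_R$ (partitioned, if necessary, into sub-balls on which the flattening map is essentially constant, at a cost absorbed into $R^{\epsilon}$), reverses the change of variables, and sums over caps, producing the claimed bound with Guo's loss $R^{\frac23(\frac12-\frac19)+\epsilon}$.

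\textbf{Expected obstacle.} The step I expect to be the main difficulty is making the identification rigorous and \emph{uniform}: showing that a single change of variables of bounded distortion (independent of the cap, or at least with distortion controlled uniformly in $R$, $N$ and the $\epsilon_i$) simultaneously straightens all the $R^{-1/3}$-caps of the degree-$5$ bilinear moment surface into caps of $\Gc$ — equivalently, that decoupling into $R^{-1/3}$-caps for this surface is genuinely equivalent to decoupling for its cubic Taylor model $\Gc$ at the matching scale. This needs both the linear-algebra fact that the two cubic branches collapse onto the symmetric combination $(t-t_0)^3+(s-s_0)^3$ modulo the affine and quadratic directions, and a careful accounting of the degree-$\ge4$ remainders and of the slow variation of the flattening map, so that the errors accumulated over the $\simeq R^{2/3}$ caps remain within the allowed $R^{\epsilon}$ (typically handled with a little $\epsilon$-room or a weight). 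The remaining ingredients — reduction to $\Phi_0$, the exploitation of separation, and the final summation over caps — are routine.
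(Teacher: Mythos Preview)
Your approach matches the paper's exactly: the paper's entire argument for this proposition is the one-line observation that $\Phi(I_1)+\Phi(I_2)$ is a surface locally approximable by nonsingular affine images of $\Gc$, with a reference to \cite{Oh} for the details of transferring Theorem~\ref{guo}. Your reduction to $\Phi_0$ via the linear map, the Taylor expansion on caps, and the observation that the cubic contributions collapse to a single direction modulo the span of the first- and second-derivative vectors are all the right ingredients.

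Your expected obstacle is genuine, but the resolution is not a single global change of variables as you half-suggest; it is induction on scales, exactly as carried out later in this paper in the proof of Theorem~\ref{b65} (and in \cite{Oh}). One decouples from caps of size $\delta$ to caps of size $\delta R^{-\alpha}$ using the cap-local affine identification with $\Gc$ (the degree $\ge 4$ remainders being negligible at that relative scale), then iterates $O_\alpha(1)$ times, with the accumulated constants absorbed into $R^\epsilon$. This sidesteps entirely the issue of making the flattening map uniform across all $\simeq R^{2/3}$ caps simultaneously.
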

The idea behind this result is that $\Phi(I_1)+\Phi(I_2)$ is a surface that can be locally approximated by nonsingular affine images of the reference surface $\Gc$. This is the approach taken in \cite{Oh}, and we refer the reader to this paper for details.

We will combine this with the following transversality result.
\begin{prop}
\label{c4}	We have
$$\|E^{\Phi}_{J_1}fE^{\Phi}_{J_2}f\|^9_{L^{9}(B_R)}\lesssim R^{-5} \|E^{\Phi}_{J_1}f\|^9_{L^{9}(B_R)}\|E^{\Phi}_{J_2}f\|^9_{L^{9}(B_R)}.$$
\end{prop}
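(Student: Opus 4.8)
The plan is to derive Proposition~\ref{c4} from the locally constant property of the wave packets, combined with a single transversality (box--intersection) estimate that encodes the separation $\dist(I_1,I_2)\simeq 1$; that box--intersection estimate is the only nontrivial input, everything else being routine and following the conventions on weights already used in the paper. Throughout, $c_i$ denotes the center of $J_i$, so $|c_1-c_2|\simeq 1$, and we use that $\Phi=A\Phi_0$ for a fixed $A$ with $\|A\|,\|A^{-1}\|\simeq 1$, so that all constants stay uniform in $\epsilon_3,\epsilon_4$.

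Set $\delta=R^{-1/3}=|J_i|$. Taylor expanding $\Phi$ around $c_i$ and using that $\Phi'(c_i),\dots,\Phi^{(5)}(c_i)$ form a bounded, well-conditioned frame, the arc $\Phi(J_i)$ lies in a plank $Q_i$ of dimensions $\delta\times\delta^2\times\delta^3\times\delta^4\times\delta^5$ along these directions. Localizing $E^{\Phi}_{J_i}f$ to $B_R$ by a smooth bump $\eta_{B_R}$, the function $g_i:=\eta_{B_R}\,E^{\Phi}_{J_i}f$ has Fourier support in the $\tfrac1R$--neighborhood $\widetilde{Q}_i$ of $Q_i$, of dimensions $\sim R^{-1/3}\times R^{-2/3}\times R^{-1}\times R^{-1}\times R^{-1}$ (the three smallest sides, $\delta^3,\delta^4,\delta^5$, are widened up to $1/R$). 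Its dual plank $\widetilde{Q}_i^{*}$ thus has dimensions $R^{1/3}\times R^{2/3}\times R\times R\times R$, sits inside a ball of radius $\lesssim R$, and has volume $|\widetilde{Q}_i^{*}|\simeq R^{4}$. The locally constant property gives, ignoring rapidly decaying tails as elsewhere, $|g_i(x)|^9\lesssim R^{-4}\int_{x+\widetilde{Q}_i^{*}}|g_i(y)|^9\,dy$; multiplying these for $i=1,2$, integrating over $\R^5$ (which dominates $\int_{B_R}|E^{\Phi}_{J_1}f|^9|E^{\Phi}_{J_2}f|^9$ since $|\eta_{B_R}|\gtrsim 1$ on $B_R$), and applying Fubini yields
\[
\int_{B_R}|E^{\Phi}_{J_1}f|^9|E^{\Phi}_{J_2}f|^9\ \lesssim\ \frac1{R^{8}}\int\!\!\int |g_1(y_1)|^9|g_2(y_2)|^9\,\big|(y_1+\widetilde{Q}_1^{*})\cap(y_2+\widetilde{Q}_2^{*})\big|\,dy_1\,dy_2 .
\]
As $\|g_i\|_{L^9(\R^5)}^9\lesssim\|E^{\Phi}_{J_i}f\|_{L^9(B_R)}^9$ (again up to weights), Proposition~\ref{c4} reduces to the uniform bound $\big|(y_1+\widetilde{Q}_1^{*})\cap(y_2+\widetilde{Q}_2^{*})\big|\lesssim R^{3}$.

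To prove the latter, observe that a point $x$ in the intersection satisfies in particular the five linear constraints $|\langle x-y_1,\Phi^{(k)}(c_1)\rangle|\lesssim\delta^{-k}$ for $k=1,2,3$ and $|\langle x-y_2,\Phi^{(k)}(c_2)\rangle|\lesssim\delta^{-k}$ for $k=1,2$, so $x$ ranges over a translate of a parallelepiped of volume at most a constant times
\[
\frac{R^{1/3}\cdot R^{2/3}\cdot R\cdot R^{1/3}\cdot R^{2/3}}{\big|\det[\,\Phi'(c_1)\mid\Phi''(c_1)\mid\Phi'''(c_1)\mid\Phi'(c_2)\mid\Phi''(c_2)\,]\big|} .
\]
The determinant here is a confluent alternant in $c_1,c_2$; writing $\Phi=A\Phi_0$ and computing for the moment curve $\Phi_0$ one finds it equals $\det(A)$ times a nonzero absolute constant times $(c_1-c_2)^{6}$, hence is $\simeq1$ because $|c_1-c_2|\simeq 1$. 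This produces the bound $R^{1/3+2/3+1+1/3+2/3}=R^{3}$, and the chain above then gives $\int_{B_R}|E^{\Phi}_{J_1}f|^9|E^{\Phi}_{J_2}f|^9\lesssim R^{-5}\|E^{\Phi}_{J_1}f\|_{L^9(B_R)}^9\|E^{\Phi}_{J_2}f\|_{L^9(B_R)}^9$, which is Proposition~\ref{c4}.

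The main obstacle is this nondegeneracy input: one has to recognize that the combination ``three derivatives of $\Phi$ at $c_1$ together with two derivatives at $c_2$'' is exactly what realizes the loss of a full factor $R$ over $|\widetilde{Q}_i^{*}|\simeq R^4$, and then check that the associated $5\times 5$ determinant is a nonzero constant multiple of $(c_1-c_2)^{6}$ (so it is bounded below for well-separated $c_i$). The locally constant property, the passage from $g_i$ back to $E^{\Phi}_{J_i}f$ on $B_R$, and the handling of tails/weights are all routine and carried out as in the rest of the paper.
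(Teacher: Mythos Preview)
Your argument is correct and follows the same overall strategy as the paper: locally constant (wave packet) behavior of $E^{\Phi}_{J_i}f$ on dual planks of dimensions $R^{1/3}\times R^{2/3}\times R\times R\times R$, followed by the transversality estimate $|P_1\cap P_2|\lesssim R^3$, which combines with $|P_i|\simeq R^4$ to produce the $R^{-5}$ factor.

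The one genuine (though minor) difference lies in how you extract the fifth linear constraint for the intersection estimate. The paper uses the four vectors $\Phi'(t_1),\Phi''(t_1),\Phi'(t_2),\Phi''(t_2)$ together with a unit vector $\mathbf{v}$ spanning the line $\pi(t_1)^\perp\cap\pi(t_2)^\perp$; the relevant nondegeneracy is checked via the $4\times4$ minor
\[
\det\begin{bmatrix}1&2t_1&3t_1^2&4t_1^3\\1&2t_2&3t_2^2&4t_2^3\\0&1&3t_1&6t_1^2\\0&1&3t_2&6t_2^2\end{bmatrix}\simeq|t_1-t_2|^4,
\]
and the $\mathbf{v}$-row contributes the $O(R)$ constraint. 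You instead take $\Phi'(c_1),\Phi''(c_1),\Phi'''(c_1),\Phi'(c_2),\Phi''(c_2)$ directly, recognizing the resulting $5\times5$ determinant as a confluent Vandermonde of type $(3,2)$ for the moment curve, hence a nonzero constant times $(c_1-c_2)^{6}$. Your route avoids introducing the auxiliary vector $\mathbf{v}$ and the two-step argument ($4\times4$ minor, then orthogonal complement), at the cost of using the slightly less transparent constraint $|\langle x-y_1,\Phi'''(c_1)\rangle|\lesssim R$, which is in any case automatic from $|x-y_1|\lesssim R$. Both choices yield the same box of volume $R^{1/3+2/3+1+1/3+2/3}=R^{3}$ and are interchangeable.
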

\begin{proof}
The proof is very similar to the one for \eqref{52}, explained in \cite{C2}. We sketch the details, with an emphasis on the main geometric inequality. Since
$$E_J^{\Phi}f(x)=E_J^{\Phi_0}f(Ax),\;\; A(x_1,\ldots,x_5)=(x_1,x_2,x_3,x_4+\epsilon_3x_3,x_5+\epsilon_4x_4)$$
and since $B_R$ does not change much under the action of $A$, we may assume that $\Phi=\Phi_0$.

Let $\eta_R$ be a positive, smooth approximation of $1_{B_R}$ with Fourier support inside $B(0,1/R)$.  The Fourier transform of $\eta_RE^{\Phi_0}_{J_i}f$ is supported on the set (for arbitrary $t_i\in J_i$)
$$\{(t_i+s,\ldots,(t_i+s)^5):\;|s|=O(R^{-1/3})\}+B(0,1/R).$$
This is easily seen to lie inside a translate of the rectangular box $B(J_i)$ defined as follows. Let $\pi(t_i)$ be the plane spanned by the vectors  $$e_1(t_i)=(1,2t_i,3t_i^2,4t_i^3,5t_i^4),\;\;e_2(t_i)=(0,1,3t_i,6t_i^2,10t_i^3).$$
Let $R(t_i)$ be the rectangle inside $\pi(t_i)$, centered at the origin, with long side of length $O(1/R^{1/3})$ in the direction $e_1(t_i)$, and short side of length $O(1/R^{2/3})$ in the orthogonal direction. We take $B(J_i)$ to be the Cartesian product of $R(J_i)$ and the cube $[-O(1/R),O(1/R)]^3$, the latter being a subset of  $\pi(t_j)^\perp$.
 We write
$$|E^{\Phi_0}_{J_i}f|^91_{B_R}\approx |E^{\Phi_0}_{J_i}f|^9\eta_R\approx\sum_{P_i\in \Pc_i}c_{P_i}1_{P_i},$$
with $c_{P_i}\in (0,\infty)$, and $\Pc_i$ a tiling of $\R^5$ with rectangular boxes $P_i$ dual to $B(J_i)$. Each $P_i$ has dimensions $R^{1/3}\times R^{2/3}\times R\times R\times R$, with the first two entries corresponding to $\pi(t_i)$ and the last three corresponding to $\pi(t_i)^\perp$.

Note that if $x\in P_i$ then
$$\langle x,e_1(t_i)\rangle=O(R^{1/3}),\;\;\langle x,e_2(t_i)\rangle=O(R^{2/3}).$$

Let us describe the intersection of $P_1$ and $P_2$.
Since
$$\det\begin{bmatrix}1&2t_1&3t_1^2&4t_1^3\\1&2t_2&3t_2^2&4t_2^3\\0&1&3t_1&6t_1^2\\0&1&3t_2&6t_2^2\end{bmatrix}\simeq |t_1-t_2|^4\simeq 1,$$
it follows that $\pi(t_1)\cap \pi(t_2)=\{0\}$ and that $\pi(t_1)^\perp\cap \pi(t_2)^\perp$ is a line, spanned by some unit vector $\textbf{v}$. It also follows that the matrix $M$ with rows one through five consisting of the vectors $e_1(t_1),e_1(t_2),e_2(t_1),e_2(t_2),\textbf{v}$ has determinant of magnitude $\simeq 1$.
Since each $x\in P_1\cap P_2$ must satisfy $Mx\in [-O(R^{1/3}),O(R^{1/3})]^2\times[-O(R^{2/3}),O(R^{2/3})]^2\times [-O(R),O(R)]$, we conclude that $|P_1\cap P_2|\lesssim R^3$. This estimate can also be seen to be sharp. Thus
\begin{align*}
\|E^{\Phi}_{J_1}fE^{\Phi}_{J_2}f\|^9_{L^{9}(B_R)}&\approx \sum_{P_1\in\Pc_1}\sum_{P_2\in\Pc_2}c_{P_1}c_{P_2}|P_1\cap P_2|\\&\lesssim R^{-5}\sum_{P_1\in\Pc_1}\sum_{P_2\in\Pc_2}c_{P_1}c_{P_2}|P_1||P_2|\\&\approx  R^{-5}\|E^{\Phi}_{J_1}f\|^9_{L^{9}(B_R)}\|E^{\Phi}_{J_2}f\|^9_{L^{9}(B_R)}.
\end{align*}

\end{proof}

The following result will allow us to perform the Step 2 decoupling in the proof of Theorem \ref{b13}. We let $M=N^{2/3}$ and $H_i=[h_i+1,h_i+M]$, with $h_1,h_2,|h_1-h_2|\simeq N$. Note that the variable $x_5$ no longer plays any role. Let us write
$$\Ec_{H,short}(x_1,x_2,x_3,x_4)=\sum_{n\in H}a_ne(\phi_1(\frac{n}{N})x_1+\ldots+\phi_4(\frac{n}{N})x_4).$$
\begin{prop}
\label{c5}	We have
$$\int_{[0,N^3]^5}|\Ec_{H_1,short}(y_1,x_2,x_3,x_4)\Ec_{H_2,short}(z_1,x_2,x_3,x_4)|^9dy_1dz_1dx_2dx_3dx_4$$$$\lesssim_\epsilon N^{15+\epsilon}M^{18(\frac12-\frac1{9})}\|a_n\|^{9}_{\ell^{9}(H_1)}\|a_n\|^{9}_{\ell^{9}(H_2)}.$$	
\end{prop}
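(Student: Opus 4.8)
\textbf{Proof plan for Proposition \ref{c5}.}

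The plan is to reduce this bilinear short-interval estimate to Theorem \ref{guo} (Guo's $l^9L^9$ decoupling for the surface $\Gc=(t,s,t^2,s^2,t^3+s^3)$) combined with the transversality inequality in Proposition \ref{c4}, after a rescaling that turns the ``short'' exponential sums on $H_i$ of length $M=N^{2/3}$ into full-length sums at scale $M$ and simultaneously creates a genuine two-parameter surface out of the two factors. Since $M=N^{2/3}$, Taylor-expanding $\phi_k(\frac{N_0+m}{N})$ around $h_1/N$ (resp. $h_2/N$) in powers of $\kappa=M/N=N^{-1/3}$ produces, after discarding linear-in-$m$ terms via periodicity in $x_1$, a rescaled curve $\tilde\Phi_i$ at scale $M$ whose leading behaviour is $(m/M,(m/M)^2,(m/M)^3,(m/M)^4)$ up to small, controlled perturbations; this is exactly the kind of bookkeeping carried out in the proof of Proposition \ref{9}, and the smallness of $\epsilon_3,\epsilon_4$ guarantees the relevant nondegeneracy/transversality hypotheses survive. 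The key point is that $\Ec_{H_1,short}(y_1,x_2,x_3,x_4)\Ec_{H_2,short}(z_1,x_2,x_3,x_4)$, with the separate first variables $y_1,z_1$ but shared $x_2,x_3,x_4$, is (after the change of variables) an extension operator attached to a surface that locally looks like a nonsingular affine image of $\Gc$: the two $t$- and $s$-linear directions come from $y_1,z_1$, the two quadratic directions and the combined cubic direction come from the shared variables $x_2,x_3,x_4$. This is the same mechanism used in Proposition \ref{b1}.

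The steps, in order, are: (i) write $H_i=h_i+[1,M]$, and for $m\in[1,M]$ expand each $\phi_k(\frac{h_i+m}{N})$ in powers of $m/M$, absorbing the constant and $m$-linear terms into $Q(m)$ (removable by periodicity in the first variable) and collecting the genuinely curved part into $\tilde\phi_{k,i}(m/M)\kappa^{\text{(appropriate power)}}$; (ii) perform the associated linear change of variables $(y_1,z_1,x_2,x_3,x_4)\mapsto$ new coordinates, with Jacobian a fixed power of $\kappa$, mapping $[0,N^3]^5$ into a box whose side lengths are the natural rescaled ranges (comparable to $M,M^2,M^3,\ldots$ in the new coordinates), so that $|\Ec_{H_1,short}\Ec_{H_2,short}|$ becomes $|E^{\tilde\Phi_1}_{[1,M]}\,E^{\tilde\Phi_2}_{[1,M]}|$ evaluated at the new variables; (iii) cover the new box by balls $B_M$ and apply Proposition \ref{b1} (Guo decoupling transplanted to the perturbed curve) at scale $R=M$, decoupling into pieces supported on subintervals $J_i$ of length $M/M^{1/3}=M^{2/3}$ — in fact since we started from integer-indexed sums of length $M$, the relevant subintervals contain $O(M^{2/3})$ terms each; (iv) on each pair $(J_1,J_2)$ apply Proposition \ref{c4} to split the bilinear term into a product of two linear $L^9$ norms; (v) estimate each linear piece trivially by Cauchy–Schwarz / the trivial bound (bounding $|E_{J_i}|$ pointwise by $\|a\|_{\ell^1(J_i)}$, or $|J_i|^{1/2}\|a\|_{\ell^2(J_i)}$, whichever the final arithmetic requires), and reassemble via Hölder over the $\lesssim M^{1/3}\times M^{1/3}$ choices of $(J_1,J_2)$; (vi) unwind the rescaling, converting powers of $M$ back into powers of $N$ via $M=N^{2/3}$ and picking up the Jacobian factors, to arrive at the claimed bound $N^{15+\epsilon}M^{18(\frac12-\frac19)}\|a\|_{\ell^9(H_1)}^9\|a\|_{\ell^9(H_2)}^9$.

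The main obstacle I expect is step (i)–(ii): carefully arranging the Taylor expansions so that, after removing the $x_1$-linear terms by periodicity, the remaining five-dimensional surface is an honest nonsingular affine perturbation of $\Gc$ — in particular checking that the cubic data from the two factors combines into a single $t^3+s^3$-type direction (rather than two independent cubic directions, which would overdetermine the system) and that the perturbation parameters $\epsilon_3,\epsilon_4,\kappa$ stay small enough that the nondegeneracy conditions needed for Propositions \ref{b1} and \ref{c4} hold uniformly. The bookkeeping of exponents in step (vi) is routine but error-prone: one must track the Jacobian of the linear change of variables, the volume of the image box, the $R^{\frac23(\frac12-\frac19)+\epsilon}$ loss from Guo's theorem at $R=M$, the $R^{-5}$ gain from transversality, and the passage from $\ell^2$ to $\ell^9$ norms on intervals of length $M^{2/3}$, and verify they conspire to give precisely $N^{15}M^{18(\frac12-\frac19)}$. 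Everything else is a direct transcription of the strategy already executed for Theorem \ref{a13} and Proposition \ref{9}, now with $L^6$-decoupling for the $3$-dimensional moment curve replaced by Guo's $L^9$-decoupling for $\Gc$.
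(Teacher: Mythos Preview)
Your plan correctly identifies the crucial point: after the change of variables, the product $\Ec_{H_1,short}(y_1,\cdot)\,\Ec_{H_2,short}(z_1,\cdot)$ is an extension operator for a two-dimensional surface $\Psi(t,s)$ in $\R^5$ that is a small perturbation of $\Gc$. But your steps (iii)--(v) misidentify both the tool and the scale.

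Propositions \ref{b1} and \ref{c4} concern the product $E^\Phi_{I_1}f(x)\,E^\Phi_{I_2}f(x)$ where both factors are evaluated at the \emph{same} point $x\in\R^5$. They belong to the proof of Theorem \ref{b13} (the outer argument), which takes a genuinely bilinear curve problem, decouples via \ref{b1}, separates via the transversality \ref{c4}, and then invokes Proposition \ref{c5} as a black box for the resulting short pieces. By the time you enter the proof of \ref{c5}, the two factors already carry \emph{distinct} first variables $y_1,z_1$; the product is a surface extension, and neither \ref{b1} nor \ref{c4} applies to it. Replaying the outer structure inside \ref{c5} is ill-posed, and in any case there is no transversality left to exploit between $t$ and $s$: the separation $|h_1-h_2|\simeq N$ is spent in making the Jacobian of the change of variables nondegenerate.

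The paper's argument is shorter than your plan. After an explicit linear map $(y_1,z_1,x_2,x_3,x_4)\mapsto(u_1,u_2,w_1,w_2,v)$ with Jacobian $\simeq N^{-8}$ and a rescaling $(\bar u_i,\bar w_i,\bar v)=(Mu_i,M^2w_i,M^3N^{-1}v)$, the product becomes a single exponential sum over $(m_1,m_2)\in[1,M]^2$ with phase $\Psi(m_1/M,m_2/M)\cdot(\bar u_1,\bar u_2,\bar w_1,\bar w_2,\bar v)$, where $\Psi$ satisfies the hypotheses of Theorem \ref{b65} (the smallness of $\epsilon_3,\epsilon_4$ is used here to make the cubic coefficients $A,B$ of $\psi_5,\psi_6$ satisfy $|A|,|B|\simeq 1$). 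One then applies Theorem \ref{b65} once on balls $B_{M^3}$; this decouples all the way to single lattice points $(m_1,m_2)$, giving $\|c_{m_1,m_2}\|_{\ell^9}=\|a\|_{\ell^9(H_1)}\|a\|_{\ell^9(H_2)}$ directly with loss $M^{2(\frac12-\frac19)}$. There is no intermediate decoupling to pieces of length $M^{2/3}$, no subsequent transversality step, and no trivial estimate on subintervals: your steps (iii)--(v) collapse to a single invocation of \ref{b65}. The scale must be $M^3$, not $M$, since resolving frequencies $\Psi(m_1/M,m_2/M)$ that differ in the cubic coordinate by $\simeq M^{-3}$ requires spatial balls of radius $M^3$.
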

\begin{proof}
Let $\alpha_i=h_i/N$.
We make the change of variables
$$\begin{cases}
u_1=\frac1N(y_1+2\alpha_1x_2+\phi_3'(\alpha_1)x_3+\phi_4'(\alpha_1)x_4)\\
u_2=\frac1N(z_1+2\alpha_2x_2+\phi_3'(\alpha_2)x_3+\phi_4'(\alpha_2)x_4)\\w_1=\frac1{N^2}(x_2+\frac{\phi_3''(\alpha_1)}{2}x_3+\frac{\phi_4''(\alpha_1)}{2}x_4)\\w_2=\frac1{N^2}(x_2+\frac{\phi_3''(\alpha_2)}{2}x_3+\frac{\phi_4''(\alpha_2)}{2}x_4)\\v=\frac{x_2}{N^2}
\end{cases}.$$
It has Jacobian $\simeq N^{-8}$, since $\epsilon_3,\epsilon_4$ were chosen to be small.
Note that
$$|\Ec_{H_1,short}(y_1,x_2,x_3,x_4)|=|\sum_{m_1=1}^{M}a_{h_1+m_1}\times$$$$e(m_1u_1+m_1^2w_1+m_1^3\frac{\phi_3^{(3)}(\alpha_1)x_3+\phi_4^{(3)}(\alpha_1)x_4}{3!N^3}+m_1^4\frac{\phi_3^{(4)}(\alpha_1)x_3+\phi_4^{(4)}(\alpha_1)x_4}{4!N^4}+m_1^5\frac{\phi_4^{(5)}(\alpha_1)x_4}{5!N^5})|$$

$$|\Ec_{H_2,short}(z_1,x_2,x_3,x_4)|=|\sum_{m_2=1}^{M}a_{h_2+m_2}\times$$$$e(m_2u_2+m_2^2w_2+m_2^3\frac{\phi_3^{(3)}(\alpha_2)x_3+\phi_4^{(3)}(\alpha_2)x_4}{3!N^3}+m_2^4\frac{\phi_3^{(4)}(\alpha_2)x_3+\phi_4^{(4)}(\alpha_2)x_4}{4!N^4}+m_2^5\frac{\phi_4^{(5)}(\alpha_2)x_4}{5!N^5})|.$$
Using the equations for $w_1,w_2,v$
and the fact that
$$\beta:=\det\begin{bmatrix}\phi_3''(\alpha_1)&\phi_4''(\alpha_1)\\\phi_3''(\alpha_2)&\phi_4''(\alpha_2)\end{bmatrix}\simeq 1,$$
we find that
\begin{equation}
\label{b5}
x_3=aw_1+bw_2+cv,\;\;x_4=dw_1+ew_2+fv,
\end{equation}
with $a,\ldots,f=O(N^2)$.
Moreover,
$$c=2N^2\beta^{-1}[\phi_4''(\alpha_1)-\phi_4''(\alpha_2)],\;\;f=2N^2\beta^{-1}[\phi_3''(\alpha_2)-\phi_3''(\alpha_1)].$$
The coefficients of $m_1^3$ and $m_2^3$ become
$$\frac{\phi_3^{(3)}(\alpha_1)x_3+\phi_4^{(3)}(\alpha_1)x_4}{6N^3}=\frac{\epsilon_1-24(\alpha_1-\alpha_2)^2}{N\beta}v+O(1/N)w_1+O(1/N)w_2$$
and
$$\frac{\phi_3^{(3)}(\alpha_2)x_3+\phi_4^{(3)}(\alpha_2)x_4}{6N^3}=\frac{\epsilon_2+24(\alpha_1-\alpha_2)^2}{N\beta}v+O(1/N)w_1+O(1/N)w_2.$$
The numbers $\epsilon_1$, $\epsilon_2$ depend on $\epsilon_3$, $\epsilon_4$ and can be guaranteed to be as small as needed, by choosing $\epsilon_3,\epsilon_4=o(1)$. Since $|\alpha_1-\alpha_2|\simeq 1$, we have that
$$|A:=\frac{\epsilon_1-24(\alpha_1-\alpha_2)^2}{\beta}|\simeq 1,\;\;|B:=\frac{\epsilon_2+24(\alpha_1-\alpha_2)^2}{\beta}|\simeq 1.$$
We next replace $x_3,x_4$ using \eqref{b5}. We also rescale
$$\bar{u}_i=Mu_i,\;\bar{w}_i=M^2w_i,\;\bar{v}=M^3N^{-1}v.$$
This allows us to rewrite
$$|\Ec_{H_1,short}(y_1,x_2,x_3,x_4)\Ec_{H_2,short}(z_1,x_2,x_3,x_4)|=|\sum_{m_1=1}^{M}\sum_{m_2=1}^{M}a_{h_1+m_1}a_{h_2+m_2}\times$$
$$e(\frac{m_1}{M}\bar{u}_1+\frac{m_2}{M}\bar{u}_2+(\psi_1(\frac{m_1}{M})+\psi_2(\frac{m_2}{M}))\bar{w}_1+(\psi_3(\frac{m_1}{M})+\psi_4(\frac{m_2}{M}))\bar{w}_2+(\psi_5(\frac{m_1}{M})+\psi_6(\frac{m_2}{M}))\bar{v})|,$$
where $$\psi_1(t)=t^2+O(M^{-1/2})t^3+O(M^{-1})t^4+O(M^{-3/2})t^5,$$
$$\psi_2(s)=O(M^{-1/2})s^3+O(M^{-1})s^4+O(M^{-3/2})s^5,$$
$$\psi_3(t)=O(M^{-1/2})t^3+O(M^{-1})t^4+O(M^{-3/2})t^5,$$
$$\psi_4(s)=s^2+O(M^{-1/2})s^3+O(M^{-1})s^4+O(M^{-3/2})s^5,$$
$$\psi_5(t)=At^3+O(M^{-1/2})t^4+O(M^{-1})t^5,$$
$$\psi_6(s)=Bs^3+O(M^{-1/2})s^4+O(M^{-1})s^5.$$
The coefficients can be easily found, but only their size matters. It is important that $|A|,|B|\simeq 1$, and also that the leading coefficients of $\psi_1$ and $\psi_4$ have magnitude $\simeq 1$.  The result now follows immediately from Theorem \ref{b65}, that we prove below.

\end{proof}

This is the analog of Theorem \ref{65} in five dimensions.

\begin{thm}
	\label{b65}	
	Let $\beta>0$ be fixed.
	Assume that $\psi_1,\ldots,\psi_6:[-1,1]\to\R$ have $C^4$ norm $O(1)$, and in addition satisfy, uniformly over all $|t|\le 1$
	$$|\psi_2''(t)|,|\psi_3''(t)|\ll 1,$$
	$$|\psi_1''(t)|,|\psi_4''(t)|\simeq1,$$
	$$|\psi_i'''(t)|\lesssim M^{-\beta},\;1\le i\le 4, $$
	$$|\psi_5'''(t)|,|\psi_6'''(t)|\simeq1,$$
	$$|\psi_i''''(t)|\lesssim M^{-\beta},\;1\le i\le 6.$$
	Let
	$$\Psi(t,s)=(t,s,\psi_1(t)+\psi_2(s),\psi_3(t)+\psi_4(s),\psi_5(t)+\psi_6(s)),\;\;|t|,|s|\le 1.$$
	Then for each ball $B_{M^3}\subset \R^5$ with radius $M^3$ and each
	constant coefficients $c_{m_1,m_2}\in\C$ we have
	\begin{equation}
	\label{b59}
	\|\sum_{m_1\le M}\sum_{m_2\le M}c_{m_1,m_2}e(x\cdot \Psi(\frac{m_1}{M},\frac{m_2}{M}))\|_{L^9(B_{M^3})}\lesssim_{\epsilon}M^{2(\frac12-\frac19)+\epsilon} \|c_{m_1,m_2}\|_{\ell^9}|B_{M^3}|^{1/9}.
	\end{equation}
	\end{thm}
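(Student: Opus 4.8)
The plan is to derive Theorem~\ref{b65} from Guo's decoupling inequality for the surface $\Gc$ (Theorem~\ref{guo}), following very closely the way Theorem~\ref{65} is used inside the proof of Theorem~\ref{a13}: discretize the decoupling so that it accepts the given exponential sum, make an affine change of variables turning $\Psi$ into a negligible perturbation of $\Gc=(t,s,t^2,s^2,t^3+s^3)$, and observe that at scale $1/M$ each decoupling cap isolates a single frequency. We may assume $M$ is larger than any fixed constant, since for bounded $M$ the estimate is immediate by the triangle inequality.

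For the discretization, take $f$ to be a smooth approximation of the measure $\sum_{m_1,m_2\le M}c_{m_1,m_2}\delta_{(m_1/M,m_2/M)}$, supported in $[0,1]^2$, so that on $B_{M^3}$ the extension $E^{\Psi}_{[0,1]^2}f(x)$ agrees, up to harmless factors, with $\sum_{m_1,m_2}c_{m_1,m_2}e(x\cdot\Psi(\tfrac{m_1}{M},\tfrac{m_2}{M}))$. This is the same approximation already used to feed Dirac-type inputs into Theorem~\ref{49}, and it lets us apply any decoupling below to $f$.

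The hypotheses enter in the affine reduction. Taylor-expanding at $0$ and using $\|\psi_i\|_{C^4}=O(1)$ together with $|\psi_i'''|,|\psi_i''''|\lesssim M^{-\beta}$ for $i\le4$, write $\psi_i(t)=a_it^2+(\text{affine in }t)+R_i(t)$ with $a_i=\tfrac12\psi_i''(0)$ and $\|R_i\|_{C^4([-1,1])}\lesssim M^{-\beta}$; similarly $\psi_i(t)=e_it^3+(\text{affine and quadratic in }t)+R_i(t)$ for $i=5,6$, with $e_i=\tfrac16\psi_i'''(0)$ and the same bound on $R_i$. The hypotheses give $|a_1|,|a_4|,|e_5|,|e_6|\simeq1$, $|a_2|,|a_3|\ll1$, hence $a_1a_4-a_2a_3\simeq1$. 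Since $\Psi$ depends on $(t,s)$ only through the separated sums $\psi_j(t)+\psi_k(s)$, the affine-in-$(t,s)$ terms can be absorbed into $x_1,x_2$ and a global phase, the quadratic-in-$t$ (resp.\ $s$) part of the fifth coordinate into the coefficient of $t^2$ (resp.\ $s^2$), and then the linear map sending $(x_3,x_4,x_5)$ to the coefficient triple of $(t^2,s^2,t^3)$ has $O(1)$ entries and determinant $e_5(a_1a_4-a_2a_3)\simeq1$. After the harmless rescaling $s\mapsto(e_5/e_6)^{1/3}s$ (with factor $\simeq1$) making the two cubic leading coefficients agree, $\Psi$ is carried to $\widetilde{\Psi}(t,s)=(t,s,t^2+g_3,s^2+g_4,t^3+s^3+g_5)$ with $\|g_j\|_{C^4}\lesssim M^{-\beta}$, while $B_{M^3}$ maps into a union of $O(1)$ balls of radius $M^3$.

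Finally, since $g_3,g_4,g_5$ are $C^4$-small, $\widetilde{\Psi}$ lies in an $M^{-\beta}$-neighborhood of $\Gc$ retaining, at every scale down to $1/M$, the nondegeneracy underlying Theorem~\ref{guo}; hence Guo's decoupling holds for $\widetilde{\Psi}$ with the same exponent --- this robustness under affine images and small perturbations is exactly the principle invoked via \cite{Oh} in Proposition~\ref{b1}. Applying it with $R=M^3$, so that caps have side $1/R^{1/3}=1/M$ and each meets only $O(1)$ of the points $(m_1/M,m_2/M)$, and bounding the single-cap term by $\|E^{\widetilde{\Psi}}_{J_1\times J_2}f\|_{L^9(B_{M^3})}\lesssim(\sum_{\text{pts in }J_1\times J_2}|c_{m_1,m_2}|^9)^{1/9}|B_{M^3}|^{1/9}$, we get
\begin{align*}
\Big\|\sum_{m_1,m_2}c_{m_1,m_2}e(x\cdot\Psi(\tfrac{m_1}{M},\tfrac{m_2}{M}))\Big\|_{L^9(B_{M^3})}
&\lesssim_\epsilon (M^3)^{\frac23(\frac12-\frac19)+\epsilon}\Big(\sum_{m_1,m_2}|c_{m_1,m_2}|^9\,|B_{M^3}|\Big)^{1/9}\\
&=M^{2(\frac12-\frac19)+\epsilon}\,\|c_{m_1,m_2}\|_{\ell^9}\,|B_{M^3}|^{1/9},
\end{align*}
which is \eqref{b59}. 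The step I expect to be the main obstacle is the bookkeeping in the affine reduction together with the robustness claim: one must check that the determinant conditions in the hypotheses are precisely those making the change of variables nonsingular, and that Guo's inequality --- quoted in \cite{G} only for $\Gc$ itself --- extends to the whole $M^{-\beta}$-neighborhood with a uniform implied constant. This is routine but tedious, parallel to the analogous reductions for the moment curve elsewhere in the paper; no genuinely new decoupling input is required.
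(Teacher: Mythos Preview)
Your proposal is correct and follows the same route as the paper: reduce to Guo's decoupling (Theorem~\ref{guo}) for $\Gc$ via an affine change of variables, treating the $O(M^{-\beta})$ discrepancy as a perturbation. The one organizational difference is worth noting, since it is exactly the ``robustness claim'' you flag as the main obstacle. You perform a single \emph{global} affine reduction (Taylor expansion at $0$), arriving at $\widetilde{\Psi}=\Gc+(0,0,g_3,g_4,g_5)$ with $\|g_j\|_{C^4}\lesssim M^{-\beta}$, and then invoke stability of Guo's inequality on this $M^{-\beta}$-neighborhood. The paper instead makes the stability argument explicit as a bootstrap: it lets $d(M)$ be the best constant in the desired decoupling for $\Psi$, first decouples into caps $K_1\times K_2$ of side $M^{-\alpha}$ (paying $d(M^{\alpha})$), and \emph{then} on each such cap Taylor-expands $\Psi$ to fourth order. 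Because the third derivatives of $\psi_1,\dots,\psi_4$ and the fourth derivatives of $\psi_5,\psi_6$ are $O(M^{-\beta})$, choosing $\alpha<1$ with $\beta+3\alpha\ge3$ makes the remainder $O(M^{-3})$, i.e.\ genuinely negligible on $B_{M^3}$; the local model $\Psi^*$ is then an \emph{exact} affine image of $\Gc$ (with Jacobian $\simeq1$ thanks to $|\psi_1''|,|\psi_4''|\simeq1$, $|\psi_2''|,|\psi_3''|\ll1$, $|\psi_5'''|,|\psi_6'''|\simeq1$), so Theorem~\ref{guo} applies verbatim and gives $d(M)\lesssim_\epsilon M^\epsilon d(M^{\alpha})$. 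This iteration is precisely the ``routine but tedious'' step you anticipated, and no other ingredient is needed.
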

\begin{proof}
The proof is very similar to the one of Theorem \ref{65}.
The upper bound $M^{-\beta}$  may easily be relaxed to $O(1)$. We chose to work with the former in order to simplify the exposition.
Let $0<\alpha<1$ be such that $\beta+3\alpha\ge 3$.
Let
$$E^{\Psi}_Sf(x)=\int_Sf(t,s)e(x\cdot\Psi(t,s))dtds,$$
where $S\subset[-1,1]^2$. It suffices to prove that the smallest constant $d(M)$ that makes the following inequality true for each ball  $B_{M^3}$ and each $f$, satisfies $d(M)\lesssim_\epsilon M^\epsilon$
$$\|E^{\Psi}_{[-1,1]^2}f\|_{L^9(B_{M^3})}\le d(M)M^{2(\frac12-\frac19)}(\sum_{H_1,H_2\subset[-1,1]:\;|H_i|=1/M}\|E^{\Psi}_{H_1\times H_2}f\|^9_{L^9(B_{M^3})})^{1/9}.$$
This will follow once we prove that $d(M)\lesssim_\epsilon M^\epsilon d(M^{\alpha})$. First, we observe that
\begin{equation}
\label{idochurpf[]-ps]=wpd]1}
\|E^{\Psi}_{[-1,1]^2}f\|_{L^9(B_{M^3})}\le d(M^{\alpha})M^{2\alpha(\frac12-\frac19)}(\sum_{K_1,K_2\subset[-1,1]:\;|K_i|=1/M^{\alpha}}\|E^{\Psi}_{K_1\times K_2}f\|^9_{L^9(B_{M^3})})^{1/9}.
\end{equation}
Second, we note that using Taylor's expansions with fourth order remainder, the restriction of $\Psi$ to $K_1\times K_2:=[t_0-1/M^{\alpha},t_0+1/M^{\alpha} ]\times [s_0-1/M^{\alpha},s_0+1/M^{\alpha} ]$ can be written as
$$\Psi(t_0+t,s_0+s)=\Psi(t_0,s_0)+\Psi^*(t,s)+O(1/M^3),$$
where
$$
\begin{cases}
\Psi_1^*(t,s)=t,\;\;\Psi_2^*(t,s)=s\\
\Psi_3^*(t,s)=A_3t+B_3t^2+D_3s+E_3s^2\\\Psi_4^*(t,s)=A_4t+B_4t^2+D_4s+E_4s^2\\\Psi_5^*(t,s)=A_5t+B_5t^2+D_5s+E_5s^2+C_5t^3+F_5s^3
\end{cases}.
$$
Thus, $\Psi(K_1\times K_2)$ and (a translate of) $\Psi^*((K_1-t_0)\times (K_2-s_0))$ are within $O(1/M^3)$, and we may replace $\Psi$ with $\Psi^*$ on $B_{M^3}$.
Our hypothesis implies that all coefficients are $O(1)$, and moreover, $E_3, B_4=o(1)$, and also $|B_3|,|E_4|\simeq1.$ These guarantee that the Jacobian of the following linear transformation is $\simeq1$
$$
\begin{bmatrix}y_1\\y_2\\y_3\\y_4\\y_5\end{bmatrix}=\begin{bmatrix}
1&0&A_3&A_4&A_5\\
0&1&D_3&D_4&D_5\\0&0&B_3&B_4&B_5\\0&0&E_3&E_4&E_5\\0&0&0&0&1\end{bmatrix}\begin{bmatrix}x_1\\x_2\\x_3\\x_4\\x_5\end{bmatrix}.
$$
This allows us to write
$$|E^{\Psi^*}_{[-M^{-\alpha},M^{-\alpha}]^2}f(x)|\simeq|\int_{[-M^{-\alpha},M^{-\alpha}]^2}f(t,s)(y\cdot(t,s,t^2,s^2,C_5t^3+F_5s^3))dtds|.$$
Recall that our hypothesis also implies that $|C_5|,|F_5|\simeq1$. This allows us to apply Theorem \ref{guo}, which upon rescaling gives
\begin{equation}
\label{idochurpf[]-ps]=wpd]}
\|E^{\Psi}_{K_1\times K_2}f\|_{L^9(B_{M^3})}\lesssim_\epsilon M^{2(1-\alpha)(\frac12-\frac19)+\epsilon}(\sum_{H_i\subset K_i:\;|H_i|=1/M}\|E^{\Psi}_{H_1\times H_2}f\|^9_{L^9(B_{M^3})})^{1/9}.\end{equation}
Finally, the combination of \eqref{idochurpf[]-ps]=wpd]1} and \eqref{idochurpf[]-ps]=wpd]} shows that $d(M)\lesssim_\epsilon M^\epsilon d(M^{\alpha})$.

\end{proof}
It is time to prove Theorem \ref{b13}. Recall that we work with  $\Omega=[0,N^3]^4\times [0,N].$
\begin{proof}
Step 1. We cover $\Omega$ with cubes $B$ of side length $N$, and apply Proposition \ref{b1} on each of them, then sum up the inequalities to get
	
	\begin{equation}
	\label{c6}
	\int_{\Omega}|\prod_{j=1}^2\Ec_{I_j,N}(x)|^{9}dx\lesssim_\epsilon N^{6(\frac12-\frac19)+\epsilon}\sum_{J_1\subset I_1}\sum_{J_2\subset I_2}\int_{\Omega}|\prod_{j=1}^2\Ec_{J_j,N}(x)|^{9}dx.	\end{equation}
The intervals $J_i$ have length $M=N^{2/3}$. 		
\\
\\
Step 2. Fix $J_1,J_2$. We apply Proposition \ref{c4} on each $B$,  followed by the analog of the smoothing inequality \eqref{c3}
\begin{align*}
\int_{\Omega}|\prod_{j=1}^2\Ec_{J_j,N}(x)|^{9}dx&\lesssim N^{-5}\sum_{B\subset \Omega}\int_{B}|\Ec_{J_1,N}(x)|^6dx\int_{B}|\Ec_{J_2,N}(x)|^6dx\\&\lesssim N^{-10}\int_{\Omega}dx\int_{(y,z)\in [-N,N]^5\times [-N,N]^5}|\Ec_{J_1,N}(x+y)\Ec_{J_2,N}(x+z)|^6dydz.
\end{align*}
We freeze the variables $x_1, x_5,y_2,y_3,y_4,y_5,z_2,z_3,z_4,z_5$, whose range has volume $\simeq N^{12}$. We hide  their contributions into the coefficient $a_n$, whose magnitude remains the same. Recall our earlier notation $$\Ec_{H,short}(x_1,x_2,x_3,x_4)=\sum_{n\in H}a_ne(\phi_1(\frac{n}{N})x_1+\ldots+\phi_4(\frac{n}{N})x_4).$$
Using $N$-periodicity in the variables $y_1,z_1$,  the previous expression can be dominated by
$$N^{-2}\int_{[0,N^3]^5}|\Ec_{H_1,short}(y_1,x_2,x_3,x_4)\Ec_{H_2,short}(z_1,x_2,x_3,x_4)|^9dy_1dz_1dx_2dx_3dx_4. $$
By Proposition \ref{c5}, this is at most
$$ N^{13+\epsilon}N^{12(\frac12-\frac1{9})}\|a_n\|^{9}_{\ell^{9}(J_1)}\|a_n\|^{9}_{\ell^{9}(J_2)}= N^{17+\frac23}\|a_n\|^{9}_{\ell^{9}(J_1)}\|a_n\|^{9}_{\ell^{9}(J_2)}.$$
Finally, we combine this with \eqref{c6} to get the desired estimate
\begin{align*}
\int_{\Omega}|\prod_{j=1}^2\Ec_{I_j,N}(x)|^{9}dx&\lesssim_\epsilon N^{6(\frac12-\frac19)+\epsilon}\sum_{J_1\subset I_1}\sum_{J_2\subset I_2}N^{17+\frac23}\|a_n\|^{9}_{\ell^{9}(J_1)}\|a_n\|^{9}_{\ell^{9}(J_2)}\\&\lesssim_\epsilon N^{20+\epsilon}\|a_n\|^{9}_{\ell^{9}(I_1)}\|a_n\|^{9}_{\ell^{9}(I_2)}.
\end{align*}	
\end{proof}	
We next use Theorem \ref{b13} to prove a bilinear counterpart of Theorem \ref{c7}. Note that we are renaming the variables $x_3,x_4,x_5$, only for convenience.

\begin{prop}
\label{c22}	
	Let $\Omega=[0,N^3]^4\times[0,N]$. Let $K\gg 1$.
We consider arbitrary integers $N_0,M$ satisfying  $1\le M\le \frac{N_0}{K}$ and $N_0+[M,2M]\subset [\frac{N}{2},N]$.

Let $H_1,H_2$ be intervals of length $\frac{M}K$ inside $N_0+[M,2M]$ such that $\dist(H_1,H_2)\ge \frac{M}{K}$. Then
$$\int_{\Omega}\prod_{i=1}^2|\sum_{n\in H_i}a_ne(\frac{n}{N}x_1+(\frac{n}{N})^2x_2+(\frac{n}{N})^4x_3+(\frac{n}{N})^5x_4+(\frac{n}{N})^3x_5)|^9dx\lesssim_\epsilon M^{2+\epsilon}N^{18}\|a\|^{18}_{\ell^9(N_0+[M,2M])}.$$	
\end{prop}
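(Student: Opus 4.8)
The plan is to treat Proposition~\ref{c22} as the degree-$5$ analogue of Proposition~\ref{9}: after a rescaling it should follow from Theorem~\ref{b13} applied at scale $M$, just as Proposition~\ref{9} follows from Theorem~\ref{a13} at scale $M$. Write $H_i=N_0+I_i$ with $I_i\subset[M,2M]$, $|I_i|=M/K$, $\dist(I_1,I_2)\ge M/K$, and set $\alpha=N_0/N$, $\kappa=M/N$. The hypotheses $N_0+[M,2M]\subset[\tfrac N2,N]$ and $M\le N_0/K$ force $\alpha\in[\tfrac14,1]$ and $\kappa\le 1/K$, so (choosing $K$ large) $\kappa$ may be taken as small as we wish; one may also assume $M$ is large, since otherwise Hölder's inequality in $n$ already beats the target bound. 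Since $K$ is a fixed constant, $|I_i|=M/K\simeq M$, so after rescaling the two arcs $I_1,I_2$ will genuinely be of the form required by Theorem~\ref{b13}.

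First I would perform the change of variables. For $n=N_0+m$, $m\in I_i$, write $t=n/N=\alpha+\kappa\tau$ with $\tau=m/M\in[1,2]$, and expand each monomial $t^{e}$ occurring in the phase of Proposition~\ref{c22} ($e$ running over $(1,2,4,5,3)$ for $x_1,\dots,x_5$) by the finite binomial formula. Collecting powers of $\tau$, the phase becomes $y_0(x)+\sum_{j=1}^{5}\tau^{j}y_j(x)$, where $y_0$ does not involve $m$ (hence $e(y_0)$ is a unimodular factor that may be discarded) and $y_j(x)=\kappa^{j}\sum_{k:\,e_k\ge j}\binom{e_k}{j}\alpha^{e_k-j}x_k$. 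Thus $|\Ec_{H_i}(x)|=|\tilde\Ec_{I_i}(y)|$ with $\tilde\Ec_{I_i}(y)=\sum_{m\in I_i}a_{N_0+m}\,e\big(\sum_{j=1}^{5}(m/M)^{j}y_j\big)$, which is precisely the exponential sum attached to the \emph{pure} moment curve at scale $M$ (so Theorem~\ref{b13} applies with $\epsilon_3=\epsilon_4=0$, legitimate by the uniformity stated in Section~\ref{d=5last}). The linear map $x\mapsto y$ has matrix $\operatorname{diag}(\kappa,\kappa^{2},\dots,\kappa^{5})\,A$ with $A_{jk}=\binom{e_k}{j}\alpha^{e_k-j}$; since $(e_1,\dots,e_5)$ is a permutation of $(1,\dots,5)$, reordering columns turns $A$ into a unitriangular matrix, so $\det A=\pm1$ and the Jacobian is exactly $\kappa^{15}$, i.e. $dx=(N/M)^{15}\,dy$. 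The image of $\Omega=[0,N^3]^4\times[0,N]$ is then a parallelepiped through the origin contained in $\prod_j\tilde\omega_j$, where $|\tilde\omega_j|\simeq\kappa^{j}$ times the largest $x_k$-range feeding $y_j$, giving $|\tilde\omega_1|\simeq MN^2$, $|\tilde\omega_2|\simeq M^2N$, $|\tilde\omega_3|\simeq M^3$, $|\tilde\omega_4|\simeq M^4/N$, $|\tilde\omega_5|\simeq M^5/N^2$.

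Next I would reshape the integral to match Theorem~\ref{b13} at scale $M$. Using that $\tilde\Ec_{I_i}$ is $M$-periodic in $y_1$ and $M^2$-periodic in $y_2$, the integrations over $\tilde\omega_1,\tilde\omega_2$ fold down to $[0,M],[0,M^2]$ at the cost of the factors $|\tilde\omega_1|/M\simeq N^2$ and $|\tilde\omega_2|/M^2\simeq N$. Each of $\tilde\omega_3,\tilde\omega_4,\tilde\omega_5$ contains the origin, and (since $M\ll N$) $|\tilde\omega_3|\lesssim M^3$, $|\tilde\omega_4|<M^3$; since the integrand is nonnegative we enlarge $\tilde\omega_3,\tilde\omega_4$ to intervals of length $\simeq M^3$ and $\tilde\omega_5$ to one of length $\simeq M$ whenever $|\tilde\omega_5|<M$, and we keep $\tilde\omega_5$ as is otherwise. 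Applying Theorem~\ref{b13} at scale $M$ to the resulting box (of volume $\simeq M^{10}$ in the main regime) yields $\lesssim_\epsilon M^{18(\frac12-\frac19)+\epsilon}M^{10}\,\|a_n\|_{\ell^9(I_1)}^{9}\|a_n\|_{\ell^9(I_2)}^{9}=M^{17+\epsilon}\|a_n\|_{\ell^9(I_1)}^{9}\|a_n\|_{\ell^9(I_2)}^{9}$; multiplying back the periodicity cost $N^2\cdot N$ and the Jacobian $(N/M)^{15}$ gives exactly $N^{18}M^{2+\epsilon}\|a_n\|_{\ell^9(I_1)}^{9}\|a_n\|_{\ell^9(I_2)}^{9}$, and $\|a_n\|_{\ell^9(I_1)}^{9}\|a_n\|_{\ell^9(I_2)}^{9}\le\|a\|_{\ell^9(N_0+[M,2M])}^{18}$ since $H_i\subset N_0+[M,2M]$. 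This is the assertion.

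I expect the main obstacle to be purely quantitative bookkeeping: the target exponent $M^2N^{18}$ admits no slack, so one must track the Jacobian $\kappa^{15}$, the two periodicity costs, and all five box dimensions $|\tilde\omega_j|$ exactly. The delicate point is the interaction between the rescaled small-cap windows $|\tilde\omega_4|\simeq M^4/N$ and $|\tilde\omega_5|\simeq M^5/N^2$ and the scale-$M$ thresholds $M^3$ and $M$: the enlargement of $\tilde\omega_4$ to $M^3$ is always allowed (it is never shrunk), and so is that of $\tilde\omega_5$ as long as $M$ is not too large relative to $N$; for the remaining range of $M$ one has to extract the extra gain in a different way (for instance by iterating the present reduction down in scale, in the spirit of the proof of Theorem~\ref{b65}, or by first exploiting $L^2$-orthogonality and a lower-dimensional decoupling, as in the case analysis of Section~\ref{thenewd=5}, in the variable carrying the degree-$5$ oscillation). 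The remaining ingredients — the $\det A=\pm1$ computation, and the verification that $\tilde\Ec_{I_i}$ really is the scale-$M$ moment-curve sum with the claimed periodicities — are routine.
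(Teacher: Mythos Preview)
Your overall strategy is exactly the paper's: rescale $n=N_0+m$ via a linear change of variables and reduce to Theorem~\ref{b13} at scale $M$. The bookkeeping in the regime $M\le N^{1/2}$ is fine and matches the target $N^{18}M^{2+\epsilon}$ precisely. However, the gap you flag at the end is real and is not closed by the remedies you suggest.

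The issue is your choice of the ``pure'' moment-curve coordinates. With your $y_5=\kappa^5 x_4$, the range of $x_4$ is $N^3$, so $|\tilde\omega_5|\simeq M^5/N^2$. When $M>N^{1/2}$ this exceeds $M$, and since you must in any case enlarge $\tilde\omega_4$ from $M^4/N$ up to $M^3$, applying Theorem~\ref{b13} produces the bound $(N/M)^{15}\cdot N^3\cdot M^{7+\epsilon}\cdot M\cdot M^2\cdot M^3\cdot M^3\cdot(M^5/N^2)=N^{16}M^{6+\epsilon}$, which beats the target only when $M^4\le N^2$. Neither an iteration in scale (the loss is not a decoupling inefficiency but a fixed volume mismatch) nor $L^2$-orthogonality in $y_5$ recovers the missing factor $N^2/M^4$.

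The paper closes this gap by \emph{not} landing on the pure moment curve. It chooses instead
\[
y_4=\kappa^4\Big(\tfrac{5\alpha}{2}x_4-\tfrac{1}{4\alpha}x_5\Big),\qquad y_5=\kappa^5\tfrac{1}{10\alpha^2}\,x_5,
\]
so that $y_5$ is fed only by $x_5$, whose range is $N$ rather than $N^3$; then $|\tilde\omega_5|\simeq M^5/N^4\le M$ for all admissible $M$. The price is that the rescaled phase is $\tau y_1+\tau^2 y_2+\phi_3(\tau)y_3+\phi_4(\tau)y_4+\tau^5 y_5$ with $\phi_3(\tau)=\tau^3+\tfrac{M}{4N_0}\tau^4$ and $\phi_4(\tau)=\tau^4+\tfrac{2M}{5N_0}\tau^5$, i.e.\ exactly the perturbed curve $\epsilon_3,\epsilon_4=O(1/K)=o(1)$ for which Theorem~\ref{b13} is stated. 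This is the reason Theorem~\ref{b13} was formulated with those perturbations in the first place; your observation that ``$\epsilon_3=\epsilon_4=0$ is legitimate'' is correct but misses the point: it is the nonzero $\epsilon_3,\epsilon_4$ that make the change of variables uniformly effective in $M$.
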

\begin{proof}
Write $H_1=N_0+I_1$, $H_2=N_0+I_2$ with $I_1,I_2$ intervals of length $\frac{M}K$ inside $[M,2M]$ and with separation $\ge \frac{M}{K}$. Let
$${\phi}_3(t)=t^3+\frac{M}{4N_0}t^4,\;\;\phi_4(t)=t^4+\frac{2M}{5N_0}t^5,\;\;\phi_5(t)=t^5$$
and
$$\begin{cases}y_1=\frac{M}{N}(x_1+A_2x_2+A_3x_3+A_4x_4+A_5x_5)\\y_2=(\frac{M}{N})^2(x_2+B_3x_3+B_4x_4+B_5x_5)\\y_3=(\frac{M}{N})^3(\frac{4N_0}{N}x_3+\frac{10N_0^2}{N^2}x_4+x_5)\\y_4=(\frac{M}{N})^4(\frac{5N_0}{2N}x_4-\frac{N}{4N_0}x_5)\\y_5=(\frac{M}{N})^5\frac{N^2}{10N_0^2}x_5.
\end{cases}$$
The linear transformation has Jacobian equal to $(\frac{M}{N})^{15}$, and maps $\Omega$ to a subset of $$\tilde{\Omega}=[-O(MN^2),O(MN^2)]\times [-O(M^2N),O(M^2N)]\times [-O(M^3),O(M^3)]^2\times [-O(M),O(M)].$$
The reader may check that (with the right choice of $A_2,\ldots,B_5=O(1)$) we have
$$|\sum_{n\in H_i}a_ne(\frac{n}{N}x_1+(\frac{n}{N})^2x_2+(\frac{n}{N})^4x_3+(\frac{n}{N})^5x_4+(\frac{n}{N})^3x_5)|=$$$$|\sum_{m_i\in I_i}a_{m_i+N_0}e(\frac{m_i}{M}y_1+(\frac{m_i}{M})^2y_2+\phi_3(\frac{m_i}{M})y_3+\phi_4(\frac{m_i}{M})y_4+\phi_5(\frac{m_i}{M})y_5)|.$$
We invoke periodicity in $y_1$ and $y_2$ together with Theorem \ref{b13} to conclude the proof.

\end{proof}

Theorem \ref{c7} follows from Proposition \ref{c22} via an argument identical to the one at the end of the previous section. Details are left to the reader.

\section{Mean value estimates for sums along the paraboloid}\label{sec:L2paraboloid}

This section is devoted to proving the following theorem.

\begin{thm}\label{thm:L2paraboloid}
	For $d\ge 2$ let $\sigma$ be a measure on $\T^d$ such that the Fourier decay \eqref{eq:decbeta} holds with $\beta=\frac{d-1}{2}$.
	Then the estimate
	\begin{equation}\label{eq:L2paraboloid}
	\int_{\T^d} |S^{\mathbb{P}}_{a, d}(x, N)|^2\,d\sigma(x)\lesssim \|a\|_{\ell^2(\{1,\ldots,N\}^{d-1})}^2\, N^{(d-3)/2} \begin{cases}
	N^{1/2},&\qquad d=2,\\
	\log N,&\qquad d=3,\\
	1,&\qquad d\ge 4,
	\end{cases}
	\end{equation}
	holds with the implicit constant independent of $N$.
	
	Moreover, the above estimate is sharp, in the sense that the constant on the right-hand side cannot be improved in terms of the dependence on $N$.
\end{thm}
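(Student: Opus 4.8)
The plan is to expand the square. Writing $\n=(n,|n|^2)$ with $n\in\{1,\dots,N\}^{d-1}$ for the points of $\mathbb P^{d-1}_N$, one has $\int_{\T^d}|S^{\mathbb P}_{a,d}(x,N)|^2\,d\sigma=\sum_{\n,\m}a_\n\overline{a_\m}\,\overline{\widehat{d\sigma}(\n-\m)}$, so by \eqref{eq:decbeta} with $\beta=(d-1)/2$ and the elementary comparison $|\n-\m|\simeq|n-m|+\bigl||n|^2-|m|^2\bigr|$,
\[
\Bigl|\int_{\T^d}|S^{\mathbb P}_{a,d}|^2\,d\sigma\Bigr|\lesssim\sum_{n,m\in\{1,\dots,N\}^{d-1}}|a_n||a_m|\,K(m,n),\qquad K(m,n):=\Bigl(1+|n-m|+\bigl||n|^2-|m|^2\bigr|\Bigr)^{-(d-1)/2}.
\]
Everything then reduces to bounding the $\ell^2$-norm of the symmetric kernel $K$ on $\{1,\dots,N\}^{d-1}$, and exhibiting a matching lower bound.

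\textbf{The case $d\ge3$.} Here I would run Schur's test: from $|a_n||a_m|\le\tfrac12(|a_n|^2+|a_m|^2)$ and $K(m,n)=K(n,m)$ the sum above is $\le\|a\|_{\ell^2}^2\,\sup_m\sum_n K(m,n)$, so it suffices to prove $\sup_m\sum_n K(m,n)\lesssim N^{(d-3)/2}$ for $d\ge4$ and $\lesssim\log N$ for $d=3$. Decomposing dyadically in $1+|n-m|+\bigl||n|^2-|m|^2\bigr|\simeq2^j$ (with $1\le 2^j\lesssim N^2$), this follows from the lattice-point bounds, uniform in $m$: for $1\le R\le N$, $\#\{n\in\{1,\dots,N\}^{d-1}:|n-m|\le R,\ \bigl||n|^2-|m|^2\bigr|\le R\}\lesssim R^{d-2}+R^{(d-1)/2}$, and for $N\le R\lesssim N^2$ the same cardinality is $\lesssim N^{d-3}R+R^{(d-1)/2}$. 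I would prove these by freezing $n_1,\dots,n_{d-3}$ (a factor $\lesssim(1+\min(R,N))^{d-3}$), which confines $(n_{d-2},n_{d-1})$ to a disc of radius $\lesssim R$ with $n_{d-2}^2+n_{d-1}^2$ in an interval of length $2R$; such a set is a cap of a planar annulus and contains $\lesssim R$ lattice points (comparison with area plus perimeter, using that the relevant arc has length at most $\min(R,\text{radius})$). It is essential here to use the disc and the interval constraints \emph{together}: applying the one-variable count $\#\{k:k^2\in[A,A+2R]\}\lesssim1+\sqrt R$ to the last coordinate alone loses a factor $\sqrt R$. Inserting these bounds, the block $2^j\le N$ contributes $\sum_{2^j\le N}2^{-j(d-1)/2}2^{j(d-2)}$ and the block $N\le2^j\lesssim N^2$ contributes $\sum 2^{-j(d-1)/2}N^{d-3}2^j$; each is $\simeq N^{(d-3)/2}$ for $d\ge4$ and $\simeq\log N$ for $d=3$.

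\textbf{The case $d=2$.} Schur's test only gives $\log N$, so a sharper argument is needed for the scale-independent bound. The plan is to peel off the diagonal $n=m$ (which contributes $\widehat{d\sigma}(0)\,\|a\|_{\ell^2}^2\lesssim\|a\|_{\ell^2}^2$), and for $n\ne m$ use $|n^2-m^2|=|n-m|(n+m)$ together with $n+m\ge2$ to get $K(m,n)\lesssim\bigl(|n-m|(n+m)\bigr)^{-1/2}$. It then remains to prove $\sum_{n\ne m}|a_n||a_m|\bigl(|n-m|(n+m)\bigr)^{-1/2}\lesssim\|a\|_{\ell^2}^2$. Since this kernel varies slowly off the diagonal, the discrete bilinear form is controlled by the integral operator $Wf(x)=\int_0^\infty\bigl(|x-y|(x+y)\bigr)^{-1/2}f(y)\,dy$ on $L^2(0,\infty)$; its kernel is homogeneous of degree $-1$, so $W$ is bounded precisely because $\int_0^\infty\bigl(|1-t|(1+t)\bigr)^{-1/2}t^{-1/2}\,dt<\infty$ (the integrand is $O(t^{-1/2})$ near $0$, $O(|1-t|^{-1/2})$ near $1$, and $O(t^{-3/2})$ near $\infty$). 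The only delicate point is transferring the $W$-bound to the discrete sum near $|n-m|=1$.

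\textbf{Sharpness.} For the lower bound I would take $a_n\equiv1$, so $\|a\|_{\ell^2}^2=N^{d-1}$, and let $d\sigma$ be the periodization on $\T^d$ of the Bessel kernel $\mathcal G_{(d-1)/2}$: a positive finite measure with $\widehat{d\sigma}(\xi)\simeq(1+|\xi|)^{-(d-1)/2}$, so \eqref{eq:decbeta} holds, and with density $\gtrsim|x|^{-(d+1)/2}$ near $x=0$. On the box $B=[-c/N,c/N]^{d-1}\times[-c/N^2,c/N^2]$ one has constructive interference $|S^{\mathbb P}_{a,d}(x,N)|\gtrsim N^{d-1}$, hence $\int_{\T^d}|S^{\mathbb P}_{a,d}|^2\,d\sigma\gtrsim N^{2(d-1)}\int_B|x|^{-(d+1)/2}\,dx$. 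Integrating in $x_d$ first (over a set of length $\simeq N^{-2}$) and then in $x'\in\R^{d-1}$ gives $\int_B|x|^{-(d+1)/2}\,dx\simeq N^{-(d+1)/2}$ for $d\ge4$, $\simeq N^{-2}\log N$ for $d=3$, and $\simeq N^{-1}$ for $d=2$; in every case the resulting lower bound is exactly the right-hand side of \eqref{eq:L2paraboloid}. The principal obstacle throughout is the lattice-point counting for $d\ge3$, specifically obtaining it with no $N^\epsilon$ loss (this is what yields the clean $\log N$ at $d=3$), which forces one to use the two geometric constraints simultaneously and to control the lattice points in a thin annular cap by area-and-perimeter rather than by divisor-type bounds for lattice points on spheres.
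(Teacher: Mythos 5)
The proposal is correct, and while it follows the same opening move as the paper (expand the square, invoke the Fourier decay, reduce to a bilinear lattice sum), it then diverges in several substantive ways.

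First, after invoking \eqref{eq:decbeta} the paper immediately discards the spatial coordinates and bounds the kernel only by $||\m|^2-|\n|^2|^{-(d-1)/2}$; you keep the full expression $(1+|n-m|+||n|^2-|m|^2|)^{-(d-1)/2}$. This is not cosmetic: your dyadic decomposition in the combined distance and the subsequent Schur test for $d\ge 3$ genuinely need the disc constraint $|n-m|\lesssim R$ in order to get the clean $R^{d-2}$ count in the block $2^j\le N$ (for $d=4$, dropping it would give $N^1$ instead of the required $N^{1/2}$). The paper avoids needing it by splitting into the near-diagonal set $\mathcal{S}=\{|\m|\simeq|\n|\}$ and its complement: on $\mathcal{S}$ it uses only the radial separation $||\m|-|\n||$ together with the unit-shell count $|\{\n:|\n|\in[k,k+1)\}|\simeq k^{d-2}$, and on $\mathcal{S}^c$ it runs a self-improving $D\lesssim\sqrt D$ Cauchy--Schwarz argument. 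The two organizational schemes produce the same final bounds for all $d$.

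Second, you handle $d=2$ separately via the homogeneous-degree-$(-1)$ kernel $(|x-y|(x+y))^{-1/2}$ and a weighted Schur test; the paper does not treat $d=2$ separately (its $\mathcal{S}/\mathcal{S}^c$ machinery already yields $O(1)$ there). Your route works, and the discretization you flag as delicate is in fact harmless — the discrete Schur test with weight $m^{1/2}$ closes directly (the $|n-m|=1$ boundary terms sum to $O(m^{-1/2})\cdot m^{1/2}$) — but it is extra machinery the paper does not need.

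Third, for sharpness the paper tests only $d=3$ (stating the other cases are similar), takes $a_\n\equiv1$ and any $\sigma$ with real nonnegative $\widehat\sigma\simeq(1+|\xi|)^{-1}$, and evaluates the quadruple lattice sum directly using $|\m|^2-|\n|^2=(|\m|-|\n|)(|\m|+|\n|)$ to replace the full Euclidean distance by the last coordinate. You instead construct a concrete extremizing measure (periodized Bessel potential $\mathcal{G}_{(d-1)/2}$, density $\gtrsim|x|^{-(d+1)/2}$ near $0$) and use constructive interference on the box $[-c/N,c/N]^{d-1}\times[-c/N^2,c/N^2]$; the ensuing singular integral $\int_B|x|^{-(d+1)/2}\,dx$ reproduces the case distinction at $d=2,3,\ge4$. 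This is a slightly more explicit and arguably more transparent argument, and it covers all $d\ge 2$ uniformly, which the paper leaves to the reader.

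In short: the proposal is a valid proof, somewhat more computational where the paper is more structural. The paper's $\mathcal{S}/\mathcal{S}^c$ split is the economical device that lets it use only the radial part of the decay and treat all dimensions uniformly; your approach buys directness in the lattice counting and in the sharpness construction at the cost of needing both decay constraints simultaneously and a separate $d=2$ argument.
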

The above result shows that  Conjecture \ref{a42} holds in the case $d=2$ and also, up to the $\log N$ loss, in the case $d=3$.
\begin{proof}
	Without the loss of generality assume that $\|a\|_{\ell^2}=1$.
	Using \eqref{eq:decbeta} we get
	
	\begin{align*}
\int_{\T^d} |S^{\mathbb{P}}_{a, d}(x, N)|^2\,d\sigma(x)&=\sum\limits_{\substack{\m,\n\in\{1,\ldots,N\}^{d-1}  }}a_{\m}\,\overline{a_{\n}}\,\widehat{\sigma}(\m-\n, |\m|^2-|\n|^2)
	\\
	&\lesssim \sum\limits_{\substack{\m,\n\in\{1,\ldots,N\}^{d-1} }}|a_{\m}\,a_{\n}|\,(1+|(\m-\n, |\m|^2-|\n|^2)|)^{-\frac{d-1}{2}}
	\\
	&\lesssim 1+\sum\limits_{\substack{\m,\n\in\{1,\ldots,N\}^{d-1} \\ \m\neq \n }}|a_{\m}\,a_{\n}| \, ||\m|^2-|\n|^2|^{-\frac{d-1}{2}}.
	\end{align*}
	Let
	\begin{align*}
	c_{\m,\n}=
	\begin{cases}
	||\m|^2-|\n|^2|^{-\frac{d-1}{2}},&\qquad \m\neq \n\\
	0,&\qquad \m=\n
	\end{cases}.
	\end{align*}
	To simplify notation let us assume that from now on $a_{\n}\ge 0$ for all $\n$.
	Let $$\mathcal{S}=\{(\m,\n)\in\{1,\ldots,N\}^{d-1} \times \{1,\ldots,N\}^{d-1} :|\m|\in(|\n|/2, 2|\n|)\}.$$ Then we have
	\begin{align}\label{eq:cmn}
	c_{\m,\n}\simeq \begin{cases}
	|\m|^{-\frac{d-1}{2}}\,||\m|-|\n||^{-\frac{d-1}{2}},&\qquad (\m,\n)\in\mathcal{S},\quad \m\neq \n\\
	\left(\max\{|\m|,|\n|\}\right)^{-(d-1)},&\qquad (\m,\n)\notin\mathcal{S}\\
	0,&\qquad \m=\n
	\end{cases}.
	\end{align}
	We begin with analyzing the contribution from $(\m,\n)\notin \mathcal{S}$. In view of \eqref{eq:cmn} we have (the ranges of summation in $\m$ and $\n$ are $\{1,\ldots,N\}^{d-1}$ unless indicated otherwise)
	\begin{align*}
	\sum\limits_{(\m,\n)\notin\mathcal{S}}a_{\m}\,a_{\n}\, c_{\m,\n}\lesssim\sum\limits_{\m, \n}a_{\m}\,a_{\n}\, \left(\max\{|\m|,|\n|\}\right)^{-(d-1)}:=D.
	\end{align*}
	We shall prove that
	$$
	D\lesssim\sqrt{D},
	$$
	which immediately implies $D\lesssim 1$, and consequently
	\begin{equation}\label{eq:notinS}
	\sum\limits_{(\m,\n)\notin\mathcal{S}}a_{\m}\,a_{\n}\, c_{\m,\n}\lesssim 1.
	\end{equation}
	By symmetry it suffices to show
	$$
	\sum\limits_{\substack{\m, \n\\ |\m|\ge|\n|}}a_{\m}\,a_{\n}\, |\m|^{-(d-1)}\lesssim\sqrt{D}.
	$$
	Using the Cauchy--Schwarz inequality we obtain
	\begin{align*}
	\sum\limits_{\substack{\m, \n\\ |\m|\ge |\n|}}a_{\m}\,a_{\n}\, |\m|^{-(d-1)}&=\sum_{\m}a_{\m}\, |\m|^{-(d-1)}\,\sum\limits_{\substack{\n:\; |\n|\le |\m|}}\,a_{\n}\le
	\left(\sum_{\m} |\m|^{-2(d-1)}\,\Big(\sum\limits_{\substack{\n:\; |\n|\le |\m|}}\,a_{\n}\Big)^2\right)^{1/2}
	\\
	&\le(\sum_{\n, \n'}a_{\n} \, a_{\n'}\, \sum\limits_{\substack{\m:\; |\m|>|\n|, |\n'|}}\,|\m|^{-2(d-1)})^{1/2}
	\\
	&\simeq
	(\sum_{\n, \n'}a_{\n} \, a_{\n'}\, \left(\max\{|\n|,|\n'|\}\right)^{-(d-1)})^{1/2}=\sqrt{D}.
	\end{align*}
	It remains to treat the contribution from $(\m,\n)\in \mathcal{S}$. Applying Cauchy-Schwarz inequality we get
	\begin{align*}
	\sum\limits_{(\m,\n)\in\mathcal{S}}a_{\m}\,a_{\n}\, c_{\m,\n}
	&\le
	(\sum\limits_{(\m,\n)\in\mathcal{S}}a_{\m}^2\, c_{\m,\n})^{1/2}
	(\sum\limits_{(\m,\n)\in\mathcal{S}}a_{\n}^2\, c_{\m,\n})^{1/2}
	\\
	&\le
	\sum\limits_{(\m,\n)\in\mathcal{S}}a_{\m}^2\, c_{\m,\n}
	+
	\sum\limits_{(\m,\n)\in\mathcal{S}}a_{\n}^2\, c_{\m,\n}.
	\end{align*}
	By symmetry it suffices to estimate the first sum above. Recalling that $\|a\|_{\ell^2}=1$ we obtain
	\begin{align*}
	\sum\limits_{(\m,\n)\in\mathcal{S}}a_{\m}^2\, c_{\m,\n}=\sum_\m a_{\m}^2 \sum\limits_{\n: (\m,\n)\in\mathcal{S}}c_{\m,\n}
	\le \sup_{\m} \sum\limits_{\n: (\m,\n)\in\mathcal{S}}c_{\m,\n}.
	\end{align*}
	We need to prove that
	\begin{equation}\label{eq:S}
	\sup_{\m} \sum\limits_{\n: (\m,\n)\in\mathcal{S}}c_{\m,\n}\lesssim
	N^{(d-3)/2}
	\begin{cases}
	N^{1/2},&\qquad d=2\\
	\log N,&\qquad d=3\\
	1,&\qquad d\ge 4
	\end{cases}.
	\end{equation}
	To this end fix $m\in \{1,\ldots,N\}^{d-1}$. Using \eqref{eq:cmn} we get
	\begin{align*}
	\sum\limits_{\n: \;(\m,\n)\in\mathcal{S}}c_{\m,\n}&\simeq |\m|^{-\frac{d-1}{2}}\sum\limits_{\substack{\n: |\m|/2<|\n|<2|\m|\\ \n\neq \m}}\left||\n|-|\m|\right|^{-\frac{d-1}{2}}
	\\
	&\simeq |\m|^{-\frac{d-1}{2}}\sum\limits_{\substack{k\in \left[|\m|/2,2|\m|\right]\cap \Z\\ k\neq |\m|}}\left|k-|\m|\right|^{-\frac{d-1}{2}}\left|\{\n: |\n|\in [k,k+1)\}\right|
	\\
	&\simeq |\m|^{-\frac{d-1}{2}}\sum\limits_{\substack{k\in \left[|\m|/2,2|\m|\right]\cap \Z\\ k\neq |\m|}}\left|k-|\m|\right|^{-\frac{d-1}{2}}k^{d-2}
	\\
	&\simeq |\m|^{-\frac{d-1}{2}}|\m|^{d-2}\sum\limits_{\substack{k\in \left[|\m|/2,2|\m|\right]\cap \Z\\ k\neq |\m|}}\left|k-|\m|\right|^{-\frac{d-1}{2}}
	\\
	&\simeq |\m|^{-\frac{d-1}{2}}|\m|^{d-2}\sum\limits_{k=1}^{|\m|}k^{-\frac{d-1}{2}}
	\simeq |\m|^{-\frac{d-1}{2}}|\m|^{d-2}
	\begin{cases}
	|\m|^{-\frac{d-1}{2}+1},&\qquad d=2\\
	\log |\m|,&\qquad d=3\\
	1,&\qquad d\ge 4
	\end{cases}.
	\end{align*}
	We have thus showed
	\begin{equation*}
	\sum\limits_{\n: (\m,\n)\in\mathcal{S}}c_{\m,\n}
	\simeq
	|\m|^{\frac{d-3}{2}}
	\begin{cases}
	|\m|^{-\frac{d-1}{2}+1}&\qquad d=2,\\
	\log |\m|,&\qquad d=3\\
	1,&\qquad d\ge 4,
	\end{cases},
	\end{equation*}
	and \eqref{eq:S} follows.
	Combining \eqref{eq:notinS} and \eqref{eq:S} gives the first part of the theorem.
	
	We now prove sharpness in the most interesting case, $d=3$. A similar argument works for the other values of $d$. Using  $a_{\n}\equiv1$,  it suffices to verify the following lower bound
	\begin{equation}
	\sum\limits_{\substack{\m\in\{1,\ldots,N\}^{2} \\ \n\in\{1,\ldots,N\}^{2} }}(1+|(\m-\n, |\m|^2-|\n|^2)|)^{-1}\gtrsim N^2 \log N.
	\end{equation}
	Note that  $|\m|^2-|\n|^2=(|\m|-|\n|)(|\m|+|\n|)>|\m|-|\n|$, thus
	$$
	1+|(\m-\n, |\m|^2-|\n|^2)|\simeq 1+||\m|^2-|\n|^2|.
	$$
	Consequently, we can estimate
	\begin{align*}
	\sum\limits_{\substack{\m,\n\in\{1,\ldots,N\}^{2}}}(1+|(\m-\n, |\m|^2-|\n|^2)|)^{-1}
	&\gtrsim
	\sum\limits_{\substack{\m,\n\in\{1,\ldots,N\}^{2}\\|\m|>|\n|+10 }}(|\m|^2-|\n|^2)^{-1}
	\\
	&\simeq \sum_{l=1}^{O(N)} \sum_{k=1}^{l-10}\frac{|\{\n:|\n|\in[l, l+1)\}| |\{\m:|\m|\in[k, k+1)\}|}{l^2-k^2}
	\\
	&\simeq \sum_{l=1}^{O(N)} \sum_{k=1}^{l-10}\frac{k\, l}{l(l-k)}\simeq \sum_{l=1}^{O(N)} \sum_{k=1}^{l-10}\left(\frac{k-l}{l-k}+\frac{l}{l-k}\right)
	\\
	&\simeq \sum_{l=1}^{O(N)} (l\log l-l)\simeq N^2\log N.
	\end{align*}
\end{proof}

\section{$\epsilon$-free $L^4$ estimate for $d=3$}
\label{four}

In this section we verify the following scale-independent estimate.

\begin{thm}\label{thm:L4}
	Let $\sigma$ be a measure on $\T^3$ such that the estimate \eqref{eq:decbeta} holds with some $\beta>2/3$.
	Then the estimate
	\begin{equation}\label{eq:L4}
	\int_{\T^3} |S_{a, 3}(x, N)|^4\,d\sigma(x)\lesssim \|a\|_{\ell^2}^4
	\end{equation}
	holds with the implicit constant  independent of $N$.
\end{thm}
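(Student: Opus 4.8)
The estimate uses nothing about $\sigma$ beyond the decay \eqref{eq:decbeta}, so the first step is to remove the measure. Expanding the fourth power and using Plancherel,
$$\int_{\T^3}|S_{a,3}(x,N)|^4\,d\sigma(x)=\sum_{\n}a_{n_1}a_{n_2}\overline{a_{n_3}a_{n_4}}\,\widehat{d\sigma}\bigl(\Delta(\n)\bigr),\qquad \Delta(\n)=\bigl(n_1+n_2-n_3-n_4,\ n_1^2+n_2^2-n_3^2-n_4^2,\ n_1^3+n_2^3-n_3^3-n_4^3\bigr),$$
and by \eqref{eq:decbeta} it suffices to prove $\Sigma:=\sum_{\n\in\{1,\dots,N\}^4}a_{n_1}a_{n_2}a_{n_3}a_{n_4}\,(1+|\Delta(\n)|)^{-\beta}\lesssim\|a\|_{\ell^2}^4$ with $a_n=|a_n|\ge0$. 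Separating the diagonal $\{n_1,n_2\}=\{n_3,n_4\}$, where $\Delta(\n)=0$, contributes $\lesssim\|a\|_{\ell^2}^4$ trivially; for the rest, since $|\Delta(\n)|\lesssim N^3$, a dyadic decomposition in $T=1+|\Delta(\n)|$ reduces everything to
$$\sum_{\substack{T\ \text{dyadic}\\ 1\le T\lesssim N^3}}T^{-\beta}\,E_T\lesssim\|a\|_{\ell^2}^4,\qquad E_T:=\sum_{\substack{\{n_1,n_2\}\ne\{n_3,n_4\}\\ |\Delta(\n)|\le T}}a_{n_1}a_{n_2}a_{n_3}a_{n_4}.$$

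I would decompose $E_T$ according to whether $n_1=n_3$ and/or $n_2=n_4$, using $\Delta(\n)=\eta(n_1,n_3)+\eta(n_2,n_4)$ with $\eta(a,b):=(a-b,\,a^2-b^2,\,a^3-b^3)$. If exactly one coincidence holds, say $n_1=n_3$ and $n_2\ne n_4$, then $\Delta(\n)=\eta(n_2,n_4)$, and this part of $E_T$ equals $\|a\|_{\ell^2}^2\,S_T$ with $S_T=\sum_{m\ne n,\ |\eta(m,n)|\le T}a_ma_n$. Carrying out the $T$-summation first, using $|\eta(m,n)|\ge|m^3-n^3|\ge|m-n|\max(m,n)^2$, and then two applications of Cauchy--Schwarz (valid because $\beta>1/2$), the total contribution of these terms is
$$\lesssim\|a\|_{\ell^2}^2\sum_{m\ne n}a_ma_n\bigl(|m-n|\max(m,n)^2\bigr)^{-\beta}\lesssim\|a\|_{\ell^2}^2\,\Bigl(\sum_{k\ge1} k^{-2\beta}\Bigr)^{1/2}\Bigl(\sum_m a_m\,m^{-2\beta}\Bigr)\|a\|_{\ell^2}\lesssim\|a\|_{\ell^2}^4.$$
For the genuinely quadrilinear part, $n_1\ne n_3$ and $n_2\ne n_4$, each nonzero value $\xi=\eta(n_1,n_3)$ is attained by a unique ordered pair, so setting $b_\xi:=a_{n_1(\xi)}a_{n_3(\xi)}$ on $\Xi\setminus\{0\}$, where $\Xi:=\eta(\{1,\dots,N\}^2)$, one has $\sum_\xi b_\xi^2\le\|a\|_{\ell^2}^4$; dropping the remaining diagonal restriction bounds this part of $E_T$ by $\sum_{\xi,\xi'\in\Xi\setminus\{0\}}b_\xi b_{\xi'}\,\mathbf{1}[\,|\xi+\xi'|\le T\,]$. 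Summing in $T$ and using the symmetry $\xi\mapsto-\xi$ of $\Xi$ (under which $b$ is invariant), this part of $\sum_T T^{-\beta}E_T$ is dominated by $\sum_{\xi,\xi'\in\Xi\setminus\{0\}}b_\xi b_{\xi'}(1+|\xi-\xi'|)^{-\beta}$.

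Everything then comes down to controlling the weighted difference sum $\sum_{\xi,\xi'\in\Xi\setminus\{0\}}b_\xi b_{\xi'}(1+|\xi-\xi'|)^{-\beta}$ by $\sum_\xi b_\xi^2\ (\le\|a\|_{\ell^2}^4)$. The input I would aim for is a counting estimate: for $\xi\in\Z^3$ with nonzero first coordinate,
$$N_T(\xi):=\#\bigl\{(a,b)\in\{1,\dots,N\}^2:\ a\ne b,\ |\eta(a,b)-\xi|\le T\bigr\}\lesssim 1+T^{\gamma}$$
with some $\gamma<\beta$, uniformly in $N$ and $\xi$; the natural exponent is $\gamma=2/3$, which is precisely what makes $\sum_{T\lesssim N^3}T^{-\beta}(1+T^{2/3})\lesssim 1$ when $\beta>2/3$, and this threshold coincides with the one forced by constructive interference near a point. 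To prove the counting estimate I would pass to $e=a-b\ (\ne0)$, $s=a+b$ and analyze the three constraints $|e-\xi_1|\le T$, $|es-\xi_2|\le T$, $|e(3s^2+e^2)-4\xi_3|\lesssim T$: the linear and quadratic ones alone are far too weak, so the cubic one is indispensable — linearizing the three constraints about the exact solution, the count is governed by the number of lattice points of an $O(T)\times O(T)$ box under a linear map whose determinant is $\simeq|\xi_1|(s_0^2-\xi_1^2)$ with $s_0=\xi_2/\xi_1$, supplemented by a direct treatment of the regimes where $\xi$ lies near the origin or near the moment curve, where one instead uses $|e|^3\le|e|s^2\lesssim T+|\xi_3|$. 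This lattice-point count — essentially an estimate for integer points near the space curve $t\mapsto(t,t^2,t^3)$ and its difference set, in the spirit of the classical lattice-points-near-circles results and the Cilleruelo--Granville circle of problems mentioned earlier — is the real obstacle; the harmonic-analytic reductions above are soft by comparison. Once it is in place, combining the three cases and summing the dyadic series completes the proof, with no $\epsilon$ loss because every step is an honest inequality rather than an application of a decoupling or Vinogradov-type bound.
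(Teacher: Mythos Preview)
Your reduction is sound and follows the same strategy as the paper: expand the fourth power, apply the decay of $\widehat\sigma$, peel off the (near-)diagonal contributions, and reduce the genuinely off-diagonal part via Cauchy--Schwarz to a Schur-type bound. The paper phrases this last step as $\sup_{n_1,n_3}\sum_{n_2\ne n_4}c_{n_1,n_2,n_3,n_4}\lesssim 1$ (and in fact discards the first coordinate of $\Delta$ entirely, working only with the quadratic and cubic components), while you package it as a bound on $\sum_{\xi,\xi'\in\Xi\setminus\{0\}}b_\xi b_{\xi'}(1+|\xi-\xi'|)^{-\beta}$; these are equivalent formulations.

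The genuine gap is that you do not prove your counting input $N_T(\xi)\lesssim 1+T^{2/3}$. The sketch you give --- linearize about the exact solution, record the $2\times2$ minor $\simeq|\xi_1|(s_0^2-\xi_1^2)$, and ``supplement with a direct treatment'' of the degenerate regimes --- is precisely where all the work lies, and the degenerate regimes are not a boundary nuisance but the main term. The paper does \emph{not} establish the uniform counting bound you aim for; it proves the Schur sum directly by a finer decomposition. After the change of variables $a=n_2-n_4$, $b=n_2+n_4$, $C=n_3^3-n_1^3$, $D=n_3^2-n_1^2$, one decomposes dyadically in the cubic discrepancy $|a(3b^2+a^2)-C|\simeq 2^k$. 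The pairs $(a,b)$ admitting many $b$ per fixed $a$ are handled by a crude count, yielding $\sum_k 2^{k(1-\beta)}C^{-1/3}\lesssim 1$; this is exactly where the threshold $\beta>2/3$ enters. For the remaining pairs there is at most one $b$ per $a$, with $ab\approx F_C(a):=\sqrt{a(C-a^3)}$, and the heart of the matter is to show
$$\sup_{D}\sum_{1\le a\le (C/4)^{1/3}}\frac{1}{|F_C(a)-D|^\beta+2^{k\beta}}\lesssim 2^{k(1/2-\beta)}$$
uniformly in $C$. This requires an honest analysis of the spacing of the values $F_C(a)$ near the maximum $a_{\max}=(C/4)^{1/3}$, where the increments $F_C(a_{\max})-F_C(a_{\max}-y)\simeq y^2$ are quadratic rather than linear, together with a convexity lemma identifying the extremal $D$. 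None of this structure is visible in your linearization sketch, and without it the argument does not close.
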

There does not seem to be an approach to this theorem using Proposition \ref{a27}, not even when $\beta=1$. Given $\beta>2/3$, \eqref{eq:L4} would follow from the  scale-independent  estimate
\begin{equation}
\label{a50}
\|\sum_{n=1}^Na_ne(n^3t)\|_{L^p([0,1])}\lesssim \|a\|_{\ell^2},
\end{equation}
conjectured to hold for $p<6$. Indeed, using this with $p>\frac{4}{\beta}$ we find
\begin{align*}
\int_{\T^3} |S_{a, 3}(x, N)|^4d\sigma(x)&\lesssim \sum_{0\le j\lesssim \log N}2^{(3-\beta)j}\int_{[-2^{-j},2^{-j}]^3}|S_{a, 3}(x, N)|^4dx\\&\le \sum_{0\le j\lesssim \log N}2^{(1-\beta)j}\sup_{b:\;|b_n|=|a_n|}\int_{[-2^{-j},2^{-j}]}|\sum_{n=1}^{N}b_ne(n^3x_3)|^4dx_3 \\&\le \sum_{0\le j\lesssim \log N}2^{(\frac4p-\beta)j}\sup_{b:\;|b_n|=|a_n|}(\int_0^1|\sum_{n=1}^{N}b_ne(n^3x_3)|^pdx_3)^{4/p}\\&\lesssim \|a\|_{\ell^2}^4.
\end{align*}
However, \eqref{a50} is not known even for $p=4$ and $a_n=1$. The difficulty of this inequality is already captured by the following result in \cite{Si}: there is a sequence $s_k\to\infty$ such that the equation $n^3+m^3=s_k$ has $(\log s_k)^{3/5}$ integral solutions.
\smallskip

Our proof of \eqref{eq:L4} will be elementary, and will involve delicate counting arguments. We never use the variable $x_1$ in our proof. In fact, an inspection of the argument reveals that we prove the stronger estimate
\begin{equation}\label{eq:L4dim2}
\int_{\T^2} |S_{a, 3}((0,x_2,x_3), N)|^4\,d\sigma(x_2, x_3)\lesssim \|a\|_{\ell^2}^4,
\end{equation}
for measure $\sigma$ on $\T^2$,  subject to only the decay condition
$$|\widehat{d\sigma}(\xi)|\lesssim (1+|\xi|)^{-\beta},\;\beta>2/3.$$
This rate of decay  is sharp,  in the sense that the estimate \eqref{eq:L4dim2} fails for $\beta<2/3$. To see that, let $\sigma$ be a measure on $\T^2$ such that $\widehat{\sigma}$ is real, positive and $\widehat{\sigma}(\xi)\gtrsim \frac{1}{(1+|\xi|)^\beta}$. Let $a$ be the sequence given by
	$
	a_n=\mathbbm{1}_{[1,2^{j/3}]}
	$
	for some fixed $2^{j/3}\lesssim N$. Then $\|a\|_{\ell^2}^4\simeq 2^{2j/3}$ and
	\begin{align*}
	\int_{\T^2} |S_{a, 3}((0,x_2,x_3), N)|^4\,d\sigma(x_2, x_3)&\gtrsim \frac{1}{2^{j\beta}} \left|\bigg\{(n_1,\dots, n_4)\in [1, 2^{\frac{j}3}]^4: \begin{cases}|n^2_1+n^2_2-n^2_3-n^2_4|\lesssim  2^j \\ |n^3_1+n^3_2-n^3_3-n^3_4| \lesssim 2^j \end{cases} \bigg\}\right|\\&\gtrsim \frac{1}{2^{j\beta}} (2^{j/3})^4\gg\|a\|_{\ell^2}^4,
	\end{align*}
	if $\beta<2/3$.
\smallskip

Before we present the proof of Theorem \ref{thm:L4} we need some technical preparation.
For $C>0$ define
\begin{equation}\label{eq:FC}
F_C(a):=\sqrt{a(C-a^3)},\qquad a\in(0,C^{1/3}).
\end{equation}
The first lemma is just a simple estimate of the size of increments $F_C$.
\begin{lem}\label{lem:calc}
	Let $C>1000$ be a fixed constant. Then
	\begin{itemize}
		\item[(a)] $F_C$ attains its maximum on $(0, C^{1/3})$ at $a_{max}=(C/4)^{1/3}$ and $F_C'(x)>0$ for $x\in (0, a_{max})$, and $F_C'(x)<0$ for $x\in (a_{max}, C^{1/3})$.
		\item[(b)] For each $y\in [0,a_{max}]$ one has
		$$
		|F_C(a_{max}-y)-F_C(a_{max})|\simeq y^2,
		$$
		with the implicit constant independent of $C$.
	\end{itemize}
\end{lem}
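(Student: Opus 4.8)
The plan is to reduce everything to the polynomial $g_C(a):=F_C(a)^2=a(C-a^3)=aC-a^4$, for which all computations become explicit and manifestly uniform in $C$.

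For part (a) I would simply differentiate: $g_C'(a)=C-4a^3$, which is positive for $a<(C/4)^{1/3}$ and negative for $a>(C/4)^{1/3}$. Since $F_C=\sqrt{g_C}>0$ on $(0,C^{1/3})$ and $t\mapsto\sqrt t$ is increasing, $F_C$ inherits this monotonicity; hence it increases on $(0,a_{max})$ and decreases on $(a_{max},C^{1/3})$ with $a_{max}=(C/4)^{1/3}$. (The hypothesis $C>1000$ is used only to ensure $0<a_{max}<C^{1/3}$, so that the interval is non-degenerate.)

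For part (b), write $b:=a_{max}$, so that $C=4b^3$. Substituting this into $g_C$ and factoring out the common factor $(b-y)$, I would establish the exact algebraic identity
$$g_C(b)-g_C(b-y)=y^2\bigl(6b^2-4by+y^2\bigr),\qquad g_C(b)=3b^4.$$
The quadratic $6b^2-4by+y^2$ has negative discriminant, hence no real zeros, and on $[0,b]$ it is decreasing in $y$ from the value $6b^2$ down to $3b^2$; in particular $6b^2-4by+y^2\simeq b^2$ with absolute constants. Consequently $g_C(b-y)=3b^4-y^2(6b^2-4by+y^2)$ lies between $0$ and $3b^4$, so $\sqrt{g_C(b-y)}+\sqrt{g_C(b)}\simeq b^2$. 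Finally, by part (a) we have $F_C(b-y)\le F_C(b)$, and writing the difference as a difference quotient,
$$|F_C(b-y)-F_C(b)|=\frac{g_C(b)-g_C(b-y)}{\sqrt{g_C(b-y)}+\sqrt{g_C(b)}}=\frac{y^2\bigl(6b^2-4by+y^2\bigr)}{\sqrt{g_C(b-y)}+\sqrt{g_C(b)}}\simeq\frac{b^2y^2}{b^2}=y^2,$$
with both the upper and lower implied constants absolute, and in particular independent of $b$, hence of $C$.

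There is no substantial obstacle here; the only point worth a moment's care is the uniformity in $C$. I would emphasize that rather than invoking Taylor's formula with remainder at $a_{max}$ (whose error term is less transparent once $y\simeq a_{max}$), one should use the exact factorization above, which makes the two-sided estimate and the $C$-uniformity completely explicit.
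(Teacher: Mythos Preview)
Your proof is correct and follows essentially the same approach as the paper: both arguments pass to $g_C=F_C^2$, expand $g_C(a_{max})-g_C(a_{max}-y)$ exactly (the paper obtains the equivalent expression $|{-6a_{max}^2y^2+4a_{max}y^3-y^4}|$), and divide by the sum $F_C(a_{max}-y)+F_C(a_{max})\simeq C^{2/3}$ to recover $y^2$. One minor remark: the hypothesis $C>1000$ is not needed to ensure $0<a_{max}<C^{1/3}$ (that holds for all $C>0$); it is there so that $a_{max}>1$, which matters for the integer sums in the subsequent corollary rather than for this lemma.
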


\begin{proof}
	Part (a) is straightforward. To show (b), we begin with fixing $y\in [0,a_{max}]$.  Since $F_C(a_{max}-y)\le F_C(a_{max})$,  $F_C(a_{max})\simeq C^{2/3}$ and $a_{max}=(C/4)^{1/3}$, we have
	\begin{align*}
	|F_C(a_{max}-y)-F_C(a_{max})|&\simeq \left|\frac{F_C^2(a_{max}-y)-F_C^2(a_{max})}{F_C(a_{max})}\right|\\
	&\simeq C^{-2/3}\left|(a_{max}-y)(C-(a_{max}-y)^3)-a_{max}(C-a_{max}^3)\right|
	\\
	&=C^{-2/3}\left|-yC+4a_{max}^3y-6a_{max}^2 y^2+4a_{max}y^3-y^4\right|
	\\
	&=C^{-2/3}\left|-6a_{max}^2 y^2+4a_{max}y^3-y^4\right|\\
	&\simeq C^{-2/3}a_{max}^2 y^2\simeq y^2,
	\end{align*}
	so (b) is proved.
\end{proof}

\begin{lem}\label{lem:convex}
	For $M\in\N$ let $f:[0,M]\rightarrow \R_+$ be such that $f'>0$, $f''<0$ and $f(0)=0$. Then for any $\alpha>0$ the following estimate holds:
	\begin{align*}
	\sup_{x\in\R_+}\sum_{j=1}^M\frac{1}{|f(j)-x|^\alpha+1}\le 2+4\sum_{j=1}^M\frac{1}{(f(M)-f(j))^\alpha+1}.
	\end{align*}
\end{lem}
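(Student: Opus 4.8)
The plan is to fix $x\in\R_+$, estimate the inner sum, and then take the supremum. Since $f'>0$ and $f''<0$, the discrete increments
\[
\delta_j:=f(j)-f(j-1),\qquad j=1,\dots,M,
\]
are positive and non-increasing in $j$. First I would locate $x$: because $f(0)=0\le x$, the index $j_0:=\max\{j\in\{0,1,\dots,M\}:f(j)\le x\}$ is well defined. If $j_0=M$ then $|f(j)-x|=x-f(j)\ge f(M)-f(j)$ for every $j$, and the asserted bound follows at once (even without the additive constant); so I may assume $j_0<M$, i.e. $f(j_0)\le x<f(j_0+1)$.

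The only genuine ingredient is an elementary rearrangement fact: for a block of $m\ge1$ consecutive increments $\delta_{i+1},\dots,\delta_{i+m}$ with $i+m\le M$, monotonicity gives $\delta_{i+l}\ge\delta_{M-m+l}$ for $1\le l\le m$, hence
\[
\delta_{i+1}+\dots+\delta_{i+m}\;\ge\;\delta_{M-m+1}+\dots+\delta_M\;=\;f(M)-f(M-m).
\]
In words, among all runs of $m$ consecutive increments of a concave function, the terminal run is the shortest. This is exactly what lets an arbitrary $x$ be compared with the endpoint value $f(M)$ that appears on the right-hand side of the lemma.

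With this in hand I would split $\{1,\dots,M\}$ into the two straddling indices $\{j_0,j_0+1\}$ and the two tails $\{1,\dots,j_0-1\}$ and $\{j_0+2,\dots,M\}$. The terms $j=j_0$ and $j=j_0+1$ each contribute at most $\tfrac1{0+1}=1$. For $j\le j_0-1$ I write $x-f(j)\ge f(j_0)-f(j)=\delta_{j+1}+\dots+\delta_{j_0}\ge f(M)-f\big(M-(j_0-j)\big)$, and for $j\ge j_0+2$ I write $f(j)-x>f(j)-f(j_0+1)=\delta_{j_0+2}+\dots+\delta_{j}\ge f(M)-f\big(M-(j-j_0-1)\big)$; in both cases the relevant block of increments lies inside $\{1,\dots,M\}$, which only requires the cheap bounds $j_0-j\le M$ and $j-j_0-1\le M$. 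Since $t\mapsto(t^\alpha+1)^{-1}$ is decreasing on $[0,\infty)$, each tail sum is then dominated by $\sum_{k\ge1}\big((f(M)-f(M-k))^\alpha+1\big)^{-1}$, and the substitution $i=M-k$ bounds that by $\sum_{i=1}^M\big((f(M)-f(i))^\alpha+1\big)^{-1}$. Adding the four pieces gives $2+2\sum_{i=1}^M\big((f(M)-f(i))^\alpha+1\big)^{-1}$, which is well within the claimed estimate. I do not foresee any real obstacle: the rearrangement inequality in the second paragraph carries all the weight, and the rest is a routine case split plus a reindexing, the only care needed being that the increment blocks stay inside the range $1,\dots,M$.
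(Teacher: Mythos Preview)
Your proof is correct and follows essentially the same approach as the paper: locate $x$ between two consecutive values $f(j_0)$ and $f(j_0+1)$, bound the two straddling terms trivially, and use concavity (non-increasing increments $\delta_j$) to compare each tail with the endpoint sum $\sum_j((f(M)-f(j))^\alpha+1)^{-1}$. The paper inserts an intermediate step, first reducing to $\max_{k}\sum_{j=1}^M(|f(j)-f(k)|^\alpha+1)^{-1}$ and then bounding this two-sided sum by $2\sum_{j}((f(M)-f(j))^\alpha+1)^{-1}$, which costs an extra factor of $2$; by applying the increment inequality directly on each one-sided tail you avoid this detour and in fact obtain $2+2\sum$ rather than the stated $2+4\sum$.
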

The above lemma asserts that the supremum of the sum on the left-hand side is essentially comparable with its value at $x=f(M)$.

\begin{proof}
	Fix $x\in\R_+$. If $x\ge f(M)$ then
	for each $j$ we have
	$$
	\frac{1}{(x-f(j))^\alpha+1}\le \frac{1}{(f(M)-f(j))^\alpha+1},
	$$
	and consequently
	\begin{align*}
	\sum_{j=1}^M\frac{1}{(x-f(j))^\alpha+1}\le \sum_{j=1}^M\frac{1}{(f(M)-f(j))^\alpha+1}.
	\end{align*}
	Therefore from now on we assume that $x\in[0, f(M)]$. If $x$ is not in the range of $f$, i.e.\ $x\notin \{f(0), \ldots, f(M)\}$, let $k\in \{0, 1,\ldots, M-1\}$ be such that $x\in (f(k), f(k+1))$. Note that then for any $j\ge k+2$ we have
	\begin{align*}
	|f(j)-x|^\alpha=(f(j)-x)^\alpha>(f(j)-f(k+1))^\alpha=|f(j)-f(k+1)|^\alpha
	\end{align*}
	and for any $j\le k-1$
	\begin{align*}
	|f(j)-x|^\alpha=(x-f(j))^\alpha>(f(k)-f(j))^\alpha=|f(j)-f(k)|^\alpha.
	\end{align*}
	Using the above relations for $j\in\{1,\ldots,k-1\}\cup\{k+2,\ldots, M\}$ and estimating the terms corresponding to $j\in\{k,k+1\}$ trivially by $1$ we obtain, with the convention that the sum over an empty set of indices is zero,
	\begin{align*}
	\sum_{j=1}^M\frac{1}{|f(j)-x|^\alpha+1}&\le 2+\sum_{j=1}^{k-1}\frac{1}{|f(j)-f(k)|^\alpha+1}+\sum_{j=k+2}^{M}\frac{1}{|f(j)-f(k+1)|^\alpha+1}\\
	&\le 2+2\max_{k\in\{0,1,\ldots,M\}}\sum_{j=1}^M \frac{1}{|f(j)-f(k)|^\alpha+1}.
	\end{align*}
	Consequently, it remains to prove that
	\begin{equation}\label{eq:max}
	\max_{k\in\{0, 1,\ldots,M\}}\sum_{j=1}^M \frac{1}{|f(j)-f(k)|^\alpha+1}\le 2\sum_{j=1}^M\frac{1}{(f(M)-f(j))^\alpha+1}.
	\end{equation}
	To this end fix $k\in\{0,1, \ldots, M\}$. The key observation is that for a fixed $l\in\N$ the distances between pairs of values of
	$f$ at arguments separated by $l$ decrease. Indeed, using the condition $f''<0$ we can estimate for $M_1\le M_2$
	$$
	f(M_1+l)-f(M_1)=\int_{M_1}^{M_1+l}f'(x)\,dx\ge \int_{M_2}^{M_2+l}f'(x)\,dx=f(M_2+l)-f(M_2).
	$$
	In view of the above we get
	\begin{align*}
	\begin{cases}
	|f(j)-f(k)|^\alpha\ge |f(M)-f(M-(k-j))|^\alpha,\qquad \textrm{for}\quad 1\le j\le k,\\
	|f(j)-f(k)|^\alpha\ge |f(M)-f(M-(j-k))|^\alpha,\qquad \textrm{for}\quad k+1\le j\le M.
	\end{cases}
	\end{align*}
	Using the above relations we get
	\begin{align*}
	\sum_{j=1}^M \frac{1}{|f(j)-f(k)|^\alpha+1}&\le \sum_{j=M-k}^M \frac{1}{(f(M)-f(j))^\alpha+1}+\sum_{j=k}^M \frac{1}{(f(M)-f(j))^\alpha+1}
	\\
	&\le 2\sum_{j=1}^M \frac{1}{(f(M)-f(j))^\alpha+1},
	\end{align*}
	which completes the proof of \eqref{eq:max} and consequently the entire lemma.
\end{proof}

\begin{cor}\label{cor:cip}
	Let $C>1000, D\ge 1$ and let $F_C:(0,C^{1/3})\rightarrow \R_+$ be the function defined by \eqref{eq:FC}, with $a_{max}=(C/4)^{1/3}$. Then the following estimate holds for each $\beta>1/2$
	\begin{align*}
	\sup_{x\in\R_+}\sum_{a=1}^{\lfloor a_{max}\rfloor}\frac{1}{|F_C(a)-x|^\beta+D^\beta}\lesssim D^{1/2-\beta},
	\end{align*}
	with the implicit constant independent of $C$ and $D$.
\end{cor}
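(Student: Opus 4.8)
The plan is to reduce Corollary~\ref{cor:cip} to Lemma~\ref{lem:convex} by a change of variables that flattens $F_C$ near its maximum, and then to estimate the resulting tail sum explicitly. First I would use Lemma~\ref{lem:calc}(a): on $(0,a_{max}]$ the function $F_C$ is strictly increasing with $F_C(0)=0$ and $F_C''<0$ (the concavity should be checked, but it follows from a direct computation of $F_C''$ together with $C>1000$, exactly as part (a) was verified). Hence the function $f(a):=F_C(a)$ on $[0,\lfloor a_{max}\rfloor]$ satisfies all hypotheses of Lemma~\ref{lem:convex} with $\alpha=\beta$ (after the harmless rescaling $x\mapsto x/D$, $f\mapsto f/D$, which turns $|F_C(a)-x|^\beta+D^\beta$ into $D^\beta(|f(a)/D - x/D|^\beta+1)$). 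Applying Lemma~\ref{lem:convex} therefore gives
\begin{align*}
\sup_{x\in\R_+}\sum_{a=1}^{\lfloor a_{max}\rfloor}\frac{1}{|F_C(a)-x|^\beta+D^\beta}
\lesssim D^{-\beta}\Bigl(1+\sum_{a=1}^{\lfloor a_{max}\rfloor}\frac{1}{(F_C(\lfloor a_{max}\rfloor)-F_C(a))^\beta/D^\beta+1}\Bigr).
\end{align*}

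Next I would estimate the remaining sum, which is now anchored at the endpoint $a=\lfloor a_{max}\rfloor$. Writing $a=\lfloor a_{max}\rfloor - y$ with $y$ ranging over integers in $[0,\lfloor a_{max}\rfloor)$, Lemma~\ref{lem:calc}(b) gives $F_C(a_{max})-F_C(a_{max}-y)\simeq y^2$ with a constant independent of $C$; there is a minor bookkeeping point that $\lfloor a_{max}\rfloor$ differs from $a_{max}$ by $O(1)$, but since $F_C'$ is bounded near $a_{max}$ this only perturbs the increments by $O(y)+O(1)$, which is absorbed. So the sum is controlled by $\sum_{y\ge 0} \bigl((y^2/D)^\beta+1\bigr)^{-1}$. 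This is a standard one-dimensional sum: the terms with $y\le \sqrt{D}$ contribute $O(\sqrt{D})$, and the terms with $y>\sqrt{D}$ contribute $D^\beta\sum_{y>\sqrt D} y^{-2\beta}\simeq D^\beta (\sqrt D)^{1-2\beta}=\sqrt D$, using $2\beta>1$. Hence the whole sum is $O(\sqrt D)$, and multiplying back by $D^{-\beta}$ yields the claimed bound $D^{1/2-\beta}$ (the additive $D^{-\beta}$ from the "$1+$" is $\lesssim D^{1/2-\beta}$ since $D\ge 1$).

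The one genuinely delicate point is the concavity claim $F_C''<0$ on $(0,a_{max})$: $F_C$ is the square root of the quartic $a(C-a^3)=Ca-a^4$, and while the quartic itself is concave on the relevant range, the square root of a concave positive function need not be concave in general, so one must actually compute. Writing $g(a)=Ca-a^4$, one has $F_C=\sqrt g$, $F_C''=\frac{2g g''-(g')^2}{4g^{3/2}}$, so concavity is equivalent to $2g g'' \le (g')^2$, i.e. $2(Ca-a^4)(-12a^2)\le (C-4a^3)^2$; the left side is negative for $a\in(0,C^{1/3})$ while the right side is nonnegative, so the inequality holds automatically. Thus $F_C$ is concave on all of $(0,C^{1/3})$, in particular on $(0,a_{max})$, and Lemma~\ref{lem:convex} applies. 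With this observation in hand, the rest is the routine two-regime summation described above, and I would present it in that order: reduce to the endpoint-anchored sum via Lemma~\ref{lem:convex}, convert increments to $y^2$ via Lemma~\ref{lem:calc}(b), then split the $y$-sum at $\sqrt D$.
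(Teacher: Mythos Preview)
Your proposal is correct and follows essentially the same route as the paper: rescale by $D$, apply Lemma~\ref{lem:convex} to anchor the sum at $\lfloor a_{max}\rfloor$, convert increments to $\simeq y^2$ via Lemma~\ref{lem:calc}(b), then split the $y$-sum at $\sqrt{D}$. Your explicit verification of $F_C''<0$ is a welcome addition (the paper uses it without checking); the only slightly loose step is the $\lfloor a_{max}\rfloor$ vs.\ $a_{max}$ bookkeeping, where the paper instead uses monotonicity and the triangle inequality to get $F_C(\lfloor a_{max}\rfloor)-F_C(\lfloor a_{max}\rfloor-y)\ge [F_C(a_{max})-F_C(a_{max}-y)]-[F_C(a_{max})-F_C(a_{max}-1)]\simeq y^2-O(1)$, which is cleaner than invoking a bound on $F_C'$.
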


\begin{proof}
	Note that the estimate we need to prove is equivalent to
	\begin{align*}
	\sup_{x\in\R_+}\sum_{a=1}^{\lfloor a_{max}\rfloor}\frac{1}{|F_C(a)/D-x|^\beta+1}\lesssim D^{1/2}.
	\end{align*}
	Applying Lemma \ref{lem:convex} with $f=F_C/D$ and $M=\lfloor a_{max}\rfloor$ we get
	\begin{align*}
	\sup_{x\in\R_+}\sum_{a=1}^{\lfloor a_{max}\rfloor}\frac{1}{|F_C(a)/D-x|^\beta+1}&\lesssim \sum_{a=1}^{\lfloor a_{max}\rfloor}\frac{1}{|F_C(a)/D-F_C(\lfloor a_{max}\rfloor)/D|^\beta+1}
	\\&
	=\sum_{n=1}^{\lfloor a_{max}\rfloor}\frac{1}{D^{-\beta}|F_C(\lfloor a_{max}\rfloor-n)-F_C(\lfloor a_{max}\rfloor)|^\beta+1}:=S.
	\end{align*}
	Due to monotonicity of $F_C$ on $(0, a_{max})$ we can estimate
	\begin{align*}
	|F_C(\lfloor a_{max}\rfloor-n)-F_C(\lfloor a_{max}\rfloor)|
	&\ge |F_C(a_{max}-n)-F_C(a_{max})|-|F_C(a_{max})-F_C(a_{max}-1)|.
	\end{align*}
	Now using Lemma \ref{lem:calc} we get
	$$
	|F_C(a_{max}-n)-F_C(a_{max})|\simeq n^2
	$$
	and
	$$
	|F_C(a_{max})-F_C(a_{max}-1)|\simeq 1.
	$$
	Therefore we can find and absolute constant $n_0\in\N$ such that for each $n\ge n_0$ we have
	$$
	|F_C(a_{max}-n)-F_C(a_{max})|\ge 2|F_C(a_{max})-F_C(a_{max}-1)|.
	$$
	Then we have for $n\ge n_0$
	\begin{align*}
	|F_C(\lfloor a_{max}\rfloor-n)-F_C(\lfloor a_{max}\rfloor)|\simeq |F_C(a_{max}-n)-F_C(a_{max})|\simeq n^2.
	\end{align*}
	Thus we can estimate
	\begin{align*}
	S\lesssim \sum_{n=1}^{n_0} 1+\sum_{n=n_0}^{\lfloor a_{max}\rfloor}\frac{1}{D^{-\beta}n^{2\beta}+1}\lesssim
	\sum_{n=1}^{D^{1/2}}1+D^\beta\sum_{n=D^{1/2}}^{\infty} \frac{1}{n^{2\beta}}\simeq D^{1/2},
	\end{align*}
	which concludes the proof of the corollary.
\end{proof}

Now we are ready to prove Theorem \ref{thm:L4}.
\begin{proof}[Proof of Theorem \ref{thm:L4}]
	
	Using \eqref{eq:decbeta} we get
	\begin{align*}
	\int_{\T^3} |S_{a, 3}(x, N)|^4\,d\sigma(x)&=\sum_{n_1, n_2,n_3, n_4=1}^N  a_{n_1}a_{n_2}\overline{a_{n_3}}\overline{a_{n_4}}
	\\
	&\quad\times\widehat{\sigma}(n_1+n_2-n_3-n_4, n^2_1+n^2_2-n^2_3-n^2_4, n^3_1+n^3_2-n^3_3-n^3_4)
	\\
	&\lesssim\sum_{n_1, n_2,n_3, n_4=1}^N  |a_{n_1}a_{n_2}a_{n_3}a_{n_4}|
	\\
	&\quad\times\frac{1}{(|(n^2_1+n^2_2-n^2_3-n^2_4, n^3_1+n^3_2-n^3_3-n^3_4)|+1)^\beta}
	\\
	&:=\sum_{n_1, n_2,n_3, n_4=1}^N  |a_{n_1}a_{n_2}a_{n_3}a_{n_4}|c_{n_1, n_2,n_3, n_4}.
	\end{align*}
	Note that
	\begin{align*}
	\sum\limits_{\substack{n_1, n_2,n_3, n_4\\ n_2=n_4}}^N  |a_{n_1}a_{n_2}a_{n_3}a_{n_4}|\,c_{n_1, n_2,n_3, n_4}
	&=
	\sum_{n_1, n_2,n_3=1}^N  |a_{n_1}a_{n_2}^2 a_{n_3}|\, c_{n_1, n_2,n_3, n_2}
	\\
	&=
	\|a\|_{\ell^2}^2\sum_{n_1, n_3=1}^N  |a_{n_1}\, a_{n_3}| \, \frac{1}{(|(n^2_1-n^2_3, n^3_1-n^3_3)|+1)^\beta}
	\\
	&\le
	\|a\|_{\ell^2}^2\sum_{n_1, n_3=1}^N  |a_{n_1}\, a_{n_3}| \, \frac{1}{(|n^2_1-n^2_3|+1)^{1/2}}
	\\
	&\lesssim
	\|a\|_{\ell^2}^4.
	\end{align*}
	By symmetry, we also have
	\begin{align*}
	\sum\limits_{\substack{n_1, n_2,n_3, n_4\\ n_1=n_3}}^N  |a_{n_1}a_{n_2}a_{n_3}a_{n_4}|\,c_{n_1, n_2,n_3, n_4}
	\lesssim
	\|a\|_{\ell^2}^4.
	\end{align*}
	Therefore it remains to show
	\begin{align*}
	\sum\limits_{\substack{n_1, n_2,n_3, n_4=1\\ n_1\neq n_3\\n_2\neq n_4}}^N  |a_{n_1}a_{n_2}a_{n_3}a_{n_4}|\,c_{n_1, n_2,n_3, n_4}
	\lesssim
	\|a\|_{\ell^2}^4.
	\end{align*}
	
	Applying the Cauchy--Schwarz inequality and using the symmetry $c_{n_1, n_2, n_3, n_4}=c_{n_3, n_4, n_1, n_2}$ we get
	\begin{align*}
	&\sum\limits_{\substack{n_1, n_2,n_3, n_4=1\\ n_1\neq n_3\\n_2\neq n_4}}^N  |a_{n_1}a_{n_2}a_{n_3}a_{n_4}|c_{n_1, n_2,n_3, n_4}\\
	&\le (\sum\limits_{\substack{n_1, n_2,n_3, n_4=1\\ n_2\neq n_4}}^N |a_{n_1}|^2\, |a_{n_3}|^2\, c_{n_1, n_2,n_3, n_4})^{1/2} (\sum\limits_{\substack{n_1, n_2,n_3, n_4=1\\ n_1\neq n_3}}^N |a_{n_2}|^2\, |a_{n_4}|^2\, c_{n_1, n_2,n_3, n_4})^{1/2}
	\\
	&\le \|a\|_{\ell^2}^4(\sup_{n_1, n_3\in\N}\sum\limits_{\substack{n_2, n_4\in\N\\ n_2\neq n_4}} c_{n_1, n_2,n_3, n_4})^{1/2} (\sup_{n_2, n_4\in\N}\sum\limits_{\substack{n_1, n_3\in\N\\ n_1\neq n_3}}  c_{n_1, n_2,n_3, n_4})^{1/2}
	\\
	&=\|a\|_{\ell^2}^4\sup_{n_1, n_3\in\N}\sum\limits_{\substack{n_2, n_4\in\N\\ n_2\neq n_4}} c_{n_1, n_2,n_3, n_4}.
	\end{align*}
	Therefore it remains to prove that
	\begin{align*}
	\sup_{n_1, n_3\in\N}\sum\limits_{\substack{n_2, n_4\in\N\\ n_2\neq n_4}} c_{n_1, n_2,n_3, n_4}\lesssim 1.
	\end{align*}
	Calling $C=n_3^3-n_1^3$, $D=n_3^2-n_1^2$, $a=n_2-n_4\neq 0$ and $b=n_2+n_4$, we reduce the problem to showing
	\begin{align*}
	\sup_{C, D\in\N}S(C,D):=\sup_{C, D\in\N}\sum\limits_{\substack{a,b\in\N \\b\ge a>0}} \frac{1}{|ab-D|^\beta+|a(a^2+3b^2)-C|^\beta+1}\lesssim 1.
	\end{align*}
	Fix $C,D\in\N$ and let $j\in \N$ be such that $C\simeq 2^j$. Let $I_k=\{(a,b)\in\N^2: b\ge a>0, |a(3b^2+a^2)-C|\simeq 2^k\}$. We observe that, since $|a(a^2+3b^2)-C|\gtrsim ab^2$ if $(a,b)\in I_k$ with $k\ge j+1$, we have
		$$
	S(C,D)\simeq \sum_{1\le 2^k\le 2^j} \sum_{(a,b)\in I_k} \frac{1}{|ab-D|^\beta+2^{k\beta}+1}+\sum_{b\ge a\ge 1}\frac{1}{ab^2}.
	$$
	Note that the second sum is $O(1)$, and that  $a\lesssim 2^{j/3}$ for each $a$ contributing to the first sum.
	Notice also that for a fixed $a$ there are at most ${O}(\frac{2^k}{\sqrt{a 2^j}}+1)$ choices of $b$ such that $(a,b)\in I_k$. Indeed, if $b_1,b_2\in\N$ are such that $(a,b_1), (a, b_2)\in I_k$ then we have
	$$
	|a(a^2+3b_1^2)-C-(a(a^2+3b_2^2)-C)|=3|a(b_1^2-b_2^2)|\lesssim 2^k,
	$$
	and it follows that
	$$
	|b_1-b_2|\lesssim \frac{2^k}{a (b_1+b_2)}\simeq\frac{2^k}{\sqrt{a}\sqrt{a(b_1^2+b_2^2)}}.
	$$
	Noticing that $ab_i^2\simeq 2^j$ for $i=1,2$ we obtain
	$$
	|b_1-b_2|\lesssim \frac{2^k}{\sqrt{a 2^j}},
	$$
	which implies the desired upper bound for the number of possible choices of $b$.
	
	This motivates a further decomposition
	\begin{align*}
	\sum_{1\le 2^k\le 2^j} \sum_{(a,b)\in I_k} \frac{1}{|ab-D|^\beta+2^{k\beta}+1}&= \sum_{1\le 2^k\le 2^j} \sum_{(a,b)\in I^1_k} \frac{1}{|ab-D|^\beta+2^{k\beta}+1}
	\\
	&\quad+ \sum_{1\le 2^k\le 2^j} \sum_{(a,b)\in I^2_k} \frac{1}{|ab-D|^\beta+2^{k\beta}+1}
	\\
	&=:S_1+S_2,
	\end{align*}
	where
	\begin{align*}
	I^1_k=\{(a,b)\in I_k: \sqrt{a2^j}\le 2^k\},\;\;\;\;I^2_k=\{(a,b)\in I_k: \sqrt{a2^j}> 2^k\}.
	\end{align*}
	Using the fact that for a fixed $a$ there are at most $O(\frac{2^k}{\sqrt{a 2^j}}+1)$ choices of $b$ such that $(a,b)\in I_k$ we can estimate $S_1$ as follows
	\begin{align*}
	S_1\le  \sum_{1\le 2^k\le 2^j} \sum_{(a,b)\in I^1_k} \frac{1}{2^{k\beta}}\lesssim  \sum_{1\le 2^k\le 2^j} \frac{1}{2^{k\beta}}|I_k^1|\lesssim
	\sum_{1\le 2^k\le 2^j} \frac{1}{2^{k\beta}}\sum_{1\le a\lesssim 2^{j/3}}\frac{2^k}{\sqrt{a 2^j}}\lesssim
	\sum_{1\le 2^k\le 2^j} \frac{1}{2^{k\beta}}\frac{2^k}{2^{j/3}}\lesssim 1,
	\end{align*}
	for $\beta>2/3$.
	
	We pass to the analysis of $S_2$. Notice that given $(a,b)\in I_k^2$ there cannot exist $b_2\neq b$ such that $(a, b_2)\in I_k^2$; in other words, each $a$ is associated with at most one $b$.
	
	For a fixed $(a,b)\in I_k^2$ denote $C_k':=a(3b^2+a^2)$. Clearly $C_k'$ depends on $a$ and $b$. Next, let $C_k:=C+100 \times 2^k$. Then  $C_k'\simeq C_k\simeq C$ and $|C_k-C_k'|\lesssim 2^k$. We have
	$$
	ab=\frac{1}{\sqrt{3}}\sqrt{(C_k'-a^3)a}=\frac{1}{\sqrt{3}}F_{C_k'}(a),
	$$
	where for $K>0$ the function $F_K$ is given by \eqref{eq:FC}.
	Furthermore, since $a\le b$, we can estimate
	\begin{equation}\label{eq:amax}
	a\le\sqrt[3]{\frac{a(3b^2+a^2)}{4}}\le\sqrt[3]{\frac{C_k}{4}}=: a_{max}(k).
	\end{equation}
	Since $C_k'$  depends on $(a,b)$ we would like to replace it with $C_k$. We have
	\begin{align*}
	|ab-\frac{1}{\sqrt{3}}\sqrt{(C_k-a^3)a}|=\frac{1}{\sqrt{3}}|\sqrt{(C_k'-a^3)a}-\sqrt{(C_k-a^3)a}|\lesssim \frac{2^k a }{\sqrt{2^j a}}=o(2^k),
	\end{align*}
	due to $a\lesssim 2^{j/3}$. It follows that
	$$
	|ab-D|\ge |\sqrt{(C_k-a^3)a}-D|-o(2^k).
	$$
	Thus
	$$
	|ab-D|^\beta+2^{k\beta}\gtrsim |\sqrt{(C_k-a^3)a}-D|^\beta+2^{k\beta}=|F_{C_k}(a)-D|^\beta+2^{k\beta}.
	$$
	Consequently
	\begin{align*}
	S_2=\sum_{1\le 2^k\le 2^j} \sum_{(a,b)\in I^2_k} \frac{1}{|ab-D|^\beta+2^{k\beta}+1}\lesssim \sum_{1\le 2^k\le 2^j} \sum_{1\le a\le a_{max}(k)} \frac{1}{|F_{C_k}(a)-D|^\beta+2^{k\beta}}.
	\end{align*}
	Finally, using Corollary \ref{cor:cip} with $C=C_k$, for each $k$ such that $1\le 2^k\le 2^j$, we get
	$$
	S_2\lesssim 1,
	$$
	so the theorem is now proved.
\end{proof}

\section{The circle}\label{sec:sphere}
For $N\in N$, let $\mathbb{S}_+^1(N)$ be the upper semicircle of radius $\sqrt{N}$ and denote
$$
S_N:=\mathbb{S}_+^1(N)\cap \Z^2.
$$
It is known that for any $\epsilon>0$ we have
$
|S_N|\lesssim_\epsilon N^\epsilon.
$
Therefore one can estimate using the Cauchy--Schwarz inequality, for any $p\ge 1$ and any finite $d\sigma$
\begin{equation*}
\int_{\T^2} \left|\sum_{\n\in S_N}a_{\n}\, e(\n\cdot x)\right|^pd\sigma(x)\lesssim \left\|\sum_{\n\in S_N}a_{\n}\, e(\n\cdot x)\right\|_\infty^p\lesssim  N^\epsilon \|a\|_{\ell^2}^p.
\end{equation*}
It is interesting to ask for what values of $p$ one can remove the $N^\epsilon$ factor. We shall prove that this is the case for $p=4$, provided that a weaker form of the following Cilleruelo--Granville conjecture from \cite{CG} holds.
\begin{conj}\label{conj:CG}
	For any $\gamma\in(0,1)$, every arc in $\mathbb{S}_+^1(N)$ of length $N^{\frac\gamma2}$ contains at most $C(\gamma)$ lattice points.
\end{conj}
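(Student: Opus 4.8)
The plan is to move the problem into the ring of Gaussian integers $\Z[i]$, in which a lattice point of $\mathbb{S}_+^1(N)$ is exactly a $z\in\Z[i]$ with $|z|^2=N$. Writing $N=2^{a_0}\bigl(\prod_{p\equiv 1\bmod 4}p^{a_p}\bigr)\bigl(\prod_{q\equiv 3\bmod 4}q^{2c_q}\bigr)$ and fixing for each $p\equiv 1\bmod 4$ a Gaussian prime $\pi_p$ above $p$, every such $z$ equals $u\,(1+i)^{a_0}\bigl(\prod_q q^{c_q}\bigr)\prod_p\pi_p^{e_p}\bar\pi_p^{a_p-e_p}$ for a unit $u$ and integers $0\le e_p\le a_p$, so the lattice points are parametrised, up to the four units, by the exponent vector $(e_p)_p$. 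If $z,z'$ lie on a common arc of $\mathbb{S}_+^1(N)$ of length $N^{\gamma/2}$, then $|\arg(z/z')|\lesssim N^{(\gamma-1)/2}$; since $z/z'$ is a unit times $\prod_p(\pi_p/\bar\pi_p)^{e_p-e_p'}=\prod_p(\pi_p^2/p)^{e_p-e_p'}$, this says precisely that the linear form $\sum_p(e_p-e_p')\theta_p$, where $\theta_p:=\arg(\pi_p^2/p)$ and the integer coefficients are bounded by $a_p=v_p(N)$, lies within $O(N^{(\gamma-1)/2})$ of an element of $\frac{\pi}{2}\Z$.

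\textbf{Reduction.} Consequently, Conjecture~\ref{conj:CG} would follow from the purely Diophantine assertion that the box $\prod_p\{-a_p,\dots,a_p\}$ contains no nonzero vector $n=(n_p)$ with $\dist(\sum_p n_p\theta_p,\ \frac{\pi}{2}\Z)\ll_\gamma N^{(\gamma-1)/2}$. Indeed, order any $k+1$ lattice points clustered on such an arc by argument; the $k$ consecutive angular gaps are positive, they sum to $\lesssim N^{(\gamma-1)/2}$, and each gap equals $\dist(\sum_p n_p\theta_p,\ \frac{\pi}{2}\Z)$ for the (nonzero) difference $n$ of the two adjacent exponent vectors, so a uniform lower bound $\gtrsim_\gamma N^{(\gamma-1)/2}$ for such forms forces $k=O_\gamma(1)$. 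First I would isolate the elementary range: for arcs shorter than a fixed small power of $N$ one can avoid any input on the $\theta_p$, arguing instead with the nonzero integers $\Im(z\bar z')$, which satisfy $1\le|\Im(z\bar z')|\lesssim N^{(1+\gamma)/2}$, together with the exact identity $z\,\bar z''=(z\,\bar z')(z'\,\bar z'')/N$, which makes $\Im(z\bar z'')$ nearly additive along a chain of consecutive points. This is the Cilleruelo--C\'ordoba mechanism; it gives $O_\gamma(1)$ for $\gamma$ below an explicit threshold, and in any case the soft bound $k\lesssim N^{(1+\gamma)/2}$ for every $\gamma<1$.

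\textbf{Main obstacle.} Extending the reduction to every $\gamma<1$ requires separating the forms above from $\frac{\pi}{2}\Z$ by $\gtrsim_\epsilon N^{-1/2+\epsilon}$, uniformly over nonzero $n$ in the box. Since $\theta_p$ is, up to a rational multiple of $\pi$, a logarithm of the algebraic number $\pi_p^2/p$ on the unit circle, this is precisely a lower bound for a linear form in logarithms with algebraic arguments, so the Baker--W\"ustholz estimates apply; but their dependence on the \emph{number of terms} is of factorial type, and the governing product of heights is $(\log N)^{\omega_1(N)}$, where $\omega_1(N)$ --- the number of prime factors of $N$ that are congruent to $1$ modulo $4$ --- is unbounded and can be as large as $(1+o(1))\log N/\log\log N$, precisely for the $N$ carrying the most lattice points. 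For those $N$ one has $(\log N)^{\omega_1(N)}\ge N^{1+o(1)}$, so the Baker bound falls far below $N^{-1/2}$ exactly in the cases of interest. Obtaining the required polynomial-in-$N$ separation for these specific forms is the genuine arithmetic content of the conjecture, and I do not expect to settle it unconditionally; a realistic plan is either (i) to establish the elementary range of the previous paragraph, which is the best currently reachable, or (ii) to deduce the full bound conditionally --- for instance from the $abc$ conjecture, or from a quantitative non-concentration hypothesis on the prime angles $\theta_p$ --- via the reduction above.
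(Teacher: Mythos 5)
The item you were assigned is a conjecture, not a theorem: the paper states it as the Cilleruelo--Granville conjecture, cites \cite{CC} for the range $\gamma<\tfrac12$, and uses it only as a \emph{hypothesis} in Theorem~\ref{thm:L4sph}. There is no proof in the paper to compare against, and no proof is currently known for any $\gamma\ge\tfrac12$.

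Your write-up correctly recognizes this and does not claim a proof. The analysis you give is sound and faithfully describes the state of the art: the Gaussian-integer parametrization of $S_N$ via the exponent vectors $(e_p)_{p\equiv 1\ (\mathrm{mod}\ 4)}$; the reduction of the clustering question to a lower bound on $\dist\bigl(\sum_p n_p\theta_p,\tfrac{\pi}{2}\Z\bigr)$ with $\theta_p=\arg(\pi_p^2/p)$ and $|n_p|\le v_p(N)$; the recovery of the elementary Cilleruelo--C\'ordoba range from the integrality and near-additivity of $\Im(z\bar z')$ along consecutive points; and the identification of the genuine obstacle, namely that for $\gamma$ close to $1$ one needs a separation of size about $N^{-1/2}$ for a linear form in logarithms of algebraic numbers whose number of terms $\omega_1(N)$ is unbounded, a regime in which Baker--W\"ustholz-type estimates are far too weak precisely for those $N$ that carry many lattice points. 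Since you do not (and, unconditionally, could not) prove the statement, there is no gap to flag beyond what you have already flagged yourself; the reduction is correct and the assessment of the obstruction is accurate.
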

The conjecture was proved to  be true in \cite{CC} for all $\gamma<\frac12$.
Our conditional result reads as follows.
\begin{thm}\label{thm:L4sph}
	Assume that Conjecture \ref{conj:CG} is true for some $\gamma>\frac12$. Let $\sigma$ be a measure on $\T^2$ such that \eqref{eq:decbeta} holds with some $\beta>0$
	Then the estimate
	\begin{equation}\label{eq:L4sph}
	\int_{\T^2} \left|\sum_{\n\in S_N}a_\n e(\n\cdot x)\right|^4\,d\sigma(x)\lesssim \|a\|_{\ell^2(S_N)}^4
	\end{equation}
	holds with the implicit constant independent of $N$.
\end{thm}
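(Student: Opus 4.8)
The plan is to expand the left side of \eqref{eq:L4sph}, reduce it by Schur's test to a purely geometric counting problem on the circle, and then resolve that problem using the at-most-two-representations rigidity of $S_N$ together with Conjecture \ref{conj:CG}. Writing $f(x)=\sum_{\n\in S_N}a_\n e(\n\cdot x)$ and replacing $a$ by $|a|$ (which only increases the right-hand side once the Fourier decay is inserted), one has
$$\int_{\T^2}|f|^4\,d\sigma=\sum_{\n_1,\n_2,\n_3,\n_4\in S_N}a_{\n_1}a_{\n_2}\overline{a_{\n_3}a_{\n_4}}\,\widehat\sigma(\n_1+\n_2-\n_3-\n_4)\lesssim\sum_{\n_1,\n_2,\n_3,\n_4\in S_N}a_{\n_1}a_{\n_2}a_{\n_3}a_{\n_4}\bigl(1+|\n_1+\n_2-\n_3-\n_4|\bigr)^{-\beta}.$$
Viewing the last sum as a bilinear form in the nonnegative sequence $b_{(\n,\m)}=a_\n a_\m$ with the symmetric nonnegative kernel $(1+|\,\cdot-\cdot\,|)^{-\beta}$ evaluated on sums, Schur's test bounds it by $\|b\|_{\ell^2}^2\,\sup_{\xi}\sum_{\n_3,\n_4\in S_N}(1+|\xi-\n_3-\n_4|)^{-\beta}=\|a\|_{\ell^2(S_N)}^4\,\sup_{\xi}\sum_{\n_3,\n_4\in S_N}(1+|\xi-\n_3-\n_4|)^{-\beta}$; note that this automatically absorbs the ``diagonal'' contributions that one treats separately elsewhere. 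Since the relevant $\xi=\n_1+\n_2$ satisfy $|\xi|\le 2\sqrt N$, a dyadic decomposition of $|\xi-\n_3-\n_4|$ reduces matters to proving
$$\sum_{2^k\lesssim\sqrt N}2^{-k\beta}\,Q_k\lesssim_\beta 1,\qquad Q_k:=\sup_{\xi\in\R^2}\#\bigl\{(\n_3,\n_4)\in S_N^2:\ |\xi-\n_3-\n_4|\le 2^k\bigr\}.$$

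The core of the argument is the estimate $Q_k\lesssim_\gamma\bigl(1+2^{k/2}N^{1/4-\gamma/2}\bigr)\bigl(1+2^kN^{-\gamma/2}\bigr)$ for $2^k\lesssim\sqrt N$. I would obtain it from three facts. First, two circles of radius $\sqrt N$ meet in at most two points, so for fixed $\n_3$ the point $\n_4$ lies in $S_N\cap B(\xi-\n_3,2^k)$, an arc of $\mathbb S^1_+(N)$ of length $\lesssim 2^k$; covering it by sub-arcs of length $N^{\gamma/2}$ and invoking Conjecture \ref{conj:CG} gives $\lesssim 1+2^kN^{-\gamma/2}$ admissible $\n_4$. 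Second, an admissible $\n_3$ must lie in the $2^k$-neighbourhood of $\xi-C_0$ (where $C_0$ is the circle of radius $\sqrt N$) as well as on $C_0$ itself; writing $\n_3=\sqrt N\,e^{i\theta_3}$, $\n_4=\sqrt N\,e^{i\theta_4}$ and $u=(\theta_3+\theta_4)/2$, $v=(\theta_3-\theta_4)/2$, the constraint $|\,2\sqrt N\cos v\cdot e^{iu}-\xi\,|\le 2^k$ together with $\theta_3,\theta_4\in[0,\pi]$ pins $\theta_3$ to an angular window of length $\lesssim\min\!\bigl(|\xi|/\sqrt N,\,2^k/|\xi|\bigr)\le 2^{k/2}N^{-1/4}$, i.e.\ $\n_3$ lies in $O(1)$ arcs of total length $\lesssim 2^{k/2}N^{1/4}$; Conjecture \ref{conj:CG} then yields $\lesssim 1+2^{k/2}N^{1/4-\gamma/2}$ choices of $\n_3$, and multiplying the two counts gives the claim. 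The decisive point is that $S_N$ is a \emph{semi}circle: without the constraint $\theta_3,\theta_4\in[0,\pi]$, the near-degenerate configuration $\n_3+\n_4\approx 0$ would let $\n_3$ roam over a whole quarter of the circle and $Q_k$ would only be controlled by $|S_N|$, which is too weak.

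Finally I would split the dyadic sum at $2^k\simeq N^{\gamma-1/2}$. For $2^k\lesssim N^{\gamma-1/2}$ both factors in the bound for $Q_k$ are $O(1)$ (using $\gamma>1/2$, together with $2^k\lesssim\sqrt N$), so $\sum_{2^k\lesssim N^{\gamma-1/2}}2^{-k\beta}Q_k\lesssim_\beta 1$. For the remaining range I would use the trivial bound $Q_k\le|S_N|^2\lesssim_\varepsilon N^{\varepsilon}$, obtaining $\sum_{N^{\gamma-1/2}\lesssim 2^k\lesssim\sqrt N}2^{-k\beta}Q_k\lesssim_\varepsilon N^{\varepsilon}\,N^{-\beta(\gamma-1/2)}$, which is $\lesssim 1$ once $\varepsilon<\beta(\gamma-1/2)$; this is a legitimate choice precisely because $\gamma>1/2$ and $\beta>0$. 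This would complete the proof of \eqref{eq:L4sph}.

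The main obstacle is the geometric count $Q_k\lesssim(1+2^{k/2}N^{1/4-\gamma/2})(1+2^kN^{-\gamma/2})$, and within it the careful treatment of the near-degenerate positions of $\xi$: one must verify that the endpoints of the semicircle genuinely confine $\n_3$ to short arcs, so that Conjecture \ref{conj:CG} applies, and one must obtain the exponent $\tfrac14-\tfrac\gamma2$ in the first factor (rather than a weaker power that would fail for small $\beta$) uniformly over all $\xi$ with $|\xi|$ ranging from $O(1)$ up to $\sqrt N$. Everything else is routine: the expansion, Schur's test, the dyadic splitting, and the use of $|S_N|=N^{o(1)}$ for the tail.
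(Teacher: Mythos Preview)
Your approach is essentially the paper's: both reduce via the Fourier decay and a Schur-type bound to the counting quantity $Q_k$ (the paper writes $I_j$), split at the threshold $2^k\sim N^{\gamma-1/2}=R^{2\gamma-1}$, use the trivial $|S_N|^2\lesssim_\varepsilon N^\varepsilon$ above it, and invoke Conjecture \ref{conj:CG} on short arcs below it. The paper packages the geometric step by citing the Cordoba--Fefferman almost-orthogonality for sums of arcs on $\mathbb S^1_+$, which immediately confines $(\n_3,\n_4)$ to $O(1)$ products of arcs of length $(2^k\sqrt N)^{1/2}$; you attempt to rederive that confinement by hand.

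There is, however, a genuine error in your intermediate step. The claim that $\theta_3$ lies in a window of length $\lesssim\min(|\xi|/\sqrt N,\,2^k/|\xi|)$ is false. Take $\xi=2\sqrt N\,e^{i\pi/4}$: the exact solution is $\theta_3=\theta_4=\pi/4$, and the perturbation analysis gives $|\cos v-1|\lesssim 2^k/\sqrt N$, hence $|v|\lesssim(2^k/\sqrt N)^{1/2}$, so that $\Delta\theta_3\sim(2^k/\sqrt N)^{1/2}$ --- much larger than your claimed $2^k/|\xi|\sim 2^k/\sqrt N$. The quantity $2^k/|\xi|$ bounds $\Delta u$, not $\Delta\theta_3=\Delta(u+v)$; the dominant contribution comes from $\Delta v$, which is governed by the second-order tangency of the two circles and is generically of size $(2^k/\sqrt N)^{1/2}$. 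Your \emph{final} bound $\Delta\theta_3\lesssim 2^{k/2}N^{-1/4}$ is nonetheless correct, and is exactly what the Cordoba--Fefferman argument yields; so the proof is rescued once you replace the $\min$ heuristic by either that argument or a direct estimate of $\Delta v$. Your separate $\n_4$-count and the second factor $(1+2^kN^{-\gamma/2})$ are then unnecessary: once $(\n_3,\n_4)$ is confined to $O(1)$ products of arcs of length $(2^k\sqrt N)^{1/2}\le N^{\gamma/2}$, the conjecture gives $Q_k=O_\gamma(1)$ directly.
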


\begin{proof}Let us write $R=\sqrt{N}$. We have
	\begin{align}
	\nonumber\int_{\T^2} \left|\sum_{\n\in S_N}a_{\n}\, e(\n\cdot x)\right|^4d\sigma(x)&
	\lesssim\sum_{0\le j \le\log_2 R}2^{-j\beta}\sum\limits_{\substack{\n_1, \n_2,\n_3, \n_4\in S_N\\ |\n_1+\n_2-\n_3-\n_4|\simeq 2^j}}  |a_{\n_1}a_{\n_2}a_{\n_3}a_{\n_4}|+\|a\|_{\ell^2}^4
	\\\label{last!}
	&\lesssim \sum_{0\le j \le\log_2 R}2^{-j\beta}I_j\|a\|_{\ell^2}^4,
	\end{align}
	where
	$$I_j=\max_{(\n_1,\n_2)\in S_N\times S_N}|\{(\n_3,\n_4)\in S_N\times S_N:\;|\n_1+\n_2-\n_3-\n_4|\simeq 2^j\}.$$ We distinguish two regimes.
	If $2^{j}> R^{2\gamma-1}$, the trivial bound $I_j\lesssim_\epsilon N^\epsilon$ suffices. 	 Assume now that $2^{j}\le R^{2\gamma-1}$. We cover $\mathbb{S}_+^1(N)$ with arcs $\tau$ of length $\simeq (2^jR)^{1/2}$. The  Cordoba--Fefferman
geometric argument (see for example Section 3.2 in \cite{C}) shows that we can split the arcs $\tau$ into $O(1)$ many collections, such that
$$\dist(\tau_1+\tau_2,\tau_3+\tau_4)\gg 2^j$$
for each $\tau_1,\ldots, \tau_4$ in each collection, subject  only to the requirement that $\{\tau_1,\tau_2\}\not=\{\tau_3,\tau_4\}$. To see that this is indeed the case, note that after rescaling this is equivalent to the fact that the sums of two arcs of length $\delta=(2^j/R)^{1/2}$ on $\S_+^1(1)$ are separated by $\gg \delta^2$. Since each $\tau_i$ contains at most $C(\gamma)$ lattice points, it follows that $I_j=O(1)$. The contribution of these $j$ to \eqref{last!} is thus acceptable.

\end{proof}


\begin{thebibliography}{99}



\bibitem{bourg}
\textsc{J{.} Bourgain}
\newblock{Fourier transform restriction phenomena for certain lattice subsets and applications to nonlinear evolution equations. I. Schr\"odinger equations.}
\newblock { \textit{Geom{.} Funct{.} Anal{.}} 3 (1993), no{.} 2, 107--156. }
	

	
\bibitem{bourg1}
\textsc{J{.} Bourgain}
\newblock{Decoupling, exponential sums and the Riemann zeta function.}
\newblock { \textit{J{.} Amer{.} Math{.} Soc{.}} 30 (2017), no. 1, 205--224.}

	
\bibitem{BD}
\textsc{J{.} Bourgain, C{.} Demeter}
\newblock{The proof of the $\ell^2$ decoupling conjecture.}
\newblock {\textit{Ann{.} of Math{.}} (2) 182 (2015), no. 1, 351--389. }
	
\bibitem{BD2}
\textsc{J{.} Bourgain, C{.} Demeter}
\newblock{New bounds for the discrete Fourier restriction to the sphere in 4D and 5D.}
\newblock{\textit{Int{.} Math{.} Res{.} Not{.} IMRN} (2015), no. 11, 3150--3184.}
	
\bibitem{BDG}
\textsc{J{.} Bourgain, C{.} Demeter, L{.} Guth}
\newblock{Proof of the main conjecture in Vinogradov's mean value theorem for degrees higher than three.}
\newblock{\textit{Ann{.} of Math{.} (2)} 184 (2016), no. 2, 633--682.}






	
	
\bibitem{BR}
\textsc{J{.} Bourgain, Z{.} Rudnick}
\newblock{Restriction of toral eigenfunctions to hypersurfaces and nodal sets.}
\newblock{\textit{Geom{.} Funct{.} Anal{.}} 22 (2012), no. 4, 878--937.}

\bibitem{CS}
\textsc{C{.} Chen, I{.} E{.} Shparlinski}
\newblock{Restricted mean value theorems and metric theory of restricted Weyl sums.}
\newblock{Preprint 2019. \texttt{arXiv:1912.01307}}
	
\bibitem{CC}
\textsc{J{.}Cilleruelo, A{.} Cordoba}
\newblock{Trigonometric polynomials and lattice points.}
\newblock{\textit{Proc{.} Amer{.} Math{.} Soc{.}} 115 (4) (1992), 899--905.}	
	
\bibitem{CG}
\textsc{J{.}Cilleruelo, A{.} Granville}
\newblock{Lattice points on circles, squares in arithmetic progressions and sumsets of squares.},
\newblock{\textit{in Additive Combinatorics, in: CRM Proc{.} Lecture Notes}, vol. 43, Amer. Math. Soc, Providence, RI, 2007, 241--262.}
	
\bibitem{C}
\textsc{C{.} Demeter}
\newblock{Fourier Restriction, Decoupling and Applications.}
\newblock{Cambridge University Press, 2020.}
	
\bibitem{C2}
\textsc{C{.} Demeter}
\newblock{On $L^{12}$ cancellation for exponential sums associated with nondegenerate curves in $\R^4$.}
\newblock{Preprint 2021. \texttt{arXiv:2101.08220}}
	
\bibitem{DGW}
\textsc{C{.} Demeter, L{.} Guth, H{.} Wang}
\newblock{Small cap decouplings. With an appendix by D{.}R{.} Heath--Brown.}
\newblock{\textit{Geom{.} Funct{.} Anal{.}}  30 (2020), no. 4, 989--1062.}
	
	
\bibitem{GR}
\textsc{B{.} Green, I{.}Z{.} Ruzsa}
\newblock{On the Hardy--Littlewood majorant problem.}
\newblock{\textit{Math{.} Proc{.} Cambridge Philos{.} Soc{.}} 137 (2004), no. 3, 511--517.}
	
	
	
\bibitem{G}
\textsc{S{.} Guo}
\newblock{On a binary system of Prendiville: the cubic case.}
\newblock{\textit{Publ. Mat.} 64 (2020), no. 1, 255-281.}	
	
	
	
\bibitem{HL}
\textsc {G{.}H{.} Hardy, J{.}E{.} Littlewood}
\newblock{Notes on the theory of series (XIX): A problem concerning majorants of Fourier series.}
\newblock{\textit{Q{.} J{.} Math{.}} 6 (1935) 304--315.}
	
	
\bibitem {Herz}
\textsc {C{.} Herz}
\newblock{Fourier transforms related to convex sets.}
\newblock{\textit{Ann{.} of Math{.}} 75 (1962), 81--92.}
	
\bibitem{H}
\textsc {H{.} Jung}
\newblock{A sharp $L^{10}$ decoupling for the twisted cubic.}
\newblock{Preprint 2020. \texttt{arXiv:2011.10539}}
	
\bibitem{MS}
\textsc {G{.} Mockenhaupt, W{.} Schlag }
\newblock{On the Hardy--Littlewood majorant problem for random sets.}
\newblock{\textit{J{.} Funct{.} Anal{.}} 256 (2009), no. 4, 1189--1237.}
	
\bibitem{Oh}
\textsc{C{.} Oh}
\newblock{Small cap decoupling inequalities: bilinear methods.}
\newblock{Preprint 2019. \texttt{arXiv:1911.08722}}
	
\bibitem{Si}
\textsc{J{.}H{.} Silverman }
\newblock{Integer points on curves of genus 1.}
\newblock{\textit{J{.} London Math{.} Soc{.}} (2) 28 (1983), no. 1, 1--7.}
	
\bibitem {Wol}
\textsc{T{.}D{.} Wooley}
\newblock{Nested efficient congruencing and relatives of Vinogradov's mean value theorem.}
\newblock{\textit{Proc{.} Lond{.} Math{.} Soc{.} }(3) 118 (2019), no. 4, 942--1016.}
	
\end{thebibliography}
\end{document}